\newcommand{\restr}{%
  \,\raisebox{-.127ex}{\reflectbox{\rotatebox[origin=br]{-90}{$\lnot$}}}\,%
}
\newcommand{\R}{\mathbb{R}}
\renewcommand{\S}{\mathbb{S}}
\newcommand{\Pd}[1][d]{\mathbb{P}_{#1}}
\newcommand{\N}{\mathbb{N}}
\renewcommand{\P}{\mathscr{P}}
\newcommand{\M}{\mathcal{M}}
\newcommand{\V}{\mathcal{V}}
\newcommand{\Vcal}{\mathcal{V}}
\newcommand{\Sch}{\mathcal{S}}
\newcommand{\Ex}{\mathbb{E}}
\newcommand{\prob}{\mathbb{P}}
\renewcommand{\L}[1][]{\mathscr{L}}
\newcommand{\E}{\mathcal{E}}
\renewcommand{\H}{\mathcal{H}}
\newcommand{\A}{\mathcal A}
\newcommand{\F}{\mathcal{F}}
\newcommand{\B}{\mathscr{B}}
\newcommand{\CE}{\mathcal{CE}}
\newcommand{\one}{\mathds{1}}
\newcommand{\eps}{\varepsilon}
\newcommand{\rv}{\mathfrak{v}}
\newcommand{\ru}{\mathfrak{u}}
\newcommand{\rf}{\mathfrak{f}}
\newcommand{\rg}{\mathfrak{g}}
\newcommand{\rj}{\mathfrak{J}}
\DeclareMathOperator{\dist}{dist}
\DeclareMathOperator{\supp}{supp}
\DeclareMathOperator{\id}{Id}
\DeclareMathOperator{\spanset}{Span}
\DeclareMathOperator{\tanspace}{Tan}
\DeclareMathOperator{\Lip}{Lip}
\DeclareMathOperator{\diff}{diff}
\DeclareMathOperator{\re}{Re}
\DeclareMathOperator*{\essinf}{essinf}
\DeclareMathOperator*{\esssup}{esssup}
\DeclareMathOperator{\vol}{Vol}
\DeclareMathOperator{\med}{med}
\newtheorem{theorem}{Theorem}[section]
\newtheorem{lemma}[theorem]{Lemma}
\newtheorem{proposition}[theorem]{Proposition}
\newtheorem{corollary}[theorem]{Corollary}
\theoremstyle{definition}
\newtheorem{definition}[theorem]{Definition}
\newtheorem{example}[theorem]{Example}
\newcommand{\red}{\normalcolor} 
\newcommand{\blue}{\normalcolor} 
\newcommand{\vio}{\normalcolor} 
\newcommand{\purp}{\normalcolor} 
\newcommand{\grn}{\normalcolor} 
\newcommand{\nc}{\normalcolor}
\definecolor{byzantine}{rgb}{0.74, 0.2, 0.64}
\definecolor{darkgreen}{rgb}{0.1,0.6,0.1}
\definecolor{darkred}{rgb}{0.6,0,0}
\definecolor{lightgray}{rgb}{0.5,0.5,0.5}
\newcommand{\te}{\textrm}
\newenvironment{listi}  
  {\begin{list} 
 {(\roman{broj})}
{ \usecounter{broj}}
     \setlength{\labelwidth}{25pt}
  }
{   \end{list} }
\newcounter{broj}
\renewcommand{\hat}{\widehat}
\renewcommand{\check}{\widecheck}
\def\Xint#1{\mathchoice
   {\XXint\displaystyle\textstyle{#1}}%
   {\XXint\textstyle\scriptstyle{#1}}%
   {\XXint\scriptstyle\scriptscriptstyle{#1}}%
   {\XXint\scriptscriptstyle\scriptscriptstyle{#1}}%
   \!\int}
\def\XXint#1#2#3{{\setbox0=\hbox{$#1{#2#3}{\int}$}
     \vcenter{\hbox{$#2#3$}}\kern-.5\wd0}}
\def\dashint{\Xint-}
\newenvironment{remark}{\pushQED{\qed}\remarkx}{\popQED\endremarkx}
\numberwithin{equation}{section}
\newenvironment{proofsketch}{%
  \proof}{\endproof}
\title{\vspace{-5mm}Geometry and analytic properties of the sliced Wasserstein space}
\author{Sangmin Park} 
\author{Dejan Slep\v{c}ev}
\address{S. Park, D. Slep\v{c}ev: Department of Mathematical Sciences, Carnegie Mellon University, 5000 Forbes ave., Pittsburgh, PA 15213}
\email{sangminp@andrew.cmu.edu, slepcev@math.cmu.edu}
\begin{document}

\begin{abstract}
The sliced Wasserstein metric compares probability measures on $\R^d$ by taking averages of the Wasserstein distances between projections of the measures to lines. The distance has found a range of applications in statistics and machine learning, as it is easier to approximate and compute in high dimensions than the Wasserstein distance. While the geometry of the Wasserstein metric is quite well understood, and has led to important advances, very little is known about the geometry and metric properties of the  sliced Wasserstein (SW) metric.  
Here we show that when the measures considered are ``nice'' (e.g. \vio bounded above and below by positive multiples of the Lebesgue measure \nc) then the SW metric  is comparable to the (homogeneous) negative Sobolev norm $\dot H^{-(d+1)/2}$. On the other hand when the measures considered are close in the infinity transportation metric to a discrete measure, then the SW metric between them is close to a multiple of the Wasserstein metric. We characterize the tangent space of the SW space, and show that the speed of curves in the space can be described by a quadratic form, but that the SW space is not a length space. We establish a number of properties of the metric given by the minimal length of curves between measures -- the SW length. Finally we highlight the consequences of these properties on the gradient flows in the SW metric.
\end{abstract}

\maketitle

\medskip
\noindent{\small\textbf{Keywords:}
Sliced Wasserstein Distance, Optimal Transport, Radon Transform, Gradient Flows in Spaces of Measures}

\noindent{\small \textbf{MSC (2020):} 49Q22, 46E27, 60B10, 44A12}

\bigskip
\setcounter{tocdepth}{1} 
\tableofcontents

\newpage

\subsection*{Notation}
\begin{itemize} \addtolength{\itemsep}{3pt}
\addtolength{\itemindent}{-35pt}
\item[ ] $\Pd$ -- the set we call the Radon domain is defined as $\Pd=(\S^{d-1}\times\R)/\!\sim$, the quotient space for the equivalence relation $(-\theta,-r)\sim(\theta,r)$; see \eqref{def:Pd}
\item[] $Rf=\hat f$ -- the Radon transform of function $f:\R^d\rightarrow\R$; see \eqref{def:RadonTrans}
\item[] $R^\ast \rg=\widecheck{\rg}$ -- the dual Radon transform of function $\rg:\Pd\rightarrow\R$; see \eqref{def:dualRadon}
\item[] $\Sch(\Omega)$ with $\Omega=\R^d$ or $\Pd$ -- the Schwartz class of functions; see \eqref{def:Sch_radon}
\item[] $\Sch'(\Omega)$ with $\Omega=\R^d$ or $\Pd$ -- the set of tempered distributions on $\Omega$
\item[] $\M(\Omega)$ with $\Omega=\R^d$ or $\Pd$ -- the set of locally finite Borel measures on $\Omega$; see \eqref{def:MPd}
\item[] $\M_b(\Omega)$ with $\Omega=\R^d$ or $\Pd$ -- the set of bounded Borel measures on $\Omega$
\item[] $\P_p(\R^d)$, $\P_p(\Pd)$ -- sets of probability measures with bounded $p$-th moments; see \eqref{def:PpPd}
\item[] $W_p$ -- the $p$-transportation distance; see \eqref{def:Wp}. We write $W$ for the Wasserstein distance $W_2$. 
\item[] $SW_p$ --  the $p$-sliced Wasserstein distance; see \eqref{def:SWp}. We write $SW$ for $SW_2$.
\item[] $\Gamma(\mu,\nu)$ -- set of transport plans between probability measures $\mu,\nu$; see \eqref{def:Wp}
\item[] $\widehat\Gamma(\hat\mu,\hat\nu)$ -- set of slice-wise transport plans between probability measures $\hat\mu,\hat\nu\in\P(\Pd)$; see \eqref{def:hatGamma}
\item[] $\Gamma_o(\mu,\nu)$ --  set of optimal transport plans between $\mu,\nu \in \P_2(\R^d)$ for the quadratic cost; see \eqref{def:Gammao}
\item[] $T_\mu^\nu$ -- optimal transport map for quadratic cost between $\mu\in\P_2(\R^d)$ and $\nu\in\P_2(\R^d)$
\item[] $\widehat\Gamma_o(\hat\mu,\hat\nu)$ -- set of slice-wise optimal transport plans between $\hat\mu,\hat\nu \in \P_2(\Pd)$; see \eqref{def:hatGammao}
\item[] $\widehat T_{\hat\mu}^{\hat\nu}$ -- slice-wise optimal transport map from $\hat\mu\in\P_2(\Pd)$ to $\hat\nu\in\P_2(\Pd)$
\item[] $\F_d f = (2\pi)^{-d/2}\int_{\R^d} e^{-ix}f(x)\,dx$ -- the $d$-dimensional Fourier transform of a function $f:\R^d\rightarrow\R$
\item[] $\F_1 \rg= (2\pi)^{-1/2}\int_{\R}e^{-ir}\rg(\theta,r)\,dr$ --  the slice-wise $1$-dimensional Fourier transform of $\rg:\Pd\rightarrow\R$
\item[] $\Lambda_d=(-\partial_r^2)^{\tfrac{d-1}{2}}$ -- slice-wise fractional derivative  applied to functions on $\Pd$; see \eqref{eq:Lambdad-laplace} and \eqref{def:hatLambda}
\item[] $H_t^s(\Omega)$ with $\Omega=\R^d$ or $\Pd$ and $s\in\R$, $t>-\frac d2$ --  Sobolev space with attenuated/amplified low frequencies; see \eqref{def:Hts_norm} and \eqref{def:Hts_norm;radon}. We write $H_t^{s-}(\Omega)=\bigcap_{\eps>0}H_t^{s-\eps}$; see \eqref{def:Hscr}
\item[] \grn $\mathscr{H}(\R^d;\R^d)$ -- space of admissible fluxes for the continuity equation; see \eqref{def:Hscr} \nc
\item[] $\CE$ --  set of suitable distributional solutions of the continuity equation; see Definition~\ref{def:CE}
\item[] \grn$\langle f,g\rangle_{\Omega}$ with $\Omega=\R^d$ or $\Pd$ -- action of a distribution $f$ on $\Omega$ on a function $g$ on $\Omega$; see \eqref{def:bracket_Omega}\nc
\end{itemize}

\section{Introduction}

The sliced Wasserstein distance, introduced by Rabin, Peyr\'e, Delon, and Bernot~\cite{RabPeyDelBer12}, compares probability measures on $\R^d$ by taking averages of the Wasserstein distances between projections of the measures to each 1-dimensional subspaces of $\R^d$. Thanks to its lower sample and computational complexity relative to the Wasserstein distance, the sliced Wasserstein distance and its variants~\cite{DeshMax19,KNSUBRR19,Bonet23,NW_Rig22,PatyCuturi19,BSTK23,NguyenHoPhamBui21,NgueynHo22} have recently expanded its applications in statistics~\cite{NGSK22,Lin_etal21,Nadjahi21,ManBalWas22, NDCKSS20} and machine learning~\cite{KSP16,Bonet22,LSMDS19,DaiSel20,DZS18,DeshMax19,KPMR19,LBBHU19} as a tool to compare measures and construct paths in spaces of measures. \vio For $p\in[1,+\infty)$, we write $SW_p$ (or the $SW_p$ distance) to refer to the $p$-sliced Wasserstein distance, and refer to the corresponding space as the $SW_p$ space. When $p=2$, we drop the subscript and simply refer to them as the $SW$ distance and the $SW$ space.\nc

Despite the multitude of uses of the sliced Wasserstein distance there are only a few works dealing with its metric and geometric properties: Bonnotte~\cite{Bon13} established that $SW_p$ is indeed a metric, and is equivalent to $W_p$ for measures supported on a common compact set for all $p\geq 1$; more recently, Bayraktar and Guo~\cite{BayGuo21} showed that $SW_p$ and $W_p$ induce the same topology on $\P_p(\R^d)$ for $p\geq 1$. This is in stark contrast with the Wasserstein metric, whose geometry has been a subject of intense study and has led to important advances; see~\cite{AGS, San15, Vil09}. 

Here we take steps towards a better understanding of the sliced Wasserstein distance and its geometry. In particular we show that for measures that
are absolutely continuous with respect to the Lebesgue measure, have bounded density, and differ only within a set compactly contained in the interior of their support, the SW metric is comparable to the (homogeneous) negative Sobolev norm $\dot H^{-(d+1)/2}$; see Theorem~\ref{thm:lsw-sw-H;comparison}. On the other hand when the measures considered are close in the infinity-transportation metric to a discrete measure, the SW metric between them is close to a multiple of the Wasserstein metric (Theorem~\ref{thm:SWnear-discrete}). We show that, unlike the Wasserstein space, the SW space is not a length space. Nevertheless, it still has a tangential structure that resembles the one of the Wasserstein space. We also show that geodesics (considered as length minimizing curves) in SW exist and study the intrinsic metric $\ell_{SW}$, defined as the length of the minimizing geodesics between measures. In particular we show that $\ell_{SW}$ satisfies some of the similar comparison and approximation properties as the SW metric. Finally we discuss the consequences of these properties to gradient flows with respect to the SW metric.

\subsection{Setting}\label{ssec:setting}
{\red For $\Omega=\R^{d},\S^{d-1}\times\R$ we denote by $\P(\Omega)$ the space of all Borel probability measures on $\Omega$.\nc}
Let $1\leq p<\infty$. For probability measures  \grn $\mu,\nu\in \P_p(\R^d) := \{ \mu \in \P(\R^d) \::\: \int_{\R^d} |x|^p d\mu(x) < \infty \}$\nc, the $p$-Wasserstein distance, $W_p$, is defined as follows: 
\begin{equation}\label{def:Wp}
    \begin{split}
        W_p(\mu,\nu) := & \inf_{\gamma\in\Gamma(\mu,\nu)}\left(\int_{\R^d\times \R^d} |x-y|^p \,d\gamma(x,y)\right)^{1/p} \\ 
        &\text{ where }
        \Gamma(\mu,\nu)=\left\{\gamma\in\P(\R^d \times \R^d):\,\pi^1_\#\gamma=\mu,\,\pi^2_\#\gamma=\nu\right\}.
    \end{split}
\end{equation}

To define the sliced Wasserstein distance, we introduce the following notation: for each $\theta\in\S^{d-1}$, let $\R\theta:=\spanset{\{\theta\}}\subset\R^d$, and define the projection $\pi^\theta:\R^d\rightarrow\R\theta$ by
\begin{equation}\label{def:pitheta}
\pi^\theta(x)=(\theta\cdot x)\theta.
\end{equation}
The $p$-sliced Wasserstein distance  $SW_p$ is defined by
\begin{equation}\label{def:SWp}
    SW_p(\mu,\sigma)=\left(\dashint_{\S^{d-1}}W_p^p(\pi^\theta_\# \mu,\pi^\theta_\#\sigma)\,d\theta\right)^{\frac1p}.
\end{equation}

The Radon transform provides a natural language to describe objects relating to the sliced Wasserstein distance. Consider an integrable function $f:\R^d\rightarrow \R$, for  $d\geq 2$. We use both $Rf$ and $\hat f$ to  denote its Radon transform: For $\theta \in \S^{d-1}$ and $r \in\R$
\begin{equation}\label{def:RadonTrans}
R_\theta f(r)=Rf(\theta, r)=\widehat f(\theta,r):=\int_{\theta^\perp} f(r\theta+y^\theta)\,dy^\theta,
\end{equation}
where $\theta^\perp:=\{y\in\R^d:\;y\cdot\theta=0\}$ and $dy^\theta$ is the $(d-1)$-dimensional Lebesgue measure on $\theta^\perp$. By Fubini's theorem, $R_\theta f(r)$ exists for $\L^1$ a.e. $r\in\R$ for each $\theta\in\S^{d-1}$ when $f\in L^1(\R^d)$.

Note that $Rf$ is even on $\S^{d-1}\times\R$,  meaning that  $Rf(-\theta,-r)=Rf(\theta,r)$. This motivates defining the $d$-dimensional ``Radon domain'' $\Pd$ by
\begin{equation}\label{def:Pd}
    \Pd:=(\S^{d-1}\times\R)/\sim \text{, where the equivalence relation is given by } (-\theta,-r)\sim(\theta,r).
\end{equation}

The Radon transform can be extended to distributions (see~\cite[Chapter 1.5]{Hel10} and~\cite{Sha21}); in particular, when $\mu$ is a bounded measure, the distributional extension is consistent with the definition of $R_\theta \mu$ as a pushforward of $\mu$ by the projection map $x\mapsto x\cdot\theta$ (see Remark~\ref{rmk:radon_on_measure}).
Thus, for $\mu\in\P_p(\R^d)$ we have $\hat\mu^\theta = R_\theta \mu\in\P_p(\R)$, and
\[SW_p(\mu,\sigma)=\left(\dashint_{\S^{d-1}}W_p^p(\hat\mu^\theta,\hat\sigma^\theta)\,d\theta\right)^{\frac1p}.\]
Note that $\hat\mu^\theta\in\P_p(\R)$ whereas $\pi^\theta_\#\mu\in\P_p(\R\theta)$; we will sometimes use the latter when it is more convenient to consider measures on subspaces of $\R^d$ than on $\R$.

Henceforth we will focus on the case $p=2$, and write $SW=SW_2$ and $W=W_2$.

\subsection{Summary of results}\label{ssec:summary}
We obtain a number of geometric and analytic properties of the $SW$ distance and  of the associated length space, and investigate their implications on the sliced Wasserstein gradient flows and statistical estimation rates in the metrics. \vio Throughout this paper, we use $X\lesssim_{\alpha_1,\cdots,\alpha_k}Y$ as a shorthand for the inequality $X\leq C(\alpha_1,\cdots,\alpha_k) Y$, where $C(\alpha_1,\cdots,\alpha_k)>0$ is a finite positive constant depending on $(\alpha_{i})_{i=1}^k$. When summarizing results or making remarks, we sometimes omit the dependence on the parameters and write $\lesssim$ for the sake of simplicity; however, all rigorous statements contain clear characterizations of the constants. \nc
\smallskip

\emph{Basic properties.}
In Section~\ref{sec:SW;basic} we establish some basic  properties of the $SW$ metric. In particular in Proposition~\ref{prop:SW2_complete} we show that $(\P_2(\R^d),SW)$ is a complete metric space, which we  refer to as the $SW$ space.
We then turn to the intrinsic geometry of the $SW$ space. Example~\ref{ex:SW2_notgeo} shows that, unlike the Wasserstein space, the $SW$ space is not a geodesic space. That is, one cannot in general find a continuous curve connecting two measures in the SW space with its length equal to the distance between the measures.
\smallskip

\emph{Tangential structure of sliced Wasserstein space.}
\grn We show in Section~\ref{sec:three} that the $SW$ space has a tangent structure which resembles the tangent structure of the Wasserstein space. Recall that in the Wasserstein space, each absolutely continuous curve $(\mu_t)_{t\in I}$ defined on an interval $I\subset\R$ corresponds to a measure-valued distributional solution of the continuity equation 
\[\partial_t \mu_t+\nabla\cdot(v_t\mu_t)=0,\; \text{ with } \|v_t\|_{L^2(\mu_t)} = |\mu'|_{W}(t) \text{ for a.e. } t\in I,\]
where $|\mu'|_{W}$ is the metric derivative w.r.t $W$; see~\cite[Theorem 8.3.1]{AGS}\nc. Theorem~\ref{thm:SW2_ac-curves} establishes an analogous result for the sliced Wasserstein space --  for each absolutely continuous curve $(\mu_t)_{t\in I}$ in the sliced Wasserstein space, there exists a vector-valued flux $(J_t)_{t\in I}$ such that $J_t$ is in a suitable subspace of $\Sch'(\R^d;\R^d)$ and
\[\partial_t \mu_t+\nabla\cdot J_t=0,\; \text{ with } \norm{\tfrac{d\widehat J_t}{d\hat\mu_t}}_{L^2(\hat\mu_t)} = |\mu'|_{SW}(t) \text{ for a.e. } t\in I.\]
We note two key differences: the metric derivative is characterized by the weighted $L^2$ norm in the Radon domain, finiteness of which does not imply $J_t\ll\mu_t$ in general (see Remark~\ref{rmk:SW2_ac-curves}). Moreover, Sharafutdinov's results of Radon transform on Sobolev spaces~\cite{Sha21} imply that $\|R f\|_{L^2(\Pd)}=\|f\|_{\dot H^{-(d-1)/2}(\R^d)}$, thus we can formally understand $|\mu'|_{SW}(t)$ as corresponding to a weighted high order negative Sobolev norm of the flux $J_t$, in contrast to the weighted $L^2$-norm in the Wasserstein case. Hence, at least for absolutely continuous measures,  the formal Riemannian metric measuring infinitesimal length in the sliced Wasserstein space corresponds to a weaker space than the one for the Wasserstein metric. 
Furthermore in Section~\ref{ssec:sw-tanspace}, 
we characterize its tangent space which has the following key property analogous to its Wasserstein counterpart: Each absolutely continuous curve $(\mu_t)_{t\in I}$ is associated to a unique (up to a $\L^1\restr_I$-null set) family of tangent vectors $(J_t)_{t\in I}$, which moreover attain the metric derivative through the quadratic form $J_t \mapsto \|d\widehat J_t/d\hat\mu_t\|_{L^2(\hat\mu_t)}$.
\smallskip

\emph{Intrinsic sliced Wasserstein length space.}
In Section~\ref{sec:ellSW}, motivated by the general of lack geodesics in $(\P_2(\R^d),SW)$, we introduce the sliced Wasserstein length metric $\ell_{SW}$ defined as the infimum of the lengths of curves between measures in the SW space. We establish the basic properties of the metric space $(\P_2(\R^d),\ell_{SW})$; in particular we prove in Proposition~\ref{prop:lsw-geodesic} that geodesics exist, which further implies that in general $\ell_{SW}\neq SW$.
\smallskip

\emph{Comparison of sliced Wasserstein metric with negative Sobolev norms and Wasserstein metric.}
In Section~\ref{sec:comparison} we establish some of the key results of this paper, namely the comparison theorems of $SW$ metric with negative Sobolev norms near absolutely continuous measures and comparisons of $SW$ with the Wasserstein metric near  discrete measures. In particular, consider  an absolutely continuous measure $\mu$ bounded away from zero and infinity on some bounded open convex domain $\Omega$.  Theorem~\ref{thm:lsw-sw-H;comparison} establishes that 
\[\|\mu-\nu\|_{\dot H^{-(d+1)/2}(\R^d)}\lesssim SW(\mu,\nu)\leq \ell_{SW}(\mu,\nu)\lesssim SW(\mu,\nu) \lesssim \|\mu-\nu\|_{\dot H^{-(d+1)/2}(\R^d)},\]
for all measures $\nu$ which are \vio bounded above and below by constant multiples of $\mu$ \nc and coincide with $\mu$ near the boundary of $\Omega$.
In other words we show that near $\mu$, $SW$ is  equivalent to $\dot H^{-(d+1)/2}$.
 
 On the other hand, Theorem~\ref{thm:SWnear-discrete} states 
\[SW(\mu^n,\nu)\leq \ell_{SW}(\mu^n,\nu)\leq \frac1d W(\mu^n,\nu)\leq (1+o(1))SW(\mu^n,\nu)\]
for $\nu$ near discrete measures of the form $\mu^n=\sum_{i=1}^n m_i\delta_{x_i}$.

These two results provide interesting insights about the $SW$ \vio metric\nc. Near smooth measures it behaves like a highly negative Sobolev space, in contrast to the Wasserstein metric which for such measures behaves like the $\dot H^{-1}$ norm as noted by Peyre~\cite{RPeyre18}, while near discrete measures $SW$ behaves like the Wasserstein distance.
\smallskip

\emph{Approximation by discrete measures in sliced Wasserstein length.}
Manole, Balakrishnan, and Wasserman~\cite[Proposition 4]{ManBalWas22} have shown that a finite random sample (i.e. the empirical measure of the set of $n$ random points) of a probability measure on $\R^d$ estimates the measure in the sliced Wasserstein distance at a parametric rate $O(n^{-1/2})$ for a large class of measures; \purp see also~\cite{NDCKSS20}\nc. This is in stark contrast with the Wasserstein distance where the approximation error is poor in high dimensions and scales like $n^{-1/d}$. 
We start by pointing out a connection between the results on the parametric finite-sample estimation in the sliced Wasserstein distance and the results in statistical literature, that our results in Section~\ref{sec:comparison} identify. Namely it is known that finite-sample estimation of measures with respect to  maximum mean discrepancy (MMD) also enjoys parametric rate ~\cite[Theorem 3.3]{Sri16}. MMD distance is nothing but the norm in the dual of a reproducing kernel Hilbert space (RKHS). In particular the results of~\cite{Sri16} apply to the dual of the Sobolev space $H^s$ with $s>\frac d2$ (when the spaces embeds in the spaces of H\"older continuous functions and are RKHS). Our Theorem~\ref{thm:lsw-sw-H;comparison} says that near absolutely continuous measures, SW behaves like the $\dot H^{-(d+1)/2}$-norm; as the associated norm $\|\cdot\|_{H^{-(d+1)/2}(\R^d)}$ is an MMD, we can formally understand $SW$ to exhibit behaviors like an MMD. Thus the MMD parametric estimation can be seen as a tangential or a linearized analogue of the finite sample estimation rates in SW distance. 

Here we investigate the finite sample estimation rates in the SW intrinsic length metric $\ell_{SW}$. The goal is to gain a better understanding of the extent to which $SW$ and $\ell_{SW}$ share properties. In Theorem~\ref{thm:lsw;para_rate}, we establish that the finite sample approximation in $\ell_{SW}$ happens at the parametric rate up to a logarithmic correction, namely that 
\[\ell_{SW}(\mu,\mu^n)\lesssim \sqrt{\frac{\log n}{n}} \; \text{ with high probability, where } \mu^n=\frac1n\sum_{i=1}^n \delta_{X_i} \text{ with } X_i\overset{i.i.d.}{\sim}\mu.\]
While this  is consistent with the geometric view of $(\P_2(\R^d),\ell_{SW})$ as a curved or nonlinear dual of Reproducing Kernel Hilbert Space (see beginning of Section~\ref{sec:lsw;stat} for discussion), the statement and the proof requires dealing with discrete measures where such heuristic view does not hold. 
\smallskip

\emph{Implications on gradient flows.}
Section~\ref{sec:SWGF} applies the comparison results on $\ell_{SW}$, $SW$ to obtain comparisons for the metric slopes. Given a metric space $(X,m)$, recall that metric slope $|\partial\E|_m$ of a functional $\E:X\rightarrow \R$ is defined by
    \begin{equation}\label{def:met_slope}
        |\partial\E|_m(u)=\limsup_{v\xrightarrow[]{d}u}\frac{[\E(u)-\E(v)]_+}{m(u,v)}.
    \end{equation}
Let $V:\R^d\rightarrow \R$ and consider the potential energy $\Vcal(\mu):=\int_{\R^d} V(x)\,d\mu(x)$. Proposition~\ref{prop:metslope;potential-ac} states that when $V$ is smooth and compactly supported, for suitable absolutely continuous $\mu\in\P_2(\R^d)$ it holds that
\[|\partial\Vcal|_{\dot H^{(d+1)/2}(\R^d)}(\mu)\lesssim |\partial\Vcal|_{\ell_{SW}}(\mu)\leq |\partial\Vcal|_{SW}(\mu)\lesssim |\partial\Vcal|_{\dot H^{(d+1)/2}(\R^d)}(\mu) \]
whereas Proposition~\ref{prop:swslope;potential} shows that the slope behaves quite differently at discrete measures, $\mu^n=\sum_{i=1}^n m_i\delta_{x_i}$, namely that
\[|\partial\Vcal|_{SW}(\mu^n)=|\partial\Vcal|_{\ell_{SW}}(\mu^n)=\sqrt{d}\,|\partial\Vcal|_{W}(\mu^n).\]
\vio By considering a sequence of discrete measures $\mu^n$ converging to an absolutely continuous measure $\mu$, we may deduce that \nc  $|\partial\Vcal|_{SW}$ (resp. $|\partial\Vcal|_{\ell_{SW}}$) is not lower semicontinuous in $SW$ (resp. $\ell_{SW}$) in general, even when $V\in C_c^\infty(\R^d)$; see Corollary~\ref{cor:relaxed_slope;pot}. 
This implies that the potential energy is not $\lambda$-geodesically convex in $(\P_2(\R^d),\ell_{SW})$. Furthermore we observe in Remark~\ref{rmk:SWGF;instability} that starting from discrete measures with finite number of particles, the curves of maximal slope in the Wasserstein space, after a constant rescaling of time, are the curves of maximal slope in the SW space. 

On the other hand, for smooth measures, the curves of maximal slope with respect to the  Wasserstein metric are not curves of maximal slope in the SW space. We formally show that SW gradient flows of the potential energy \vio satisfy \nc a higher order equation given by a pseudodifferential operator of order $d$, which is consistent with the rigorous results of Proposition~\ref{prop:metslope;potential-ac}. We conclude that the framework of gradient flows in metric spaces would not be the right tool to study such equations; PDE based approaches may provide an avenue for creating a well-posedness theory, which remains an open problem. 

\subsection{Related works}\label{ssec:relatedworks}

Since the introduction of the sliced Wasserstein distance~\cite{RabPeyDelBer12}, numerous variants have been considered. Deshpande et al.~\cite{DeshMax19} proposed the max-sliced Wasserstein distance (max-SW distance), which is the maximum of the 1D Wasserstein distances, instead of the average as in the $SW$ case. Niles-Weed and Rigollet~\cite{NW_Rig22} and Paty and Cuturi~\cite{PatyCuturi19} independently proposed the $k$-dimensional generalization (max-$k$-SW distance) for $1\leq k\leq d$. Generalizations to spherical~\cite{Bonet23} and other nonlinear projections~\cite{KNSUBRR19} have also been considered to more effectively capture the geometric structure of data. Based on the ideas of partial optimal transportation~\cite{Figalli09}, Bai, Schmitzer, Thorpe, and Kolouri~\cite{BSTK23} introduced sliced optimal partial transport to compare of measures with different masses. Further projection-based transport metrics include the distributional sliced Wasserstein distance introduced by Nguyen, Ho, Pham, and Bui~\cite{NguyenHoPhamBui21} and the convolution sliced Wasserstein distance proposed by Nguyen and Ho~\cite{NgueynHo22}. 

The sliced Wasserstein distances have found numerous applications in image processing. In fact, utility of the sliced Wasserstein barycenter for tasks such as image synthesis, color transfer, and texture mixing served as a motivation behind the introduction of sliced Wasserstein distance~\cite{RabPeyDelBer12}. Bonneel, Rabin, Peyr\'e, and Pfister~\cite{BoRaPePf14} further studied efficient numerical methods to compute sliced Wasserstein and related barycenters, and their applications. Kolouri, Park, and Rhode proposed the Radon cumulative distribution transport (Radon CDT)~\cite{KSP16} for image classification; Radon CDT effectively computes the sliced Wasserstein `geodesic', by taking the Radon inverse of the displacement interpolation between the Radon transform of the measures. However, we note that such inverse will in general fail to be a curve in the space of probability measures, as the Radon inverse of nonnegative functions need not be nonnegative. 

Gradient flows related to the sliced Wasserstein distance have been applied to various machine learning and image processing tasks. Bonnotte noticed~\cite{Bon13} that the continuous analogue of the the isotropic Iterative Distribution Transfer (IDT) algorithm, introduced by~\cite{PKD07} to transfer the color palette of a reference picture to a target picture, is the Wasserstein gradient flow of $\mu\mapsto \frac12 SW^2(\mu,\sigma)$. Liutkus et al.~\cite{LSMDS19} utilizes the Wasserstein gradient flows of the entropy-regularized version of the same energy functional for generative modelling. Gradient flow of the same energy in the sliced Wasserstein space have been considered by Bonet et al. 
\cite{Bonet22} also for generative modelling; \purp they also study the JKO scheme with respect to $SW$, and establish existence and uniqueness of minimizers of the scheme when the optimization is restricted to probability measures supported on a common compact set~\cite[Section 3.2]{Bonet22}\nc. Sliced Iterative Normalizing Flows (SINF)~\cite{DaiSel20}, useful for sampling and density evaluation, can be seen as a max-SW variant of the isotropic IDT algorithm.

Other applications in machine learning include: sliced Wasserstein generative adversarial nets by Deshpande, Zhang, Schwing~\cite{DZS18}; max-SW generative adversarial nets~\cite{DeshMax19} for generative modelling; sliced Wasserstein autoencoder by~\cite{KPMR19}; and use of $SW_1$ distance for unsupervised domain adaptation~\cite{LBBHU19}.

On the statistical side, Manole, Balakrishnan, and Wasserman~\cite{ManBalWas22} established, based on the 1-dimensional results by Bobkov and Ledoux~\cite{BobLed19}, the parametric estimation rate $\Ex SW(\mu^n,\mu) \lesssim n^{-1/2}$ for the empirical measure $\mu^n$ of $n$ i.i.d samples of $\mu$, and further investigated statistical properties of the trimmed sliced Wasserstein distances. Nietert, Goldfeld, Sadhu, and Kato~\cite{NGSK22} established empirical estimation rate in $SW$ and max-$SW$ for log-concave distributions with explicit constants dependent on the intrinsic dimension, and showed robustness to data contamination and explored efficient computational methods. Lin, Zheng, Chen, Cuturi, and Jordan~\cite{Lin_etal21} investigated the max-$k$-sliced distances and their corresponding integral variants integral projection robust Wasserstein (IPRW) distance, also known as the $k$-sliced Wasserstein distances ($k$-SW distances), and established several statistical properties including sample complexity $O(n^{-1/k})$. More recently, Olea, Rush, Velez, and Wiesel~\cite{ORVW23} explored the connection between a certain linear predictor problems and distributionally robust optimization based on a modified max-SW. For applications in Approximate Bayesian Computation, we refer the readers to~\cite[Chapter 4]{Nadjahi21} and the references therein.

Regarding analytic and topological properties, Bonnotte~\cite{Bon13} showed that $SW_p$ is indeed a distance on $\P_p(\R^d)$ for $1\leq p<\infty$, and established that, for measures supported on $\Omega\subset\subset\R^d$ \grn-- i.e. $\Omega$ compactly contained in $\R^d$ -- \nc we have
\[SW_p\leq W_p\lesssim_{\Omega} SW_p^{\frac{1}{2p(d+1)}}.\]
More recently, Bayraktar and Guo~\cite{BayGuo21} showed that $SW_p$, $W_p$, and the $p$-max-sliced Wasserstein distance induce the same topology on $\P_p(\R^d)$ for $p\geq 1$. We note here that this does not directly imply completeness of $(\P_2(\R^d),SW)$, as not all Cauchy sequences in $SW$ need be Cauchy in $W$.

Bonnotte also showed the existence of the Wasserstein gradient flow of the energy functional $\mu\mapsto \frac12 SW^2(\mu,\sigma)$ for the target measure $\sigma$, despite the lack of geodesic convexity of the energy functional, and derived the corresponding PDE~\cite[Chapter 5]{Bon13}, the continuous-time version of the previously mentioned isotropic IDT algorithm. Due to the lack of convexity of the energy functional, even the asymptotic convergence of the gradient flow remains open. Nevertheless, Li and Moosm\"{u}ller~\cite{LiMo23} recently established almost sure convergence of the discrete isotropic IDT algorithm with step-sizes satisfying certain summability conditions. \grn More recently, Cozzi and Santambrogio~\cite{CozSan24} established the convergence rate $t\mapsto SW^2(\mu_t,\sigma)=O(t^{-1})$ when the target measure $\sigma$ is any isotropic Gaussian. \nc

As our work was nearing completion, we became aware of the  independent work by Kitagawa and Takatsu~\cite{KitagawaTakatsu23} on the sliced Wasserstein spaces. In their work, Kitagawa and Takatsu establish in  the metric completeness of sliced optimal-transportation-based spaces, which generalizes our Proposition~\ref{prop:SW2_complete}, and also demonstrate that the SW spaces are not geodesic spaces, generalizing our Example~\ref{ex:SW2_notgeo}.
Their work focuses on isometrically embedding the SW type spaces into larger spaces and the barycenter problem. \purp See also~\cite{kitagawaTakatsu2024} for their more recent work on disintegrated optimal transport for metric fiber bundles.\nc

\section{Basic properties of the sliced Wasserstein space}\label{sec:SW;basic}

In this section, we examine the basic properties of the sliced Wasserstein space $(\P_2(\R^d),SW)$. We start by reviewing a few properties of the Radon transform that we use (Section~\ref{ssec:prelim}). In Section~\ref{ssec:SW2;basicprop} we establish basic metric properties including lower semicontinuity of $SW$ and precompactness of balls in $SW$ with respect to the narrow topology, from which completeness follows. We conclude the section by noting that, unlike the Wasserstein space, the sliced Wasserstein space is not a geodesic space.

\subsection{Preliminaries on the Radon transform}\label{ssec:prelim}
Here we provide a brief overview of the key properties of the Radon transform. We refer the readers to Appendix~\ref{app:radon} for precise statements and to the book by Helgason~\cite{Hel10} for a more thorough introduction. 

\smallskip

\emph{The dual Radon transform.} Given an integrable function $\rg:\Pd\rightarrow\R$ we define its dual Radon transform, which we write $R^\ast \rg$ or $\check{\rg}$, by
\begin{equation}\label{def:dualRadon}
    R^\ast \rg(x)=\check{\rg}(x)=\dashint_{\S^{d-1}} \rg(\theta, x \cdot\theta)\,d \theta.
\end{equation}
As
\[\dashint_{\S^{d-1}}\int_{\R} g(\theta,r)\,d\L^1(r)\,d\theta
=\dashint_{\S^{d-1}}\int_{\R^d} g(\theta,x\cdot\theta) \,d\L^d(x)\,d\theta = \int_{\R^d} R^\ast g(x)\,d\L^d(x),
\]
by Fubini's theorem $R^\ast\rg$ is well-defined for $\L^d$-a.e. $x\in\R^d$ whenever $\rg\in L^1(\Pd)$. Furthermore, the dual transform $R^\ast$ satisfies
\begin{equation}\label{eq:duality;radon}
    \dashint_{\S^{d-1}}\int_{\R} Rf(\theta,r)\rg(\theta,r)\,dr\,d\theta = \int_{\R^{d}} f(x)R^\ast \rg(x)dx
\end{equation}
whenever either $Rf \rg$ or $f R^\ast \rg$ are absolutely integrable; see~\cite[Lemma 5.1]{Hel10} for further details. In particular, the extension of the Radon transform to finite measures $\mu$ as the pushforward of $\mu$ under the map $x\mapsto x\cdot\theta$ is consistent with \eqref{eq:duality;radon} (see Remark~\ref{rmk:radon_on_measure}).
Consequently, we will often use the duality formula for bounded measures in the form
\begin{equation}\label{eq:duality;radon;measure}
    \int_{\R^d} R^\ast \rg\,d\mu = \dashint_{\S^{d-1}}\int_{\R} \rg\,d\hat\mu \;\;\text{ for } \mu\in\M_b(\R^d) \text{ and } \rg\in C_0(\Pd).
\end{equation}
\smallskip

\emph{Spaces related to the Radon transform.}
To add clarity, we denote the functions \vio defined on $\Pd$ \nc by a different set of symbols -- e.g. $\rf,\rg,\ru,\rv,\rj$. We denote by $\M(\Omega)$ the space of locally finite signed Borel measures on $\Omega$. 
We note that $\M(\Pd)$ can be identified with
\begin{equation}\label{def:MPd}
    \{\rj\in\M(\S^{d-1}\times\R): d\rj(-\theta,-r)=d\rj(\theta,r) \}.
\end{equation}
We write $\M_b(\Omega)$ for the space of bounded Borel measures on $\Omega$.
Any $\Omega=\R^d,\Pd,I\times\R^d,I\times\Pd$ is a Polish space, hence $\M(\Omega)$ can be equivalently understood as a space of signed Radon measures. 
Finally, we denote by $\M(\Omega;\R^d):=\M(\Omega)^d$ the space of vector valued Radon measures.

We will mostly treat $\theta\in\S^{d-1}$ as a parameter and $r\in\R$ as the variable, which is reflected in our notation.  For instance, for a function $\rg:\Pd\rightarrow\R$ we write $\rg^\theta(r)=\rg(\theta,r)$. Denoting by $\vol_{\S^{d-1}}$ 
the normalized volume measure on $\S^{d-1}$ satisfying $\int_{\S^{d-1}}d\vol_{\S^{d-1}}=1$, for each $\rj\in\M_b(\Pd)$ we write $\rj^\theta\in\M_b(\R)$ for its disintegration with respect to $\vol_{\S^{d-1}}$ -- i.e. \vio$\rj=\rj^\theta\,d\vol_{\S^{d-1}}(\theta)$; for precise statement of the disintegration theorem, see~\cite[III-70]{DelMey78} or~\cite[Theorem 5.3.1]{AGS}\nc. \grn We will always consider $\rj\in\M_b(\Pd)$ with its first marginal equal to $\vol_{\S^{d-1}}$\nc.

We denote by $\Sch(\Omega)$ with $\Omega=\R^d,\Pd$ the Schwartz-Bruhat space of smooth rapidly decreasing functions~\cite{Bruhat61,Osborne75}. We note that $\Sch(\R^d)$ is the usual Schwartz class, whereas $\Sch(\Pd)$ can be identified with the subspace of $\Sch(\S^{d-1}\times\R)$ of even functions, namely the set
\begin{equation}\label{def:Sch_radon}
    \{\rg\in\Sch(\S^{d-1}\times\R):\,\rg(-\theta,-r)=\rg(\theta,r)\}
\end{equation}
 We write $\Sch'(\Omega)$ with $\Omega=\R^d,\Pd$ to denote the space of continuous linear functionals on $\Sch(\Omega)$ --i.e. the space of tempered distributions on $\Omega$.

The $L^2$-Sobolev theory of Radon transforms will be crucial in understanding the differential structure of the sliced Wasserstein space. For this purpose, we use Sobolev spaces $H_t^s$ with attenuated ($t>0$) or amplified ($t<0$) low frequencies, introduced by Sharafutdinov~\cite{Sha21}. \grn For each $1\leq k\leq d$ let us denote by $\F_k$ the $k$-dimensional Fourier transform
\[\F_k f(\xi)=(2\pi)^{-k/2}\int_{\R^k} f(x)e^{-ix\cdot\xi}\,dx.\]
\nc For $s\in\R$ and $t>-\frac d2$, the Hilbert space $H_t^s(\R^d)$ is defined as the completion of $\Sch(\R^d)$ under the norm
\begin{equation}\label{def:Hts_norm}
    \|f\|_{H_t^s(\R^d)}^2=\int_{\R^d} |\xi|^{2t}(1+|\xi|^2)^{s-t} |\F_d f(\xi)|^2\,d\xi.
\end{equation}
Similarly we define the analogous space $H_{t}^s(\Pd)$ for $s\in\R$ and $t>-\frac 12$ in the Radon domain as the completion of Schwartz functions $\Sch(\Pd)$ on the Radon domain under the norm
\begin{equation}\label{def:Hts_norm;radon}
    \|\rg\|_{H_t^s(\Pd)}^2 = \frac{1}{2(2\pi)^{d-1}} \int_{\S^{d-1}} \int_{\R} |\zeta|^{2t}(1+\zeta^2)^{s-t}|\F_1\rg(\theta,\zeta)|^2\,d\zeta\,d\theta.
\end{equation}
\grn Here and in the sequel the one-dimensional Fourier transform $\F_1$ always applies to the scalar variable when applied to functions defined on $\Pd$. \nc
We only use the norms $H_t^s$ or $H_{-t}^{-s}$ for $0\leq t\leq s$. In the first case, we can view the $H_t^s$ norm as counting derivatives of order between $t$ and $s$, as
\[ |\xi|^{2t}+|\xi|^{2s} \lesssim_{t-s} |\xi|^{2t}(1+|\xi|^{2})^{t-s}\lesssim_{t-s} |\xi|^{2t}+|\xi|^{2s}.\]
Thus when $0=t\leq s$ we see $H_t^s$ coincides with the standard Sobolev space of order $s$. On the other hand, the space $H_{-t}^{-s}$ can be understood as the dual of $H_t^s$. Indeed, for any $f,g\in\Sch(\R^d)$
\begin{equation}\label{eq:Hts_dual}
      \left|\int_{\R^d}f(x)g(x)\,dx\right| \leq \|f\|_{H_t^s(\R^d)}\|g\|_{H_{-t}^{-s}(\R^d)};
\end{equation}
for  details, see~\cite[Theorem 5.3]{Sha21} and its proof. We provide further \grn information \nc on the relationship between the Radon transform and Sobolev spaces in  Appendix~\ref{app:radon}.

Outside of Section~\ref{sec:SW;basic} we will mostly be interested in the case $t=s$, where $\dot H^s(\Omega):=H_s^s(\Omega)$ with $\Omega=\R^d$ or $\Pd$ is equivalent to the more familiar homogeneous Sobolev space.

We only consider the space $H_t^s(\R^d)$ where $-\frac d2 <t< \frac d2$, and $H_t^s(\Pd)$ for $-\frac12<t<\frac12$ which ensures that the identity map continuously embeds $H_t^s(\R^d)$ to $\Sch'(\R^d)$ and the same holds for $H_t^s(\Pd)$ and $\Sch'(\Pd)$; see~\cite[Theorem 5.3]{Sha21}, which we have included in the appendix (Theorem~\ref{thm:Hts;tempered_dist}) for completeness. Thus, for $\Omega=\R^d,\Pd$, the spaces $H_t^s(\Omega)$ can be seen as a complete normed subspace of $\Sch'(\Omega)$. We stress that, while we use the norm $\|\cdot\|_{\dot H^{(d+1)/2}(\R^d)}$ in comparison to $SW$, generic elements of spaces $H_t^s(\R^d)$ with $|t|>\frac d2$ are not considered in this paper.

Furthermore, when $0<s<\frac d2$, $\dot H^s(\R^d)$ continuously embeds to $L^{\frac{2d}{d-2s}}(\R^d)$ by Gagliardo-Nirenberg-Sobolev inequality for fractional Sobolev spaces; see for instance~\cite[Theorem 11.31]{Leoni23}. Thus, we can consider $\dot H^s(\R^d)$ as a space of functions in this case.

We note that in some works the definition of homogeneous Sobolev spaces for $s<d/2$ differs from the one we use. Namely $\dot H^s(\Omega)$ is defined as the subset of $\Sch'(\Omega)$ for which the seminorm \eqref{def:Hts_norm} for $t=s$ is bounded; in this case, elements in $\dot H^s$ are uniquely defined in $\Sch'$ modulo polynomials; see for instance~\cite[Remark 3, Section 5.1]{Triebel10}.

Sharafutdinov showed~\cite[Theorem 2.1]{Sha21} that the Radon transform can be extended as a bijective isometry between $H_t^s(\R^d)$ and $H^{s+(d-1)/2}_{t+(d-1)/2}(\Pd)$ -- i.e. when $t>-\frac d2$
\begin{equation}\label{eq:Radon_isometry;Hts}
    \|f\|_{H_t^s(\R^d)}=\|Rf\|_{H^{s+(d-1)/2}_{t+(d-1)/2}(\Pd)}.
\end{equation}
The special case $t=s=0$ was observed by Reshetnyak, recorded in~\cite[Section 1.1.5]{GGV66} and also in~\cite[Chapter 1, Theorem 4.1]{Hel10}. \purp Whenever $t\in(-d/2,-d/2+1)$, the $H_{s+(d-1)/2}^{t+(d-1)/2}(\Pd)$-norm is stronger than the topology of $\Sch'(\Pd)$, thus the continuous extension of the Radon transform applied to any function $f\in H_t^s(\R^d)$ is unambiguously defined as an element of $\Sch'(\Pd)$ independently of $t\in(-d/2,-d/2+1)$ and $s\in\R$. Therefore \nc in the remainder of this paper we refer to this extension simply as the Radon transform. 

\blue
In Sections~\ref{sec:comparison} and~\ref{sec:lsw;stat} we will make use of the weighted homogeneous Sobolev norm of order $-1$ in dimension 1. Given $\sigma,\mu,\nu\in\P_2(\R)$, the $\dot H^{-1}(\sigma)$-norm of $\mu-\nu$ is defined by
\begin{equation}\label{def:weightedH1neg;1D}
    \|\mu-\nu\|_{\dot H^{-1}(\sigma)}:=\sup\left\{\int_{\R} \varphi\,d(\mu-\nu):\;\varphi\in\Sch(\R) \text{ and }\|\varphi\|_{\dot H^1(\sigma)}\leq 1 \right\}.
\end{equation}
\nc
\smallskip

\emph{Operators related to the Radon transform.}
Calculus using the Radon transform often involves $\Lambda_d$, which is defined via
\begin{equation}\label{eq:Lambdad-laplace}
    \Lambda_d=(-\partial_{r}^2)^{\frac{d-1}{2}}.
\end{equation}
When $d$ is even, the fractional power of the 1-dimensional Laplace operator is defined using the Hilbert transform; for precise definitions, see Definition~\ref{def:Lambda} in the appendix. Observe that $\Lambda_d$ is well-defined as an operator from $\Sch(\Pd)$ to itself, and can be extended as a bounded operator from $H^r_t(\Pd)$ to $H^{r-(d-1)}_{t-(d-1)}(\Pd)$ for $t>d-3/2$; see Remark~\ref{rmk:Lambda}.

The operators $\Lambda_d$ can be understood by their interaction with the Fourier transform, namely
\begin{equation}\label{eq:Lambdas}
    \begin{split}
    (\F_1\Lambda_d \rg)(\theta,\zeta)&=|\zeta|^{d-1}\F_1 \rg(\theta,\zeta).
    \end{split}
\end{equation}
We also use fractional powers of the Laplace operator in $d$-dimensions, which can be defined via the Fourier transform by
\[(\F_d(-\Delta)^{s} f)(\xi)=|\xi|^{2s}\F_d f(\xi).\]
Again, observe that $(-\Delta)^s$ is well-defined as an operator from $\Sch(\R^d)$ to itself, and can be extended as a bounded operator from $H^r_t(\R^d)$ to $H^{r-2s}_{t-2s}(\R^d)$ when $t-2s>-\frac d2$.
Rigorous definition of $(-\Delta)^s$ for fractional powers $s$ without relying on the Fourier transform can be found in~\cite[Chapter 7.6]{Hel10}, which we have included in Proposition~\ref{prop:riesz_trnsfrm;properties} in the appendix for completeness. 
The inversion formulae are expressed using these operators: Setting $c_d=(4\pi)^{(d-1)/2}\Gamma(d/2)/\Gamma(1/2)$, for each $f\in\Sch(\R^d)$ and $\rg\in\Sch(\Pd)$ we have
\[
c_d f=R^\ast \Lambda_d R f=(-\Delta)^{(d-1)/2} R^\ast Rf \quad \text{ and } \quad 
c_d \rg(\theta,r)= R R^\ast(\Lambda_d \rg).
\] 
See Proposition~\ref{prop:radon_inversion} for further details. 

Whenever $t\in(-\frac d2,-\frac d2+1)$, we have $H_t^s(\R^d)\subset\Sch'(\R^d)$ and $H_{t+(d-1)/2}^{s+(d-1)/2}(\Pd)\subset\Sch'(\Pd)$. In this case, straightforward calculations using the inversion formula and the Fourier transform imply
\begin{equation}\label{eq:duality;radon;dist}
    J(c_d\varphi) = RJ(\Lambda_d R\varphi) \text{ for } J\in H_t^s(\R^d) \text{ and }\varphi\in\Sch(\R^d).
\end{equation}
\purp For each of the domains  $\Omega=\R^d,\Pd,\R$, we will write
\begin{equation}\label{def:bracket_Omega}
    \langle J,\varphi\rangle_{\Omega}:=J(\varphi) \text{ for } J\in\Sch'(\Omega) \text{ and } \varphi\in\Sch(\Omega).
\end{equation}
\nc
\smallskip

\subsection{Basic properties of sliced Wasserstein metric}\label{ssec:SW2;basicprop}
In this section we establish some basic properties of the SW distance and the SW space.

Let $\P_2(\Pd)$ be the \grn set of Borel probability measures \nc in the Radon domain with bounded second moment
\begin{equation}\label{def:PpPd}
\begin{split}
    \P_2(\Pd):=&\left\{\hat\mu\in\P_2(\S^{d-1}\times\R): \pi^1_\# \hat\mu=\vol_{\S^{d-1}},\;\;\hat\mu^\theta\in\P_2(\R), \phantom{\int_\R} \right.\\
    &\left.\;\; d\hat\mu^{-\theta}(-r)=d\hat\mu^{\theta}(r),\quad\dashint_{\S^{d-1}} \int_{\R} r^2\,d\hat\mu^\theta(r)\,d\theta<\infty\right\}
\end{split}
\end{equation}
where $\pi^1$ is the projection in the first variable. In other words, $(\hat\mu^\theta)_{\theta\in\S^{d-1}}$ is a family of measures in $\P_2(\R)$ parametrized by $\theta\in\S^{d-1}$ additionally satisfying the evenness condition.
\vio Observe that for each $\theta\in\S^{d-1}$, we can choose an orthonormal frame $\{\theta^1,\cdots,\theta^d\}$ with $\theta^1=\theta$ and $\theta^i\in\S^{d-1}$ for $i=1,\cdots,d$. As $|x|^2=\sum_{i=1}^d |x\cdot\theta^i|^2$, we have
\[\int_{\S^{d-1}}|x|^2\,d\vol_{\S^{d-1}}(\theta)=\sum_{i=1}^d\int_{\S^{d-1}}|x\cdot\theta^i|^2\,d\vol_{\S^{d-1}}(\theta^i)=d\int_{\S^{d-1}} |x\cdot\theta|^2\,d\vol_{\S^{d-1}}(\theta).\]
\nc Thus
\[\dashint_{\S^{d-1}} \int_{\R} r^2 d\hat\mu^\theta(r)\,d\theta = \dashint_{\S^{d-1}} \int_{\R^d} |x\cdot\theta|^2\,d\mu(x)\,d\theta = \frac1d \int_{\R^d} |x|^2\,d\mu(x).\]
Equivalently,
\begin{equation}\label{eq:dSW=W;delta}
    SW^2(\mu,\delta_0)=\frac1d W^2(\mu,\delta_0) \text{ for any } \mu\in\P_2(\R^d).
\end{equation}
Thus the finite second moment condition in the Euclidean and Radon domain coincide. Hence, $\mu\in\P_2(\R^d)$ if and only if $\hat\mu\in\P_2(\Pd)$.        

Given $\mu,\nu\in\P_2(\R^d)$, let $\Gamma_o(\mu,\nu)$ be the set of optimal transport plans for the quadratic cost:
\begin{equation}\label{def:Gammao}
    \Gamma_o(\mu,\nu)=\left\{\gamma\in\Gamma(\mu,\nu):\; \grn \int_{\R^d\times\R^d} |x-y|^2\,d\gamma(x,y)\nc =W^2(\mu,\nu)\right\},
\end{equation}
where $\Gamma(\mu,\nu)$ is as defined in \eqref{def:Wp}. On the other hand, given $\hat\mu,\hat\nu\in\P_2(\Pd)$, write
\begin{equation}\label{def:hatGamma}
    \widehat\Gamma(\hat\mu,\hat\nu)=\{\hat\gamma\in\P(\S^{d-1}\times\R\times\R):\;\pi^1_\#\hat\gamma=\vol_{\S^{d-1}},\;\hat\gamma^\theta\in\Gamma(\hat\mu^\theta,\hat\nu^\theta) \text{ for a.e. }\theta\in\S^{d-1}\}
\end{equation}
where $(\hat\gamma^\theta)_{\theta\in\S^{d-1}}$ is the disintegration of $\hat\gamma$ with respect to $\vol_{\S^{d-1}}$.
Then we define the set $\widehat\Gamma_o$ of slice-wise optimal transport plans by
\begin{equation}\label{def:hatGammao}
    \widehat\Gamma_o(\hat\mu,\hat\nu)=\{\hat\gamma\in\widehat\Gamma(\hat\mu,\hat\nu):\;\hat\gamma^\theta\in\Gamma_o(\hat\mu^\theta,\hat\nu^\theta) \text{ for a.e. }\theta\in\S^{d-1}\}.    
\end{equation}
As noticed by Bonnotte~\cite{Bon13}, given $\gamma\in\Gamma(\mu,\nu)$, $(\pi^\theta\times\pi^\theta)_\# \gamma\in\Gamma(\pi^\theta_\#\mu,\pi^\theta_\#\nu)$, and thus
\[SW(\mu,\nu)\leq \frac{1}{\sqrt{d}} W(\mu,\nu) \text{ for all } \mu,\nu\in\P_2(\R^d).\]
                       
\grn We denote the optimal transport map between $\mu,\nu\in\P_2(\R^d)$, if it exists, by $T_{\mu}^\nu$ -- i.e. $(\id\times T_\mu^\nu)\in\Gamma_o(\mu,\nu)$, where $\id(x)=x$\nc. Similarly, we denote by $\widehat T_{\hat\mu}^{\hat\nu}$ the family of optimal transport maps in the Radon domain -- i.e. $\widehat T_{\hat\mu}^{\hat\nu}:\S^{d-1}\times\R\rightarrow \R$ such that
\[\|\widehat T_{\hat\mu}^{\hat\nu}-\widehat\id\|_{L^2(\hat\mu)}=SW(\mu,\nu) \text{ where } \widehat\id(\theta,r)=r.\]
\smallskip

\grn Recall that the sequence $(\mu_n)_{n\in\N}$ in $\P(\R^d)$ converges narrowly to $\mu\in\P(\R^d)$ if for each continuously bounded function $\varphi\in C_b(\R^d)$
\begin{equation}\label{def:narrow_conv}
    \lim_{n\rightarrow\infty}\int_{\R^{d}} \varphi(x)\,d\mu_n(x) = \int_{\R^d} \varphi(x)\,d\mu(x).
\end{equation}
We begin by establishing lower semicontinuity of $SW$ with respect to the narrow convergence.
\nc
\begin{proposition}[$SW$ is lower semicontinuous with respect to the narrow topology]\label{prop:SW2_lsc}
    The map $(\mu,\nu)\mapsto SW(\mu,\nu)$ from $\P_2(\R^d)\times\P_2(\R^d)$ to $[0,+\infty)$ is lower semicontinuous with respect to the narrow topology.
\end{proposition}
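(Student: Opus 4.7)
The plan is to exploit the defining representation
\[SW^2(\mu,\nu) = \dashint_{\S^{d-1}} W^2(\pi^\theta_\#\mu, \pi^\theta_\#\nu)\, d\theta\]
and reduce the statement to the slice-wise lower semicontinuity of the one-dimensional $W_2^2$ together with Fatou's lemma. Concretely, fix sequences $\mu_n \to \mu$ and $\nu_n \to \nu$ narrowly in $\P_2(\R^d)$ and show $SW^2(\mu,\nu) \le \liminf_n SW^2(\mu_n,\nu_n)$; monotonicity of the square root then yields the claim for $SW$.

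First, since each projection $\pi^\theta:\R^d \to \R\theta$ is continuous, narrow convergence $\mu_n\to\mu$ and $\nu_n\to\nu$ on $\R^d$ pushes forward to narrow convergence $\pi^\theta_\#\mu_n\to\pi^\theta_\#\mu$ and $\pi^\theta_\#\nu_n\to\pi^\theta_\#\nu$ on $\R\theta$ (equivalently $\R$). I would then invoke the standard fact that $W_2$ is lower semicontinuous with respect to narrow convergence on $\P(\R)$ — deducible either from Kantorovich duality (representing $W_2^2$ as a supremum over pairs of continuous bounded potentials) or by extracting a narrow subsequential limit of optimal plans and applying Fatou to the nonnegative integrand $|x-y|^2$. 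This yields, for every $\theta \in \S^{d-1}$,
\[W^2(\pi^\theta_\#\mu,\pi^\theta_\#\nu) \le \liminf_{n\to\infty} W^2(\pi^\theta_\#\mu_n,\pi^\theta_\#\nu_n).\]

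Second, since the integrands are nonnegative, Fatou's lemma on the probability space $(\S^{d-1},\vol_{\S^{d-1}})$ gives
\[SW^2(\mu,\nu) = \dashint_{\S^{d-1}} W^2(\pi^\theta_\#\mu,\pi^\theta_\#\nu)\,d\theta \le \liminf_{n\to\infty}\dashint_{\S^{d-1}}W^2(\pi^\theta_\#\mu_n,\pi^\theta_\#\nu_n)\,d\theta = \liminf_{n\to\infty}SW^2(\mu_n,\nu_n),\]
which is the desired inequality.

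The only routine verification needed is measurability of $\theta \mapsto W^2(\pi^\theta_\#\mu,\pi^\theta_\#\nu)$ so that $\dashint_{\S^{d-1}}$ is legitimate, but this follows from the narrow continuity of $\theta \mapsto \pi^\theta_\#\mu$ combined with the narrow lower semicontinuity of $W_2^2$ on $\P(\R)\times\P(\R)$. I do not foresee a serious obstacle; the subtlety worth flagging is that narrow convergence does \emph{not} imply convergence of second moments, so one must genuinely use lower semicontinuity of $W_2$ rather than its continuity — precisely what slice-wise transport duality affords.
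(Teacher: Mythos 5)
Your proof is correct and follows essentially the same route as the paper: project slice-wise, use that continuous pushforwards preserve narrow convergence, invoke the classical narrow lower semicontinuity of $W_2$ in one dimension, and conclude by Fatou's lemma applied to the nonnegative integrand over $\S^{d-1}$. The only additions — your remark on measurability of $\theta\mapsto W^2(\pi^\theta_\#\mu,\pi^\theta_\#\nu)$ and your sketch of how to prove narrow lower semicontinuity of $W_2$ — are reasonable but not substantively different from the paper's argument, which simply cites the latter.
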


\begin{proof}
Note that the analogous statement for $W_p$ for $p\in[1,+\infty)$ is a classical result; see~\cite[Remark 6.12]{Vil09} for instance (where they refer to the narrow convergence as weak convergence). Clearly $\sigma^k\rightharpoonup \sigma$ implies $\pi^\theta_\# \sigma^k\rightharpoonup\pi^\theta_\#\sigma$. Thus, if $(\mu^k,\nu^k)\rightharpoonup(\mu,\nu)$ narrowly, then by Fatou's lemma
\begin{align*}
    \liminf_k SW^2(\mu^k,\nu^k) &= \liminf_k \dashint_{\S^{d-1}} W^2(\pi^\theta_\#\mu^k,\pi^\theta_\#\nu^k)\,d\theta  \\
    &\geq \dashint_{\S^{d-1}}\liminf_k W^2(\pi^\theta_\#\mu^k,\pi^\theta_\#\nu^k) \,d\theta
    \geq \dashint_{\S^{d-1}} W^2(\pi^\theta_\#\mu,\pi^\theta_\#\nu)\,d\theta = SW^2(\mu,\nu).
    \end{align*}
    Thus we deduce that $SW$ is lower semicontinuous with respect to the narrow convergence.
\end{proof}

\begin{remark}[Lack of compactness in $SW$]\label{rmk:SWball=notswcpct}
We note here that the closed unit ball $\overline B^{SW}(\nu,1)$ in $SW$ is not compact in the $SW$ topology. The argument is analogous to one that shows that $\overline B^{W}(\nu,1)$ is not compact with respect to the topology of the Wasserstein metric. The argument is as follows: Consider
\[\nu_k=(1-\eps_k)\nu+\eps_k\delta_{x_k}\in \overline B_{W}(\nu,1)\subset \overline B^{SW}(\nu,1)\]
where $|x_k|\nearrow \infty$ and choose $\eps_k\searrow 0$ such that $\eps_k|x_k-x_0|^2=1$. Quick calculations show that while $\nu_k\rightharpoonup \nu$, the second moments do not converge, as
\[\liminf_{k\to \infty} \int_{\R^d} |x|^2\,d\nu_k = \int_{\R^d} |x|^2\,\vio d\nu\nc+ 1.\]
Thus $W(\nu_k,\nu)\not\rightarrow 0$. As $W$ and $SW$ induce the same topology, we deduce that $\overline B^{SW}(\nu,1)$ is indeed not compact with respect to the $SW$ topology.
\end{remark}

On the other hand we note that balls in sliced Wasserstein space are compact with respect to the narrow convergence of measures:
\begin{proposition}[Narrow compactness of the sliced Wasserstein unit ball]\label{prop:SWball=wcpct}
    Let $\nu\in\P_2(\R^d)$ be fixed. Then the closed unit ball
    \[\overline B^{SW}(\nu,1):=\left\{\sigma\in\P_2(\R^d):\;SW(\sigma,\nu)\leq 1\right\}\]
    is compact with respect to the topology of narrow convergence. 
\end{proposition}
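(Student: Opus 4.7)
\begin{proofsketch}
The plan is to apply Prokhorov's theorem: I would show that $\overline B^{SW}(\nu,1)$ is tight and narrowly sequentially closed in $\P_2(\R^d)$.

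First I would establish a uniform bound on second moments. Using the identity \eqref{eq:dSW=W;delta}, for any $\sigma\in\overline B^{SW}(\nu,1)$ the triangle inequality gives
\[
\frac{1}{\sqrt d}\left(\int_{\R^d} |x|^2\,d\sigma\right)^{1/2} = SW(\sigma,\delta_0) \leq SW(\sigma,\nu) + SW(\nu,\delta_0) \leq 1 + SW(\nu,\delta_0),
\]
so $\int |x|^2 d\sigma \leq d(1+SW(\nu,\delta_0))^2=:M$, uniformly in $\sigma$. By Markov's inequality, $\sigma(\{|x|>R\})\leq M/R^2$ for every such $\sigma$, which yields tightness of the family $\overline B^{SW}(\nu,1)$.

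Next, given any sequence $(\sigma_k)\subset\overline B^{SW}(\nu,1)$, Prokhorov's theorem furnishes a subsequence (not relabeled) and some probability measure $\mu\in\P(\R^d)$ such that $\sigma_k\rightharpoonup\mu$ narrowly. Since $|x|^2$ is nonnegative and lower semicontinuous, Fatou's lemma gives $\int |x|^2\,d\mu \leq \liminf_k \int |x|^2\,d\sigma_k \leq M$, so $\mu\in\P_2(\R^d)$. Finally, the lower semicontinuity of $SW$ with respect to narrow convergence, established in Proposition~\ref{prop:SW2_lsc}, yields $SW(\mu,\nu)\leq \liminf_k SW(\sigma_k,\nu)\leq 1$, so $\mu\in\overline B^{SW}(\nu,1)$. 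This proves narrow compactness.

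There is no real obstacle; the only subtlety is ensuring that the narrow limit actually lies in $\P_2(\R^d)$ so that the $SW$ lower semicontinuity statement of Proposition~\ref{prop:SW2_lsc} applies, which is handled by the uniform second moment bound via Fatou.
\end{proofsketch}
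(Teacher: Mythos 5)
Your proof takes essentially the same approach as the paper's: bound second moments uniformly via \eqref{eq:dSW=W;delta} and the triangle inequality, deduce tightness, apply Prokhorov, and conclude with the narrow lower semicontinuity of $SW$ from Proposition~\ref{prop:SW2_lsc}. Your explicit Fatou step confirming that the narrow limit lies in $\P_2(\R^d)$ (so that Proposition~\ref{prop:SW2_lsc} applies as stated) is a minor but welcome bit of extra care that the paper leaves implicit.
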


\begin{proof}
\grn Recall from \eqref{eq:dSW=W;delta} that
\begin{align*}
    \int_{\R^d}|x|^2\,d\sigma(x) = W^2(\delta_0,\sigma)
    = d SW^2(\delta_0,\sigma).
\end{align*}
\nc Thus, for all $\sigma\in\overline B:=\overline B^{SW}(\nu,1)$, we have
\[\int_{\R^d}|x|^2\,d\sigma(x) \leq d SW^2(\delta_0,\sigma)\lesssim_d SW^2(\delta_0,\nu)+SW^2(\nu,\sigma)\leq SW^2(\delta_0,\nu)+1.\]
Hence the second moment of probability measures in $\overline B$ is uniformly bounded, hence $\overline B$ is tight, as
\begin{align*}
    \sigma(\R^d\setminus B(0,R))&=\sigma(\{x\in\R^d: |x|\geq R\}) \leq \frac{1}{R^2}\int_{\R^d\setminus B(0,R)} |x|^2\,d\sigma(x) \\
    &\leq \frac{1}{R^2}\int_{\R^d} |x|^2\,d\sigma(x) \lesssim_d \frac{1}{R^2} (SW^2(\delta_0,\nu)+1).
\end{align*}
        
Thus by Prokhorov's theorem, for any sequence $(\sigma^k)_k$ in $\overline B$ we can find a subsequence narrowly converging to $\sigma^0\in\P(\R^d)$. Fix such a subsequence without relabling. Moreover $\sigma^0\in\overline B$, as 
\[SW(\nu,\sigma^0)\leq \liminf_{k} SW(\nu,\sigma^k)\leq 1\]
by lower semicontinuity of $SW$ (Proposition~\ref{prop:SW2_lsc}).
\end{proof}

We can deduce completeness from weak compactness, lower semicontinuity, and the topological equivalence. While the authors believe this is known, we record the proof here as we could not locate the statement of completeness in the literature. 

\begin{proposition}[Completeness]\label{prop:SW2_complete}
    $(\P_2(\R^d),SW)$ is a complete metric space.
\end{proposition}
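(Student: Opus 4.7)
The plan is to combine the two preceding propositions --- narrow precompactness of $SW$-balls (Proposition~\ref{prop:SWball=wcpct}) and narrow lower semicontinuity of $SW$ (Proposition~\ref{prop:SW2_lsc}) --- with a standard extraction-then-upgrade argument. Since $SW$ has already been established as a metric, completeness reduces to exhibiting, for each Cauchy sequence $(\mu^n) \subset \P_2(\R^d)$, a limit in $\P_2(\R^d)$ to which the sequence converges in $SW$.

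First I would use the Cauchy property to trap the tail of $(\mu^n)$ inside a single closed $SW$-ball: choose $N$ so that $SW(\mu^n, \mu^N) \leq 1$ for all $n \geq N$, so $\mu^n \in \overline{B}^{SW}(\mu^N, 1)$ for such $n$. Proposition~\ref{prop:SWball=wcpct} then delivers a narrowly convergent subsequence $\mu^{n_k} \rightharpoonup \mu$, and the proof of that proposition simultaneously shows $\mu \in \overline{B}^{SW}(\mu^N, 1) \subset \P_2(\R^d)$ (via the lower semicontinuity estimate at the end of its proof). So the candidate limit automatically sits in the correct space without any additional second-moment estimate.

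Next I would upgrade the narrow subsequential convergence to $SW$-convergence of the entire sequence. Given $\eps > 0$, the Cauchy property provides $M \geq N$ with $SW(\mu^n, \mu^m) < \eps$ whenever $n, m \geq M$. Fixing such $n$ and sending $k \to \infty$ along the subsequence, Proposition~\ref{prop:SW2_lsc} yields
\[
SW(\mu^n, \mu) \leq \liminf_{k \to \infty} SW(\mu^n, \mu^{n_k}) \leq \eps.
\]
Since $\eps > 0$ is arbitrary, $SW(\mu^n, \mu) \to 0$ as $n \to \infty$, which gives the claim.

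There is no real obstacle here: the only point that requires a moment's thought is checking that the narrow limit $\mu$ has finite second moment so that $SW(\mu^n, \mu)$ is well-defined and Proposition~\ref{prop:SW2_lsc} is applicable, and this is handled automatically by the preceding results rather than by a separate tightness-of-second-moments calculation. Everything else is a textbook Cauchy-subsequence-converges-implies-sequence-converges maneuver, adapted to the setting where precompactness and limit-identification come from different topologies (narrow for precompactness, $SW$ for the final convergence).
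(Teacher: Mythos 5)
Your proof is correct and follows essentially the same route as the paper: trap the Cauchy tail in a closed $SW$-ball, extract a narrowly convergent subsequence via Proposition~\ref{prop:SWball=wcpct}, and upgrade to $SW$-convergence via the lower semicontinuity of Proposition~\ref{prop:SW2_lsc}. The only cosmetic difference is that you keep the original index $n$ fixed when applying lower semicontinuity along the subsequence, which absorbs the paper's final triangle-inequality step into the estimate directly.
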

\begin{proof}
    Suppose $(\mu^k)_k$ is a Cauchy sequence with respect to $SW$. Then we can find a closed unit ball $\overline B$ in $SW$ that contains all $\mu^k$ for sufficiently large $k$, which is relatively compact by Proposition~\ref{prop:SWball=wcpct} hence has a subsequential narrow limit $\mu^0\in\overline B\subset\P_2(\R^d)$. Fix such a subsequence without relabeling. Then, by the lower semicontinuity established in Proposition~\ref{prop:SW2_lsc},
    \[SW(\mu^k,\mu^0)\leq \liminf_l SW(\mu^k,\mu^l) \xrightarrow[]{k\rightarrow\infty} 0.\]
    By the triangle inequality we deduce $\mu^k\rightarrow\mu^0$ for the original sequence in $SW$.
\end{proof}

We conclude this section by noting that the sliced Wasserstein space is not a geodesic space. Indeed, let $\mu_0,\mu_1\in\P_2(\R^d)$, and suppose $(\mu_t)_{t}:[0,1]\rightarrow\P_2(\R^d)$ is a \vio constant-speed $SW$-geodesic from $\mu_0$ to $\mu_1$ -- i.e. for any $0\leq s<t\leq 1$, $SW(\mu_t,\mu_s)=(t-s)SW(\mu_0,\mu_1)$. Then for any $N\in\N$ and $0=t_0<t_1<\cdots<t_N=1$,
\begin{align*}
    \norm{\sum_{i=0}^{N-1}W(\hat\mu_{t_i}^\cdot,\hat\mu_{t_{i+1}}^\cdot)}_{L^2(\vol_{\S^{d-1}})}\leq \sum_{i=0}^{N-1} \|W(\hat\mu_{t_i}^\cdot,\hat\mu_{t_{i+1}}^\cdot)\|_{L^2(\vol_{\S^{d-1}})}=\sum_{i=0}^{N-1} SW(\mu_{t_i},\mu_{t_{i+1}})=SW(\mu_0,\mu_1).
\end{align*}
Using this and the triangle inequality $W(\hat\mu_0^\theta,\hat\mu_1^\theta)\leq\sum_{i=0}^{N-1}W(\hat\mu_{t_i}^\theta,\hat\mu_{t_{i+1}}^\theta)$,
\begin{align*}
0\leq\dashint_{\S^{d-1}}\left(\sum_{i=0}^{N-1}W(\hat\mu_{t_i}^\theta,\hat\mu_{t_{i+1}}^\theta)\right)^2-W^2(\hat\mu_0^\theta,\hat\mu_1^\theta)\,d\theta
\leq SW^2(\mu_0,\mu_1)-SW^2(\mu_0,\mu_1)=0.
\end{align*}
As the integrand above is nonnegative, it is zero for a.e. $\theta\in\S^{d-1}$. By for instance considering all rational sequences $0=t_0<\cdots<t_N=1$ and arguing by density, we may deduce that for a.e. $\theta\in\S^{d-1}$ the curve $(\hat\mu^\theta_t)_{t\in[0,1]}$ must be a $2$-Wasserstein geodesic between $\hat\mu_0^\theta$ and $\hat\mu_1^\theta$, hence characterized by the displacement interpolation based on the 1D transport plan. \nc
    
Thus the problem of identifying the geodesic comes down to invertibility of displacement interpolant $(\hat\mu_t)_{t\in[0,1]}$. In principle, sufficient regularity of $\hat\mu_t$ guarantees the existence of a function $\mu_t$ such that $R\mu_t=\hat\mu_t$ for $t\in[0,1]$. However, we additionally require $R^{-1}\hat\mu_t \geq 0$. 
    
While the Radon transform preserves nonnegativity, the inverse Radon transform does not. For example, consider
\[g_\eps= a\one_{B(0,1)}-b\one_{B(0,\eps)} \text{ with } 0<a<b \text{ and } \eps\ll 1.\]
For sufficiently small $\eps>0$ we see $\hat g_\eps \geq 0$, whereas $g_\eps<0$ near the origin. Mollifying $g_\eps$ within radius $\tilde\eps\ll \eps$, we see that the additional regularity does not resolve the issue. In general, it is difficult to determine when the Radon inversion is nonnegative, as the inversion formula involves high order derivatives and $R^\ast$. Indeed, in many cases the geodesic $(\mu_t)_{t}$ cannot exist, as we can see in the following example.

\begin{figure}[htbp]
\centering
\begin{tikzpicture}
	    \node at (0,0){\includegraphics[width=0.5\linewidth]{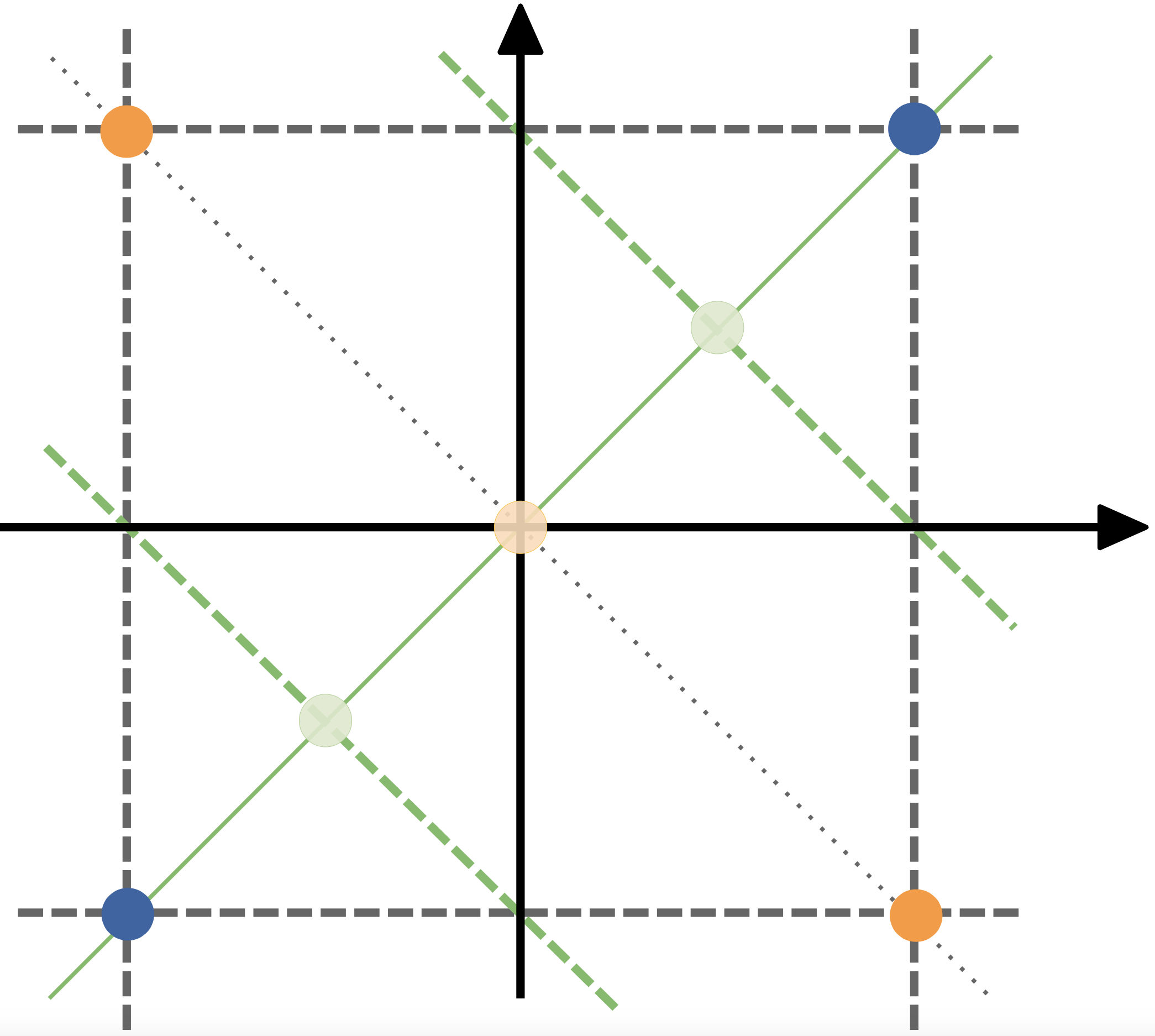}};
	    \node[color=teal] at (3.16,3.2){$\boldsymbol{\mu_0=(R_{\pi/4}\mu_0)}$};
	    \node[color=orange]  at (-3.5,3.2){$\boldsymbol{\mu_1}$};
	    \node[color=orange] at (0.3,0.3){$\boldsymbol{R_{\pi/4}\mu_1}$};
	    \node[color=darkgreen] at (1.2,1.9){$\boldsymbol{R_{\pi/4}\mu_{1/2}}$};
\end{tikzpicture}
\caption{Illustration of the counterexample provided in Example~\ref{ex:SW2_notgeo}.
Measures $\mu_0$ and $\mu_1$ each consist of a pairs of delta masses at the opposite vertices of the unit square, indicated by blue and orange discs respectively. 
The transparent disc in the center is $R_{\pi/4}\mu_1$ shown on the line at angle $\theta=\pi/4$. Green discs on the dashed anti-diagonal lines mark represent the displacement interpolation at $t=1/2$ between $R_{\pi/4}\mu_0$ and $R_{\pi/4}\mu_1$.}
\label{fig:SW_notgeo}
\end{figure} 

\begin{example}[$(\P_2(\R^d),SW)$ is not a geodesic space]\label{ex:SW2_notgeo}
Consider $\mu_0=\frac12(\delta_{(-1-1)}+\delta_{(1,1)})$ and \grn $\mu_1=\frac12(\delta_{(-1,1)}+\delta_{(1,-1)})$ \nc. Suppose $(\mu_t)_{t\in[0,1]}$ is the constant speed geodesic from $\mu_0$ to $\mu_1$. As the quadratic cost is strictly convex, for each $\theta\in\S^1$, $R_\theta\mu_t$ should be the displacement interpolation between $R_\theta\mu_0$ and $R_\theta\mu_1$. \vio Considering $t=\frac12$ in particular, $R_\theta\mu_{1/2}$ should satisfy \nc
\[
    R_\theta\mu_{1/2}=
    \begin{cases}
    \frac12(\delta_{-1}+\delta_{1}) & \text{ for } \theta=0,\\
    \frac12\left(\delta_{-\frac{1}{\sqrt{2}}}+\delta_{\frac{1}{\sqrt{2}}}\right) & \text{ for } \theta=\frac\pi4, \text{ and }\\
    \frac12(\delta_{-1}+\delta_{1}) & \text{ for } \theta=\frac\pi2.
    \end{cases}
\]
\vio However, this is impossible: \nc The first and the third line imply that $\mu_{1/2}$ must be a convex combination of $\delta_{(-1,-1)}, \delta_{(-1,1)},\delta_{(1,-1)}$, and $\delta_{(1,1)}$, which contradicts the second requirement. \grn By continuity of $\theta\mapsto R_\theta\mu_{1/2}$, similar properties hold for $R_\theta\mu_{1/2}$ close to $\theta=0,\pi/4,\pi/2$, which shows that the geodesic cannot exist. \nc

Note that regularity is not the only issue; by convolving $\mu,\sigma$ with a smooth kernel with a sufficiently small radius, we can argue similarly that there cannot be a geodesic. 
\end{example}
In fact, we will see in Corollary~\ref{cor:sw2-neq-lsw} that $SW$ is not even a length metric on $\P_2(\R^d)$ -- i.e. in general it cannot be approximated by the sliced Wasserstein length of absolutely continuous curves. This motivates us to investigate the length (and geodesic) metric induced by $SW$ on $\P_2(\R^d)$.

\section{Curves in the sliced Wasserstein space and the tangential structure}\label{sec:three}
In this section we study absolutely continuous curves in the SW space. In Section~\ref{ssec:SW2_ac-curves} we investigate the sliced Wasserstein metric derivative and prove the main result of this section, Theorem~\ref{thm:SW2_ac-curves}, characterizing absolutely continuous curves by corresponding distributional solutions of the continuity equation in the flux form
\[\partial_t\mu_t+\nabla\cdot J_t=0.\]
    
Section~\ref{ssec:sw-tanspace} characterizes the tangent space $\tanspace_\mu(\P_2(\R^d),SW)$. The main result regarding the tangent space, Proposition~\ref{prop:sw-ac;Jt_unique}, states that if $(\mu_t,J_t)_{t\in I}$ is a solution of the continuity equation corresponding to an absolutely continuous curve, $\|d\widehat J_t/d\hat\mu_t\|_{L^2(\hat\mu_t)}$ attains $|\mu'|_{SW}(t)$ if and only if $J_t\in\tanspace_{\mu_t}(\P_2(\R^d),SW)$, up to a $\L^1\restr_I$-null set.

While these results are  reminiscent of the analogous statements for the Wasserstein space~\cite[Proposition 8.4.5]{AGS}, we note here a few differences. Firstly, from Theorem~\ref{thm:SW2_ac-curves} we know that a generic absolutely continuous curve $(\mu_t)_{t\in I}$ admits the representation $\partial_t\mu_t+\nabla\cdot J_t =0$ for some $J_t \in \mathscr{H}(\R^d;\R^d)$, where \grn
\begin{equation}\label{def:Hscr}
\begin{split}
    \mathscr{H}(\R^d;\R^d)&:=R^{-1}\M_b(\Pd;\R^d).
\end{split}
\end{equation}
We will see in Lemma~\ref{lem:Hts-leq-Bsw} that the above space is well-defined as a subspace of $\Sch'(\R^d;\R^d)$, and that 
\[\mathscr{H}(\R^d;\R^d)\subset H_{-(d-1)/2}^{-d/2-}(\R^d;\R^d):=\bigcap_{\eps>0} H_{-(d-1)/2}^{-d/2-\eps}(\R^d;\R^d)\subset\Sch'(\R^d;\R^d).\]
\nc
In general $J_t$ need not be measures, in which case it does not even make sense to consider the Radon-Nikodym derivative $\frac{dJ_t}{d\mu_t}$. Thus the distributional fluxes $J$ will be the main object on which the tangential structure is based. \grn Moreover, not all fluxes in $\tanspace_\mu(\P_2(\R^d),SW)$ preserve the nonnegativity of $\mu$. Therefore the tangent vectors attainable by curves in the $SW$ space forms a (convex) cone, whereas the tangent space of the 2-Wasserstein space is a vector space. Thus, we can formally consider the $SW$ space as a manifold with corners; see Remark~\ref{rmk:Vmu} for further details. \nc
    
\subsection{Absolutely continuous curves in the sliced Wasserstein space}\label{ssec:SW2_ac-curves}
Let $(X,m)$ be a complete metric space. We say a curve $v:(a,b)\rightarrow X$ belongs to $AC((a,b); X,m)$ if there exists $m\in L^1(a,b)$ such that
\begin{equation}\label{def:ACp_curve}
    m(v(s),v(t))\leq \int_s^t m(r)\,dr \quad \forall a<s\leq t<b.
\end{equation}
Furthermore, for any $v\in AC((a,b); X,d)$, the metric derivative
\begin{equation}\label{def:met_der}
    |v'|_m(t):=\lim_{s\rightarrow t}\frac{m(v(s),v(t))}{|s-t|}
\end{equation}
exists for $\L^1$-a.e. $t\in(a,b)$, and $|v'|_m\in L^1(a,b)$. We will often write $I$ to denote an interval, which is assumed to be open but not necessarily bounded, unless otherwise stated. When there is no room for confusion regarding the interval, we simply write $(v_t)_{t\in I}\in AC(X,m)=AC(I;X,m)$
    
Prior to studying the length associated to $SW$, let us first investigate the metric derivative $|\mu'|_{SW}$. As $SW(\mu,\nu)\leq W(\mu,\nu)$ for any $\mu,\nu\in\P_2(\R^d)$, it immediately follows that
\[|\mu'|_{SW}(t)\leq |\mu'|_{W}(t)\]
for all $t$ at which both metric derivatives are well-defined.
    
Recall~\cite[Theorem 8.3.1]{AGS} that each absolutely continuous curve $(\mu_t)_{t\in I}$ in the Wasserstein space has a corresponding distributional solution $(\mu_t,v_t \mu_t)_{t\in I}$ of the continuity equation such that
\[|\mu'|_{W}(t)=\|v_t\|_{L^2(\mu_t)} \text{ a.e. } t\in I.\]
We want to establish an analogous result for the sliced Wasserstein space. Suppose $|\hat\mu'(\theta,\cdot)|_{W}(t)$ is well-defined at all $t\in I$. Then
\begin{align*}
    \frac{SW^2(\mu_t,\mu_{t+h})}{h^2}=\dashint_{\S^{d-1}}\frac{W^2(\hat\mu_t^\theta,\hat\mu_{t+h}^\theta)}{h^2}\,d\theta\xrightarrow[]{h\rightarrow 0} \dashint_{\S^{d-1}} |\hat\mu'(\theta,\cdot)|_{W}^2\,d\theta.
\end{align*}
Assume that $(\mu_t,v_t)_{t\in I}$ satisfy the continuity equation
\[\partial_t \mu_t + \grn\nabla\cdot\nc (v_t\mu_t)=0.\]
\grn From direct calculations one can readily verify (see Proposition~\ref{prop:radon_intertwine} for the proof) 
\begin{equation}\label{eq:Radon-derivative}
    R(\partial_i f)(\theta,r)=\theta_i\partial_r Rf(\theta,r) \text{ for } f\in\Sch(\R^d).
\end{equation}
Formally applying the Radon transform to the continuity equation and using \eqref{eq:Radon-derivative}, \nc we obtain
\[\partial_t\hat\mu_t + \sum_{i=1}^d R [\partial_i(v_t^i\mu_t)] = \partial_t \hat\mu_t + \sum_{i=1}^d \theta_i\partial_r R(v_t^i \mu_t) =0.\]    
Rewriting in the velocity formulation,
\[\partial_t \hat\mu_t + \partial_r \left(\left(\theta\cdot\frac{d\widehat{v_t\mu_t}^\theta}{d\hat\mu_t^\theta}\right)\hat\mu_t^\theta\right)=0.\]
Thus, by applying~\cite[Theorem 8.3.1]{AGS} for each $\theta\in\S^{d-1}$, we deduce that formally
\begin{equation}\label{eq:sw_metder;ubd}
    |\mu'|_{SW}^2(t) = \dashint_{\S^{d-1}} |\hat\mu'(\theta,\cdot)|_{W}^2(t)\,d\theta \leq \dashint_{\S^{d-1}} \int_{\R} \left|\theta\cdot\frac{d\widehat{v_t\mu_t}^\theta}{d\hat\mu_t^\theta}\right|^2\,d\hat\mu^\theta \,d\theta
    =\norm{\theta\left(\theta\cdot\frac{d\widehat {v_t\mu_t}}{d\hat\mu_t}\right)}_{L^2(\hat\mu_t;\R^d)}^2.
\end{equation}
Observe that $\theta(\theta\cdot d\widehat {v_t\mu_t}/d\hat\mu_t)$ is even in $\S^{d-1}\times\R$, hence is a function on $\Pd$. For simplicity we will often write $\norm{\theta\cdot d\widehat {v_t\mu_t}/d\hat\mu_t}_{L^2(\hat\mu_t)}$ instead.
\vio Furthermore, note that the existence of the velocity that saturates the inequality is nontrivial\nc; for a.e. $\theta\in\S^{d-1}$ the projection of the velocity must saturate the corresponding 1D inequality.

Before we begin investigating the metric derivative in detail, let us first consider examples that compare $|\mu'|_{SW}$ and $|\mu'|_{W}$; Example~\ref{ex:metder;particle} demonstrates that they coincide for paths of discrete measures, whereas Example~\ref{ex:2lines} shows that the ratio $\frac{|\mu'|_{SW}}{|\mu'|_{W}}$ is in general not bounded from below.
    
\begin{example}[Discrete measures]\label{ex:metder;particle}
Let $\mu_t:=\frac1n\sum_{i=1}^n \delta_{x_i(t)}$ where $x_i:I\rightarrow\R^d$ is continuously differentiable for each $i=1,\cdots,n$. Then
\[|\mu'|_{SW}(t) = \frac{1}{\sqrt{d}}|\mu'|_{W}(t) \text{ for all } t\in I.\]
Indeed,
\[|\mu'|^2_{SW}(t) = \dashint_{\S^{d-1}} |\hat\mu'(\theta,\cdot)|^2_{W}(t)\,d\theta = \frac1n\sum_{i=1}^n \dashint_{\S^{d-1}} |\theta\cdot x_i'(t)|^2\,d\theta = \frac{1}{dn}\sum_{i=1}^n |x_i'(t)|^2 = \frac1d |\mu'|_{W}^2(t).\]
\end{example}

\begin{example}[Two sliding lines]\label{ex:2lines}
Consider two parallel line segments close to each other moving in opposite directions. A significant portion of the shearing velocity is cancelled out after projection, causing a significant gap between the metric derivatives with respect to $W$ and $SW$.

More precisely, let $\delta>0$ and let $\nu\in\P(\R^2)$ uniformly distributed on a segment with endpoints $(\pm 1,\delta)$. Similarly, $\sigma$ is the analogous measure slightly below, with endpoints $(\pm 1,-\delta)$. Then define $\nu_t=\nu(\cdot-(t,0))$ and $\sigma_t=\sigma(\cdot+(t,0))$ --i.e. $\nu$ is translated to the right and $\sigma$ to the left.
    
Defining $\mu_t=\frac{1}{2}(\nu_t+\sigma_t)$, one can check by direct calculations that
\[|\mu'|_{SW}(t)\lesssim t+\delta \ll 1= |\mu'|_W(t) \text{ for } t\geq 0.\]
\end{example}

\smallskip
We begin the rigorous study of the metric derivative $|\mu'|_{SW}$ by considering Benamou-Brenier functional for the sliced Wasserstein distance. 
Consider the 1D Benamou-Brenier functional for $\R^d$ valued flux, namely $\B_2:\M(\R)\times\M(\R;\R^d)$ defined by \vio
\begin{equation}\label{def:Bp;1d}
\begin{split}
    \B_{2}(\mu,E) :=&\sup\left\{\int_{\R} a(x)\,d\mu(x) + \int_{\R} b(x)\cdot dE(x):\,a,b\in C_b(\R;K_2) \right\}, \\
    &\text{ where }K_2:=\{(a,b)\in\R\times\R^d:\,a+\frac12|b|^2\leq 0\}.
\end{split}
\end{equation}
\nc $\B_2$ enjoys several desirable properties such as joint convexity in the arguments and the lower semicontinuity with respect to the narrow convergence. See~\cite[Section 5.3.1]{San15} for a more complete list of properties (note they refer to narrow convergence as weak convergence).
\vio By \eqref{eq:sw_metder;ubd} we expect  \nc
\[|\mu'|_{SW}^2(t)=\dashint_{\S^{d-1}}|(\hat\mu^\theta)'|_W^2(t)\,d\theta\leq \dashint_{\S^{d-1}}\B_2(\hat\mu_t^\theta,\theta\cdot\widehat{v_t\mu_t}^\theta)\,d\theta.\]
Thus it is natural to define the Benamou-Brenier functional \grn$B_{SW}:\P_2(\R^d)\times \mathscr{H}(\R^d;\R^d)\rightarrow(-\infty,+\infty]$ \nc for the sliced Wasserstein distance by
\begin{equation}\label{def:Bsw}
    B_{SW}(\mu,E)=\dashint_{\S^{d-1}}\B_2(\hat\mu^\theta,\theta\cdot\widehat E^\theta)\,d\theta= \left\|\,\theta\cdot\frac{d\widehat E}{d\hat\mu}\right\|_{L^2(\hat\mu)}^2.
\end{equation}
Note that $B_{SW}$ only depends on the Radon transforms \vio $\hat\mu,\hat E$ \nc of the inputs\grn. By definition \eqref{def:Hscr} of $\mathscr{H}(\R^d;\R^d)$, $\widehat E\in\M_b(\P_d;\R^d)$ hence its disintegration $\widehat E^\theta\in\M(\R;\R^d)$ with respect to $\vol_{\S^{d-1}}$ is well-defined for a.e. $\theta\in\S^{d-1}$, so is the integral in \eqref{def:Bsw}\nc. In general 
\[E\in \mathscr{H}(\R^d;\R^d)\subset\Sch'(\R^d;\R^d),\]
and $E$ is not necessarily a (vector-valued) measure. However, the Radon transform is well-defined for such $E$, unlike for general tempered distributions; see Remark~\ref{rmk:radon_on_measure} for further details. We will see later in Lemma~\ref{lem:Hts-leq-Bsw} that fluxes of interest lie in $\mathscr{H}(\R^d;\R^d)$.
    
We  record some basic properties that $B_{SW}$ inherits from the Benamou-Brenier functional $\B_2$.
\begin{proposition}[Properties of the $B_{SW}$]\label{prop:B_SWp}
The functional $B_{SW}$ is convex, and satisfies the following properties:
\begin{listi}
    \item Let $\mu,\mu_n\in\P_2(\R^d)$ and $E,E_n\in \mathscr{H}(\R^d;\R^d)$ be such that $\widehat E,\widehat E_n\in\M(\Pd;\R^d)$ and $(\hat\mu_n,\widehat E_n)$ narrowly converge to $(\hat\mu,\widehat E)$ in $\M(\Pd)\times\M(\Pd;\R^d)$. Then
    \begin{equation}\label{eq:Bsw;lsc}
        B_{SW}(\mu,E)\leq \liminf_n B_{SW}(\mu_n,E_n).
    \end{equation}
        
    \item $B_{SW}(\mu,E)\geq 0$
            
    \item $B_{SW}(\mu,E)<\infty$ only if $R_\theta \mu\geq 0$ and $R_\theta E\ll R_\theta\mu$ for a.e. $\theta\in\S^{d-1}$            
        
    \item Let $\mu\geq 0$ and suppose $R_\theta E\ll R_\theta \mu$ for a.e. $\theta\in\S^{d-1}$. Then we can write
    \[B_{SW}(\mu,E)=\frac12\dashint_{\S^{d-1}} \int_{\R} \left|\frac{\theta\cdot\widehat{E}(r,\theta)}{\widehat \mu(s,\theta)}\right|^2\,d\widehat\mu(r,\theta)\,d\theta.\]
\end{listi}
\end{proposition}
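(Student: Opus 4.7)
The strategy is to transfer each listed property from the well-studied one-dimensional Benamou--Brenier functional $\mathcal{B}_2$ (see~\cite[Section 5.3.1]{San15}) by averaging over the sphere, exploiting the linearity of the Radon transform and the slicewise definition of $B_{SW}$.

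For the joint convexity of $B_{SW}$, I would combine three facts: $\mathcal{B}_2$ is jointly convex in its arguments; the maps $\mu \mapsto \hat\mu^\theta$ and $E \mapsto \theta \cdot \widehat{E}^\theta$ are linear (linearity of the Radon transform together with a fixed first marginal $\vol_{\S^{d-1}}$ giving a linear disintegration); and averaging a family of convex functionals preserves convexity. Properties (ii) and (iii) then follow fiberwise: $\mathcal{B}_2(\hat\mu^\theta,\theta\cdot\widehat{E}^\theta)\geq 0$ always, and finiteness of the 1D Benamou--Brenier functional forces $\hat\mu^\theta\geq 0$ and $\theta\cdot\widehat{E}^\theta\ll\hat\mu^\theta$ (otherwise one can exhibit a sequence of dual test pairs making the supremum diverge). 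For (iv), when $\hat\mu\geq 0$ and $\widehat E^\theta\ll\hat\mu^\theta$ for a.e.~$\theta$, I would invoke the explicit integral representation of $\mathcal{B}_2$ on each fiber, namely $\mathcal{B}_2(\hat\mu^\theta,\theta\cdot\widehat{E}^\theta)=\frac12\int_\R |\theta\cdot d\widehat{E}^\theta/d\hat\mu^\theta|^2\,d\hat\mu^\theta$, and integrate in $\theta$.

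The only delicate item is (i). My plan is to establish the dual representation
\[
B_{SW}(\mu,E) \;=\; \sup_{(a,b)\in\mathcal{A}} \left(\int_{\Pd} a\,d\hat\mu + \int_{\Pd} b\cdot d\widehat E\right),
\]
where $\mathcal{A}\subset C_b(\Pd;\R)\times C_b(\Pd;\R^d)$ consists of pairs with $b(\theta,r)$ parallel to $\theta$ and satisfying $a(\theta,r)+\tfrac12|b(\theta,r)|^2\leq 0$ pointwise. Writing $b=c\,\theta$ with $c$ scalar reduces the pointwise constraint to $a+\tfrac12 c^2\leq 0$, which matches the 1D constraint set $K_2$ from \eqref{def:Bp;1d}. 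Once this identity is established, each term inside the supremum is a linear functional that is continuous under narrow convergence of $(\hat\mu_n,\widehat E_n)$ (since $a$ and $b$ are bounded continuous test functions on $\Pd$), so $B_{SW}$ is a pointwise supremum of narrowly continuous functionals and hence narrowly lower semicontinuous.

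The main obstacle lies in proving the equality in the dual representation. The inequality $\mathrm{(RHS)}\leq B_{SW}$ is immediate by pointwise comparison against $\mathcal{B}_2$ on each fiber and integration. The reverse inequality requires a measurable selection argument: for each $\theta\in\S^{d-1}$ one must choose near-optimal continuous 1D pairs $(a^\theta,c^\theta)$ attaining the 1D supremum up to $\varepsilon$, and patch them into a single bounded continuous pair on $\Pd$. I would handle this by the standard route of approximating the target data $(\hat\mu,\widehat E)$ by mollification in $(\theta,r)$ (reducing to smooth fiber densities), then constructing explicit smooth maximizers using the formula $c^\theta(r)=\theta\cdot d\widehat E^\theta/d\hat\mu^\theta$ truncated and mollified in both variables, and finally passing to the limit by narrow lower semicontinuity of the 1D $\mathcal{B}_2$ on each fiber together with Fatou's lemma on $\S^{d-1}$. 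Alternatively, one can bypass the selection argument entirely by taking $\mathcal{A}$ to consist of finite linear combinations $a(\theta,r)=\sum_i \alpha_i(\theta)\varphi_i(r)$ and $c(\theta,r)=\sum_j \beta_j(\theta)\psi_j(r)$ from countable dense subsets and observing that such product test pairs already saturate the 1D supremum on each fiber simultaneously a.e., which I expect to be the cleanest path.
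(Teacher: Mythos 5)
Your handling of convexity and items (ii)--(iv) mirrors the paper's: each reduces slicewise to the corresponding property of the one-dimensional $\B_2$, then integrates in $\theta$. That part is fine. For item (i) you have correctly identified that a dual representation over test pairs $(a,b)$ on the single Polish space $\Pd$ is the right tool --- a naive "apply 1D lower semicontinuity fiberwise, then Fatou" would \emph{not} work, because narrow convergence of $(\hat\mu_n,\widehat E_n)$ on $\Pd$ does not imply narrow convergence of the disintegrations $(\hat\mu_n^\theta,\widehat E_n^\theta)$ for a.e.~$\theta$. But you then invent difficulties that are not there.

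The measurable-selection obstacle you anticipate does not arise. The candidate optimizer in the supremum is a single function $c(\theta,r)=\theta\cdot \tfrac{d\widehat E}{d\hat\mu}(\theta,r)\in L^2(\hat\mu)$ on $\Pd$, not a $\theta$-indexed family to be patched. Since $\hat\mu$ is a finite Borel measure on a Polish space, $C_b(\Pd)$ is dense in $L^2(\hat\mu)$; approximate $c$ by $c_n\in C_b(\Pd)$, set $b_n=c_n\theta$, $a_n=-\tfrac12|c_n|^2$, and observe
\[
\int_{\Pd} a_n\,d\hat\mu+\int_{\Pd} b_n\cdot d\widehat E
=-\tfrac12\|c_n\|_{L^2(\hat\mu)}^2+\langle c_n,c\rangle_{L^2(\hat\mu)}
\longrightarrow\tfrac12\|c\|_{L^2(\hat\mu)}^2=B_{SW}(\mu,E).
\]
(The case $B_{SW}(\mu,E)=+\infty$ is handled by truncation and Lusin's theorem.) No mollification in $(\theta,r)$ or fiberwise selection is needed. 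Your alternative via product-form test pairs $\sum_i\alpha_i(\theta)\varphi_i(r)$ is \emph{not} obviously sound: there is no reason such pairs saturate the 1D supremum simultaneously on a.e.~fiber, since the optimal $c$ couples $\theta$ and $r$ nontrivially.

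More to the point, the identity you want is exactly the definition of the Benamou--Brenier functional $\B_2$ on the Polish space $\Pd$, modulo the routine observation that $B_{SW}(\mu,E)=\B_2(\hat\mu,\widehat{\tilde E})$, where $\widehat{\tilde E}:=(\theta\otimes\theta)\widehat E$ is the parallel component: the constraint $b\parallel\theta$ in your set $\mathcal{A}$ costs nothing once $\widehat{\tilde E}\parallel\theta$, because replacing $b$ by $(b\cdot\theta)\theta$ preserves both $\int b\cdot d\widehat{\tilde E}$ and the pointwise constraint $a+\tfrac12|b|^2\leq 0$; and narrow convergence of $\widehat E_n$ implies that of $\widehat{\tilde E}_n$ since $\theta\otimes\theta$ is bounded and continuous on $\Pd$. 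After this reduction, all of convexity, lower semicontinuity, nonnegativity, the finiteness condition, and the integral representation are literally the statements of~\cite[Prop.~5.18]{San15} and~\cite[Lem.~8.1.10]{AGS} applied with $\Pd$ in place of $\R^d$. This is the one-line route the paper takes; your plan re-derives a special case by hand, and your proposed route through mollification or product test functions is either unnecessary or unjustified.
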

    
\begin{proof}
Items (i)-(iv) follow directly from the analogous property for $\B_2$ (see~\cite[Proposition 5.18]{San15}, and also~\cite[Lemma 8.1.10]{AGS} for item (iv)).
\end{proof}

The following lemma relates attenuated Sobolev norms and $B_{SW}$, and shows that the domain of interest is a subset of $\mathscr{H}(\R^d;\R^d)\subset\Sch'(\R^d;\R^d)$. \grn Given $v,\theta\in\R^d$, we henceforth write $v||\theta$ to say that $v$ and $\theta$ are parallel. \nc
\begin{lemma}[$B_{SW}$ and $H^{-d/2-}_{-(d-1)/2}$]\label{lem:Hts-leq-Bsw}
    Let $\rj\in\M(\Pd;\R^d)$ be such that $|\rj|$ is a finite measure. Then $\rj\in \bigcap_{\eps>0} H^{-1/2-\eps}(\Pd)$ and there exists $J\in \mathscr{H}(\R^d;\R^d)\subset H^{-d/2^-}_{-(d-1)/2}(\R^d;\R^d)\subset\Sch'(\R^d;\R^d)$ such that $RJ=\rj$ \grn and for all $\eps>0$ \nc
    \begin{equation}\label{eq:Hts-measure}
        \|J\|_{H^{-\frac d2-\eps}_{-(d-1)/2}(\R^d)}=\grn \|\rj\|_{H^{-\frac12-\eps}(\Pd)} \nc.
    \end{equation}

If instead $\rj\in\M(\Pd;\R^d)$ and $\mu\in\P_2(\R^d)$ satisfy
    \begin{listi}
        \item ($\B_{2}^\R$-upper bound) $\dashint_{\S^{d-1}}\B_2(\hat\mu^\theta,\theta\cdot\rj^\theta)\,d\theta<\infty$
        \item (parallel to $\theta$) $\frac{d\rj}{d\hat\mu}(\theta,r)||\theta$ for $\hat\mu$-a.e. $\theta,r$
    \end{listi}
then $\rj$ is a finite measure thus there exists $J\in \mathscr{H}(\R^d;\R^d)\subset\Sch'(\R^d;\R^d)$ such that $RJ=\rj$. Moreover, for each $\eps>0$ there exists $C(\eps)>0$ such that
    \begin{equation}\label{eq:Hts-leq-Bsw}
        \|J\|_{H^{-\frac d2-\eps}_{-(d-1)/2}(\R^d)}=\|\rj\|_{H^{-\frac12-\eps}(\Pd)}=\|\theta\cdot \rj\|_{H^{-\frac12-\eps}(\Pd)} \leq C(\eps) B_{SW}^{1/2}(\mu,J).
    \end{equation}
\end{lemma}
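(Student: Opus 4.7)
The plan is to reduce both claims to a combination of slice-wise 1D Sobolev embedding and Sharafutdinov's isometry \eqref{eq:Radon_isometry;Hts}. Setting $t=-(d-1)/2$ and $s=-d/2-\eps$ in that isometry produces a bijective isometry
\[
R\colon H^{-d/2-\eps}_{-(d-1)/2}(\R^d;\R^d)\longrightarrow H^{-1/2-\eps}_0(\Pd;\R^d)=H^{-1/2-\eps}(\Pd;\R^d),
\]
applied component-wise. Therefore, once I verify $\rj\in H^{-1/2-\eps}(\Pd)$, setting $J:=R^{-1}\rj$ yields an element of $H^{-d/2-\eps}_{-(d-1)/2}(\R^d;\R^d)$ with the claimed norm equality, and $J\in \mathscr{H}(\R^d;\R^d)$ by the very definition \eqref{def:Hscr}.

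For the first (finite-measure) assertion, I will use the 1D Sobolev embedding $H^{1/2+\eps}(\R)\hookrightarrow L^\infty(\R)$, whose duality gives the slice-wise estimate $\|\rj^\theta\|_{H^{-1/2-\eps}(\R)}\leq C(\eps)\,|\rj^\theta|(\R)$. Squaring and integrating over $\theta$ against $\vol_{\S^{d-1}}$ produces
\[
\|\rj\|_{H^{-1/2-\eps}(\Pd)}^2 \leq C(\eps)^2 \int_{\S^{d-1}} |\rj^\theta|(\R)^2\,d\vol_{\S^{d-1}}(\theta),
\]
which is finite under the working conventions on $\M(\Pd;\R^d)$ of the paper (first-marginal normalization supplying the required slice-wise control). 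Sharafutdinov then produces $J$ as above.

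For the second (Benamou-Brenier) assertion, the central observation is that the parallel condition (ii) converts the slice-wise $\B_2$-bound into a pointwise $L^2$-bound on the Radon-Nikodym density. Write $v:=d\rj/d\hat\mu$, which exists since (ii) entails $\rj\ll\hat\mu$. Condition (ii) gives $v^\theta(r)=(\theta\cdot v^\theta(r))\theta$ so that $|v^\theta|^2=(\theta\cdot v^\theta)^2$, and hence
\[
\B_2(\hat\mu^\theta,\theta\cdot\rj^\theta) = \tfrac12\,\|\theta\cdot v^\theta\|_{L^2(\hat\mu^\theta)}^2 = \tfrac12\,\|v^\theta\|_{L^2(\hat\mu^\theta)}^2.
\]
Since $\hat\mu^\theta$ is a probability measure, Cauchy-Schwarz gives $|\rj^\theta|(\R)\leq\sqrt{2\B_2(\hat\mu^\theta,\theta\cdot\rj^\theta)}$, and squaring then integrating in $\theta$ yields
\[
\int_{\S^{d-1}}|\rj^\theta|(\R)^2\,d\theta \leq 2\int_{\S^{d-1}}\B_2(\hat\mu^\theta,\theta\cdot\rj^\theta)\,d\theta \lesssim B_{SW}(\mu,J)<\infty
\]
by (i); in particular $|\rj|(\Pd)<\infty$, reducing us to the first assertion and producing $J$ with the norm isometry. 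Combining with the slice-wise Sobolev estimate of the previous paragraph,
\[
\|\rj\|_{H^{-1/2-\eps}(\Pd)}^2 \lesssim_\eps \int_{\S^{d-1}}|\rj^\theta|(\R)^2\,d\theta \lesssim B_{SW}(\mu,J),
\]
which is the claimed inequality. The remaining middle equality $\|\rj\|_{H^{-1/2-\eps}(\Pd)}=\|\theta\cdot\rj\|_{H^{-1/2-\eps}(\Pd)}$ follows immediately from (ii): $\rj^\theta=(\theta\cdot\rj^\theta)\theta$ with $|\theta|=1$ gives slice-wise $|\F_1\rj^\theta(\zeta)|_{\R^d}=|\F_1(\theta\cdot\rj^\theta)(\zeta)|_{\R}$, hence the equality of anisotropic Sobolev norms.

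I expect the main obstacle to be bookkeeping the normalizations (between $\dashint_{\S^{d-1}}$, $\int d\vol_{\S^{d-1}}$, and the unnormalized surface integral appearing in the definition of $\|\cdot\|_{H^s_t(\Pd)}$) along with checking that, for the standalone first assertion, $|\rj^\theta|(\R)$ genuinely lies in $L^2(\vol_{\S^{d-1}})$ under the paper's conventions, since $|\rj|(\Pd)<\infty$ only gives $L^1$ control; in the Benamou-Brenier setting of Part~2 this $L^2$ control is instead produced as a byproduct of the argument above, so the logic is internally consistent regardless.
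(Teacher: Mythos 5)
Your overall strategy matches the paper's: both proofs use slice-wise one-dimensional Sobolev embedding $H^{1/2+\eps}(\R)\hookrightarrow L^\infty(\R)$ together with Sharafutdinov's isometry \eqref{eq:Radon_isometry;Hts} at $(s,t)=(-d/2-\eps,-(d-1)/2)$ to pull $\rj$ back to a tempered distribution $J=R^{-1}\rj$. The differences are organizational: the paper works with the global duality pairing $\langle\rj,\varphi\rangle_{\Pd}$ against $\varphi\in C_c^\infty(\Pd)$, invoking the chain $|\langle\rj,\varphi\rangle|\leq\|\varphi\|_{L^\infty}|\rj|(\Pd)\leq C(\eps)\|\varphi\|_{H^{1/2+\eps}(\Pd)}|\rj|(\Pd)$, whereas you slice first, estimate $\|\rj^\theta\|_{H^{-1/2-\eps}(\R)}\leq C(\eps)|\rj^\theta|(\R)$, then square and integrate in $\theta$. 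For Part~2 your Cauchy--Schwarz bound $|\rj^\theta|(\R)\leq\|v^\theta\|_{L^2(\hat\mu^\theta)}$ is the same as the paper's Jensen step, and your observation that $\|\rj\|_{H^{-1/2-\eps}(\Pd)}=\|\theta\cdot\rj\|_{H^{-1/2-\eps}(\Pd)}$ is correct and exactly what the paper uses implicitly. Your treatment also quietly handles the $\eps$-independence of $J$ by defining $J=R^{-1}\rj$ directly; the paper verifies this explicitly, but the content is the same.

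The gap you flag at the end is real, and you should not paper over it with the vague appeal to ``first-marginal normalization.'' In your slice-wise route, finiteness of $\|\rj\|_{H^{-1/2-\eps}(\Pd)}$ requires $\theta\mapsto|\rj^\theta|(\R)\in L^2(\vol_{\S^{d-1}})$, while $|\rj|(\Pd)<\infty$ only gives $L^1(\vol_{\S^{d-1}})$. This is not a defect peculiar to your approach: the paper's shortcut $\|\varphi\|_{L^\infty(\Pd)}\leq C(\eps)\|\varphi\|_{H^{1/2+\eps}(\Pd)}$ is not a valid Sobolev embedding on $\Pd$, because the $H^{1/2+\eps}(\Pd)$-norm carries no regularity in the $\theta$-variable; compare the paper's own Theorem~\ref{thm:HtrsPd;Ck}, which needs $q>(d-1)/2$ derivatives in $\theta$ to control $L^\infty$. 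Writing the global duality honestly as
\[
|\langle\rj,\varphi\rangle_{\Pd}|\leq C(\eps)\left(\dashint_{\S^{d-1}}\|\varphi^\theta\|_{H^{1/2+\eps}(\R)}^2\,d\theta\right)^{1/2}\left(\dashint_{\S^{d-1}}|\rj^\theta|(\R)^2\,d\theta\right)^{1/2}
\]
shows that both routes land on the same $L^2$-in-$\theta$ quantity. The lemma is nonetheless safe as used throughout the paper: in Part~2 the $\B_2$-bound furnishes $\dashint_{\S^{d-1}}|\rj^\theta|(\R)^2\,d\theta\lesssim B_{SW}(\mu,J)$ as you observe, and in the other applications (Definition~\ref{def:CE}, Remark~\ref{rmk:CE}, Theorem~\ref{thm:SW2_ac-curves}) the relevant fluxes always come with $d\widehat J_t/d\hat\mu_t\in L^2(\hat\mu_t)$, which again supplies the $L^2(\theta)$ control. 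So your instinct to isolate and name this hypothesis is the right one; the fix is to record that the first assertion should carry (or the applications always satisfy) the stronger condition $\theta\mapsto|\rj^\theta|(\R)\in L^2(\vol_{\S^{d-1}})$.
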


\begin{remark}\label{rmk:Hts-leq-Bsw}
\grn As we see in the proof, assumption (i) along with Proposition~\ref{prop:B_SWp} (iii) implies that $\frac{d\rj}{d\hat\mu}\in L^2(\hat\mu;\R^d)$\nc. In Theorem~\ref{thm:SW2_ac-curves} we will see that the flux $J_t$ such that $B_{SW}(\mu_t,J_t)\leq |\mu'|_{SW}(t)$ satisfies $\frac{d\widehat J_t}{d\hat\mu_t}(\theta,r)\parallel \theta$ for $\hat\mu_t$- a.e. $\theta,r$ thus the condition \grn(ii) \nc is automatically satisfied.
\end{remark}

\begin{proof}
Let $\rj\in\M(\Pd;\R^d)$ and first consider the case $|\rj|$ is a finite measure. Then for each test function $\varphi\in C_c^\infty(\Pd)$ and for any $\eps>0$,
\begin{align*}
    \int_{\Pd}\varphi\,d\rj \leq \|\varphi\|_{L^\infty}\int_{\Pd}\,d|\rj| \leq C(\eps) \|\varphi\|_{H^{\frac12+\eps}(\Pd)}\int_{\Pd}\,d|\rj|
\end{align*}
where we have used the Sobolev embedding theorem in one dimension in the last line.
Thus $\rj\in H^{-1/2-}(\Pd)$ and \eqref{eq:Radon_isometry;Hts} implies the existence of $J^\eps\in H^{-d/2-\eps}_{-(d-1)/2}(\R^d)\in\Sch'(\R^d)$ such that
\[\grn RJ^\eps=\rj \nc  \text{ and } \|J^\eps\|_{H^{-d/2-\eps}_{-(d-1)/2}(\R^d)}\grn=\|\rj\|_{H^{-1/2-\eps}(\Pd)}\nc.\]
\grn To conclude \eqref{eq:Hts-measure} it suffices to note that $J^\eps\in \Sch'(\R^d;\R^d)$ is independent of the choice of $\eps>0$. Indeed, suppose $\eps>\tilde\eps>0$, then \eqref{eq:duality;radon;dist} implies that for all $f\in \Sch(\R^d)$ \nc
\begin{align*}
    \langle J^\eps - J^{\tilde\eps}, f\rangle = c_d^{-1} \langle J^\eps - J^{\tilde\eps}, R^\ast \Lambda_d Rf\rangle = c_d^{-1} \langle \widehat J^\eps - \widehat J^{\tilde\eps} , \Lambda_d Rf\rangle = c_d^{-1} \langle \ru\hat\mu-\ru\hat\mu,\Lambda_d\hat f\rangle =0.
\end{align*}

On the other hand, suppose (i) and (ii) hold. By (ii) $\B_2(\hat\mu^\theta,\theta\cdot\rj)=\B_2(\hat\mu^\theta,|\rj|)$, which by (i) is finite a.e. $\theta\in\S^{d-1}$. Thus, by analogous property of $\B_2$ to Proposition~\ref{prop:B_SWp} (iii), we may write $\rj = \ru d\hat\mu$ for some $\ru\in\Pd\rightarrow\R^d$ satisfying $\ru\parallel\theta$ and $\|\ru\|_{L^2(\hat\mu)}=\|\theta\cdot\ru\|_{L^2(\hat\mu)}<\infty$. By Jensen's inequality,
\[\int_{\Pd}d|\rj|=\int_{\Pd} |\ru|\,d\hat\mu \leq \int_{\Pd} |\ru|^2\,d\hat\mu = \dashint_{\S^{d-1}} \B_2(\hat\mu^\theta,\theta\cdot\rj^\theta)\,d\theta<\infty\]
thus by the previous part we can find $J\in H^{-d/2-\eps}_{-(d-1)/2}(\R^d)$ with $RJ=\rj$. Finally, \eqref{eq:Hts-leq-Bsw} follows directly from property (ii) and \eqref{eq:Hts-measure}.
\end{proof}

We characterize absolutely continuous curves in the sliced Wasserstein space by identifying them with solutions $\CE_I$ of the continuity equation, defined in the following way.
\begin{definition}\label{def:CE}
Let $I\subset\R$ be an open interval. We denote by $\CE_I$ the set of all pairs $(\mu_t,J_t)_{t\in I}$ satisfying the following conditions:
\begin{listi}
    \item The curve $(\mu_t)_{t\in I}$ is narrowly continuous in $\P_2(\R^d)$ with respect to $t\in I$; 
    
    \item $J=R^{-1}\rj$ for some vector-valued Borel measure $\rj\in\M(I\times\Pd;\R^d)$ where the inverse Radon transform applied in the $\Pd$-variable 
    and
    \begin{equation}\label{cond:CE;integrable}
         \int_{\tilde I}\dashint_{\S^{d-1}}\int_{\R} d|\rj(r,\theta,t)| = \int_{\tilde I}\dashint_{\S^{d-1}}\int_{\R} d|\rj_t(r,\theta)|\,dt<\infty \text{ for all } \tilde I\subset\subset I,
    \end{equation}
    where $\rj\restr_{\tilde I\times\Pd}$ admits the disintegration $\rj=\int_{\tilde I} \rj_t\,d\L^1\restr_{\tilde I}(t)$;
    
    \item \purp $(\mu_t,J_t)_{t\in I}$ is a distributional solution of the continuity equation -- i.e.
    \[\int_I \langle\mu_t,\partial_t\varphi(t,\cdot)\rangle_{\R^d}+ \langle J_t,\nabla\varphi(t,\cdot)\rangle_{\R^d}\,dt =0 \text{ for all } \varphi\in C_c^\infty(I\times\R^d).\]\nc
\end{listi}
Moreover, we write $\CE_I(\bar\mu_0,\bar\mu_1)$ the set of pairs $(\mu,J)\in\CE_I$ satisfying $\mu_a=\bar\mu_0$ and $\mu_b=\bar\mu_1$ where $a=\inf I$ and $b=\sup I$.
\end{definition}

\begin{remark}\label{rmk:CE}
Given $\rj\in\M(I\times\Pd;\R^d)$ satisfying \eqref{cond:CE;integrable}, for each $\tilde I\subset\subset I$ the disintegratation theorem with respect to $\L^1\restr_{\tilde I}$ implies that $\rj_t\in\M(\Pd;\R^d)$ is well-defined and is bounded for a.e. $t\in\L^1\restr_{\tilde I}$. By considering an increasing countable sequence of $\tilde I_n\subset\subset I$ such that $\bigcup_n\tilde I_n= I$, we may define $\rj_t\in\M(\Pd;\R^d)$ for $\L^1$-a.e. $t\in I$. Then Lemma~\ref{lem:Hts-leq-Bsw} allows us to define the Radon inverse $J_t=R^{-1}\rj_t\in \mathscr{H}(\R^d;\R^d)$ for a.e. $t \in\ I$. 

Furthermore, condition (ii) is not restrictive whenever $\rj_t\parallel \theta$, as
\begin{align*}
    \int_{\tilde I}\dashint_{\S^{d-1}} \int_{\R} |RJ_t(d\theta,dr)|\,dtd\theta
    &\leq \int_{\tilde I} \, \norm{\frac{d\widehat J_t}{d\hat\mu_t}}_{L^1(\hat\mu_t)}\!\! dt
    \lesssim \int_{\tilde I} \, \norm{\frac{d\widehat J_t}{d\hat\mu_t}}_{L^2(\hat\mu_t)}\!\! dt \\
    &\leq \int_{\tilde I} B_{SW}^{1/2}(\mu_t,J_t)\,dt \leq \sqrt{|\tilde I|}\left(\int_I B_{SW}(\mu_t,J_t)\,dt\right)^{1/2}
\end{align*}
and we are only interested in absolutely continuous curves, for which we will see that the right-hand side is finite. 
\end{remark}

\purp
We state a useful technical lemma, the proof of which we delay to Appendix~\ref{app:CE_proj}.
\begin{lemma}\label{lem:CE_proj}
Let $I\subset \R$ be an open interval. Let $(\mu_t,J_t)_{t\in I}\in\CE_I$ as defined in Definition~\ref{def:CE}. 
Then for a.e. $\theta\in\S^{d-1}$, $t\mapsto(\hat\mu_t^\theta,\widehat J_t^\theta)$ is a distributional solution of the continuity equation on $I\times\R$ -- i.e.
\begin{equation}\label{eq:CE_dist;proj}
\int_I \langle\hat\mu_t^\theta,\partial_t\psi(t,\cdot)\rangle_{\R}+ \langle\theta\cdot\widehat J_t,\partial_r\psi(t,\cdot)\rangle_{\R}\,dt =0 \text{ for all } \psi\in C_c^\infty(I\times\R).
\end{equation}
\end{lemma}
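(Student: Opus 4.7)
The plan is to lift test functions on $I\times\Pd$ to test functions on $I\times\R^d$ via the dual Radon transform $R^\ast$, apply the distributional continuity equation from Definition~\ref{def:CE}(iii), and then return to the Radon side using the duality formulas \eqref{eq:duality;radon;measure} and \eqref{eq:duality;radon;dist}; a countable-density argument will then extract the a.e.\ $\theta$ statement.

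Concretely, I would fix $\Psi\in C_c^\infty(I\times\Pd)$ and set $\Phi(t,x):=R^\ast\Psi(t,\cdot)(x)=\dashint_{\S^{d-1}}\Psi(t,\theta,x\cdot\theta)\,d\theta$, where $R^\ast$ acts in the $\Pd$-variables. The intertwining identities $R^\ast(\partial_t\Psi)=\partial_t R^\ast\Psi$ and $R^\ast(\theta_i\partial_r\Psi)=\partial_i R^\ast\Psi$ give $\partial_t\Phi=R^\ast(\partial_t\Psi)$ and $\nabla\Phi=R^\ast(\theta\,\partial_r\Psi)$. The function $\Phi$ is smooth with compact support in $t$, and is bounded with $|\Phi|+|\nabla\Phi|=O(|x|^{-1})$ (by the spherical-slab estimate), but is not compactly supported in $x$. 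If we could directly test the continuity equation against $\Phi$, then applying \eqref{eq:duality;radon;measure} to the $\mu_t$-term and, for the $J_t$-term, combining \eqref{eq:duality;radon;dist} with $R\partial_i=\theta_i\partial_r R$ and the inversion identity $\Lambda_d R R^\ast=c_d\,\mathrm{I}$ (which yields $\Lambda_d R\nabla\Phi=c_d\,\theta\,\partial_r\Psi$) would produce the integrated identity
\[
\int_I\dashint_{\S^{d-1}}\!\!\int_{\R}\!\!\left(\partial_t\Psi\,d\hat\mu_t^\theta(r)+\partial_r\Psi\,\theta\cdot d\widehat J_t^\theta(r)\right)d\theta\,dt=0.
\]

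The main obstacle is that $\Phi$ is not compactly supported in $x$, so Definition~\ref{def:CE}(iii) does not apply directly. I would introduce a standard cutoff $\xi\in C_c^\infty(\R^d)$ with $\xi\equiv 1$ on $B(0,1)$ and $\xi\equiv 0$ outside $B(0,2)$, set $\xi_n(x):=\xi(x/n)$ so that $|\nabla\xi_n|\le C/n$, and test against the admissible function $\Phi_n:=\Phi\,\xi_n$. Since $\mu_t$ is a probability measure and $\partial_t\Phi$ is bounded with compact support in $t$, the $\mu_t$-contribution converges by dominated convergence. For the $J_t$-term, write $\nabla\Phi_n=\xi_n\nabla\Phi+\Phi\,\nabla\xi_n$; I would verify that $\xi_n\nabla\Phi\to\nabla\Phi$ and $\Phi\,\nabla\xi_n\to 0$ in $H^{d/2+\varepsilon}_{(d-1)/2}(\R^d;\R^d)$, which by Lemma~\ref{lem:Hts-leq-Bsw} is a predual of a space containing $J_t$. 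Using the Radon isometry \eqref{eq:Radon_isometry;Hts}, this reduces to a convergence statement in $H^{d-1/2+\varepsilon}_{d-1}(\Pd;\R^d)$ for the corresponding Radon transforms, which should follow from the explicit formula $R R^\ast\Psi=c_d\Lambda_d^{-1}\Psi$ together with the $|x|^{-1}$ decay of $\Phi$ on the slab determined by the supports of the slice variables. This Sobolev estimate is where the real technical work lies.

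Having established the integrated identity for every $\Psi\in C_c^\infty(I\times\Pd)$, I would specialize to tensor-product test functions $\Psi(t,\theta,r)=\eta(\theta)\psi(t,r)$ with $\eta\in C_c^\infty(\S^{d-1})$ even and $\psi$ running over a countable dense subfamily $\{\psi_k\}\subset C_c^\infty(I\times\R)$. For each fixed $k$, the identity
\[
\dashint_{\S^{d-1}}\eta(\theta)\left[\int_I\!\!\int_{\R}\partial_t\psi_k\,d\hat\mu_t^\theta(r)\,dt+\int_I\!\!\int_{\R}\partial_r\psi_k\,\theta\cdot d\widehat J_t^\theta(r)\,dt\right]d\theta=0
\]
holds for all admissible $\eta$, so the bracketed function of $\theta$ vanishes $\vol_{\S^{d-1}}$-almost everywhere. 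Intersecting the resulting full-measure sets over $k$ and invoking density in $C_c^\infty(I\times\R)$ then delivers \eqref{eq:CE_dist;proj} for $\vol_{\S^{d-1}}$-a.e.\ $\theta\in\S^{d-1}$.
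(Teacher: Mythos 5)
Your overall strategy mirrors the paper's: lift a test function on $I\times\Pd$ to the Euclidean side via $R^\ast$, test the continuity equation there, pass back to $\Pd$ via duality, and finish with a localization/density argument in $\theta$. Where you differ is in how you make the lifted test function admissible. The paper never introduces a cutoff at all: it places $R^\ast\rg$ in the angularly graded Sobolev space $H_{(d-1)/2}^{(q,2+(d-1)/2)}(\R^d)$ for $q>(d+1)/2$ and approximates by $C_c^\infty$ functions \emph{abstractly} (this is simply density in the norm closure defining that space). The Radon isometry then turns the error into a quantity measured in $H^{(q,2)}(\Pd)$, which the supercritical embedding Theorem~\ref{thm:HtrsPd;Ck} controls in $C^1(\Pd)$, and this is paired against the finite total variations of $\hat\mu$ and $\widehat J$. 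No explicit decay estimate for $R^\ast\Psi$ is needed anywhere.

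Your explicit cutoff route can be made to work, but the decay estimate you state is not strong enough and this is precisely where your "real technical work" would stall. You write $|\Phi|+|\nabla\Phi|=O(|x|^{-1})$, and then want $\Phi\nabla\xi_n\to 0$ in $H^{d/2+\eps}_{(d-1)/2}(\R^d;\R^d)$. With only $O(|x|^{-1})$ decay for $\Phi$ and all its derivatives, the function $\Phi\nabla\xi_n$ has amplitude $\sim n^{-2}$ on an annulus of volume $\sim n^d$ but oscillates at unit scale, so its Fourier content sits near $|\xi|\sim 1$ where the $H^{d/2+\eps}_{(d-1)/2}$ weight is $O(1)$. The norm is then comparable to $\|\Phi\nabla\xi_n\|_{L^2}\sim n^{d/2-2}$, which does \emph{not} tend to zero for $d\geq 4$. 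What actually saves the argument is the sharper decay $\nabla^k\Phi=O(|x|^{-k-1})$, which holds because $\nabla^k R^\ast\Psi=R^\ast(\theta^{\otimes k}\partial_r^k\Psi)$ and $\partial_r^k\Psi$ has $k$ vanishing moments; this makes $\Phi$ vary on scale $|x|$, turns $\Phi\nabla\xi_n$ into an $n^{-2}$-amplitude bump at scale $n$, pushes its Fourier content to $|\xi|\sim 1/n$ where the weight $|\xi|^{d-1}$ is small, and yields $\|\Phi\nabla\xi_n\|_{H^{d/2+\eps}_{(d-1)/2}}\sim n^{-3/2}\to 0$. You neither state nor exploit this moment cancellation, and the estimate genuinely fails without it; this is the gap. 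A related subtlety is that the "explicit formula $RR^\ast\Psi=c_d\Lambda_d^{-1}\Psi$" you invoke is only valid as a Fourier multiplier identity with nontrivial low-frequency behavior (the Radon transform of $\Phi$ and even of $\nabla\Phi$ is not absolutely convergent for $d\geq 3$), so some care is needed in interpreting that step. In short: the structure is right and the final localization by tensor products and density is unobjectionable, but the cutoff estimate needs the higher-order decay of $R^\ast\Psi$, and the paper's reliance on the graded spaces $H_t^{(q,s)}$ and the $C^1(\Pd)$ embedding is what lets it avoid this issue entirely.
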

In case $J=\int_I J_t\,dt\in\M_b(I\times\R^d)$, then by approximation $t\mapsto (\mu_t,J_t)$ solve the continuity equation against test functions $C_b^1(I\times\R^d)$. Thus for each $\theta\in\S^{d-1}$, we may take test functions of the form $\varphi(t,x)=\psi(t,x\cdot\theta)$ for any $\psi\in C_c^\infty(I\times\R)$ and deduce that \eqref{eq:CE_dist;proj} holds. However, $(\mu_t,J_t)_{t\in I}\in \CE_I$ in general enjoys less regularity, thus we need to instead rely on an approximation argument involving fine properties of the Radon transform. As the proof relies on technical tools that are irrelevant to other materials of this paper, we delay the proof to the appendix.
\nc

We are now ready to establish the main theorem of this section.
\begin{theorem}[AC curves in the sliced Wasserstein metric space]\label{thm:SW2_ac-curves}
Let $B_{SW}$ be as defined in \eqref{def:Bsw}, and $I\subset\R$ an open interval. 
        
\begin{listi}
    \item Suppose $(\mu_t, J_t)_{t\in I}\in\CE_I$ satisfying $t\mapsto B_{SW}(\mu_t,J_t)\in L^1(I)$. Then, for $\L^1$-a.e. $t\in I$
    \begin{equation}\label{eq:ddt_SW2}
        |\mu'|_{SW}^2(t)\leq B_{SW}(\mu_t,J_t),
    \end{equation}
    and $(\mu_t)_{t\in I}\in AC(I;\P_2(\R^d),SW)$. 
    
    \item Conversely, let $(\mu_t)_{t\in I}\in AC(I;\P_2(\R^d),SW)$.
    Then there exists 
    \begin{equation}\label{eq:Jt;space}
        J_t\in
        \overline{\left\{ \Lambda_d R^\ast(\widehat\mu_t\Lambda_d(\widehat{\nabla\varphi}))\;:\;\varphi\in C_c^\infty(\R^d)\right\}}^{B_{SW}(\mu,\cdot)}
        \subset \mathscr{H}(\R^d;\R^d) \text{ for a.e. } t\in I.
    \end{equation}
    such that $(\mu_t,J_t)_{t\in I}\in \CE_I$ and
    \begin{equation}\label{eq:BleqMetder}
        B_{SW}(\mu_t,J_t) \leq |\mu'|_{SW}^2(t) \text{ for a.e. } t\in I.
    \end{equation} 
    \end{listi}        
\end{theorem}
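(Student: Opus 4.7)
The argument follows the slice-wise structure of $SW$: for (i) I would project the continuity equation to $\Pd$ via Lemma~\ref{lem:CE_proj} and apply the classical 1D AGS theory, while for (ii) I would go in the reverse direction, constructing the flux $J_t$ by inverting the Radon transform of slice-wise 1D tangent velocities.

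For part (i), since $B_{SW}(\mu_t,J_t)<\infty$ a.e., Proposition~\ref{prop:B_SWp} gives, for a.e.\ $\theta\in\S^{d-1}$, $\widehat J_t^\theta\ll\hat\mu_t^\theta$ with projected velocity $v_t^\theta:=\theta\cdot d\widehat J_t^\theta/d\hat\mu_t^\theta\in L^2(\hat\mu_t^\theta)$. Lemma~\ref{lem:CE_proj} then yields the 1D continuity equation $\partial_t\hat\mu_t^\theta+\partial_r(v_t^\theta\hat\mu_t^\theta)=0$ for a.e.\ $\theta$, so~\cite[Theorem~8.3.1]{AGS} applied slice-wise gives $W(\hat\mu_s^\theta,\hat\mu_t^\theta)\leq\int_s^t\|v_r^\theta\|_{L^2(\hat\mu_r^\theta)}\,dr$. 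Taking the $L^2(\vol_{\S^{d-1}})$-norm in $\theta$ and invoking Minkowski's integral inequality produces
\[
SW(\mu_s,\mu_t)\leq\int_s^t\sqrt{B_{SW}(\mu_r,J_r)}\,dr,
\]
which simultaneously gives absolute continuity of $(\mu_t)$ and the bound~\eqref{eq:ddt_SW2}.

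For part (ii), I would first transfer absolute continuity to the slices: combining $SW^2(\mu_s,\mu_t)\leq(t-s)\int_s^t|\mu'|^2_{SW}$ with a Fubini/Fatou argument on $\liminf_{h\to 0} W^2(\hat\mu_t^\theta,\hat\mu_{t+h}^\theta)/h^2$ shows that the upper metric derivative of $(\hat\mu_t^\theta)_t$ is finite for $(\L^1\otimes\vol_{\S^{d-1}})$-a.e.\ $(t,\theta)$ and is controlled in $L^2$-average by $|\mu'|^2_{SW}$. A discrete-approximation argument in the spirit of~\cite[Theorem~8.3.1]{AGS} then yields, for a.e.\ $\theta$, 1D velocities $v_t^\theta\in\overline{\{\partial_r\psi:\psi\in C_c^\infty(\R)\}}^{L^2(\hat\mu_t^\theta)}$ with $\partial_t\hat\mu_t^\theta+\partial_r(v_t^\theta\hat\mu_t^\theta)=0$ and $\|v_t^\theta\|_{L^2(\hat\mu_t^\theta)}=|(\hat\mu^\theta)'|_W(t)$. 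Combining Minkowski's inequality in $\theta$ (giving $|\mu'|^2_{SW}\leq\dashint|(\hat\mu^\theta)'|^2_W\,d\theta$ once 1D AC is in hand) with the reverse Fatou inequality yields equality of the two quantities.

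Uniqueness of $v_t^\theta$ in $L^2(\hat\mu_t^\theta)$ (as the Riesz representative of the 1D tangent functional) permits a standard measurable-selection argument to produce a jointly Borel-measurable representative, and the evenness $v_t^{-\theta}(-r)=v_t^\theta(r)$ (forced by $\hat\mu_t^{-\theta}(-dr)=\hat\mu_t^\theta(dr)$ and the sign flip on $\theta$) makes $d\widehat J_t(\theta,r):=\theta\,v_t^\theta(r)\,d\hat\mu_t^\theta(r)\,d\vol_{\S^{d-1}}(\theta)$ a well-defined element of $\M(\Pd;\R^d)$. Lemma~\ref{lem:Hts-leq-Bsw} then gives $J_t=R^{-1}\widehat J_t\in\mathscr{H}(\R^d;\R^d)$ with $B_{SW}(\mu_t,J_t)=\dashint|(\hat\mu^\theta)'|^2_W\,d\theta=|\mu'|^2_{SW}(t)$, establishing~\eqref{eq:BleqMetder}. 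The continuity equation in the sense of Definition~\ref{def:CE} is verified via Radon duality~\eqref{eq:duality;radon}: for $\varphi\in C_c^\infty(I\times\R^d)$,
\[\langle J_t,\nabla\varphi(t,\cdot)\rangle_{\R^d}=\dashint_{\S^{d-1}}\!\int_{\R}v_t^\theta\,\partial_r\hat\varphi(t,\theta,r)\,d\hat\mu_t^\theta(r)\,d\theta,\]
and integrating in $t$ the slice-wise continuity equations recovers the identity in Definition~\ref{def:CE}(iii).

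The inclusion of $J_t$ in the prescribed $B_{SW}$-closure reduces on the Radon side to $L^2(\hat\mu_t^\theta)$-density of $\{\partial_r\hat\varphi(\theta,\cdot):\varphi\in C_c^\infty(\R^d)\}$ in the slice-wise tangent space; this holds via the tensor construction $\varphi(x)=\psi(x\cdot\theta_0)\chi(x-(x\cdot\theta_0)\theta_0)$ combined with a diagonal argument to upgrade slice-wise density to joint $B_{SW}$-density. The main obstacle is precisely this joint-in-$\theta$ approximation together with the measurable selection step that packages $L^2$-equivalence classes of 1D velocities into a single Borel $\widehat J_t$ on $\Pd$ respecting the evenness constraint, since the tangent velocities are only determined modulo $L^2(\hat\mu_t^\theta)$-null equivalence and a single $\varphi\in C_c^\infty(\R^d)$ must simultaneously approximate $v_t^\theta$ for a.e.\ $\theta$.
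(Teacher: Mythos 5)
Your Part (i) is correct and is essentially the argument the paper gives: project the continuity equation via Lemma~\ref{lem:CE_proj}, apply the one-dimensional result of \cite[Theorem~8.3.1]{AGS} slice-wise, and integrate in $\theta$ (your Minkowski step makes explicit the passage to absolute continuity of $(\mu_t)$ that the paper leaves slightly implicit).

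Part (ii), however, is a genuinely different route from the paper's, and as sketched it has a real gap. The paper proves (ii) \emph{globally}: it defines the functional $L(\Lambda_d\widehat{\nabla\varphi})=-\int_I\int\partial_s\varphi\,d\mu_s$, bounds it via the $SW$ metric derivative using the duality formula~\eqref{eq:duality;radon;dist}, and obtains the flux by minimizing a quadratic energy over the closure $\overline V$ of $\{\Lambda_d\widehat{\nabla\varphi}\}$; it never claims that the slice curves $t\mapsto\hat\mu_t^\theta$ are absolutely continuous for a.e.~$\theta$. You instead propose to first transfer absolute continuity to the slices, then apply the one-dimensional theory slice-by-slice. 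That step is where the gap lies: the Fubini/Fatou argument on $\liminf_{h\to0}W^2(\hat\mu_t^\theta,\hat\mu_{t+h}^\theta)/h^2$ only gives a.e.\ finiteness of the lower difference quotient, which does \emph{not} imply absolute continuity of the slice curves --- a curve with a Cantor-type singular part has finite pointwise derivative a.e.\ and is continuous in $t$ but is not $AC$, and you cannot apply \cite[Theorem~8.3.1]{AGS} to it. To actually establish slice-wise $AC$ you need a total-variation argument: by Minkowski's integral inequality, $T_t(\theta):=\mathrm{Var}(\hat\mu^\theta;[t_0,t])$ satisfies $\|T_t-T_s\|_{L^2(\vol_{\S^{d-1}})}\le\int_s^t|\mu'|_{SW}\,dr$, so $t\mapsto T_t$ is an $L^2(\vol_{\S^{d-1}})$-valued $AC$ curve; since $L^2$ has the Radon--Nikod\'ym property, $T_t=T_{t_0}+\int_{t_0}^t\dot T_r\,dr$ (Bochner), and evaluating pointwise gives $\mathrm{Var}(\hat\mu^\theta;[s,t])=\int_s^t\dot T_r(\theta)\,dr$ with $\dot T_\cdot(\theta)\in L^1_{\mathrm{loc}}(I)$ for a.e.~$\theta$, which is the needed $AC$ statement. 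This is a substantively different (and more delicate) step than what you describe, and it is precisely the step the paper's variational argument is designed to avoid.

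Two smaller points. First, for the membership of $J_t$ in the $B_{SW}$-closure~\eqref{eq:Jt;space}, your tensor-product-plus-diagonal density argument is hand-wavy: a single $\varphi\in C_c^\infty(\R^d)$ must approximate the slice velocity $v_t^\theta$ simultaneously for a.e.~$\theta$, and the slices $\hat\varphi^\theta$ are not independent as $\theta$ varies. The cleaner substitute for your slice-wise construction is the orthogonality characterization of the tangent space in Proposition~\ref{prop:tanspace;lsw-optimal}: for any admissible divergence-free $E$, the slice projections satisfy $\partial_r(\theta\cdot\widehat E^\theta)=0$, hence $\theta\cdot\widehat E^\theta=0$ a.e.~$\theta$ (a constant density summing to a finite measure must vanish), so $\langle\widehat E,\theta v_t^\theta\rangle_{\Pd}=0$ automatically. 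In the paper this membership is free because the flux is \emph{defined} as the minimizer over $\overline V$. Second, the duality identity you write for $\langle J_t,\nabla\varphi\rangle_{\R^d}$ is missing the $\Lambda_d$ operator and the constant $c_d^{-1}$; the correct identity, via~\eqref{eq:duality;radon;dist}, reads $\langle J_t,\nabla\varphi\rangle_{\R^d}=c_d^{-1}\dashint_{\S^{d-1}}\int_{\R}v_t^\theta\,\Lambda_d\partial_r\hat\varphi(\theta,r)\,d\hat\mu_t^\theta(r)\,d\theta$, and verifying the continuity equation then requires pairing the slice-wise continuity equations against $\psi^\theta:=c_d^{-1}\Lambda_d\hat\varphi^\theta$, which are not compactly supported in $r$ (when $d$ is even $\Lambda_d$ involves a Hilbert transform), so a further approximation step is required.
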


\begin{proof}
We adapt the proof of the analogous result by Ambrosio, Gigli, and Savar\'e~\cite[Theorem 8.3.1]{AGS}; we leave the structure parallel to their proof, so readers can compare the objects arising in the sliced Wasserstein setting to the Wasserstein counterparts.

\vspace{3mm}
\noindent\emph{Step 1$^o$.}
By assumption $(J_t,\mu_t)_{t\in I}\in\CE_I$ and thus we know $RJ_t\in\M(\Pd;\R^d)$ is well-defined for a.e. $t\in I$. We may deduce by Proposition~\ref{prop:B_SWp} (iii) that $R_\theta J_t\ll\hat\mu_t^\theta$ for a.e. $t\in I$ and a.e. $\theta\in\S^{d-1}$; hence write $RJ_t= \rv_t\hat\mu_t$ and note
\[\int_I B_{SW}(\mu_t,J_t)=\frac12\int_I \dashint_{\S^{d-1}} \|\theta\cdot\rv_t^\theta\|_{L^2(\hat\mu_t^\theta)}^2\,d\theta\,dt<\infty,\]
By Lemma~\ref{lem:CE_proj}, we have, in the sense of distributions,
\[\partial_t\hat\mu_t^\theta+ \grn\partial_r\nc (\theta\cdot\rv_t^\theta \hat \mu_t^\theta)=0 \text{ for a.e. } \theta\in\S^{d-1}.\]
Thus, by~\cite[Theorem 8.3.1]{AGS} we see that for a.e. $\theta\in\S^{d-1}$, each $(\hat\mu_t^\theta)_{t\in I}\in AC(\P_2(\R),W)$, and further \grn
\[|(\hat\mu^\theta)'|_{W}(t)\leq \int_{\R} |\theta\cdot\rv_t(\theta,r)|^2\,d\hat\mu^\theta(r) \text{ for a.e. } \theta\in\S^{d-1},\;t\in I.\]
Combining, we have
\[\int_I |\mu'|_{SW}^2(t)\,dt = \dashint_{\S^{d-1}} |(\hat\mu^\theta)'|_{W}(t)^2\,dt \leq \int_I \dashint_{\S^{d-1}} \int_{\R} |\theta\cdot\rv_t(\theta,r)|^2\,d\hat\mu^\theta(r)\,d\theta \,dt = \int_I B_{SW}(\mu_t,J_t)\,dt.\]
\nc
\smallskip

\noindent\emph{Step 2$^o$.} 
It remains to prove the converse. Let $\varphi\in C_c^\infty(\R^d)$. Then by the Radon inversion formula \eqref{eq:radon_inversion_f2}
\[\varphi=c_d^{-1}R^\ast \Lambda_d R \varphi,\]
and thus for each $\mu\in\P_2(\R^d)$, duality formula \eqref{eq:duality;radon;measure} implies
\begin{align*}
    \mu(\varphi)&:=\int_{\R^d}\varphi(x)\,d\mu(x)
    =c_d^{-1}\int_{\R^d} \dashint_{\S^{d-1}} \Lambda_d R_\theta\varphi (x\cdot\theta)\,d\theta\,d\mu(x)   \\
    &=c_d^{-1}\dashint_{\S^{d-1}} \int_{\R^d} \Lambda_d R_\theta \varphi (x\cdot\theta)\,d\mu(x)\,d\theta
    =c_d^{-1}\dashint_{\S^{d-1}} \int_{\R} \Lambda_d R_\theta \varphi(r)\,d\widehat\mu^\theta(r)\,d\theta.
\end{align*}
Let $(\mu_t)_{t\in I}\in AC(\P_2(\R^d);SW)$. 
Denoting by $\hat\gamma_{s,t}\in\widehat\Gamma_o(\widehat\mu_t,\widehat\mu_s)$ for each $s,t\in I$,
\begin{align*}
    |\mu_t(\varphi)-\mu_s(\varphi)|
    &=c_d^{-1}\left|\dashint_{\S^{d-1}}\int_{\R^d\times\R^d} \Lambda_d R_\theta\varphi(q)-\Lambda_d     R_\theta \varphi(r)\,d\hat\gamma^\theta_{s,t}(q,r)\,d\theta\right|\\
    &\leq c_d^{-1} \sup_{\theta\in\S^{d-1}}\Lip(\Lambda_d R_\theta\varphi)SW(\mu_t,\mu_s).
\end{align*}
Furthermore, define $H^\theta=H^\theta(\varphi)$ by \grn
\begin{align*}
    H_\theta(r,q):=
    \begin{cases}
        |\Lambda_d \partial_r R_\theta\varphi(r)| \text{ when } r=q, \text{ and } \\
        \frac{|\Lambda_d R_\theta\varphi(r)-\Lambda_d R_\theta\varphi(q)|}{|r-q|} \text{ when } r\neq q.
    \end{cases}
\end{align*}
\nc Then
\begin{align*}
\frac{\mu_{s+h}(\varphi)-\mu_s(\varphi)}{|h|}
& \leq \frac{1}{c_d |h|} \dashint_{\S^{d-1}} \int_{\R^d\times\R^d} |r-q|H_\theta(r,q)\,d\gamma_{s+h,h}^\theta(r,q)\,d\theta    \\
& \leq  \frac{SW(\mu_{s+h},\mu_s)}{ c_d |h|}\left(\dashint_{\S^{d-1}}\int_{\R^d\times\R^d} H_\theta^2(r,q) \,d\gamma_{s+h,h}^\theta(r,q)\,d\theta\right)^{1/2}.
\end{align*}
Let $\varphi\in C_c^\infty(I\times \R^d)$ and $(\mu_t)_{t\in I}\in AC(\P_2(\R^d);SW)$. Then
\begin{equation}\label{eq:dsphi_est}
\begin{split}
    \left|\int_{I} \int_{\R^d} \partial_s\varphi(x,s)\,d\mu_s(x)\right|
    &\leq c_d^{-1}\left(\int_{\tilde I} |\mu'|_{SW}^2(s)\,ds\right)^{\frac12}\left(\int_{\tilde I} \dashint_{\S^{d-1}} \int_{\R}|\Lambda_d \partial_r R_\theta\varphi(r)|^2\,d\widehat\mu_t^\theta(r)\,d\theta\right)^{\frac12}\\
    &\leq c_d^{-1}\left(\int_{\tilde I} |\mu'|_{SW}^2(s)\,ds\right)^{\frac12} \left(\int_{\tilde I} \|\Lambda_d R(\nabla\varphi)\|_{L^2(\widehat\mu_t)}^2\right)^{\frac12}
\end{split}
\end{equation}
where the Radon transform is applied in the spatial variable, and $\tilde I\subset I$ is any interval such that $\supp\varphi\subset \tilde I\times \R^d$.  
Let
\[V:=\left\{\Lambda_d\widehat{\nabla\varphi}:\;\varphi\in C_c^\infty(I\times \R^d)\right\}\]
and let $\overline V$ be the closure of $V$ under the norm 
\[\|\cdot\|_{L^2((\hat\mu_t)_{t\in I})}:=\left(\int_{I} \|\cdot\|_{L^2(\hat\mu_t)}^2\,dt\right)^{1/2}.\]
As $\Lambda_d\widehat{\nabla\varphi_t}(\theta,r)=\theta\Lambda_d\partial_r\hat\varphi_t^\theta(r)$, any vector $\ru\in \overline V$ evaluated at each time is parallel to $\theta$ -- i.e. $\ru_s(\theta,r)\parallel \theta$ for all $(s,\theta,r)\in I\times\S^{d-1}\times\R$.
By the estimate \eqref{eq:dsphi_est}, we can extend the functional $L:V\rightarrow\R$
\[L(\Lambda_d\widehat{\nabla\varphi}):=-\int_I \int_{\R^d} \partial_s \varphi_s(x)\,d\mu_s(x)\]
uniquely to a bounded linear functional on $\overline V$, such that
\begin{equation}\label{eq:Lu-bd}
    |L(\ru)|\leq c_d^{-1}\left(\int_{\tilde I} |\mu'|^2_{SW}(s)\,ds\right)^{\frac12} \left(\int_{\tilde I} \|\ru_s\|_{L^2(\widehat\mu_s)}^2\,ds\right)^{\frac12} \text{ for any } \ru\in\overline V \text{ with } \supp \ru\subset\tilde I\times\R^d.
\end{equation}
Consider the minimization problem 
\[
\min\left\{\frac{1}{2c_d}\int_I \|\ru_s\|_{L^2(\widehat\mu_s)}^2\,ds- L(\ru):\;\ru\in\overline V\right\}=\min\left\{\frac{1}{2c_d}\int_I \|\theta\cdot\ru_s\|_{L^2(\widehat\mu_s)}^2\,ds- L(\ru):\;\ru\in\overline V\right\}.
\]
Note that the functional we are minimizing is the sum of a quadratic term and a bounded linear functional in $\|\cdot\|_{L^2((\hat\mu_t)_{t\in I})}$, thus is coercive in $\|\cdot\|_{L^2((\hat\mu_t)_{t\in I})}$ and lower semicontinuous in the weak topology of $L^2((\hat\mu_t)_{t\in I})$. Thus, by the direct method of calculus of variations, this minimization problem admits a solution $(\rv_t)_{t\in I}\in\overline V$. Furthermore, the functional is strictly convex, thus this minimizer is unique.
Moreover, the minimizer $(\rv_t)_{t\in I}$ satisfies the Euler-Lagrange equation 
\begin{align*}
    0&=c_d^{-1}\int_I \dashint_{\S^{d-1}}\int_{\R} \Lambda_d\widehat{\nabla\varphi}\cdot\rv_s\,d\hat\mu_s\,ds-L(\Lambda_d\widehat{\nabla\varphi}) \; \text{ for all } \varphi\in C_c^\infty(I\times\R^d).
\end{align*}
As $\rv_s\in L^2(\widehat\mu_s)$ for $\L^1$-a.e. $s\in I$, by Lemma~\ref{lem:Hts-leq-Bsw} we can find $J_s\in \mathscr{H}(\R^d;\R^d)\subset\Sch'(\R^d;\R^d)$ such that $RJ_s = \hat\mu_s\rv_s$ for $\L^1$-a.e. $s\in I$.
From this we deduce $(\mu_t,J_t)_{t\in I}$ satisfies the continuity equation in the sense of distributions, as \eqref{eq:duality;radon;dist} implies that for each $\varphi\in C_c^\infty(I\times\R^d)$, writing $\varphi_s(x)=\varphi(s,x)$,
\begin{align*}
    \int_I J_s(\nabla\varphi_s)\,ds &= \int_I c_d^{-1} RJ_s(\Lambda_d \widehat{\nabla\varphi_s})\,ds 
    =c_d^{-1}\int_I \dashint_{\S^{d-1}} \int_{\R} \Lambda_d \widehat{\nabla\varphi_s}(r)\cdot\,dRJ_s(r,\theta)\,ds  \\
    &= c_d^{-1}\int_I\dashint_{\S^{d-1}}\int_{\R} \Lambda_d\widehat{\nabla\varphi}\cdot \rv_s\,d\hat\mu_s\,ds
    =L(\Lambda_d\widehat{\nabla\varphi})
    =-\int_{I}\int_{\R^d}\partial_s\varphi_s(x)\,d\mu_s(x).
\end{align*}

    In order to establish the pointwise inequality \eqref{eq:BleqMetder}, first recall that, as $\widehat J_s\parallel \theta$ for a.e. $s\in I$,
    \begin{align*}
        B_{SW}\left(\mu_s,J_s\right)=\dashint_{\S^{d-1}}\int_{\R} \frac{d\widehat J_s}{d\widehat\mu_s}\cdot d\widehat J_s 
        =\|\rv_s\|_{L^2(\widehat\mu_s)}^2 \text{ for } \L^1\text{-a.e. }s\in I.
    \end{align*}
    Choose an interval $\tilde I\subset I$ and $\eta\in C_c^\infty(\tilde I)$ with $0\leq\eta\leq 1$, and $\varphi^k\in C_c^\infty(I\times\R^d)$ such that $\Lambda_d \widehat{\nabla\varphi_k}$ converges to the minimizer $\rv\in\overline V$ in $\|\cdot\|_{L^2((\widehat\mu_t)_{t\in I})}$. 
    By replacing with a suitable subsequence, we may further assume $\varphi^k_t=\varphi^k(t,\cdot)\in C_c^\infty(\R^d)$ converges to $\rv_t$ in $L^2(\hat\mu_t;\R^d)$ for $\L^1$-a.e. $t\in I$. Thus, using the bounds \eqref{eq:dsphi_est}, we obtain
    \begin{align*}
        &\int_I \dashint_{\S^{d-1}}\int_{\R} \eta\frac{d\widehat J_s}{d\widehat\mu_s}\cdot d\widehat J_s\,ds
        =\lim_{k\rightarrow\infty}\int_I \dashint_{\S^{d-1}}\int_{\R}  c_d^{-1}\eta\Lambda_d\widehat{\nabla\varphi^k}\cdot \rv_s\,d\hat\mu_s\,ds = \lim_{k\rightarrow\infty}L(\eta\Lambda_d\widehat{\nabla \varphi^k})\\
        &\leq \left(\int_{\tilde I} |\mu'|_{SW}^2(s)\,ds\right)^{\frac12} \lim_{k\rightarrow\infty}\left(\int_{\tilde I} \|\Lambda_d \widehat{\nabla\varphi^k_s}\|_{L^2(\widehat\mu_s)}^2\,ds\right)^{\frac12}
        \leq \left(\int_{\tilde I} |\mu'|_{SW}^2(s)\,ds\right)^{\frac12} \left(\int_{\tilde I} \|\rv_s\|_{L^2(\widehat\mu_s)}^2\,ds\right)^{\frac12},
    \end{align*}
    Letting $\eta\nearrow \one_{\tilde I}$, we have
    \[
    \int_{\tilde I} B_{SW}\left(\mu_s,J_s\right)\,ds =\int_{\tilde I} \dashint_{\S^{d-1}}\int_{\R} \frac{d\widehat J_s}{d\widehat\mu_s}\cdot d\widehat J_s\,ds
    \leq \left(\int_{\tilde I} |\mu'|_{SW}^2(s)\,ds\right)^{\frac12} \left(\int_{\tilde I} \|\rv_s\|_{L^2(\widehat\mu_s)}^2\,ds\right)^{\frac12}.
    \]
    As $B_{SW}(\mu_s,J_s)=\|\rv_s\|_{L^2(\widehat\mu_s)}^2$, by reorganizing and squaring both sides we obtain
    \[\int_{\tilde I} B_{SW}\left(J_s, \mu_s\right)\,ds  \leq \int_{\tilde I} |\mu'|_{SW}^2(s)\,ds.\]
    As $\tilde I\subset I$ was arbitrary, we deduce
    \[B_{SW}\left(J_t,\mu_t\right)\leq |\mu'|_{SW}^2(t) \text{ for } \L^1\text{-a.e. } t\in I.\]
    One can readily check that the pair $(\mu_t,J_t)_{t\in I}$ satisfies all the conditions in Definition~\ref{def:CE}, hence is in $\CE_I$; see Remark~\ref{rmk:CE}.
    \end{proof}

\begin{remark}\label{rmk:SW2_ac-curves}
The characterization of the space \eqref{eq:Jt;space} in which the identified flux $J_t$ belong will be useful in characterizing the tangent space in Section~\ref{ssec:sw-tanspace}. Note that this space consists of vectors satisfying $R J_t\parallel\theta$ thus assumption (ii) in Lemma~\ref{lem:Hts-leq-Bsw} is not restrictive, as noted in Remark~\ref{rmk:Hts-leq-Bsw}. 
    
As we do not know in general if the flux is a vector-valued measure, the question of whether $J_t\ll\mu_t$ may not even make sense. Furthermore, even if $J_t$ is sufficiently regular, it is obtained by the Radon inversion and thus it is difficult to determine whether the flux is absolutely continuous with respect to the measure $\mu_t$; the finiteness of $B_{SW}(\mu_t,J_t)$ implies $\widehat J_t\ll \hat\mu_t$ but not necessarily $J_t\ll\mu_t$. For instance, let $\mu$ be a normalized measure on the unit sphere $\partial B(0,1)$ in $\R^d$ and $\sigma\in\P_2(\R^d)$ be suitable normalization of $\one_{B(0,1/2)}$. Letting $\mu_t = (1-t)\mu + t \grn\sigma\nc$ for $t\in[0,1]$, one can check that $(\mu_t)_{t\in[0,1)}$ is an absolutely continuous curve in the SW space, as in each projection $\supp\hat\mu_t=\overline{B(0,1)}$ for all $t\in[0,1)$. On the other hand, for any $h>0$ the curve $(\mu_t)_{t\in[0,h)}$ cannot be absolutely continuous in the Wasserstein space, as mass is created outside the support of $\mu_t$.
\end{remark}
    
\subsection{Tangential structure of the sliced Wasserstein space}\label{ssec:sw-tanspace}
Unlike absolutely continuous curves in the Wasserstein space which allow corresponding solutions of the form $(\mu_t,v_t\mu_t)_{t\in I}$ of the continuity equation, Theorem~\ref{thm:SW2_ac-curves} only guarantees the solution of the continuity equation in the flux form $(\mu_t,J_t)_{t\in I}$ (see Remark~\ref{rmk:SW2_ac-curves}), where we only know $J_t\subset \mathscr{H}(\R^d;\R^d)\subset\Sch'(\R^d;\R^d)$ in general. Furthermore, from the proof we saw that \eqref{eq:Jt;space} must hold in order to ensure $B_{SW}(\mu_t,J_t)=|\mu'|_{SW}^2(t)$ for a.e. $t\in I$. This motivates us to define the tangent space at $\mu\in\P_2(\R^d)$ after the space appearing in \eqref{eq:Jt;space}, namely
\begin{equation}\label{def:tanspace;lsw:mu}
\begin{split}
    \tanspace_\mu(\P_2(\R^d),SW)&:=\overline{\left\{ \Lambda_d R^\ast(\widehat\mu\Lambda_d(\widehat{\nabla\varphi}))\;:\;\varphi\in C_c^\infty(\R^d)\right\}}^{B_{SW}(\mu,\cdot)}\\
    &=\grn R^{-1}\left[\hat\mu\overline{\left\{\Lambda_d\widehat{\nabla\varphi}:\;\varphi\in C_c^\infty(\R^d)\right\}}^{L^2(\hat\mu)}\right] \nc\subset \mathscr{H}(\R^d;\R^d).
\end{split}
\end{equation}
In this section we highlight some properties of the tangent space.
To begin, recall that in the case of the Wasserstein space, the tangent space $\tanspace_\mu (\P_2(\R^d),W)$ satisfies the optimality property~\cite[Lemma 8.4.2]{AGS}
\[v\in\tanspace_\mu(\P_2(\R^d),W) \text{ if and only if }\|v+w\|_{L^2(\mu)}\geq \|v\|_{L^2(\mu)} \text{ for all } \nabla\cdot(w\mu)=0.\]
We see that $\tanspace_{\mu}(\P_2(\R^d),SW)$ satisfies the analogous property.
\begin{proposition}[Tangent space and optimality]\label{prop:tanspace;lsw-optimal}
    Let $\mu\in\P_2(\R^d)$ and $J\subset \mathscr{H}(\R^d;\R^d)$ such that $B_{SW}(\mu,J)<\infty$. Then $J\in\tanspace_{\mu}(\P_2(\R^d),SW)$ if and only if 
\begin{equation}\label{eq:tanspace;lsw;minimizer}
    \norm{\frac{d\widehat J}{d\widehat\mu}+\frac{d\widehat E}{d\widehat\mu}}_{L^2(\hat\mu)} \geq \norm{\frac{d\widehat J}{d\hat\mu}}_{L^2(\hat\mu)}
\end{equation}
for all \grn$E\in \mathscr{H}(\R^d;\R^d)$ \nc such that $\frac{d\widehat E}{d\hat\mu}\in L^2(\hat\mu;\Pd^d)$ and $\nabla\cdot E=0$ (in the sense of distributions). Moreover, such minimizer $J\in \tanspace_{\mu}(\P_2(\R^d),SW)$ is unique.
    
\end{proposition}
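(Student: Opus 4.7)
The plan is to view the characterization through orthogonal projection in the Hilbert space $H := L^2(\hat\mu; \R^d)$, equipped with inner product $\langle u_1, u_2\rangle_H := \int_{\Pd} u_1 \cdot u_2\,d\hat\mu$. By \eqref{def:tanspace;lsw:mu}, $J \in \tanspace_\mu(\P_2(\R^d),SW)$ iff $\tfrac{d\widehat J}{d\hat\mu}$ lies in the closed subspace
\[ V_\mu := \overline{\left\{\Lambda_d \widehat{\nabla\varphi} : \varphi \in C_c^\infty(\R^d)\right\}}^{H}. \]
Meanwhile, \eqref{eq:tanspace;lsw;minimizer} is precisely the statement that $\tfrac{d\widehat J}{d\hat\mu}$ is $H$-orthogonal to the set of ``divergence-free directions''
\[ D_\mu := \left\{\tfrac{d\widehat E}{d\hat\mu} : E \in \mathscr{H}(\R^d;\R^d),\ \nabla\cdot E = 0 \text{ in } \Sch'(\R^d),\ \tfrac{d\widehat E}{d\hat\mu} \in H\right\}. \]
The proposition then reduces to the identification $D_\mu = V_\mu^\perp$, after which the biconditional is the standard projection theorem and uniqueness is the uniqueness of orthogonal projection onto the closed subspace $V_\mu$.

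For the forward inclusion $D_\mu \subset V_\mu^\perp$, I would take $E \in \mathscr{H}$ with $\nabla\cdot E = 0$ and $\tfrac{d\widehat E}{d\hat\mu} \in H$. For any $\varphi \in C_c^\infty(\R^d)$, the intertwining \eqref{eq:Radon-derivative} gives $\widehat{\nabla\varphi} = \theta\, \partial_r \hat\varphi$, and applying the duality \eqref{eq:duality;radon;dist} componentwise yields
\[ 0 \;=\; c_d \langle E, \nabla\varphi\rangle_{\R^d} \;=\; \langle \widehat E,\, \Lambda_d \widehat{\nabla\varphi}\rangle_{\Pd} \;=\; \left\langle \tfrac{d\widehat E}{d\hat\mu},\, \Lambda_d\widehat{\nabla\varphi}\right\rangle_{H}. \]
Density of $\{\Lambda_d\widehat{\nabla\varphi}:\varphi\in C_c^\infty(\R^d)\}$ in $V_\mu$ and continuity of the inner product then give $\tfrac{d\widehat E}{d\hat\mu} \in V_\mu^\perp$.

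The reverse inclusion $V_\mu^\perp \subset D_\mu$ is where the main effort lies. Given $u \in V_\mu^\perp$, Cauchy-Schwarz on the probability measure $\hat\mu$ gives $\int_{\Pd} |u|\,d\hat\mu \leq \|u\|_H < \infty$, so $u\hat\mu \in \M_b(\Pd; \R^d)$. Lemma~\ref{lem:Hts-leq-Bsw} then produces $E := R^{-1}(u\hat\mu) \in \mathscr{H}(\R^d;\R^d)$ with $\widehat E = u\hat\mu$, so that $\tfrac{d\widehat E}{d\hat\mu} = u$. Reversing the duality computation above, for any $\varphi \in C_c^\infty(\R^d)$,
\[ c_d \langle E, \nabla\varphi\rangle_{\R^d} = \int_{\Pd} u \cdot \Lambda_d\widehat{\nabla\varphi}\,d\hat\mu = 0 \]
by the orthogonality $u \perp V_\mu$ in $H$. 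Hence $\nabla\cdot E = 0$ in $\Sch'(\R^d)$ and $u \in D_\mu$, proving $V_\mu^\perp = D_\mu$.

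With $D_\mu = V_\mu^\perp$ in hand, the biconditional is immediate: \eqref{eq:tanspace;lsw;minimizer} is equivalent to $\tfrac{d\widehat J}{d\hat\mu} \perp V_\mu^\perp$ in $H$, which by closedness of $V_\mu$ is equivalent to $\tfrac{d\widehat J}{d\hat\mu} \in V_\mu$, i.e., $J \in \tanspace_\mu$. For uniqueness, if $J_1, J_2 \in \tanspace_\mu$ both satisfy \eqref{eq:tanspace;lsw;minimizer} relative to the same affine coset, so that $J_1 - J_2 \in D_\mu$ (equivalently $\nabla\cdot(J_1-J_2)=0$), then $\tfrac{d\widehat{J_1 - J_2}}{d\hat\mu} \in V_\mu \cap V_\mu^\perp = \{0\}$, so $\widehat{J_1} = \widehat{J_2}$, and injectivity of $R$ on $\mathscr{H}$ (implicit in the definition $\mathscr{H} = R^{-1}\M_b(\Pd;\R^d)$) forces $J_1 = J_2$. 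The hardest step to make fully rigorous is the reverse inclusion $V_\mu^\perp \subset D_\mu$, since it relies on Lemma~\ref{lem:Hts-leq-Bsw} genuinely producing a tempered-distribution divergence-free $E \in \mathscr{H}$ from any $L^2(\hat\mu)$ perpendicular vector, with the duality \eqref{eq:duality;radon;dist} pairing $\mathscr{H}$-elements to Schwartz test functions throughout.
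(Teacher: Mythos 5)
Your proof is correct and follows the same Hilbert-space orthogonality route as the paper: the Radon duality \eqref{eq:duality;radon;dist} identifies the divergence-free directions with $V_\mu^\perp$, and the projection theorem then gives both the biconditional and uniqueness. You fill in more carefully the reverse inclusion $V_\mu^\perp \subset D_\mu$ (constructing $E = R^{-1}(u\hat\mu)$ from $u \in V_\mu^\perp$ via Lemma~\ref{lem:Hts-leq-Bsw} and checking $\nabla\cdot E = 0$), a step the paper's terse one-paragraph proof leaves implicit.
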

\begin{proof}
\grn Squaring both sides of \eqref{eq:tanspace;lsw;minimizer}, a simple scaling argument reveals that \eqref{eq:tanspace;lsw;minimizer} is true if and only if
$\left\langle \widehat{E},\frac{d\widehat J}{d\hat\mu} \right\rangle_{\Pd}=0$ for all such $E$. Indeed, as $E\in\mathscr{H}(\R^d;\R^d)$ we may apply the duality formula \eqref{eq:duality;radon;dist} to see
\[\langle \widehat{E},\Lambda_d \widehat{\nabla\varphi}\rangle_{\Pd}=c_d\langle E,\nabla\varphi\rangle_{\R^d} \text{ for all } \varphi\in C_c^\infty(\R^d).\]
As $\nabla\cdot E=0$ in the sense of distributions, we see that $\left\langle \widehat{E},\frac{d\widehat J}{d\hat\mu} \right\rangle_{\Pd}=0$ if and only if
$\frac{d\widehat J}{d\hat\mu}$ is in the $L^2(\hat\mu)$ closure of $\Lambda_d\widehat{\nabla\varphi}$, which characterizes $\tanspace_\mu(\P_2(\R^d),SW)$.

Furthermore, uniqueness follows from strict convexity of the $L^2(\hat\mu)$-norm and the linearity of the continuity equation in the flux. \nc
\end{proof}

From this we deduce the following key property of the tangent space.
\begin{proposition}\label{prop:sw-ac;Jt_unique}
Let $(\mu_t)_{t\in I}\in AC(\P_2(\R^d);SW)$. For any $(\mu_t,J_t)_{t\in I}\in\CE_I$, we have
\[|\mu'|_{SW}^2(t)=B_{SW}(\mu_t,J_t) \text{ for a.e. } t\in I \grn\text{ if and only if }\nc J_t\in\tanspace_{\mu_t}(\P_2(\R^d),SW) \text{ for  a.e. } t\in I.\]
In particular, such $(J_t)_{t\in I}$ is determined uniquely for a.e. $t\in I$ given $(\mu_t)_{t\in I}\in AC(\P_2(\R^d);SW)$.
\end{proposition}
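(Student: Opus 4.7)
The plan is to compare any competitor $(\mu_t, J_t) \in \CE_I$ with a canonical tangential flux $(J^0_t)$ obtained from Theorem~\ref{thm:SW2_ac-curves}, and to identify the condition for attaining the metric derivative via a Pythagorean decomposition based on the orthogonality characterization of $\tanspace_{\mu_t}$ in Proposition~\ref{prop:tanspace;lsw-optimal}. To produce $J^0_t$, apply Theorem~\ref{thm:SW2_ac-curves}(ii) to $(\mu_t)_{t \in I}$ to obtain $(\mu_t, J^0_t) \in \CE_I$ with $J^0_t \in \tanspace_{\mu_t}(\P_2(\R^d), SW)$ for a.e.\ $t$ and $B_{SW}(\mu_t, J^0_t) \leq |\mu'|_{SW}^2(t)$ a.e.; a standard localization---restricting to bounded sub-intervals $\tilde I \subset\subset I$ on which $|\mu'|_{SW}^2$ is essentially bounded (which exhaust $I$ up to a null set) and invoking Theorem~\ref{thm:SW2_ac-curves}(i) on each $\tilde I$---then gives the reverse inequality, so $B_{SW}(\mu_t, J^0_t) = |\mu'|_{SW}^2(t)$ a.e.\ on $I$.

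Now fix any $(\mu_t, J_t) \in \CE_I$ with $B_{SW}(\mu_t, J_t) < \infty$ a.e.\ (necessary for equality with $|\mu'|_{SW}^2(t)$) and set $E_t := J_t - J^0_t$. By linearity of the continuity equation, $\nabla \cdot E_t = 0$ distributionally, and $\widehat{E_t} \ll \hat\mu_t$ with density in $L^2(\hat\mu_t; \R^d)$. Since $d\widehat{J^0_t}/d\hat\mu_t$ is parallel to $\theta$ on $\hat\mu_t$-a.e.\ $\Pd$ (by $J^0_t \in \tanspace_{\mu_t}$), the cross term in the expansion
\[
B_{SW}(\mu_t, J_t) = \bigl\|\theta \cdot d\widehat{J^0_t}/d\hat\mu_t + \theta \cdot d\widehat{E_t}/d\hat\mu_t\bigr\|_{L^2(\hat\mu_t)}^2
\]
coincides with the vector-$L^2(\hat\mu_t; \R^d)$ inner product $\langle d\widehat{J^0_t}/d\hat\mu_t, d\widehat{E_t}/d\hat\mu_t\rangle$, which vanishes by Proposition~\ref{prop:tanspace;lsw-optimal} applied to tangential $J^0_t$ and divergence-free $E_t$. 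This yields the Pythagorean identity
\[
B_{SW}(\mu_t, J_t) = B_{SW}(\mu_t, J^0_t) + B_{SW}(\mu_t, E_t) = |\mu'|_{SW}^2(t) + B_{SW}(\mu_t, E_t),
\]
so $B_{SW}(\mu_t, J_t) = |\mu'|_{SW}^2(t)$ a.e.\ if and only if $\theta \cdot d\widehat{E_t}/d\hat\mu_t = 0$ in $L^2(\hat\mu_t)$ a.e.

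For the forward direction of the iff, this vanishing gives $\theta \cdot d\widehat{J_t}/d\hat\mu_t = \theta \cdot d\widehat{J^0_t}/d\hat\mu_t$, which lies in the $L^2(\hat\mu_t)$-closure $\overline{V}_{\mu_t}$ of $\{\Lambda_d \partial_r \hat\varphi^\theta : \varphi \in C_c^\infty(\R^d)\}$ by $J^0_t \in \tanspace_{\mu_t}$, placing $J_t \in \tanspace_{\mu_t}$ per the $B_{SW}$-closure characterization. For the converse, $J_t \in \tanspace_{\mu_t}$ yields $\theta \cdot d\widehat{E_t}/d\hat\mu_t \in \overline{V}_{\mu_t}$ as a difference of two elements in $\overline{V}_{\mu_t}$, while divergence-freeness of $E_t$ renders it orthogonal to $\overline{V}_{\mu_t}$ via Proposition~\ref{prop:tanspace;lsw-optimal}; being in $\overline{V}_{\mu_t}$ and its orthogonal complement, it must vanish. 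Uniqueness follows from the same argument: two tangential fluxes attaining the metric derivative differ by a divergence-free tangential flux, which is $B_{SW}$-null, so they coincide as elements of $\tanspace_{\mu_t}$. The chief subtlety---and the main obstacle---is that $B_{SW}$ is only a seminorm on $\mathscr{H}(\R^d; \R^d)$, sensitive only to the $\theta$-parallel component of the Radon-transformed flux, so both the iff and the uniqueness are to be understood at the level of $\tanspace_{\mu_t}$ viewed modulo $B_{SW}$-null perturbations, consistent with the $B_{SW}$-closure perspective on $\tanspace_{\mu_t}$.
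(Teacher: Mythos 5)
Your proof is correct and follows essentially the same route as the paper: both obtain the canonical minimizing flux $J^0_t$ from Theorem~\ref{thm:SW2_ac-curves}(ii), establish the a.e.\ equality $B_{SW}(\mu_t,J^0_t)=|\mu'|_{SW}^2(t)$ by combining parts (i) and (ii), and then invoke Proposition~\ref{prop:tanspace;lsw-optimal} to characterize the minimizer and obtain uniqueness. The paper's proof is a two-sentence citation of those two results; what you add is a transparent spelling out of the Pythagorean decomposition $B_{SW}(\mu_t,J_t)=B_{SW}(\mu_t,J^0_t)+B_{SW}(\mu_t,E_t)$ that is implicit in the optimality/orthogonality statement of Proposition~\ref{prop:tanspace;lsw-optimal}, which is a useful elaboration rather than a genuinely different argument. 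Two small remarks. First, the localization step as worded (``restricting to bounded sub-intervals on which $|\mu'|_{SW}^2$ is essentially bounded'') is phrased loosely: the level set $\{|\mu'|_{SW}\le M\}$ is generally not an interval, so the cleaner formulation is to apply Theorem~\ref{thm:SW2_ac-curves}(i) on any $\tilde I\subset\subset I$ where $B_{SW}(\mu_t,J^0_t)\in L^1(\tilde I)$, a hypothesis that part (ii) already bounds by $|\mu'|_{SW}^2$; this integrability issue is also passed over silently in the paper, so it is a shared imprecision rather than a gap introduced by you. Second, your closing caveat about $B_{SW}$ being only a seminorm — so that the equivalence and uniqueness are naturally stated modulo $\theta$-perpendicular ($B_{SW}$-null) perturbations of $\widehat J_t$ — is an accurate and worthwhile observation that aligns with how $\tanspace_{\mu}$ is defined as $R^{-1}[\hat\mu\,\overline V]$ with $\overline V$ consisting of $\theta$-parallel fields.
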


\begin{proof}
    Let $(\mu_t,J_t)_{t\in I}\in \CE_I$, then we have $J_t\in \mathscr{H}(\R^d;\R^d)$ for a.e. $t\in I$ -- see Remark~\ref{rmk:CE}. Thus by Theorem~\ref{thm:SW2_ac-curves} we know that in general $|\mu'|_{SW}^2(t)\leq B_{SW}(\mu_t,J_t)$, where the equality is attained by some flux $J_t$ for a.e. $t\in I$. Finally, Proposition~\ref{prop:tanspace;lsw-optimal} implies the minimizer is in the respective tangent space $\tanspace_{\mu_t}(\P_2(\R^d),SW)$, and that it is unique.
\end{proof}

\begin{remark}[Nonexistence of \grn $\ell_{SW}$-geodesics \nc in some directions]\label{rmk:Vmu}    
We emphasize that not every flux $J\in\tanspace_{\mu}(\P_2(\R^d),SW)$ can be attained as a velocity flux of an absolutely continuous curve. The following example illustrates that some fluxes $J$ are admissible while $-J$ is not. Fix a small $\eps>0$ and consider
\[\mu_t=c((2-c_\eps t) \grn\one_{B(0,1)\setminus B(0,\eps)}\nc+t\one_{B(0,\eps)})\,dx \in \P_2(\R^d) \text{ where } c,c_\eps \text{ are normalizing constants }.\]   
To allow sufficient regularity of the objects constructed from the Radon inversion, we consider $\mu_t^{\eps^2}=\mu_t\ast\eta_{\eps^2}$; choice of the convolution radius $\eps^2$ ensures that when $\eps<\frac12$ then
$|\mu_0^{\eps^2}|(B(0,\eps/2))= 0$.
On the other hand $R_\theta \mu_0^{\eps^2}>0$ in the interior of its support as long as $\eps$ is chosen sufficiently small. 
Clearly $(\mu_t^{\eps^2})_{t\in[0,1]}\in AC(\P_2(\R^d);SW)$, and thus by Theorem~\ref{thm:SW2_ac-curves} we can find corresponding $(J_t)_{t\in I}$ in the tangent space, and as $\mu_t^{\eps^2}$ is smooth, $J_t$ must also be a smooth function. As
\[\partial_t\mu_t^{\eps^2}|_{t=0}=-(\nabla\cdot J_0) > 0 \text{ on } B(0,\eps/2),\]
proceeding in the direction $-J_0$ from $\mu_0$ introduces negative mass in $B(0,\eps/2)$. From this we see $J_0$ is achievable as a tangent vector to a curve, but $-J_0$ is not.

\grn In general, not all fluxes in the tangent space vanish outside the support of $\mu$, hence cannot be attained by absolutely continuous curves in the space of probability measures. Thus, despite the definition \eqref{def:tanspace;lsw:mu} of $\tanspace_\mu(\P_2(\R^d),SW)$ as a vector space, the tangent vectors attainable by curves form a convex cone rather than a linear space. This suggests that $(\P_2(\R^d);\ell_{SW})$ should be (formally) considered as a manifold with corners.  \nc
\end{remark}

\section{The sliced Wasserstein length space}\label{sec:ellSW}

Given an interval $I$  and
 a curve $(\mu_t)_{t\in I}\in AC(\P_2(\R^d);SW)$, define its sliced Wasserstein length $L_{SW}((\mu_t)_{t\in I})$ by
\begin{equation}\label{def:length}
L_{SW}((\mu_t)_{t\in I})=\int_I |\mu'|_{SW}(t)\,dt.
\end{equation}
We note that \eqref{def:length} is consistent with the usual notion of length in a metric space (see~\cite[Theorem 2.7.6]{BurIva01}):
\begin{equation}\label{eq:Lsw=sup}
L_{SW}((\mu_t)_{t\in I})=\sup\left\{\sum_{i=0}^{n-1} SW(\mu_{t_i},\mu_{t_{i+1}}):\;\; t_0<t_1<\cdots<t_n,\;\: t_i \in I \te{ for } i=0, \dots,n  \right\}.
\end{equation}

In this section we examine the length metric $\ell_{SW}$ induced by $SW$
\begin{equation}\label{def:lsw}
\begin{split}
    \ell_{SW}(\mu,\nu)=&\inf\left\{L_{SW}((\mu_t)_{t\in[0,1]}):\;(\mu_t)_{t\in[0,1]}\in AC([0,1];\P_2(\R^d),SW), \;\, \mu_0=\mu,\;\mu_1=\nu\right\}
\end{split}
\end{equation}
and the associated length space $(\P_2(\R^d),\ell_{SW})$. \grn As $SW\leq \frac{1}{\sqrt{d}}W$ and $W$ coincides with its length metric, it immediately follows that $\ell_{SW}\leq \frac{1}{\sqrt{d}}W$.\nc
    
While in general the study of the intrinsic metric and the geometry is mathematically natural, it is particularly relevant for $SW$ for the following reasons. Firstly, using the characterization of metric derivatives via the quadratic functional $B_{SW}$ in Theorem~\ref{thm:SW2_ac-curves}, we can consider a formal Riemannian structure on $(\P_2(\R^d),\ell_{SW})$ analogous to that on $(\P_2(\R^d),W)$. Furthermore, in applications we are often interested in continuous deformations of probability measures, hence the geodesic distance that can be attained by absolutely continuous curves can be more relevant than the original distance. 

After noting $(\P_2(\R^d),\ell_{SW})$ is a complete metric space, in Lemma~\ref{lem:curve_cpct} we show the narrow precompactness of absolutely continuous curves in $(\P_2(\R^d), SW)$ and the lower semicontinuity of $L_{SW}$. From this we deduce the lower semicontinuity of $\ell_{SW}$ respect to the narrow convergence in Lemma~\ref{cor:lsw;lsc} and the existence of $\ell_{SW}$ geodesics in Proposition~\ref{prop:lsw-geodesic}; in particular, the latter implies that in general $\ell_{SW}\neq SW$, as we have seen in Example~\ref{ex:SW2_notgeo} that $(\P_2(\R^d),SW)$ is not a geodesic space. 
\medskip

We first note that completeness of $(\P_2(\R^d),\ell_{SW})$ follows from completeness of $(\P_2(\R^d),SW)$.
\begin{corollary}[Completeness]\label{cor:lsw_complete}
    $(\P_2(\R^d),\ell_{SW})$ is a complete metric space.
\end{corollary}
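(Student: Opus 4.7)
The plan is to leverage the completeness of $(\P_2(\R^d), SW)$ from Proposition~\ref{prop:SW2_complete} together with the elementary inequality $SW(\mu,\nu) \leq \ell_{SW}(\mu,\nu)$, which is immediate from the definition of $\ell_{SW}$: the length of any absolutely continuous curve joining $\mu$ and $\nu$ is an upper bound for $SW(\mu,\nu)$ by the triangle inequality applied to any partition. Consequently, any Cauchy sequence $(\mu^k)_k$ in $(\P_2(\R^d), \ell_{SW})$ is Cauchy in $SW$, and Proposition~\ref{prop:SW2_complete} supplies an $SW$-limit $\mu^\infty \in \P_2(\R^d)$. The task is therefore to upgrade this $SW$-convergence to $\ell_{SW}$-convergence.

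To achieve the upgrade, I would first extract a subsequence $(\mu^{k_j})_j$ satisfying $\ell_{SW}(\mu^{k_j},\mu^{k_{j+1}}) < 2^{-j}$, and for each $j$ pick a curve $\alpha^j \in AC([t_j, t_{j+1}]; \P_2(\R^d), SW)$ from $\mu^{k_j}$ to $\mu^{k_{j+1}}$ of $L_{SW}$-length at most $2^{-j+1}$, where $t_j := 1 - 2^{-j+1}$ so that $t_1 = 0$ and $t_j \nearrow 1$. Concatenating produces a map $\alpha: [0,1) \to \P_2(\R^d)$ whose restriction to each $[0, t_N]$ is absolutely continuous in $SW$. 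For $s \le t$ in $[t_j, 1)$ the bound
\[
SW(\alpha_s, \alpha_t) \;\leq\; L_{SW}(\alpha|_{[s,t]}) \;\leq\; \sum_{i \geq j} 2^{-i+1} \;=\; 2^{-j+2}
\]
shows that $(\alpha_s)$ is $SW$-Cauchy as $s \to 1^{-}$; since $\alpha_{t_j} = \mu^{k_j} \to \mu^\infty$ in $SW$, the limit is forced to be $\mu^\infty$, and we may extend $\alpha$ continuously by $\alpha_1 := \mu^\infty$.

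The delicate point, and the main obstacle, is verifying that the extended curve $\alpha: [0,1] \to \P_2(\R^d)$ is admissible in the infimum defining $\ell_{SW}$, i.e.\ that it is genuinely absolutely continuous in $SW$ (not merely continuous with bounded sup-length on each proper subinterval). The concatenation is $SW$-continuous on the full interval $[0,1]$ and has total variation at most $\sum_j 2^{-j+1} < \infty$ in the sense of the right-hand side of \eqref{eq:Lsw=sup}. A standard arc-length reparametrization then yields a Lipschitz, hence absolutely continuous, curve in $(\P_2(\R^d), SW)$ from $\mu^{k_1}$ to $\mu^\infty$ of the same length. Applying the same construction with $\mu^{k_j}$ as starting point shows $\ell_{SW}(\mu^{k_j}, \mu^\infty) \leq 2^{-j+2}$. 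Combining with the $\ell_{SW}$-Cauchy property of the original sequence via the triangle inequality gives $\ell_{SW}(\mu^k, \mu^\infty) \to 0$, completing the argument.
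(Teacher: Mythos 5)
Your proof is correct, but it takes a genuinely different route from the paper's. The paper observes (as you do) that an $\ell_{SW}$-Cauchy sequence is $SW$-Cauchy and so has an $SW$-limit $\mu_0$; it then upgrades the convergence in one line by invoking the topological equivalence of $SW$ and $W$ due to Bayraktar--Guo to deduce $W(\mu_n,\mu_0)\to 0$, and finishes with $\ell_{SW}\le d^{-1/2}W$. Your argument instead upgrades the convergence intrinsically: you extract a rapidly $\ell_{SW}$-Cauchy subsequence, concatenate near-geodesic arcs joining consecutive terms, show the concatenation extends continuously to the $SW$-limit, observe that the total variation in the sense of \eqref{eq:Lsw=sup} is summable, and reparametrize by arc length to obtain an admissible $SW$-Lipschitz competitor. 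In effect you have reproved, from scratch, the general fact that the induced length metric of a complete metric space in which $\ell$ is everywhere finite is itself complete. What the paper's route buys is brevity, at the cost of importing an external theorem ($W$--$SW$ topological equivalence) that is overkill here; what your route buys is self-containment --- it only uses Proposition~\ref{prop:SW2_complete}, lower semicontinuity of $SW$ under $SW$-convergence (implicit in identifying the endpoint), and the bound $\ell_{SW}\le d^{-1/2}W$ to guarantee rectifiability of competitors --- and it transfers verbatim to any metric space, not just $\P_2(\R^d)$. The one place worth being slightly more explicit in your write-up is the claim that total variation of the full concatenation is bounded by $\sum_j 2^{-j+1}$: for an arbitrary finite partition $0=s_0<\dots<s_m=1$ the last subinterval straddles infinitely many of the $[t_j,t_{j+1}]$, so one should pass through an intermediate node $t_N>s_{m-1}$ and use $SW(\alpha_{t_N},\alpha_1)\le\sum_{j\ge N}L_{SW}(\alpha^j)$, which follows from the Cauchy tail estimate and $SW$-lower semicontinuity along $s\to 1^-$. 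With that small addition the argument is airtight.
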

\begin{proof}
    \grn Note that any Cauchy sequence $(\mu_n)$ in $(\P_2(\R^d),\ell_{SW})$, is also Cauchy with respect to $SW$. By Proposition~\ref{prop:SW2_complete} we can find a limit $\mu_0\in\P_2(\R^d)$ such that $SW(\mu_n,\mu_0)\xrightarrow[]{n\rightarrow\infty} 0$. By the topological equivalence of $SW$ and $W$~\cite[Theorem 2.3]{BayGuo21}, $\ell_{SW}(\mu_n,\mu_0)\leq d^{-1/2}W(\mu_n,\mu_0)\xrightarrow[]{n\rightarrow\infty} 0$. \nc
\end{proof}

In locally compact metric spaces the compactness of paths, lower semicontinuity of length, and existence of geodesics follow by classical arguments; see Section 4 of~\cite{AmbTil}. However, in $(\P_2(\R^d), SW)$ balls are not precompact; see Remark~\ref{rmk:SWball=notswcpct}. On the other hand balls are precompact with respect to the narrow topology (Proposition~\ref{prop:SWball=wcpct}) and the SW distance is lower semicontinuous with respect to narrow convergence. This allows to use instead the following refined version of Ascoli-Arzel\`a theorem~\cite[Proposition 3.3.1]{AGS} to construct limiting curves and establish the existence of geodesics. 

\begin{proposition}[Proposition 3.3.1. of~\cite{AGS}]\label{prop:ArzelaAscoli-weak}
Let $(X,m)$ be a complete metric space. Let $T>0$ and $K\subset X$ be a sequentially compact set with respect to topology $\sigma$, and let $u_n:[0,T]\rightarrow X$ be curves such that
\begin{equation}\label{eq:u_n;eqcts}
u_n(t)\in K\quad\forall n\in\N,\;t\in[0,T],\\
\limsup_{n\rightarrow\infty} m(u_n(s),u_n(t))\leq \omega(s,t)\quad \forall s,t\in[0,T],
\end{equation}
for a symmetric function $\omega:[0,T]\times[0,T]\rightarrow[0,+\infty)$, such that
\[\lim_{(s,t)\rightarrow(r,r)}\omega(s,t)=0\quad\forall r\in[0,T]\setminus\mathscr{C}\]
where $\mathscr{C}$ is an (at most) countable subset of $[0,T]$. Then there exists an increasing subsequence $k\mapsto n(k)$ and a limit curve $u:[0,T]\rightarrow X$ such that
\[u_{n(k)}(t)\xrightharpoonup[]{\sigma}u(t)\quad\forall t\in[0,T],\;u \text{ is continuous with respect to }  m \text{ in } [0,T]\setminus\mathscr{C}.\]
\end{proposition}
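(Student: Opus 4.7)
The plan is to imitate the classical Ascoli–Arzel\`a argument, replacing pointwise compactness in $(X,m)$ (which typically fails here) with the $\sigma$-sequential compactness of $K$, and replacing equicontinuity with the generalized modulus $\omega$. A key (implicit) assumption one should use in this framework is that $m$ is $\sigma$-sequentially lower semicontinuous on $K$; this is the standard setting of~\cite{AGS} and is satisfied in our applications where $\sigma$ is narrow convergence and $m$ is $SW$ or $\ell_{SW}$, by Proposition~\ref{prop:SW2_lsc} and Lemma~\ref{cor:lsw;lsc}.

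First I would fix a countable dense set $D\subset[0,T]$ containing $\mathscr{C}$. Since for each $t\in D$ the sequence $(u_n(t))_n$ lies in the $\sigma$-sequentially compact set $K$, a standard diagonal extraction produces a subsequence (still denoted $u_{n(k)}$) and a map $u\colon D\to K$ such that
\[u_{n(k)}(t)\xrightharpoonup{\sigma}u(t)\quad\text{for every }t\in D.\]
Combining the limsup hypothesis with $\sigma$-lower semicontinuity of $m$ one gets
\[m(u(s),u(t))\;\leq\;\liminf_{k\to\infty} m(u_{n(k)}(s),u_{n(k)}(t))\;\leq\;\omega(s,t)\quad\forall\, s,t\in D.\]

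Next I would extend $u$ to $[0,T]\setminus\mathscr{C}$. For $r\in[0,T]\setminus\mathscr{C}$ pick any sequence $(t_j)\subset D$ with $t_j\to r$. Then $\omega(t_j,t_k)\to 0$ as $j,k\to\infty$ by the assumption on $\omega$ at $r$, so $(u(t_j))_j$ is $m$-Cauchy and hence $m$-converges to some $u(r)\in X$ by completeness of $(X,m)$. Comparing two such sequences through the triangle inequality and the $\omega$ bound shows $u(r)$ is independent of the choice of $(t_j)$, so $u(r)$ is well defined; by the same bound, $u$ is $m$-continuous on $[0,T]\setminus\mathscr{C}$. To upgrade this to $\sigma$-convergence of the subsequence at $r$, I would argue that any $\sigma$-cluster point $\tilde u\in K$ of $(u_{n(k)}(r))_k$ satisfies, via $\sigma$-lower semicontinuity of $m$ and the limsup bound,
\[m(\tilde u,u(t))\;\leq\;\omega(r,t)\quad\forall\, t\in D;\]
letting $t\to r$ along $D$ forces $\tilde u=u(r)$, so that the whole subsequence converges. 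For $r\in\mathscr{C}$ one extracts one more diagonal subsequence to ensure $\sigma$-convergence at each such point (countably many steps), choosing $u(r)$ to be the resulting limit; $m$-continuity of $u$ at these points is not required.

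The main obstacle, and the reason a naive Ascoli–Arzel\`a argument does not suffice, is precisely the mismatch between the two topologies: one needs $\sigma$ to extract limits of $u_n(t)$, but $m$ to control distances along the curve. Bridging them requires the $\sigma$-lower semicontinuity of $m$ at every step where $\omega$-estimates are transferred from the approximating sequence $u_{n(k)}$ to the limit curve $u$. The countable exceptional set $\mathscr{C}$ is also delicate: on $\mathscr{C}$ only $\sigma$-convergence (not $m$-continuity) can be secured, which is why an extra diagonal extraction there is needed and why the conclusion only asserts $m$-continuity on $[0,T]\setminus\mathscr{C}$.
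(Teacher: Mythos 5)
The paper does not prove this proposition; it is imported verbatim from Ambrosio--Gigli--Savar\'e, and your argument reproduces the standard [AGS]-style proof faithfully. You correctly identify the implicit standing hypothesis of that framework, namely that $m$ is sequentially lower semicontinuous with respect to $\sigma$ (indeed jointly in both arguments, which is what your step $m(\tilde u,u(t))\leq\liminf_j m(u_{n(k_j)}(r),u_{n(k_j)}(t))$ actually uses), and you correctly note that this is verified in the present paper for $SW$ and narrow convergence.

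Two small remarks. First, the final diagonal extraction ``for $r\in\mathscr{C}$'' is redundant: since you chose $D\supset\mathscr{C}$ from the outset, the $\sigma$-convergence at every point of $\mathscr{C}$ is already secured by the initial diagonalization over $D$, so no further passage to a subsequence is needed there. Second, the subsequence-of-subsequence argument used to upgrade ``every $\sigma$-cluster point of $u_{n(k)}(r)$ equals $u(r)$'' to ``$u_{n(k)}(r)\xrightharpoonup{\sigma}u(r)$'' tacitly relies on $\sigma$ being Hausdorff (so cluster points are unique) and on $\sigma$-sequential compactness of $K$ to furnish cluster points along any subsequence; both are standing hypotheses in the [AGS] framework, and it would be worth stating them explicitly alongside the lower semicontinuity. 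With those points clarified, the proof is complete and matches the cited source.
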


Setting $m=SW$ and $\sigma$ to be the topology generated by the narrow convergence in Proposition~\ref{prop:ArzelaAscoli-weak}, we can modify the standard arguments to show pointwise narrow compactness of curves.

\begin{lemma}[Pointwise narrow compactness for curves and lower semiconinuity of length]\label{lem:curve_cpct}
$ $\newline\noindent Let $I$ be a closed interval
and suppose a sequence of curves $(\mu^k_t)_{t\in I}\in AC(\P_2(\R^d);SW)$ satisfies
\[\sup_{k\geq 1} L_{SW}((\mu^k_t)_{t\in I})<\infty \text{ and } \sup_{k,l\geq 1} SW(\mu^k_0,\mu^l_0)<\infty.\]
Then, up to a reparametrization, there exists a curve $(\mu_t)_{t\in I}$ continuous in $SW$ such that along a subsequence (which we do not relabel)
\[\mu^k_t\rightharpoonup \mu_t \text{ narrowly for all } t\in I .\]
Moreover,
\begin{equation}\label{eq:Lsw;lsc}
    L_{SW}((\mu_t)_{t\in I})\leq \liminf_{k\rightarrow\infty} L_{SW}((\mu^k_t)_{t\in I}).
\end{equation}
In particular, $(\mu_t)_{t\in I}\in AC(\P_2(\R^d);SW)$.
\end{lemma}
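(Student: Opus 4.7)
The strategy is to apply the refined Ascoli--Arzel\`a theorem (Proposition~\ref{prop:ArzelaAscoli-weak}) with $m = SW$ and $\sigma$ the topology of narrow convergence. This choice is forced: balls in $(\P_2(\R^d), SW)$ are \emph{not} $SW$-compact (Remark~\ref{rmk:SWball=notswcpct}), so the classical version of Ascoli--Arzel\`a cannot be used; however, $SW$-balls \emph{are} narrowly compact (Proposition~\ref{prop:SWball=wcpct}) and $SW$ is narrowly lower semicontinuous (Proposition~\ref{prop:SW2_lsc}), which is precisely the structure that the refined version exploits.

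First I reparametrize. Set $L_k := L_{SW}((\mu^k_t)_{t \in I})$ and $L^* := \sup_k L_k < \infty$. Using the arc-length parametrization rescaled to $I$, I replace each $\mu^k$ by a reparametrization $\tilde\mu^k$ (not relabeled later) satisfying the uniform Lipschitz estimate
\[
SW(\tilde\mu^k_s, \tilde\mu^k_t) \le \frac{L^*}{|I|}\, |s - t|, \qquad s, t \in I.
\]
Combined with $\sup_{k, l} SW(\mu^k_0, \mu^l_0) < \infty$, this yields a radius $R < \infty$ with $\sup_{k, t} SW(\tilde\mu^k_t, \tilde\mu^1_0) \le R$, so every $\tilde\mu^k_t$ lies in the closed ball $\overline{B}^{SW}(\tilde\mu^1_0, R)$. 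Proposition~\ref{prop:SWball=wcpct} says this ball is narrowly compact. I then apply Proposition~\ref{prop:ArzelaAscoli-weak} with this ball as $K$, modulus $\omega(s, t) = (L^*/|I|)\,|s-t|$, and $\mathscr{C} = \emptyset$, producing a subsequence and a limit curve $(\mu_t)_{t \in I}$ with $\tilde\mu^k_t \rightharpoonup \mu_t$ narrowly for every $t \in I$, continuous with respect to $SW$ on $I$. Passing to the narrow limit in the Lipschitz estimate via Proposition~\ref{prop:SW2_lsc} shows that $(\mu_t)_{t\in I}$ is itself $(L^*/|I|)$-Lipschitz in $SW$, hence lies in $AC(\P_2(\R^d); SW)$.

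For the lower semicontinuity of length, I use the variational characterization \eqref{eq:Lsw=sup}. Fix any partition $t_0 < t_1 < \cdots < t_n$ of points in $I$; by narrow lower semicontinuity of $SW$ applied to each term,
\[
\sum_{i=0}^{n-1} SW(\mu_{t_i}, \mu_{t_{i+1}}) \le \sum_{i=0}^{n-1} \liminf_{k \to \infty} SW(\tilde\mu^k_{t_i}, \tilde\mu^k_{t_{i+1}}) \le \liminf_{k \to \infty} L_{SW}((\tilde\mu^k_t)_{t \in I}) = \liminf_{k \to \infty} L_{SW}((\mu^k_t)_{t \in I}),
\]
where the last equality uses that reparametrization preserves length. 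Taking the supremum over partitions on the left yields \eqref{eq:Lsw;lsc}.

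The only conceptual obstacle is the one already addressed above: the lack of compactness of $SW$-balls in the $SW$ topology, which is circumvented by working in the weaker narrow topology while retaining $SW$ as the metric in Proposition~\ref{prop:ArzelaAscoli-weak}. Everything else is essentially bookkeeping once the right parametrization and ambient ball are fixed.
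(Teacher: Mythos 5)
Your proof is correct and follows essentially the same strategy as the paper: reparametrize to uniformly Lipschitz curves, locate everything in a single $SW$-ball which is narrowly compact by Proposition~\ref{prop:SWball=wcpct}, apply the refined Ascoli--Arzel\`a theorem (Proposition~\ref{prop:ArzelaAscoli-weak}) with metric $SW$ and topology $\sigma=$ narrow, and use the narrow lower semicontinuity of $SW$ (Proposition~\ref{prop:SW2_lsc}) both to show the limit curve is Lipschitz and to get lower semicontinuity of length via the supremum-over-partitions characterization \eqref{eq:Lsw=sup}.

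One point worth noting: your treatment of the length lower semicontinuity step is actually cleaner and more careful than the paper's. You apply narrow lower semicontinuity of $SW$ directly to each pair $(\tilde\mu^k_{t_i},\tilde\mu^k_{t_{i+1}}) \rightharpoonup (\mu_{t_i},\mu_{t_{i+1}})$, use the elementary estimate $\sum_i \liminf_k \leq \liminf_k \sum_i$ for finite sums, bound by the length, and then take the supremum over partitions. The paper's written argument instead asserts that $SW(\mu^k(t_j),\mu(t_j))<\eps/N$ for $k$ large, which does not follow from narrow convergence alone (it would require convergence of second moments, i.e.\ $SW$-convergence). Your version establishes the inequality \eqref{eq:Lsw;lsc} without needing that intermediate step, and without an implicit appeal to anything stronger than what Propositions~\ref{prop:SW2_lsc} and~\ref{prop:ArzelaAscoli-weak} actually give.
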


\begin{proof}
As each $(\mu_t^k)_{t\in I}$ is an absolutely continuously curve with uniformly bounded length, we may instead consider their Lipschitz reparametrizations~\cite[Lemma 1.1.4]{AGS} to the interval $I:=[0,1]$ with each of the Lipschitz constant is bounded above by the length of the curve. Thus the equicontinuity condition \eqref{eq:u_n;eqcts} is satisfied at all points $s,t\in I$ with \grn$\omega(s,t)=|s-t|\sup_k L_{SW}((\mu^k_\tau)_{\tau\in I})$\nc. Furthermore, the condition $\sup_{k,l\geq 1}SW(\mu^k_0,\mu^l_0)<\infty$ allows us to choose $\nu\in\P_2(\R^d)$ such that $M_2:=\sup_k SW(\nu,\mu^k_0)$ is finite. 
Then
\[SW(\mu^k_t,\nu)\leq SW(\mu^k_0,\mu^k_t)+SW(\mu^k_0,\nu)\leq L((\mu^k_t)_{t\in I})+\sup_k SW(\nu,\mu^k_0)\leq M_1+M_2.\]
By Proposition~\ref{prop:SWball=wcpct}, $\overline B_{SW}(\nu,M_1+M_2)$ is compact with respect to the narrow topology. Thus the refined Ascoli-Arzel\`a Theorem (Proposition~\ref{prop:ArzelaAscoli-weak}) implies the existence of curve $(\mu_t)_{t\in I}$ continuous in $t\in I$ such that $(\mu^k_t)_{t\in I}$ pointwise converge narrowly at all $t\in I$ as $k\nearrow \infty$.

By Proposition~\ref{prop:SW2_lsc} $(\mu,\nu)\mapsto SW(\mu,\nu)$ is lower semicontinuous with respect to narrow convergence of measures. Thus for any fixed partition $0<t_1<\cdots<t_N$, we can find sufficiently large $k$ such that $SW(\mu^k(t_j),\mu(t_{j}))<\eps/N$ for all $j\leq N$ and thus
\[L_{SW}((\mu_t)_{t\in I})\leq \sum_{j\leq N-1}SW(\mu(t_{j}),\mu(t_{j+1})) \leq \sum_{j\leq N-1}SW(\mu^k(t_{j}),\mu^k(t_{j+1})) + 2\eps \leq L_{SW}((\mu^k_t)_{t\in I}) +2\eps.\]
Letting $\eps\searrow 0$ and $k\to \infty$, we see that indeed \grn $L_{SW}((\mu_t)_{t\in I})=\lim_{k\nearrow\infty} L_{SW}((\mu^k_t)_{t\in I})$.\nc
\end{proof}

From Lemma~\ref{lem:curve_cpct} we deduce the lower semiconitnuity of $\ell_{SW}$.
\begin{corollary}[lower semicontinuity of $\ell_{SW}$]\label{cor:lsw;lsc}
The map $(\mu,\nu)\mapsto \ell_{SW}(\mu,\nu)$ on $\P_2(\R^d)\times\P_2(\R^d)$ is lower semicontinuous with respect to the narrow convergence of measures.
\end{corollary}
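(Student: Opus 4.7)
The plan is to deduce lower semicontinuity of $\ell_{SW}$ directly from Lemma~\ref{lem:curve_cpct} combined with the definition \eqref{def:lsw} of $\ell_{SW}$. Fix sequences $\mu^k \rightharpoonup \mu$ and $\nu^k \rightharpoonup \nu$ narrowly in $\P_2(\R^d)$, and set $L := \liminf_k \ell_{SW}(\mu^k,\nu^k)$; we wish to show $\ell_{SW}(\mu,\nu) \leq L$. If $L = +\infty$ the claim is trivial, so assume $L < +\infty$ and pass to a subsequence, not relabeled, along which $\ell_{SW}(\mu^k,\nu^k) \to L$ and $\ell_{SW}(\mu^k,\nu^k) \leq L+1$ for all $k$.

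Using the definition of $\ell_{SW}$, I would pick for each $k$ a nearly length-minimizing curve $(\mu^k_t)_{t\in[0,1]} \in AC([0,1];\P_2(\R^d),SW)$ with $\mu^k_0 = \mu^k$, $\mu^k_1 = \nu^k$, and $L_{SW}((\mu^k_t)_{t\in[0,1]}) \leq \ell_{SW}(\mu^k,\nu^k) + 1/k$. In particular $\sup_k L_{SW}((\mu^k_t)_{t\in[0,1]}) < \infty$. Appealing to Lemma~\ref{lem:curve_cpct}, I would then extract (along a further subsequence) a limit curve $(\mu_t)_{t\in[0,1]}$ which is continuous in $SW$ and satisfies $\mu^k_t \rightharpoonup \mu_t$ narrowly for each $t\in[0,1]$, together with the length lower semicontinuity
\[
L_{SW}((\mu_t)_{t\in[0,1]}) \leq \liminf_k L_{SW}((\mu^k_t)_{t\in[0,1]}) \leq L.
\]
By uniqueness of narrow limits, $\mu_0 = \mu$ and $\mu_1 = \nu$, so $(\mu_t)_{t\in[0,1]}$ is admissible in the definition of $\ell_{SW}(\mu,\nu)$, which yields $\ell_{SW}(\mu,\nu) \leq L_{SW}((\mu_t)_{t\in[0,1]}) \leq L$, as desired.

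The main technical obstacle is verifying the second hypothesis of Lemma~\ref{lem:curve_cpct}, namely the $SW$-bound $\sup_{k,\ell}SW(\mu^k_0, \mu^\ell_0) < \infty$ on the starting points. Narrow convergence $\mu^k \rightharpoonup \mu$ in $\P_2(\R^d)$ does not on its own imply $SW$-boundedness of the $\mu^k$'s (the second moments may drift to infinity), so one needs a further reduction: either upgrade narrow convergence to $SW$-convergence (equivalently $W$-convergence by the topological equivalence of~\cite{BayGuo21}), or sharpen Lemma~\ref{lem:curve_cpct} so that narrow tightness of $\{\mu^k_0\}$ (automatic from narrow convergence) replaces the $SW$-bound, using Prokhorov's theorem on the intermediate measures $\{\mu^k_t\}$ to obtain pointwise narrow compactness along the curves. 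In either case the structure of the argument above carries through unchanged.
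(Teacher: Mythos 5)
Your approach is the same as the paper's: take near-minimizing curves for $\ell_{SW}(\mu^k,\nu^k)$, invoke Lemma~\ref{lem:curve_cpct} to extract a pointwise narrow limit curve, and use lower semicontinuity of $L_{SW}$ to bound the length of the limit. You correctly flag that the sticking point is verifying the second hypothesis of Lemma~\ref{lem:curve_cpct}, $\sup_{k,l}SW(\mu_0^k,\mu_0^l)<\infty$, since narrow convergence does not control second moments (e.g.\ $\mu^k=(1-1/k)\delta_0+(1/k)\delta_{ke_1}\rightharpoonup\delta_0$ with $SW(\mu^k,\delta_0)\to\infty$). It is worth noting that the paper's own proof has the very same gap: it asserts ``as $SW$ is lower semicontinuous, $SW(\mu^k,\mu)\to 0$,'' which does not follow from lower semicontinuity, nor from narrow convergence alone. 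So you have not missed anything; you have identified a genuine issue.

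Of your two proposed repairs, the first (upgrading the hypothesis from narrow convergence to $SW$- or $W$-convergence) does not fix the corollary --- it proves a strictly weaker statement, since lower semicontinuity with respect to a finer topology is a weaker assertion. The second is the right fix, and it does work: the relevant observation is that tightness propagates along curves of bounded $SW$-length even though $SW$-boundedness from a fixed center does not. Concretely, if $SW(\mu,\nu)\leq M$ then for any $R>0$, using that for $|x|>R$ one has $\vol_{\S^{d-1}}(\{\theta:|x\cdot\theta|>R/2\})\geq c_d>0$, one obtains $c_d\,\nu(\R^d\setminus B(0,R))\leq \mu(\R^d\setminus B(0,R/4))+16M^2/R^2$ by splitting the 1D transport plans into mass that starts far out and mass that travels more than $R/4$. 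Applying this with $\mu=\mu_0^k$ (a tight family by narrow convergence) and $\nu=\mu_t^k$ (with $SW(\mu_t^k,\mu_0^k)\leq M:=\sup_k L_{SW}$) shows $\{\mu_t^k\}_{k,t}$ is tight, and then the refined Ascoli--Arzel\`a argument in the proof of Lemma~\ref{lem:curve_cpct} goes through verbatim with Prokhorov replacing Proposition~\ref{prop:SWball=wcpct}. With this modification the rest of your argument is correct.
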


\begin{proof}
Let $\mu^k,\nu^k$ be narrowly convergent sequences in $\P_2(\R^d)$ with respective limits $\mu,\nu$. Fix $\eps>0$, and
for each $k=1,2,\cdots$ let $(\mu_t^k)_{t\in I_k}$, $I_k=[0,\ell_{SW}(\mu^k,\nu^k)+\eps]$, be the arc-length parametrized curve~\cite[Lemma 1.1.4]{AGS} such that
\[L_{SW}((\mu_t^k)_{t\in I_k})\leq \ell_{SW}(\mu^k,\nu^k)+\eps.\]
By setting the $\mu_{t}^k=\nu^k$ for $t\geq \ell_{SW}(\mu^k,\nu^k)+\eps$, we can define all $(\mu^k_t)_t$ on a common bounded interval $I\supset I_k$. As $SW$ is lower semicontinuous, $SW(\mu^k,\mu)\xrightarrow[]{k\rightarrow\infty} 0$. Thus by Lemma~\ref{lem:curve_cpct}, there exists a limiting curve $(\mu_t)_{t\in I}\in AC(\P_2(\R^d);SW)$ such that
\[\mu^k_t\rightharpoonup \mu_t \text{ narrowly for all } t\in I.\]
Moreover, $(\mu_t)_t$ is a curve connecting $\mu$ and $\nu$, and by \eqref{eq:Lsw;lsc}
\[\ell_{SW}(\mu,\nu^0)\leq L_{SW}((\mu_t)_{t\in I})\leq \liminf_{k\rightarrow\infty} L_{SW}((\mu^k_t)_{t\in I}) = \liminf_{k\rightarrow\infty}\ell_{SW}(\mu^k,\nu^k)+\eps.\]
We conclude by letting $\eps\searrow 0$.
\end{proof}

Existence of geodesics in $(\P_2(\R^d),\ell_{SW})$ also follows from Lemma~\ref{lem:curve_cpct}.
\begin{proposition}[$\ell_{SW}$ is a geodesic metric]\label{prop:lsw-geodesic}
    For each $\mu,\nu\in\P_2(\R^d)$ there exists a length minimizing curve $(\mu_t)_{t\in[0,1]}$ such that $\ell_{SW}(\mu,\nu)=L_{SW}((\mu_t)_{t\in[0,1]})$. In particular, $(\P_2(\R^d),\ell_{SW})$ is a geodesic space.
\end{proposition}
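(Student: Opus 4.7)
The plan is a direct application of the compactness result in Lemma~\ref{lem:curve_cpct}, using the standard scheme for extracting a minimizing geodesic from a minimizing sequence of curves.

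First, observe that $\ell_{SW}(\mu,\nu)<\infty$: since $SW\leq \frac{1}{\sqrt d} W$ and $W$ is a length metric with $\ell_W=W$ on $\P_2(\R^d)$, we have $\ell_{SW}(\mu,\nu)\leq \frac{1}{\sqrt d}W(\mu,\nu)<\infty$. Thus we may choose a minimizing sequence of curves $(\mu^k_t)_{t\in[0,1]}\in AC([0,1];\P_2(\R^d),SW)$ connecting $\mu$ to $\nu$ with
\[L_{SW}((\mu^k_t)_{t\in[0,1]})\searrow \ell_{SW}(\mu,\nu) \text{ as } k\to\infty.\]
In particular $\sup_k L_{SW}((\mu^k_t)_{t\in[0,1]})<\infty$. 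Moreover, since $\mu^k_0=\mu$ for every $k$, we trivially have $\sup_{k,l}SW(\mu^k_0,\mu^l_0)=0<\infty$.

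The hypotheses of Lemma~\ref{lem:curve_cpct} are thus satisfied. Applying that lemma (with the arc-length or Lipschitz reparametrization to $[0,1]$ built into the statement, which preserves both endpoints and the length of each curve), we obtain a subsequence (not relabeled) and a limit curve $(\mu_t)_{t\in[0,1]}\in AC([0,1];\P_2(\R^d),SW)$ with $\mu^k_t\rightharpoonup\mu_t$ narrowly for every $t\in[0,1]$, and
\[L_{SW}((\mu_t)_{t\in[0,1]})\leq \liminf_{k\to\infty}L_{SW}((\mu^k_t)_{t\in[0,1]})=\ell_{SW}(\mu,\nu).\]
Since narrow convergence at $t=0$ and $t=1$ forces $\mu_0=\mu$ and $\mu_1=\nu$, the curve $(\mu_t)_{t\in[0,1]}$ is admissible in the infimum defining $\ell_{SW}(\mu,\nu)$, so the reverse inequality $\ell_{SW}(\mu,\nu)\leq L_{SW}((\mu_t)_{t\in[0,1]})$ holds by definition. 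Combining these gives $L_{SW}((\mu_t)_{t\in[0,1]})=\ell_{SW}(\mu,\nu)$, so $(\mu_t)_{t\in[0,1]}$ is a length-minimizing curve. A standard constant-speed reparametrization then yields a geodesic in $(\P_2(\R^d),\ell_{SW})$, proving it is a geodesic space.

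No step here poses a real obstacle; all the work has been done in establishing the narrow precompactness and lower semicontinuity of length in Lemma~\ref{lem:curve_cpct}. The only point worth emphasizing is that the reparametrization alluded to in Lemma~\ref{lem:curve_cpct} can be chosen (e.g.\ by the standard arc-length reparametrization of~\cite[Lemma 1.1.4]{AGS}) so as to preserve both endpoints of each curve, so that the limit curve has the correct endpoints $\mu$ and $\nu$.
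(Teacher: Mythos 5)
Your proof is correct and takes essentially the same route as the paper: use the bound $SW\leq\frac{1}{\sqrt{d}}W$ to obtain a minimizing sequence of curves with uniformly bounded length, then apply Lemma~\ref{lem:curve_cpct} to extract a narrowly convergent subsequence with a limit curve of at most the liminf length. The only difference is that you spell out a couple of details the paper elides (finiteness of $\ell_{SW}$, the fact that the arc-length reparametrization in Lemma~\ref{lem:curve_cpct} fixes the endpoints), which is good practice.
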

\begin{proof}
Let $\mu,\nu\in\P_2(\R^d)$. \grn As $SW\leq \frac{1}{\sqrt{d}}W$, the length-minimizing sequence of curves can be chosen to have length $L_{SW}$ controlled unformly by $W(\mu,\nu)$. \nc Then Lemma~\ref{lem:curve_cpct} directly implies the existence of a length minimizing curve $(\mu_t)_{t\in[0,1]}$. We also note that by Theorem~\ref{thm:SW2_ac-curves} one can associate $(\mu_t,J_t)_{t\in[0,1]}\in\CE_{[0,1]}$ such that
$\ell_{SW}^2(\mu,\nu)=\int_0^1 B_{SW}(\mu_t,J_t)\,dt$.
\end{proof}

\begin{remark}\label{rmk:lsw-geodesic}
In case the geodesic $(\mu_t)_{t\in[0,1]}$ attains the sliced Wasserstein distance between $\mu,\nu\in\P_2(\R^d)$, the geodesic can be characterized as the Radon inverse of the 1D displacement interpolant between $\hat\mu^\theta$ and $\hat\nu^\theta$. However, in general such Radon inverse is not a probability measure, as noted in Example~\ref{ex:SW2_notgeo}.

While the $\ell_{SW}$-geodesic $(\mu_t)_{t\in[0,1]}$ remains in $\P_2(\R^d)$, we cannot guarantee that the corresponding pair $(\mu_t,J_t)_{\in[0,1]}\in\CE_{[0,1]}$ satisfies $\mu_t\ll J_t$, or even that $J_t$ is a measure for a.e. $t\in[0,1]$. See Remark~\ref{rmk:SW2_ac-curves}. However, it can be approximated by solution of the continuity equation with $J_t\ll\mu_t$ by concatenating $(\mu_t\ast\eta_\eps,J_t\ast\eta_\eps)_{t\in[0,1]}$ with the Wasserstein geodesics from $\mu$ to $\mu\ast\eta_\eps$ and $\nu\ast\eta_\eps$ to $\nu$, where $\eta_\eps$ is a suitable smooth convolution kernel with bandwidth $\eps\ll1$.
\end{remark}

\begin{remark}\label{rmk:nonexpansive}
We note that the  projection to closed balls is not a sliced Wasserstein contraction. This property holds for all transportation distances which are increasing with Euclidean distance, but we show by explicit example that it does not hold for the sliced Wasserstein distance: let
\[\grn \mu^\eps=\frac12\delta_{(0,-1)}+\frac12\delta_{(0,\sqrt{1+\eps})}\nc,\quad \nu^\eps=\frac12\delta_{(-\sqrt{\frac{\eps}{1+\eps}},\sqrt{\frac{1}{1+\eps}})}+\frac12\delta_{(\sqrt{\frac{\eps}{1+\eps}},\sqrt{\frac{1}{1+\eps}})}.\]
Then $\supp \nu^\eps \subset \overline B(0,1)\subset\R^2$. \purp Let $\pi^B:\R^2\rightarrow\overline B(0,1)$ be the projection onto $\overline B(0,1)$\nc. Then via explicit computation one can verify that
\[SW(\pi^B_\#\mu^\eps,\nu^\eps)> SW(\mu^\eps,\nu^\eps) \;\text{ for sufficiently small } \eps>0.\]

In light of this observation, the following question is nontrivial:
Consider $\mu,\nu\in\P_2(\R^d)$ supported in a closed unit ball $\overline B$ centered at 0. Does it hold that for all $t \in [0,1]$ the  measures along the geodesic are supported within the same ball? This property sounds natural as $\hat\mu^\theta,\hat\nu^\theta$ would be supported on $[-1,1]$ for all $\theta\in\S^{d-1}$, but remains an open problem.
\end{remark}

Recall that in Example~\ref{ex:SW2_notgeo} established that in general the $SW$ geodesics do not exist, whereas $\ell_{SW}$ geodesics between $\mu,\nu\in\P_2(\R^d)$ always exist by Proposition~\ref{prop:lsw-geodesic}. Thus we conclude this section with the following corollary.
\begin{corollary}\label{cor:sw2-neq-lsw}
    $(\P_2(\R^d),SW)$ is not a length space.
\end{corollary}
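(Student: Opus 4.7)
The plan is to derive a contradiction by combining Proposition~\ref{prop:lsw-geodesic} with the nonexistence of geodesics established in Example~\ref{ex:SW2_notgeo}. Recall that $(X,m)$ is a length space precisely when $m$ coincides with its induced length metric. In our setting, this would mean $SW(\mu,\nu) = \ell_{SW}(\mu,\nu)$ for every $\mu,\nu\in \P_2(\R^d)$.

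First I would take the specific measures $\mu_0 = \tfrac{1}{2}(\delta_{(-1,-1)} + \delta_{(1,1)})$ and $\mu_1 = \tfrac{1}{2}(\delta_{(-1,1)} + \delta_{(1,-1)})$ considered in Example~\ref{ex:SW2_notgeo}, which live in $\P_2(\R^2)$. Suppose towards a contradiction that $SW(\mu_0,\mu_1) = \ell_{SW}(\mu_0,\mu_1)$. By Proposition~\ref{prop:lsw-geodesic} there exists an $\ell_{SW}$-geodesic $(\mu_t)_{t\in[0,1]}$ joining $\mu_0$ to $\mu_1$ with $L_{SW}((\mu_t)_{t\in[0,1]}) = \ell_{SW}(\mu_0,\mu_1) = SW(\mu_0,\mu_1)$. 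After an arc-length reparametrization (as in~\cite[Lemma 1.1.4]{AGS}), we may assume the curve has constant speed, so that $SW(\mu_s,\mu_t) \leq |t-s|\, SW(\mu_0,\mu_1)$ for all $0 \leq s \leq t \leq 1$.

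Next I would argue that this inequality forces equality at every pair $s,t$. Indeed, by the triangle inequality,
\[
SW(\mu_0,\mu_1) \leq SW(\mu_0,\mu_s) + SW(\mu_s,\mu_t) + SW(\mu_t,\mu_1) \leq (s + (t-s) + (1-t))\, SW(\mu_0,\mu_1) = SW(\mu_0,\mu_1),
\]
so each inequality is saturated and $(\mu_t)_{t\in[0,1]}$ is a constant-speed $SW$-geodesic between $\mu_0$ and $\mu_1$. However, Example~\ref{ex:SW2_notgeo} shows that no such geodesic can exist for this specific pair: indeed, the argument there pushed forward the (would-be) geodesic under the Radon transform and identified a contradiction at $t=1/2$ using the strict convexity of the quadratic transport cost in one dimension. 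This contradiction completes the proof that $SW \neq \ell_{SW}$, and hence $(\P_2(\R^d),SW)$ is not a length space.

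The main step of the argument is the reduction from the length-metric identity $SW = \ell_{SW}$ to the existence of a constant-speed $SW$-geodesic, which then contradicts Example~\ref{ex:SW2_notgeo}; once this is set up, no further work is needed, since all the heavy lifting is done by Proposition~\ref{prop:lsw-geodesic} and Example~\ref{ex:SW2_notgeo}.
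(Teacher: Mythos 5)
Your proof is correct and matches the paper's intended argument: the paper presents the corollary as an immediate consequence of Proposition~\ref{prop:lsw-geodesic} (existence of $\ell_{SW}$-geodesics) and Example~\ref{ex:SW2_notgeo} (nonexistence of $SW$-geodesics for a specific pair), and you carry out precisely that reduction, filling in the triangle-inequality argument that upgrades the length-metric identity to a constant-speed $SW$-geodesic. One minor caveat worth noting: Example~\ref{ex:SW2_notgeo} is stated in $\R^2$, so your proof directly covers $d=2$; for general $d\ge2$ one would embed the same configuration in a two-dimensional affine subspace of $\R^d$ and check the projection argument still obstructs the Radon inversion, a step the paper also leaves implicit.
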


\section{Comparisons with negative Sobolev norms and the Wasserstein distance}\label{sec:comparison} 
    
We establish two comparison results, Theorem~\ref{thm:lsw-sw-H;comparison}, near absolutely continuous measures, and Theorem~\ref{thm:SWnear-discrete}, near discrete measures. The former states that for suitable absolutely continuous measures, $\ell_{SW}$ is equivalent to $SW$ and both are comparable to the $\dot H^{-(d+1)/2}(\R^d)$-norm as a consequence of the averaging effect of the Radon transform.
On the other hand, Theorem~\ref{thm:SWnear-discrete} states that, roughly speaking, $SW$ and $\ell_{SW}$ are very close to $\frac{1}{\sqrt{d}}W$ near discrete measures, as the smoothing effect due to averaging does not take place. 
    
For absolutely continuous measures with densities bounded below by $a$ and above by $b$, Peyre~\cite{RPeyre18} established the metric equivalence
\[\|\mu-\nu\|_{\dot H^{-1}(\R^d)}\lesssim_b W(\mu,\nu)\lesssim_a \|\mu-\nu\|_{\dot H^{-1}(\R^d)};\]
see~\cite[Proposition 2.8]{Loe06} for an earlier proof of the first inequality above. 
Our results can be seen as providing analogous comparisons between the SW distance and a norm in a Hilbert space. As $\ell_{SW}$ differs from SW, a question particular to our setup is whether the intrinsic distance also enjoys such comparison with $\dot H^{-(d+1)/2}(\R^d)$ norm. \blue We answer this affirmatively in Theorem~\ref{thm:lsw-sw-H;comparison}.\nc 

\vio Recall that a measure $\lambda\in\P(\R^d)$ is log-concave if for any $t\in(0,1)$ and Borel measurable sets $A,B\subset\R^d$
\begin{equation}
    \lambda((1-t)A + t B)\geq \lambda^{1-t}(A)\lambda^t(B).
\end{equation}
We first prove a useful lemma, which relies on that log-concavity is preserved by the pushforward with respect to projection $x\mapsto x\cdot\theta$~\cite{Bo74}, and that log-concave measures have log-concave density~\cite{Bo75}.
\nc
\blue
\begin{lemma}\label{lem:hatmut_ubd}
    Let $\lambda\in\P_2(\R^d)$ be a log-concave measure. Let $\mu,\nu\in\P_2(\R^d)$, and suppose there exists $b>0$ such that
    \begin{equation}\label{eq:hatmunu_ubd}
        \hat\mu^\theta,\hat\nu^\theta\leq b\hat\lambda^\theta\; \text{ for a.e. }\theta\in\S^{d-1}.
    \end{equation}
    Then for a.e. $\theta\in\S^{d-1}$ the displacement interpolation $(\hat\mu_t^\theta)_{t\in[0,1]}$ from $\hat\mu^\theta$ to $\hat\nu^\theta$ satisfies the same upper bound for all $t\in[0,1]$.
\end{lemma}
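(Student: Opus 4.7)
\emph{Reduction to one dimension and log-concavity of the density.} The plan is to argue slicewise: fix $\theta\in\S^{d-1}$ in the full-measure set where the hypothesis holds, and work on $\R$. Since $\hat\lambda^\theta$ is the pushforward of $\lambda$ under the linear map $x\mapsto x\cdot\theta$, Borell's theorem (a consequence of Pr\'ekopa--Leindler) yields that $\hat\lambda^\theta$ is log-concave on $\R$. If $\hat\lambda^\theta$ is a Dirac mass, then $\hat\mu^\theta,\hat\nu^\theta\leq b\hat\lambda^\theta$ forces both to equal $\hat\lambda^\theta$ and the conclusion is trivial; otherwise $\hat\lambda^\theta$ is absolutely continuous with a log-concave density $h$ on $\R$, and the hypothesis provides densities $f:=d\hat\mu^\theta/d\L^1$ and $g:=d\hat\nu^\theta/d\L^1$ with $f,g\leq bh$.

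\emph{Change of variables under the 1D interpolant.} Let $T:=F_{\hat\nu^\theta}^{-1}\circ F_{\hat\mu^\theta}$ be the monotone rearrangement and set $T_t:=(1-t)\id + tT$, so $\hat\mu_t^\theta=(T_t)_\#\hat\mu^\theta$. For $t\in[0,1)$ the map $T_t$ is strictly increasing (since $\id$ is and $T$ is nondecreasing), hence $\hat\mu_t^\theta\ll\L^1$ with a density $f_t$ and the standard 1D change of variables gives $f_t(T_t(x))\,T_t'(x)=f(x)$ for $\hat\mu^\theta$-a.e.\ $x$. The endpoint $t=1$ is exactly the assumption $g\leq bh$. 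The task thus reduces to the pointwise bound $f(x)\leq b\,h(T_t(x))\,T_t'(x)$ for $\hat\mu^\theta$-a.e.\ $x$.

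\emph{Combining log-concavity with weighted AM--GM.} Log-concavity of $h$ yields
\[h(T_t(x))=h\bigl((1-t)x+tT(x)\bigr)\;\geq\; h(x)^{1-t}\,h(T(x))^t,\]
while weighted AM--GM applied to $T_t'(x)=(1-t)\cdot 1+t\cdot T'(x)$ (using $T'\geq 0$) gives $T_t'(x)\geq T'(x)^t$. Multiplying,
\[b\,h(T_t(x))\,T_t'(x)\;\geq\; b\,h(x)^{1-t}\bigl(h(T(x))\,T'(x)\bigr)^t.\]
On the other hand, combining $f\leq bh$ with the mass-conservation identity $f(x)=g(T(x))T'(x)\leq bh(T(x))T'(x)$,
\[f(x)=f(x)^{1-t}f(x)^t\;\leq\;\bigl(bh(x)\bigr)^{1-t}\bigl(bh(T(x))T'(x)\bigr)^t=b\,h(x)^{1-t}\bigl(h(T(x))T'(x)\bigr)^t,\]
which is at most $b\,h(T_t(x))T_t'(x)$. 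Dividing by $T_t'(x)>0$ gives $f_t(T_t(x))\leq bh(T_t(x))$, hence $f_t\leq bh$ a.e., which is precisely the desired $\hat\mu_t^\theta\leq b\hat\lambda^\theta$.

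\emph{Expected obstacle.} The main care needed is in the degenerate sets excluded by the clean computation. Where $h(x)=0$, the bound $f\leq bh$ forces $f(x)=0$ and the target inequality becomes trivial; where $T'(x)=0$ or is undefined, mass conservation again gives $f(x)=0$, and the set where $T$ fails to be differentiable is $\L^1$-null by monotonicity. If one prefers to avoid casework, the alternative is to convolve $\hat\mu^\theta$ and $\hat\nu^\theta$ with a narrow centered log-concave kernel (preserving the upper bound up to a factor $1+o(1)$ while ensuring strict positivity and smoothness), apply the clean argument above, and pass to the limit using narrow continuity of 1D displacement interpolation together with the pointwise lower semicontinuity of density bounds.
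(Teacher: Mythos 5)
Your proof is correct and follows essentially the same route as the paper's: both reduce to a fixed slice $\theta$, invoke Borell's theorem to get a log-concave density $h$ for $\hat\lambda^\theta$ (handling the Dirac case trivially), use the monotone transport map $T$ and the change-of-variables identity for the displacement interpolant, and combine a weighted geometric-mean inequality with log-concavity of $h$. The only cosmetic difference is the packaging of the mean inequality: you apply weighted AM--GM to $T_t'(x)=(1-t)+tT'(x)\geq T'(x)^t$ and then factor $f=f^{1-t}f^t$, whereas the paper writes $f_t(T_t(r))=\bigl(\tfrac{1-t}{f(r)}+\tfrac{t}{g(T(r))}\bigr)^{-1}$ and applies the harmonic--geometric mean inequality directly. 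These are the same inequality under the substitution $T'=f/(g\circ T)$, so no new content; the paper's version is a bit more compact.
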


\begin{proof}
    As projection preserves log-concavity of a measure, each $\hat\lambda^\theta$ is log-concave, and thus $\hat\lambda^\theta$ has a log-concave density with respect to $\L^1$ unless it is a dirac mass~\cite[Theorem 3.2]{Bo75}. For $\theta\in\S^{d-1}$ such that $\hat\lambda^\theta$ is a dirac mass, the conclusion of this lemma is trivial, so we consider $\theta\in\S^{d-1}$ such that $\hat\lambda^\theta\ll\L^1$.
    It follows that $\hat\mu^\theta,\hat\nu^\theta\ll\L^1$, thus we can fix the optimal transport map $T^\theta:\R\rightarrow\R$ mapping $\hat\mu^\theta$ to $\hat\nu^\theta$, and define
    \[T^\theta_t(r)=(1-t)r+t T^\theta(r).\]    
    In the remainder of this proof, we identify the measures with their densities with respect to $\L^1$. 
    The displacement interpolation $\hat\mu_t^\theta$ is given by $\hat\mu_t^\theta=(T_t^\theta)_\#\hat\mu^\theta$. As $\hat\mu_t^\theta\ll (T_t^\theta)_\#\hat\lambda^\theta$, it suffices to show $\hat\mu_t^\theta(T_t^\theta(r))\leq b \hat\lambda^\theta(T_r^\theta(r))$ for all $r\in\R$. Arguing as in the proof of\cite[Proposition D.2]{McCann94},
    \begin{align*}
        \hat\mu_t^\theta(T_t^\theta(r)) = \frac{\hat\mu^\theta(r)}{\partial_r T^\theta_t(T^\theta(r))}
        =\frac{\hat\mu^\theta(r)}{1-t+t\frac{\hat\mu^\theta(r)}{\hat\nu^\theta(T^\theta(r))}}
        =\left(\frac{1-t}{\hat\mu^\theta(r)}+\frac{t}{\hat\nu^\theta(T^\theta(r))}\right)^{-1}
        \leq (\hat\mu^\theta(r))^{1-t}(\nu(T^\theta(r)))^t,
    \end{align*}
    where we have used the harmonic mean-geometric mean inequality. On the other hand, log-concavity of $\hat\lambda^\theta$ implies 
    \[\hat\mu_t^\theta(T_t^\theta(r))\leq(\hat\mu^\theta(r))^{1-t}(\nu(T^\theta(r)))^t\leq b(\hat\lambda^\theta(r))^{1-t}(\hat\lambda^\theta(T^\theta(r)))^{t}\leq \hat\lambda^\theta(T_t^\theta(r)).\]
\end{proof}

\begin{theorem}[Comparison between $\ell_{SW},SW$ and the $\dot H^{-(d+1)/2}$-norm.]\label{thm:lsw-sw-H;comparison}
    Let $\mu,\nu,\lambda\in\P_2(\R^d)$, and let $0<a\leq b<\infty$ such that
\begin{equation}\label{cond:mu-nu;comparison_theta}
    a\hat\lambda^\theta\leq \hat\mu^\theta \leq b \hat\lambda^\theta \; \text{ and } \;\hat\nu^\theta\leq b\hat\lambda^\theta\;\; \text{ for a.e. }\theta\in\S^{d-1}.
\end{equation}
    Then we have the following.
    \begin{listi}
        \item  If $\lambda$ is log-concave  then
        \begin{equation}\label{eq:lsw-sw;direct}
            \ell_{SW}(\mu,\nu)\leq 2\sqrt{\frac{b}{a}}SW(\mu,\nu).
        \end{equation}
        
        \item If $\lambda\ll\L^d$ with
        $C_\lambda:=\esssup_{r\in\R,\theta\in\S^{d-1}}\frac{d\hat\lambda^\theta}{d\L^1}(r)<\infty$, then
        \begin{equation}\label{eq:Hneg-sw}
            \sqrt{\frac{1}{bC_\lambda}}\|\mu-\nu\|_{\dot H^{-(d+1)/2}(\R^d)} \leq  SW(\mu,\nu).
        \end{equation}
        Suppose further $\lambda=\lambda_\Omega:=\frac{1}{|\Omega|}\L^d\restr_\Omega$ for an open connected bounded $\Omega\subset\R^d$. Furthermore, let $\mu=\nu$ on $\Omega\setminus\tilde\Omega$ for some $\tilde\Omega\subset\subset \Omega$. Then there exists $C=C(d,\Omega,\dist(\tilde\Omega,\partial\Omega))$ such that
        \begin{equation}\label{eq:lsw-Hneg}
            \sqrt{\frac{1}{bC_\lambda}}\|\mu-\nu\|_{\dot H^{-(d+1)/2}(\R^d)} \leq  SW(\mu,\nu)\leq \ell_{SW}(\mu,\nu)\leq \frac{C}{\sqrt{a}}\|\mu-\nu\|_{\dot H^{-(d+1)/2}(\R^d)}.
        \end{equation}
    \end{listi}
    In particular, if $\Omega$ in (ii) is also convex, then
    \begin{equation}\label{eq:lsw-sw-Hneg;comparison}
        \begin{split}
            \sqrt{\frac{1}{bC_\lambda}}\|\mu-\nu\|_{\dot H^{-(d+1)/2}(\R^d)} &\leq  SW(\mu,\nu)\\
            &\leq \ell_{SW}(\mu,\nu)\leq \min\left\{2\sqrt{\frac{b}{a}}SW(\mu,\nu), \frac{C}{\sqrt{a}}\|\mu-\nu\|_{\dot H^{-(d+1)/2}}\right\}.
        \end{split}
        \end{equation}    
\end{theorem}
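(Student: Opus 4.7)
The plan is to establish parts (i) and (ii) separately; the combined inequality \eqref{eq:lsw-sw-Hneg;comparison} then follows immediately, because $\lambda_\Omega$ is log-concave whenever $\Omega$ is convex, so (i) applies with $\lambda = \lambda_\Omega$. The main tool for both upper bounds on $\ell_{SW}$ will be the linear interpolation $\mu_t := (1-t)\mu + t\nu$, $t\in[0,1]$, which is manifestly a curve in $\P_2(\R^d)$ and hence admissible for the infimum defining $\ell_{SW}$.

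For part (i), I associate a flux $J_t$ to this curve through Theorem~\ref{thm:SW2_ac-curves}: using \eqref{eq:Radon-derivative} and $\partial_t\hat\mu_t^\theta = \hat\nu^\theta - \hat\mu^\theta$, the sliced continuity equation forces $\theta\cdot\widehat J_t^\theta(r) = F_{\hat\mu^\theta}(r) - F_{\hat\nu^\theta}(r)$ independently of $t$. Consequently
\[
|\mu_t'|_{SW}^2 \leq B_{SW}(\mu_t,J_t) = \dashint_{\S^{d-1}}\int_\R \frac{\bigl(F_{\hat\mu^\theta}-F_{\hat\nu^\theta}\bigr)^2}{\hat\mu_t^\theta}\,dr\,d\theta.
\]
The key step is a 1D Benamou--Brenier (Peyre-type) bound: since $\hat\mu^\theta, \hat\nu^\theta \leq b\hat\lambda^\theta$ and $\lambda$ is log-concave, Lemma~\ref{lem:hatmut_ubd} yields $\hat\rho_t^\theta \leq b\hat\lambda^\theta$ along the 1D Wasserstein geodesic $(\hat\rho_t^\theta)_{t\in[0,1]}$; testing this geodesic against a 1D gradient and applying Cauchy--Schwarz produces
\[
\int_\R \bigl(F_{\hat\mu^\theta}-F_{\hat\nu^\theta}\bigr)^2/\hat\lambda^\theta\,dr \;\leq\; b\,W^2(\hat\mu^\theta,\hat\nu^\theta).
\]
Combining with the pointwise lower bound $\hat\mu_t^\theta \geq (1-t)a\hat\lambda^\theta$ gives $|\mu_t'|_{SW}(t)\leq\sqrt{b/\bigl((1-t)a\bigr)}\,SW(\mu,\nu)$, and integrating $\int_0^1 (1-t)^{-1/2}\,dt = 2$ delivers \eqref{eq:lsw-sw;direct}.

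For the lower bound \eqref{eq:Hneg-sw} in part (ii), I note that $\hat\mu^\theta,\hat\nu^\theta \leq b\hat\lambda^\theta \leq bC_\lambda\,\L^1$, so Lemma~\ref{lem:hatmut_ubd} again controls the 1D geodesic and the unweighted 1D Peyre estimate delivers $\|\hat\mu^\theta-\hat\nu^\theta\|_{\dot H^{-1}(\R)}^2 \leq bC_\lambda\,W^2(\hat\mu^\theta,\hat\nu^\theta)$. Integrating over $\theta$ and invoking the Radon--Sobolev isometry \eqref{eq:Radon_isometry;Hts} with $s=t=-(d-1)/2$ (which gives $\|Rf\|_{\dot H^{-1}(\Pd)} = \|f\|_{\dot H^{-(d+1)/2}(\R^d)}$ and, up to an explicit dimensional constant, relates $\dashint_{\S^{d-1}}\|\hat\mu^\theta-\hat\nu^\theta\|_{\dot H^{-1}(\R)}^2\,d\theta$ to $\|\mu-\nu\|_{\dot H^{-(d+1)/2}}^2$) yields the first half of \eqref{eq:lsw-Hneg}. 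For the upper bound in \eqref{eq:lsw-Hneg}, I run the linear-interpolation argument once more but now exploit the support hypothesis $\mu = \nu$ on $\Omega\setminus\tilde\Omega$: the function $F_{\hat\mu^\theta}-F_{\hat\nu^\theta}$ then vanishes outside the projection of $\tilde\Omega$, on which $\hat\lambda^\theta(r)=|\{y\in\theta^\perp:\,r\theta+y\in\Omega\}|/|\Omega|$ is bounded below by a positive constant $c>0$ depending only on $d,\Omega,\dist(\tilde\Omega,\partial\Omega)$ (a thick $(d-1)$-dimensional slice of $\Omega$ persists over this range). Hence
\[
\int_\R \bigl(F_{\hat\mu^\theta}-F_{\hat\nu^\theta}\bigr)^2/\hat\lambda^\theta\,dr \;\leq\; c^{-1}\|\hat\mu^\theta-\hat\nu^\theta\|_{\dot H^{-1}(\R)}^2,
\]
and the Radon--Sobolev isometry converts this into $|\mu_t'|_{SW}^2(t) \lesssim_{d,\Omega} (1-t)^{-1} a^{-1}\|\mu-\nu\|_{\dot H^{-(d+1)/2}}^2$, which integrates to the claim.

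The main obstacle will be carefully executing the 1D Peyre-type inequality that underlies both parts: one needs the density bound along the \emph{entire} 1D Wasserstein geodesic, not merely at the endpoints, and this is precisely where Lemma~\ref{lem:hatmut_ubd} (and thus log-concavity of $\lambda$) enters. A secondary technicality is tracking the constants from the Radon--Sobolev isometry and the normalization of $\dashint_{\S^{d-1}}$ versus the measure on $\Pd$; verifying that the linear interpolation $(\mu_t)_{t\in[0,1]}$ is absolutely continuous in $SW$ amounts to checking that the flux $J_t$ constructed above lies in $\mathscr{H}(\R^d;\R^d)$ with $\int_0^1 B_{SW}(\mu_t,J_t)\,dt<\infty$, which is automatic once the bounds above are in place.
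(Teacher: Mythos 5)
Your proof follows the same overall structure as the paper's: use the linear interpolation $\mu_t = (1-t)\mu+t\nu$ to bound $\ell_{SW}$ from above, invoke Lemma~\ref{lem:hatmut_ubd} to propagate density upper bounds along the one--dimensional displacement interpolations, run a Peyre-type weighted $\dot H^{-1}$ argument on each slice, and convert $\dot H^{-1}(\Pd)$ into $\dot H^{-(d+1)/2}(\R^d)$ via the Fourier slicing theorem. Writing the slicewise flux as $\theta\cdot\widehat J_t^\theta = F_{\hat\mu^\theta}-F_{\hat\nu^\theta}$ is equivalent to the paper's formulation via weighted $\dot H^{-1}$-norms, and your observation that $\hat\mu_t^\theta\geq (1-t)a\hat\lambda^\theta$ integrating to $\int_0^1(1-t)^{-1/2}\,dt=2$ reproduces the constant $2\sqrt{b/a}$ exactly.

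There is one gap in the argument for the last inequality of \eqref{eq:lsw-Hneg}. You claim that $F_{\hat\mu^\theta}-F_{\hat\nu^\theta}$ vanishes outside $\tilde\Omega^\theta$ and that $\hat\lambda^\theta_\Omega$ is bounded below there because $B(x,\alpha)\subset\Omega$ for each $x\in\tilde\Omega$. In fact, since $F_{\hat\mu^\theta}-F_{\hat\nu^\theta}=(\mu-\nu)(\{x:x\cdot\theta\leq r\})$ is a primitive, its support is the \emph{convex hull} of $\tilde\Omega^\theta$ — an interval that can strictly contain $\tilde\Omega^\theta$ when $\tilde\Omega$ is disconnected (which the theorem's hypotheses permit). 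At values $r$ in that interval with no $x\in\tilde\Omega$ satisfying $x\cdot\theta=r$, your ``thick $(d-1)$-dimensional slice'' argument does not apply directly. The paper sidesteps this by using connectedness of $\Omega$ (so that $\Omega^\theta$ is an interval and $\hat\lambda^\theta_\Omega>0$ on its interior), observing that the shrunken interval $[\inf\Omega^\theta+\alpha,\sup\Omega^\theta-\alpha]$ contains the convex hull of $\tilde\Omega^\theta$, and then obtaining a lower bound for $\hat\lambda^\theta_\Omega$ on this compact set that is uniform in $\theta$ and $r$. Once you replace your local ball argument with this positivity-plus-compactness step (which is what forces $C$ to depend on $\Omega$ and not merely on $d$ and $|\Omega|$), the rest of your chain is exactly the paper's.
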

\nc

\begin{remark}\label{rmk:lsw-sw;comparison1}
We leave a few remarks on the conditions of Theorem~\ref{thm:lsw-sw-H;comparison}.
A simple, and useful, condition that implies \eqref{cond:mu-nu;comparison_theta} is the following:
\begin{equation}\label{cond:mu-nu;comparison}
    a\lambda\leq\mu\leq b\lambda \;\text{ and }\; \nu\leq b\lambda.
\end{equation}
We note that  \eqref{cond:mu-nu;comparison_theta} only requires the comparison to hold after integrating over hyperplanes. 

\blue
The condition $C_\lambda<\infty$ is satisfied whenever $\lambda\ll\L^d$ is compactly supported and has bounded density. Indeed, denoting by $B_k(0,M)$ the ball of radius $M$ in $\R^k$ centered at 0, if $\supp\lambda\subset B_d(0,M)$, 
\[\frac{d\hat\lambda^\theta}{d\L^1}(r)=\int_{r+\theta^\perp}\frac{d\lambda}{d\L^d}\,d\L^{d-1}\leq \norm{\frac{d\lambda}{d\L^d}}_{\infty} \L^{d-1}(B_{d-1}(0,M)),\]
thus $C_\lambda<\infty$ whenever $\|d\lambda/d\L^d\|_{\infty}<\infty$.
However, $\lambda$ need not be compactly supported for $C_\lambda<\infty$; for instance, consider a Gaussian measure on $\R^d$.
\nc

\grn Observe also that the second part of (ii) requires connectedness but not convexity of $\Omega$, whereas the comparison $\dot H^{-1}$ with $W$ on $\R^d$ requires $\Omega$ to be convex~\cite{RPeyre18}. This is because we only use displacement interpolation between 1D projections, and connected and convex sets coincide in $\R$. In fact, our proof only requires connectedness for each projection $\Omega^\theta$ defined in~\eqref{def:Omega_theta}. However, to keep the statement simpler we use a stronger assumption that $\Omega$ is connected. \nc
\end{remark}
  
\blue
\begin{proof}
Our proof is a careful adaptation of the argument by Peyre~\cite{RPeyre18} to the sliced Wasserstein setting. The main difficulty comes from the fact that the density of the projections $\hat\mu^\theta,\hat\nu^\theta$ with respect to $\L^1$ is not bounded away from zero, near the edge of their supports.

In this proof, we will use $\Omega^\theta\subset\R$ to denote the projection of $\Omega\subset\R^d$ in the direction $\theta\in\S^{d-1}$, namely
\begin{equation}\label{def:Omega_theta}
        \Omega^\theta=\{r\in\R:\;\exists x\in\Omega \text{ s.t. } r=x\cdot\theta\}.
\end{equation}
Defining $\Omega$ to be the interior of $\supp\lambda$ in the case (i), observe that in both cases (i) and (ii) $\Omega^\theta\subset\R$ is connected for each $\theta\in\S^{d-1}$, hence convex; in particular, the displacement interpolation between $\hat\mu^\theta$ and $\hat\nu^\theta$ remains in $\Omega^\theta$.

Noting that $\L^d\restr_\Omega$ is log-concave whenever $\Omega$ is convex, \eqref{eq:lsw-sw-Hneg;comparison} follows directly from \eqref{eq:lsw-sw;direct} and \eqref{eq:lsw-Hneg}. Thus it suffices to prove items (i) and (ii).

\vspace{3mm}
\noindent\emph{Step 1$^o$} In this step we show that when $\lambda$ is log-concave, the condition \eqref{cond:mu-nu;comparison_theta} implies the upper bound
\[\ell_{SW}(\mu,\nu)\leq 2\sqrt{\frac{b}{a}}SW(\mu,\nu).\] 
    
We do this using comparison of $W$ distances of the projections along each $\theta$ with corresponding weighted $\dot H^{-1}$ norms. Consider the linear interpolation $\tilde\mu_t = (1-t)\mu+t\nu$ and write $\tilde\mu_t^\theta=R_\theta\tilde\mu_t=(1-t)\hat\mu^\theta+t\hat\nu^\theta$. Then $\tilde\mu_t^\theta \geq (1-t)\hat\mu^\theta$ and thus by duality $\|\cdot\|_{\dot H^{-1}(\tilde\mu_t^\theta)} \leq (1-t)^{-1/2}\|\cdot\|_{\dot H^{-1}(\hat\mu^\theta)}$. Hence, using Benamou-Brenier formula for each projection, we have
\begin{equation}\label{eq:lsw-weightedHneg}
\ell_{SW}(\mu,\nu)\leq \dashint_{\S^{d-1}} \int_0^1 \|\hat\mu^\theta-\hat\nu^\theta\|_{\dot H^{-1}(\tilde\mu_t^\theta)}\,dt\,d\theta \leq  2\dashint_{\S^{d-1}} \|\hat\mu^\theta-\hat\nu^\theta\|_{\dot H^{-1}(\hat\mu^\theta)}\,d\theta.
\end{equation}
Note that \eqref{eq:lsw-weightedHneg} required no assumption on $\mu,\nu$. For each $\theta\in\S^{d-1}$ let $(\hat\mu_t^\theta)_{t\in[0,1]}$ be a constant speed $W$-geodesic from $\hat\mu^\theta$ to $\hat\nu^\theta$. By Lemma~\ref{lem:hatmut_ubd} $\hat\mu_t^\theta\leq b\hat\lambda^\theta$ for all $t\in[0,1]$. As $a\hat\lambda^\theta \leq \hat\mu^\theta$, we have
\[\|\cdot\|_{\dot H^1(\hat\mu_t^\theta)} \leq \sqrt{b}\|\cdot \|_{\dot H^1(\hat\lambda^\theta)} \leq \sqrt{\frac{b}{a}} \,\|\cdot\|_{\dot H^1(\hat\mu^\theta)},\]
and thus by duality
\[\|\cdot \|_{\dot H^{-1}(\hat\mu^\theta)}\leq \sqrt{\frac{b}{a}} \,\|\cdot\|_{\dot H^{-1}(\hat\mu_t^\theta)}.\]
As $\hat\mu_t^\theta$ is a constant speed geodesic, $\|\partial_t\hat\mu_t^\theta\|_{\dot H^{-1}(\hat\mu_t^\theta)}=W(\hat\mu^\theta,\hat\nu^\theta)$ and thus
\begin{align*}
    \|\hat\mu^\theta-\hat\nu^\theta\|_{\dot H^{-1}(\hat\mu^\theta)}
    \leq \int_0^1 \|\partial_t\hat\mu_t^\theta\|_{\dot H^{-1}(\hat\mu^\theta)}\,dt
    \leq \sqrt{\frac{b}{a}} \int_0^1 \|\partial_t\hat\mu_t^\theta\|_{\dot H^{-1}(\hat\mu_t^\theta)}\,dt
    =\sqrt{\frac{b}{a}} W(\hat\mu^\theta,\hat\nu^\theta).
\end{align*}
Hence
\begin{align*}
\ell_{SW}^2(\mu,\nu) \leq 4\dashint_{\S^{d-1}} \|\hat\mu^\theta-\hat\nu^\theta\|_{\dot H^{-1}(\hat\mu^\theta)}^2\,d\theta
\leq \frac{4b}{a}\dashint_{\S^{d-1}} W^2(\hat\mu^\theta,\hat\nu^\theta)\,d\theta = \frac{4b}{a} SW^2(\mu,\nu).
\end{align*}

\vspace{3mm}

\noindent\emph{Step 2$^o$} In this step we establish the lower bound
\[\sqrt{\frac{1}{bC_\lambda}}\|\mu-\nu\|_{\dot H^{-(d+1)/2}(\R^d)}\leq SW(\mu,\nu).\]
under the assumption that $\lambda\ll\L^d$ and $C_\lambda<\infty$. By construction
\[\hat\mu^\theta,\hat\nu^\theta \leq b\hat\lambda^\theta \leq bC_\lambda \L^1 \text{ for a.e. } \theta\in\S^{d-1}.\]
As $\L^1$ is log-concave, by Lemma~\ref{lem:hatmut_ubd} the displacement interpolation $\hat\mu_t^\theta$  between $\hat\mu^\theta$ and $\hat\nu^\theta$ satisfies $\hat\mu^\theta_t\leq b C_\lambda \L^1$ for all $t\in[0,1]$ and a.e. $\theta\in\S^{d-1}$. Arguing as in\cite[Theorem 5]{RPeyre18}, we have
\begin{align*}
    \|\hat\mu^\theta-\hat\nu^\theta\|_{\dot H^{-1}(\R)}
    \leq \int_0^1 \|\partial_t\hat\mu_t\|_{\dot H^{-1}(\R)}
    \leq \sqrt{b C_\lambda}\int_0^1 \|\partial_t\hat\mu_t\|_{\dot H^{-1}(\hat\mu_t^\theta)}
    = \sqrt{b C_\lambda} W(\hat\mu^\theta,\hat\nu^\theta). 
\end{align*}
By averaging over $\theta\in\S^{d-1}$ we obtain
\[\frac{1}{bC_\lambda}\|\hat\mu-\hat\nu\|_{\dot H^{-1}(\Pd)}^2\leq SW^2(\mu,\nu).\]
Moreover, identifying the measures $\mu,\nu$ and their densities, we have $\mu-\nu\in L^1(\R^d)$. Thus by the Fourier slicing theorem (Proposition~\ref{prop:radon-fourier}) and change of variables $\xi=\theta\zeta$, we have
\begin{align*}
    \|\hat\mu-\hat\nu\|_{\dot H^{-1}(\Pd)}^2 &= \frac{1}{2(2\pi)^{d-1}}\int_{\S^{d-1}}\int_{\R} |\zeta|^{-2} |(\F_1 R_\theta(\mu-\nu))(\zeta)|^2\,d\zeta\,d\theta  \\
    &= \frac{1}{2(2\pi)^{d-1}}\int_{\S^{d-1}}\int_{\R} |\zeta|^{-2} |\F_d(\mu-\nu)(\theta\zeta)|^2\,d\zeta\,d\theta  \\
    &= \int_{\R^d} |\xi|^{-(d+1)} |\F_d(\mu-\nu)(\xi)|^2\,d\xi = \|\mu-\nu\|_{\dot H^{-(d+1)/2}(\R^d)}^2.
\end{align*}

\vspace{3mm}
\noindent\emph{Step 3$^o$} We show \eqref{eq:lsw-Hneg} under the additional assumption that $\lambda=\lambda_\Omega:=\frac{1}{|\Omega|}\L^d\restr_\Omega$ for some bounded connected $\Omega\subset\R^d$ and $\mu=\nu$ in $\Omega\setminus\tilde\Omega$. Let $\alpha:=\dist(\tilde\Omega,\partial\Omega)$. Then
\[\hat\mu^\theta=\hat\nu^\theta \text{ on } (\inf_{r\in\Omega^\theta} r,\inf_{r\in\Omega^\theta} r+\alpha)\cup (\sup_{r\in\Omega^\theta}r-\alpha,\sup_{r\in\Omega^\theta}r) \text{ for each } \theta\in\S^{d-1}.\]
Recall $\frac{d\hat\mu^\theta}{d\L^1}=\frac{d\hat\mu^\theta}{d\hat\lambda^\theta_\Omega}\frac{d\hat\lambda^\theta_\Omega}{d\L^1}\geq a \frac{d\hat\lambda^\theta_\Omega}{d\L^1}$. Furthermore, as $\Omega$ is open and connected, $\frac{d\hat\lambda^\theta_\Omega}{d\L^1}>0$ on the interval $(\inf_{r\in\Omega^\theta} r, \sup_{r\in\Omega^\theta} r)$. Thus there exists some constant $C$ depending only on $\Omega,\alpha,d$ such that
\[\hat\mu^\theta \geq \frac{1}{a}\hat\lambda_\Omega^\theta \geq \frac{C}{a}\L^1 \text{ on } [\inf_{r\in\Omega^\theta}r+\alpha,\sup_{r\in\Omega^\theta}r-\alpha].\]
Combining this with \eqref{eq:lsw-weightedHneg}, we can find some $C=C(\Omega,\alpha,d)>0$ such that
\begin{align*}
    \ell_{SW}(\mu,\nu)\leq 2\dashint_{\S^{d-1}} \|\hat\mu^\theta-\hat\nu^\theta\|_{\dot H^{-1}(\hat\mu^\theta)}\,d\theta
    \leq \frac{C}{\sqrt{a}}\dashint_{\S^{d-1}} \|\hat\mu^\theta-\hat\nu^\theta\|_{\dot H^{-1}(\R)}\,d\theta,
\end{align*}
By Jensen's inequality and the Radon isometry argument as in Step 2$^\circ$, we deduce
\begin{align*}
    \ell_{SW}^2(\mu,\nu) \leq \frac{C^2}{a}\|\hat\mu-\hat\nu\|_{\dot H^{-1}(\Pd)}^2 = \frac{C^2}{a} \|\mu-\nu\|_{\dot H^{-(d+1)/2}(\R^d)}.
\end{align*}
\end{proof}
\nc

\begin{remark}\label{rmk:lsw-sw;comparison}
We make a few further remarks. Firstly, the lower bound 
\[\|\mu-\nu\|_{\dot H^{-(d+1)/2}(\R^d)} \lesssim  SW(\mu,\nu)\]
does not hold for general measures. In fact, while the Sobolev embedding Theorem implies $\delta_0\in H^{-(d+1)/2}(\R^d)$, the same is not true for $\dot H^{-(d+1)/2}(\R^d)$. Recalling that $\F_d\delta_0$ is a constant, we see that for any $f\in\Sch(\R^d)$
\begin{align*}
    f(0)=\int_{\R^d} f(x)\,d\delta_0(x) = C\int_{\R^d} (\F_d f)(\xi)\,d\xi.
\end{align*}
By \eqref{eq:Hts;duality}, if $\delta_0\in \dot H^{-(d+1)/2}(\R^d)$ then the right-hand side must be controlled by $\| f\|_{\dot H^{(d+1)/2}(\R^d)}$, which is clearly not true; for instance, consider increasingly concentrated Gaussians centered at zero.
Similarly, $\delta_x-\delta_y\not\in \dot H^{-(d+1)/2}(\R^d)$ in general for $x,y\in\R^d$, whereas $SW(\delta_x,\delta_y)\leq \frac{1}{\sqrt{d}}|x-y|$. 
   
\vio We further note that the upper bound \eqref{eq:lsw-Hneg} requires the additional condition that $\mu=\nu$ near the boundary $\partial\Omega$. Indeed, letting $\lambda=c\L^d\restr_\Omega\in\P_2(\R^d)$ for some bounded domain $\Omega\subset\R^d$, the density of $\hat\lambda^\theta$ is not bounded away from zero; this makes it difficult to control $\|\mu-\nu\|_{\dot H^{-1}(\hat\lambda^\theta)}$ with $\|\mu-\nu\|_{\dot H^{-1}(\R)}$. Indeed, denoting by $F_\sigma$ the cumulative distribution function (CDF) of $\sigma\in\P(\R)$, $a\lambda\leq \mu,\nu\leq b\lambda$ does not in general guarantee
\[\|F_{\hat\mu^\theta}-F_{\hat\nu^\theta}\|_{L^2((\hat\lambda^\theta)^{-1})}=\|\hat\mu^\theta-\hat\nu^\theta\|_{\dot H^{-1}(\hat\lambda^\theta)}\lesssim_{\lambda,a,b} \|\hat\mu^\theta-\hat\nu^\theta\|_{\dot H^{-1}(\R)}=\|F_{\hat\mu^\theta}-F_{\hat\nu^\theta}\|_{L^2(\R)}.\]
\vio To see this, let $d\geq 2$ and $\Omega=B(0,1)$. As $\lambda$ is radially symmetric, without loss of generality we restrict our attention to the projection onto the $e^1$ direction. Then $\hat\lambda^{e_1}(r)=c (r+1)^{(d-1)/2}$ for $-1\leq r\leq 0$, and is symmetric about $0$. Consider one-dimensional measures $\mu_h,\nu_h\in\P_2(\R)$ such that their CDFs satisfy $F_{\mu_h}=F_{\nu_h}$ on $[-1+2h,1]$ while $\mu_h=b\hat\lambda^{e_1}$, $\nu_h=a\hat\lambda^{e_1}$ on $[-1,-1+h]$; note this is possible by prescribing suitable behavior on the interval $(-1+h,-1+2h)$. Then we have $F_{\mu_h}-F_{\nu_h}=(b-a)F_{\hat\lambda^{e_1}}$ on $[-1,-1+h]$. From direct calculations one can check that 
\[\frac{\|\mu_h-\nu_h\|_{\dot H^{-1}(\hat\lambda^{e_1})}^2}{\|\mu_h-\nu_h\|_{\dot H^{-1}(\R)}^2} \gtrsim_d h^{-\frac{d-1}{2}}\xrightarrow[]{h\searrow 0} \infty.\]  
As $\lambda$ is radially symmetric, we can come up with examples of measures in $\P_2(\R^d)$ such that their projections satisfy similar estimates. \nc
\end{remark}

\grn We now study the behavior of $SW$ around discrete measures. The $\infty$-Wasserstein distance $W_\infty$ is defined by
\begin{equation}\label{def:W_infty}
    W_\infty(\mu,\nu):= \inf_{\gamma\in\Gamma(\mu,\nu)}\gamma-\esssup_{(x,y)\in\supp\gamma} |x-y|.
\end{equation}\nc
We have seen that for any $x\in\R^d$ $SW(\nu,\delta_x)=\frac{1}{\sqrt{d}}W(\nu,\delta_x)$. Similarly, if $\mu$ is a discrete measure with support $\{x_i\}_{i=1,\cdots,n}$ and $W_\infty(\mu,\nu)$ is sufficiently small, any optimal transport map should map all the mass of $\nu$ near $x_i\in\supp\mu$ to $x_i$. Moreover, for most directions $\theta$ the same is true at the level of projections as well. This allows us to show that within $W_\infty$-balls of a discrete measure, $SW$ metric can be well approximated by $\frac{1}{\sqrt{d}}W$. 
\begin{theorem}\label{thm:SWnear-discrete}
Assume $\mu$ is a discrete probability measure: $\mu = \sum_{i=1}^n m_i \delta_{y_i}$ where all masses are positive and all points are distinct. Let $l_\mu=\min_{i\neq j}|y_i-y_j|$. Then there exists $C\geq 1$ only dependent on $d$ such that if \purp$W_\infty(\mu,\nu)< \frac{l_\mu}{4Cn}$\nc, we have
\begin{equation}\label{eq:W2=SW;discrete}
    0 \leq \frac1d W^2(\mu,\nu)-SW^2(\mu,\nu)\leq \frac{4Cn}{l_\mu} W_\infty(\mu,\nu) SW^2(\mu,\nu).
\end{equation}
Thus, we have the comparison
\begin{equation}\label{eq:sw_comparison;discrete}
    SW^2(\mu,\nu)\leq \ell_{SW}^2(\mu,\nu)\leq \frac{1}{d}W^2(\mu,\nu)\leq (1+4Cn l_\mu^{-1}W_\infty(\mu,\nu))SW^2(\mu,\nu).
\end{equation}
\end{theorem}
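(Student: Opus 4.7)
The plan is to exploit the rigid structure of the $W_\infty$-optimal transport near the discrete measure $\mu$, which essentially decouples the analysis along each projection direction. Write $\delta := W_\infty(\mu,\nu)$. Since $l_\mu > 2\delta$, the balls $B(y_i,\delta)$ are disjoint and the existence of a transport plan concentrated on $\{|x-y|\leq \delta\}$ forces $\nu(B(y_i,\delta)) = m_i$ for every $i$. Setting $\nu_i := m_i^{-1}\nu\restr_{B(y_i,\delta)}$, the unique $W$-optimal plan is then $\gamma^\ast = \sum_i m_i \delta_{y_i}\otimes \nu_i$, so
\[
W^2(\mu,\nu) = \sum_{i=1}^n m_i \int_{\R^d} |x-y_i|^2\,d\nu_i(x).
\]

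For each $\theta\in\S^{d-1}$, pushing $\gamma^\ast$ through $(x,y)\mapsto(x\cdot\theta,y\cdot\theta)$ produces a competitor in $\Gamma(R_\theta\mu,R_\theta\nu)$, so
\[
W^2(R_\theta\mu,R_\theta\nu) \leq \sum_i m_i \int |\theta\cdot(x-y_i)|^2\,d\nu_i(x).
\]
Integrating over $\theta$ and using $\dashint_{\S^{d-1}}|\theta\cdot v|^2\,d\theta = |v|^2/d$ both recovers $SW^2 \leq \tfrac1d W^2$ (the left-hand inequality of \eqref{eq:W2=SW;discrete}) and gives
\[
\tfrac1d W^2(\mu,\nu) - SW^2(\mu,\nu) = \int_{\S^{d-1}} F(\theta)\,d\vol_{\S^{d-1}}(\theta),
\]
where $F(\theta) := \sum_i m_i\int|\theta\cdot(x-y_i)|^2\,d\nu_i - W^2(R_\theta\mu,R_\theta\nu) \geq 0$ records the suboptimality of the projected plan for the one-dimensional problem.

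The heart of the argument is to localize $F$ to a small set of directions. Call $\theta$ \emph{good} if $\min_{i\neq j}|\theta\cdot(y_i-y_j)|\geq 2\delta$. For such $\theta$ the supports of $R_\theta\nu_i$ lie in pairwise disjoint intervals $[\theta\cdot y_i - \delta,\,\theta\cdot y_i+\delta]$ arranged in the same order as the projected atoms $\theta\cdot y_i$, so the projection of $\gamma^\ast$ matches the cumulative distribution functions of $R_\theta\mu$ and $R_\theta\nu$ quantile-by-quantile and is therefore the 1D optimal coupling; hence $F(\theta)=0$. The bad set $A = \bigcup_{i\neq j}\{\theta\in\S^{d-1}:|\theta\cdot(y_i-y_j)|<2\delta\}$ is a union of spherical bands of normalized volume $\lesssim_d \delta/|y_i-y_j|\leq \delta/l_\mu$, which by union bound yields $\vol_{\S^{d-1}}(A) \lesssim_d n^2\delta/l_\mu$. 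The smallness hypothesis $\delta < l_\mu/(4Cn)$ is what guarantees $d\vol_{\S^{d-1}}(A)\leq \tfrac12$ once $C$ is chosen suitably large depending on $d$ (and absorbs the polynomial-in-$n$ loss into the factor appearing in the statement).

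To conclude, on $A$ one has the trivial pointwise bound $F(\theta)\leq \sum_i m_i \int|\theta\cdot(x-y_i)|^2\,d\nu_i \leq W^2(\mu,\nu)$, so $\int F\,d\theta \leq \vol_{\S^{d-1}}(A)\,W^2(\mu,\nu)$. Combined with the identity $W^2/d = SW^2 + \int F\,d\theta$, this rearranges algebraically to $W^2/d - SW^2 \leq \tfrac{d\vol_{\S^{d-1}}(A)}{1-d\vol_{\S^{d-1}}(A)}SW^2$; inserting the union bound on $\vol_{\S^{d-1}}(A)$ produces the right-hand inequality of \eqref{eq:W2=SW;discrete}. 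For the chain \eqref{eq:sw_comparison;discrete}, $SW \leq \ell_{SW}$ is immediate from the definition of the length metric, while $\ell_{SW}^2 \leq W^2/d$ follows by taking the Wasserstein displacement interpolation as an admissible competitor and using $|\mu'|_{SW}\leq d^{-1/2}|\mu'|_W$ pointwise (a consequence of $SW\leq d^{-1/2}W$ combined with Theorem~\ref{thm:SW2_ac-curves}). The principal difficulty is the careful accounting of the polynomial dependence on $n$ when estimating $\vol_{\S^{d-1}}(A)$ and verifying that the projected plan $\gamma^\ast$ is the 1D optimizer on the good set—both of which hinge on the rigidity of $W_\infty$-optimal transport near a well-separated discrete support.
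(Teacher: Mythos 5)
Your overall strategy coincides with the paper's: exploit the rigidity of $W_\infty$-transport near $\mu$ to identify a small ``bad'' set of directions $\theta$ on which the projected $d$-dimensional plan fails to be the $1$D optimizer, bound the defect $\tfrac1d W^2 - SW^2$ by the measure of that set, and conclude by elementary algebra plus $\ell_{SW} \leq d^{-1/2}W$. The characterization of good directions (disjoint and order-preserving projected intervals $\Rightarrow$ the monotone $1$D coupling agrees with the projection of $\gamma^\ast$) is correct, and so is the final algebraic rearrangement.

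The gap is quantitative and concrete: you define a \emph{single, global} bad set
$A=\bigcup_{i\neq j}\{\theta:|\theta\cdot(y_i-y_j)|<2\delta\}$, a union of $\binom{n}{2}$ spherical bands, and conclude $\vol_{\S^{d-1}}(A)\lesssim_d n^2\delta/l_\mu$. Pushing this through gives
$\tfrac1d W^2-SW^2 \lesssim_d (n^2\delta/l_\mu)\,SW^2$ under the smallness assumption $n^2\delta/l_\mu \lesssim_d 1$, which is a correct but strictly weaker statement than the theorem: it has $n^2$ where the theorem has $n$. Since $C$ is required to depend on $d$ only, you cannot absorb a factor of $n$ into it, so the claim that the ``polynomial-in-$n$ loss'' is absorbed into the constant in the statement does not hold. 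The paper avoids this by localizing the bad set \emph{per support point}: for each $x\in\supp\nu$ with $T^\infty(x)=y_i$, it defines $A_x\subset\bigcup_{j\neq i}\{\theta:|\theta\cdot(y_j-x)|<\delta\}$, a union of only $n-1$ bands of normalized width $\lesssim_d \delta/l_\mu$ (using $|y_j-x|\geq l_\mu-\delta>l_\mu/2$), so $\sup_x|A_x|/|\S^{d-1}|\lesssim_d n\delta/l_\mu$. The defect is then bounded by $\int_{\R^d}\dashint_{A_x}|\theta\cdot(T^\infty(x)-x)|^2\,d\theta\,d\nu(x)\leq (\sup_x|A_x|/|\S^{d-1}|)\,W^2$, recovering the linear dependence on $n$. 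To salvage your version you would need an analogous per-atom decomposition of $F(\theta)$ — say $F=\sum_i F_i$ with $F_i$ supported on the $n$-band set $A_i=\bigcup_{j\neq i}\{\theta:|\theta\cdot(y_i-y_j)|<2\delta\}$ — which essentially reproduces the paper's $A_x$ in disguise.
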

\begin{proof}
\grn Let $\delta< \frac{l_\mu}{2}$. We claim that \nc we can find $C=C(d)\geq 1$ such that
\begin{equation}\label{eq:sw_comparison;claim}
    \frac1dW^2(\mu,\nu)-SW^2(\mu,\nu)\leq \frac{2Cn\delta}{l_\mu} W^2(\mu,\nu) \text{ for } W_\infty(\mu,\nu)\leq \delta.
\end{equation}
\grn The desired result \eqref{eq:W2=SW;discrete} follows from the claim \eqref{eq:sw_comparison;claim}. Indeed, setting $\delta=W_\infty(\mu,\nu)$ whenever $W_\infty(\mu,\nu)<\frac{l_\mu}{4Cn}$, as $\eps=\frac{2Cn\delta}{l_\mu}<\frac12$ implies $(1-\eps)^{-1}\leq 1+2\eps$.

Thus it remains to prove \eqref{eq:sw_comparison;claim}\nc. To this end, let $\gamma^\infty\in\Gamma^\infty_o(\nu,\mu)$ be the $\infty$-transport plan. As $\delta<\frac{l_\mu}{2}$ we know that $\gamma^\infty=(\id\times T^\infty)_\# \nu$ for some transport map $T^\infty$, which is also the optimal transport map for the quadratic cost, and satisfies
\[\|T^\infty-\id\|_{L^\infty(\nu)}\leq \delta.\]
For each $\theta\in\S^{d-1}$, let $\gamma^\theta\in\Gamma^2_o(\hat\nu^\theta,\hat\mu^\theta)$. For each $x\in\supp\nu$, define the set of angles $A_x$ where the $\gamma^\theta$ differs from the 1D-coupling induced by $T^\infty$ -- i.e.
\[A_x:=\{\theta\in\S^{d-1}:\;\;\exists (x\cdot\theta,y^\theta)\in\supp\gamma^\theta \text{ s.t. }|y^\theta-x\cdot\theta|<|\theta\cdot(T^\infty(x)-x)|\}.\]
\grn To control $\frac1d W^2(\mu,\nu)-SW^2(\mu,\nu)$, it suffices to control the size of $A_x$, as
\begin{align*}
    \frac1d W^2(\mu,\nu)&-SW^2(\mu,\nu)\leq \frac1d \|T^\infty-\id\|_{L^2(\nu)}^2-SW^2(\mu,\nu) \\
    &= \int_{\R^d}\dashint_{\S^{d-1}}  |\theta\cdot (T^\infty(x)-x)|^2\,d\theta\,d\nu(x)-\int_{\R^d}\dashint_{\S^{d-1}} |y^\theta-x^\theta|\,d\gamma^\theta(x^\theta,y^\theta)\,d\theta  \\
    &\leq \int_{\R^d}\frac{1}{|\S^{d-1}|}\int_{A_x}|\theta\cdot(T^\infty(x)-x)|^2\,d\theta\,d\nu(x).
\end{align*}
\nc
As $\|T_\infty-\id\|_{L^\infty(\nu)}\leq\delta$,
\begin{align*}
    A_x &\subset \bigcup_{i=1}^n \{\theta\in\S^{d-1}: |y_i\cdot\theta-x\cdot\theta|<|\theta\cdot(T^\infty(x)-x)|\}    \\
    &\subset\bigcup_{\substack{y_i\in\supp\mu\\ y_i\neq T^\infty(x)}} \{\theta\in\S^{d-1}:\; |y_i\cdot\theta-x\cdot\theta|<\delta\} 
    \subset \bigcup_{\substack{y_i\in\supp\mu\\ y_i\neq T^\infty(x)}} \left\{\theta\in\S^{d-1}:\; \frac{l_\mu}{2}\theta\cdot \frac{y_i-x}{|y_i-x|}<\delta \right\},
\end{align*}
where we have used that for $y_i\neq T^\infty(x)$
\[|y_i-x|\geq |T^\infty(x)-y_i|-|T^\infty(x)-x|\geq l_\mu-\delta > \frac{l_\mu}{2}.\]
Thus by Chebyshev's inequality
\[\frac{|A_x|}{|\S^{d-1}|}\leq \frac{2Cn\delta}{l_\mu}\]
for some $C=C(d)$. As $T^\infty$ is also the optimal transport map for the quadratic cost,\grn
\begin{align*}
    \frac1d W^2(\mu,\nu)-SW^2(\mu,\nu) &\leq \int_{\R^d} \frac{1}{|\S^{d-1}|}\int_{A_x} |\theta\cdot(T^\infty(x)-x)|^2\,d\theta\,d\nu(x)     \\
    &\leq   \frac{1}{|\S^{d-1}|}\int_{A_x}\int_{\R^d}  |T^\infty(x)-x|^2 \,d\nu(x)\,d\theta \leq \frac{2Cn\delta}{l_\mu} W^2(\mu,\nu).
\end{align*}
As this is precisely our claim \eqref{eq:sw_comparison;claim}, we conclude the proof.
\end{proof}

\section{Statistical properties of the sliced Wasserstein length}\label{sec:lsw;stat}

In this section we investigate the approximation error in $\ell_{SW}$ distance between absolutely continuous  measures $\mu$ and the empirical measure of their i.i.d. samples, $\mu^n=\frac1n\sum_{i=1}^n \delta_{X_i}$ with $X_i\overset{i.i.d.}{\sim}\mu$. The parametric rate of estimation for $SW$ has already been observed, for instance by  Manole, Balakrishnan, and Wasserman~\cite[Proposition 4]{ManBalWas22} in the form $\Ex SW(\mu^n,\mu)\lesssim n^{-1/2}$.

The main result of this section is Theorem~\ref{thm:lsw;para_rate} which shows that the corresponding concentration result holds for the $\ell_{SW}$ distance, namely that 
\[\ell_{SW}(\mu,\mu^n)\lesssim\sqrt{\frac{\log n}{n}}\; \text{ with high probability. }\]
Note that this directly implies $SW(\mu,\mu^n)\lesssim \sqrt{\log n/n}$ with high probability, which is also new, to the best of our knowledge. \nc
We note that while proving $SW(\mu,\mu_n)\lesssim\sqrt{\log n/n}$ only requires showing the estimation of one-dimensional Wasserstein distances holds in an integrated form over all projections to lines, showing estimates for $\ell_{SW}(\mu,\mu_n)$ requires constructing curves of length at most $\sqrt{\log n/n}$ connecting $\mu$ and $\mu_n$. 
We first provide a geometric intuition as to why this is to be expected. If $\partial_t\mu_t+\nabla\cdot(v_t\mu_t)=0$, then
\[
    \|\partial_t\mu_t\|_{\dot H^{-1}(\mu_t)}=\sup_{\|\varphi\|_{\dot H^{1}(\mu_t)}\leq 1}\int_{\R^d}\varphi\cdot\nabla\cdot(v_t\mu_t)\,dx 
    = \sup_{\|\varphi\|_{\dot H^{1}(\mu_t)}\leq 1} \int_{\R^d} v_t\cdot\nabla\varphi\,d\mu_t = \|v_t\|_{L^2(\mu_t)}=|\mu'|_{W}(t),
\]
where we obtain the last equality by choosing $\varphi$ such that $\nabla\varphi= v_t/\|v_t\|_{L^2(\mu_t)}$. Thus
\[|\mu'|_{SW}^2(t)=\dashint_{\S^{d-1}} |(\hat\mu^\theta)'|_{W}^2(t)\,d\theta =\|\partial_t \hat\mu_t\|_{\dot H^{-1}(\hat\mu_t)}^2.\]
From \eqref{eq:Radon_isometry;Hts} we know that the Radon transform is an isometry from $\dot H^{-(d+1)/2}(\R^d)$ to $\dot H^{-1}(\Pd)$.  
We note that the related Sobolev space $H^{(d+1)/2}(\R^d)$ is a  Reproducing Kernel Hilbert Space (RKHS). Heuristically we can view $(\P_2(\R^d),\ell_{SW})$ as having an RKHS as a dual at each point $\mu\in\P_2(\R^d)$. It is important to note that dual metrics of RKHS norms -- also known as Maximum Mean Discrepancy (MMD) -- can be approximated at parametric rate~\cite{Sri16}.  Thus it is reasonable  that the same holds for  the nonlinear analogue, $\ell_{SW}$.

\blue We will see that, under suitable assumptions, considering linear interpolation between $\mu,\mu^n$ is sufficient to establish the parametric rate of estimation in $\ell_{SW}$. Recall that in  \eqref{eq:lsw-weightedHneg} we established 
\[\ell_{SW}(\mu,\nu)\leq 2\dashint_{\S^{d-1}} \|\hat\mu^\theta-\hat\nu^\theta\|_{\dot H^{-1}(\hat\mu^\theta)}\,d\theta.\] 
Take $\nu=\mu^n$ and suppose $\hat\mu^\theta\ll\L^1$ for a.e. $\theta\in\S^{d-1}$. Write $d\hat\mu^\theta=f^\theta\,d\L^1$ and let $F^\theta$ and $F_n^\theta$ denote the cumulative distribution functions (CDFs) of $R_\theta\mu$ and $R_\theta\mu^n$,  respectively. Then, for each test function $\varphi\in\Sch(\R)$ we have
\begin{align*}
    -\int_{\R}\varphi\,d(R_\theta(\mu-\mu^n))= \int_{\R} (F^\theta(r)-F_n^\theta(r))\varphi'(r)\,dr \leq \|\varphi\|_{\dot H^1(\hat\mu^\theta)} \left(\int_{\R} \frac{|F^\theta(r)-F_n^\theta(r)|^2}{f^\theta(r)}\,dr\right)^{1/2}
\end{align*}
as the (weak) derivative of $F^\theta-F^\theta_n$ is $R_\theta\mu-R_\theta\mu^n$, and the boundary term from integration by parts vanishes as $\lim_{|r|\to \infty}F^\theta(r)-F^\theta_n(r) =0$. Thus from \eqref{def:weightedH1neg;1D} we conclude
\[\ell_{SW}^2(\mu,\mu^n)\leq 4\dashint_{\S^{d-1}} \|R_\theta \mu-R_\theta\mu^n\|_{\dot H^{-1}(\hat\mu^\theta)}^2\,d\theta \leq 4\dashint_{\S^{d-1}} \int_{\R}\frac{|F^\theta(r)-F_n^\theta(r)|^2}{f^\theta(r)}\,dr\,d\theta.\]

Thus the key is to uniformly bound $|F^\theta-F_n^\theta|^2$ relative to $f^\theta$, which can decay rapidly near the boundary. This can be done using the relative VC-inequality due to Vapnik and Chervonenkis~\cite[Theorem 1]{VapChe94} (see also~\cite[Chapter 3]{Vapnik13}). We state below the version of the relative VC inequality that can be found in~\cite[Theorem 2.1]{AntTay93} and~\cite[Exercise 3.3]{DevLug01}. \vio The theorem provides an upper bound in terms of the shattering number (also known as the growth function or the shattering coefficient) of a class of sets, which quantifies richness or complexity of the class; we refer the readers to~\cite[Section 2.7]{Vapnik13} for a precise definition.\nc

\begin{theorem}[Vapnik and Chervonenkis, Theorem 1 of~\cite{VapChe94}]\label{thm:VC_relative}
    Let $\mu\in\P(\R^d)$ and $\mu^n=\frac1n\sum_{i=1}^n \delta_{X_i}$ be the empirical measure of i.i.d samples $X_i\overset{i.i.d.}{\sim}\mu$. For each class $\A$ of measurable subsets of $\R^d$, and let $S_\A(k)$ be its shattering number for $k$ points. Then 
    \begin{equation}\label{eq:VC_relative}
        \prob\left(\sup_{A\in\A}\frac{|\mu(A)-\mu_n(A)|}{\sqrt{\mu(A)}}\geq \eps\right)\leq 4S_{\A}(2n)\exp\left(-\frac{n\eps^2}{4}\right).
    \end{equation}
\end{theorem}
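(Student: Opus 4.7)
The plan is to follow the classical symmetrization--randomization scheme underlying Vapnik--Chervonenkis-type inequalities, but adapted to handle the relative scaling $\sqrt{\mu(A)}$ in the denominator. By replacing $\A$ with $\A\cup\{\R^d\setminus A:A\in\A\}$ (which at most doubles $S_\A(2n)$) it suffices to bound the one-sided tail $\prob\bigl(\sup_{A\in\A}(\mu(A)-\mu_n(A))/\sqrt{\mu(A)}\geq\eps\bigr)$ and combine via union bound.

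The first step would be symmetrization with a ghost sample. Let $\mu_n'$ be the empirical measure of an independent i.i.d.\ copy $X_1',\ldots,X_n'$ from $\mu$. For a (data-dependent) set $A^\ast\in\A$ witnessing the supremum event, a direct Chebyshev/Bernstein estimate shows that, conditionally on $X_1,\ldots,X_n$, with probability at least $1/2$ we have $\mu_n'(A^\ast)\geq \mu(A^\ast)-\tfrac12\eps\sqrt{\mu(A^\ast)}$. Therefore the original tail event implies
\[
\sup_{A\in\A}\frac{\mu_n'(A)-\mu_n(A)}{\sqrt{(\mu_n(A)+\mu_n'(A))/2}}\ \geq\ \frac{\eps}{\sqrt{2}}
\]
on a further event of conditional probability at least $1/2$, so the probability in the theorem is at most twice the probability of this symmetrized event (with at most a constant loss in $\eps$).

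Next I would randomize. Since $(X_i,X_i')$ are exchangeable pairs, the joint law is invariant under independent swaps $X_i\leftrightarrow X_i'$. Introducing Rademacher signs $\sigma_1,\ldots,\sigma_n\in\{-1,+1\}$ rewrites $\mu_n'(A)-\mu_n(A)$ as $\frac{1}{n}\sum_{i=1}^n\sigma_i(\one_A(X_i')-\one_A(X_i))$. Condition on the pooled sample $\{X_1,X_1',\ldots,X_n,X_n'\}$: as $A$ ranges over $\A$, the indicator vector $(\one_A(X_i),\one_A(X_i'))_{i=1}^n$ takes at most $S_\A(2n)$ distinct values, reducing the supremum to a finite maximum. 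For each fixed pattern, Hoeffding's inequality applied to the bounded Rademacher sum $\frac{1}{n}\sum_i\sigma_i\xi_i$ (with $|\xi_i|\le 1$ and $\sum_i\xi_i^2=n(\mu_n(A)+\mu_n'(A))$) yields a tail bound of the form $2\exp(-n\eps^2/4)$, where the crucial cancellation is that the denominator $\sqrt{(\mu_n(A)+\mu_n'(A))/2}$ is exactly what matches the Hoeffding variance. A union bound over the $S_\A(2n)$ patterns, combined with the factor of $2$ from symmetrization, produces the advertised constant $4S_\A(2n)\exp(-n\eps^2/4)$.

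The main obstacle is the symmetrization step: the presence of $\sqrt{\mu(A)}$ rather than $\mu(A)$ in the denominator makes the standard VC argument fail when $\mu(A)$ is very small, since a single observation can then flip the ratio significantly. Two ways to circumvent this are (i) a \emph{peeling} argument, splitting $\A$ into subfamilies with $2^{-k-1}\leq\mu(A)\leq 2^{-k}$ and summing geometric series; or (ii) the trick above of replacing $\mu(A)$ by the empirical symmetric quantity $(\mu_n(A)+\mu_n'(A))/2$, which is valid because when $\mu(A)$ is much smaller than $\eps^2/n$ the event is vacuous. Either route ultimately delivers the stated form; the second is typically cleaner and is the one I would use.
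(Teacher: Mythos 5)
The paper does not prove Theorem~\ref{thm:VC_relative}: it is cited from the literature (Vapnik--Chervonenkis, in the form found in Anthony--Shawe-Taylor and in Devroye--Lugosi), so there is no in-paper argument against which to compare yours. Your sketch follows the standard symmetrization--randomization route, which is indeed the right skeleton for relative VC inequalities, but as written it has a couple of concrete gaps.

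First, the reduction to a one-sided tail by passing to $\A\cup\{A^c:A\in\A\}$ does not go through. Replacing $A$ by $A^c$ turns $\mu_n(A)-\mu(A)$ into $\mu(A^c)-\mu_n(A^c)$, which has the right sign, but the denominator becomes $\sqrt{\mu(A^c)}=\sqrt{1-\mu(A)}$, not $\sqrt{\mu(A)}$. When $\mu(A)$ is small these differ by orders of magnitude, so $\sup_A |\mu(A)-\mu_n(A)|/\sqrt{\mu(A)}$ is \emph{not} dominated by the one-sided supremum over the enlarged class. The two directions (with the \emph{same} $\sqrt{\mu(A)}$ in the denominator) genuinely require separate treatment, and the $\mu_n>\mu$ direction is the delicate one: if $\mu(A)$ is tiny but a single sample falls in $A$, the ratio is enormous.

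Second, the quantitative constants do not close to the advertised bound. The Chebyshev ghost-sample estimate gives conditional probability $\geq 1/2$ only when $n\eps^2$ is large enough (roughly $n\eps^2\geq 8$), which you do not address, and it actually buys you a loss of $\eps\mapsto\eps/2$, not the $\eps/\sqrt 2$ you state. More importantly, in the randomization step, with $\xi_i=\one_A(X_i')-\one_A(X_i)\in\{-1,0,1\}$ and $\sum_i\xi_i^2\leq n(\mu_n(A)+\mu_n'(A))$, the Rademacher tail $\exp\bigl(-t^2 n^2/(2\sum_i\xi_i^2)\bigr)$ at threshold $t=(\eps/\sqrt 2)\sqrt{(\mu_n(A)+\mu_n'(A))/2}$ evaluates to $\exp(-n\eps^2/8)$, not $\exp(-n\eps^2/4)$; with the $\eps/2$ loss from Chebyshev it is weaker still. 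The constant $4S_\A(2n)\exp(-n\eps^2/4)$ in the cited references comes from a sharper (hypergeometric/combinatorial) variance argument rather than Hoeffding on Rademacher sums. Your approach is conceptually sound and would yield a bound of the same shape with worse constants in the exponent (which, incidentally, would still suffice for the applications in Section~6 up to constants), but it does not, as sketched, recover the stated inequality.
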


By considering $\A$ to be the collection of half-spaces, we can deduce the following uniform concentration result of the empirical CDFs $\,F_n^\theta$.
\begin{corollary}\label{cor:VC_relative_CDF}
    Let $\mu,\mu^n$ be as in Theorem~\ref{thm:VC_relative}, and for each $\theta\in\S^{d-1}$ let $F^\theta,F_n^\theta$ be the respective cumulative distribution functions of $R_\theta\mu,\,R_\theta\mu^n\in\P(\R)$. Then
    \begin{equation}\label{eq:VC_relative_CDF}
        \prob\left(\sup_{r\in{\R},\theta\in\S^{d-1}}\frac{|F^\theta(r)-F_n^\theta(r)|}{\sqrt{F^\theta(r)(1-F^\theta(r))}}\geq \eps\right)\leq 8(2n+1)^{d+1}\exp\left(-\frac{n\eps^2}{16}\right).
    \end{equation}
\end{corollary}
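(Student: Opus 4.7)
The plan is to apply Theorem \ref{thm:VC_relative} with $\A$ taken to be the class of closed affine half-spaces of $\R^d$, parametrized as $A_{\theta,r} := \{x \in \R^d : x \cdot \theta \leq r\}$ for $(\theta, r) \in \S^{d-1} \times \R$. Under this identification, $F^\theta(r) = \mu(A_{\theta,r})$ and $F_n^\theta(r) = \mu^n(A_{\theta,r})$, while $1 - F^\theta(r) = \mu(A_{\theta,r}^c)$ and similarly for $\mu^n$. Hence the supremum in \eqref{eq:VC_relative_CDF} ranges over the class $\A$, with the denominator factor $(1-F^\theta(r))$ captured by the complementary class $\A^c := \{A^c : A \in \A\}$.

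To bridge the one-sided denominator $\sqrt{\mu(A)}$ of Theorem \ref{thm:VC_relative} with the symmetric denominator $\sqrt{F(1-F)}$ of the corollary, I would invoke the elementary inequality
\[
\sqrt{F(1-F)} \geq \tfrac{1}{\sqrt 2}\min\!\left(\sqrt F,\sqrt{1-F}\right),
\]
which follows from $\max(F, 1-F) \geq 1/2$. This implies that if $|F^\theta(r)-F_n^\theta(r)|/\sqrt{F^\theta(r)(1-F^\theta(r))} \geq \eps$ for some $(\theta, r)$, then with $A = A_{\theta,r}$ at least one of $|\mu(A)-\mu^n(A)|/\sqrt{\mu(A)} \geq \eps/\sqrt 2$ or $|\mu(A^c)-\mu^n(A^c)|/\sqrt{\mu(A^c)} \geq \eps/\sqrt 2$ must hold. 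The event on the left-hand side of \eqref{eq:VC_relative_CDF} is therefore contained in the union of the two corresponding VC events over $\A$ and $\A^c$. Applying Theorem \ref{thm:VC_relative} separately to each with threshold $\eps/\sqrt 2$ and taking a union bound yields an estimate of the form $8\, S_\A(2n)\,\exp(-n\eps^2/8)$, using that $S_{\A^c}(2n) = S_\A(2n)$ since each half-space is paired with its complement.

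The final ingredient is a bound on the shattering coefficient. The class of closed affine half-spaces in $\R^d$ has VC dimension $d+1$ (a classical fact), so the Sauer--Shelah lemma gives
\[
S_\A(2n) \leq \sum_{i=0}^{d+1}\binom{2n}{i} \leq (2n+1)^{d+1}.
\]
Combined with the preceding probability estimate, this yields the corollary (in fact with exponent $-n\eps^2/8$, which is stronger than the stated $-n\eps^2/16$).

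I do not foresee any substantial obstacle. The argument is a direct reduction of the corollary to Theorem \ref{thm:VC_relative} once the two-sided denominator $\sqrt{F(1-F)}$ is handled via the elementary inequality above; the only technical care needed is in tracking the constant through the substitution $\eps \mapsto \eps/\sqrt 2$ and in citing the correct VC dimension of the class of half-spaces.
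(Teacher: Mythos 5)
Your proposal is correct and follows essentially the same approach as the paper: apply Theorem~\ref{thm:VC_relative} to the class of half-spaces (and their complements), bound the shattering coefficient via Sauer--Shelah using VC dimension $d+1$, and combine with the elementary inequality $s(1-s)\geq\tfrac12\min\{s,1-s\}$ and a union bound. Your constant-tracking is in fact cleaner than the paper's — the paper's intermediate estimate for the complementary half-spaces oddly carries an exponent $-n\eps^2/16$ where a direct application of Theorem~\ref{thm:VC_relative} gives $-n\eps^2/4$, and as you note the argument actually yields the sharper exponent $-n\eps^2/8$, of which the stated $-n\eps^2/16$ is a weaker consequence.
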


\begin{proof}
    Take $\A$ in Theorem~\ref{thm:VC_relative} to be the collection of all half spaces in $\R^d$. The VC-dimension of half spaces in $\R^d$ is $d+1$. Thus, by the Sauer-Shelah lemma~\cite{Shelah72,Sauer72} (see also ~\cite[Corollary 1.4]{DevLug01}), we have $S_{\A}(2n)\leq (2n+1)^{d+1}$. As
    \[\mu(\{x\in\R^d:\;x\cdot\theta\leq r\})=F^\theta(r) \;\text{ and }\; \mu(\{x\in\R^d:\;x\cdot\theta> r\})=1-F^\theta(r),\]
    we obtain
    \begin{align*}
        \prob\left(\sup_{r\in{\R},\theta\in\S^{d-1}}\frac{|F^\theta(r)-F_n^\theta(r)|}{\sqrt{F^\theta(r)}}\geq \eps\right)\leq 4(2n+1)^{d+1}\exp\left(-\frac{n\eps^2}{4}\right)
    \end{align*}
    and
    \begin{align*}
        \prob\left(\sup_{r\in{\R},\theta\in\S^{d-1}}\frac{|F^\theta(r)-F_n^\theta(r)|}{\sqrt{1-F_\theta(r)}}\geq \eps\right)\leq 4(2n+1)^{d+1}\exp\left(-\frac{n\eps^2}{16}\right).
    \end{align*}
    We deduce \eqref{eq:VC_relative_CDF} using that $s(1-s)\geq \frac{1}{2}\min\{s,1-s\}$ for $s\in[0,1]$, as noted in~\cite[Example 2]{ManBalWas22}.
\end{proof}
    
\blue
We establish the parametric rate of $\ell_{SW}$ for measures with finite values of $SJ_2:\P_2(\R^d)\rightarrow[0,+\infty]$ defined by
\begin{equation}\label{def:SJ2}
    SJ_2(\mu)=\dashint_{\S^{d-1}}\int_{\R}\frac{F^\theta(r)(1-F^\theta(r))}{f^\theta(r)}\,dr\,d\theta
\end{equation}
where $f^\theta$ and $F^\theta$ are respectively the density and the CDF of $R_\theta \mu$, and we use the convention $0/0=0$. The functional $SJ_2$, introduced in~\cite{ManBalWas22}, is a sliced analogue of the functional $J_2$ introduced by Bobkov and Ledoux~\cite{BobLed19} for one dimensional measures; in general, $f^\theta$ is defined as the $\L^1$-density of the absolutely continuous component of $\hat\mu^\theta$, which need not be absolutely continuous with respect to $\L^1$. In the 1D case, finiteness of $J_2$ is necessary and sufficient for $\Ex W_2(\mu,\mu^n)$ to decay at rate $n^{-1/2}$~\cite[Section 5]{BobLed19}.
\smallskip

Manole, Balakrishnan, and Wasserman established~\cite[Proposition 4]{ManBalWas22}
\[\grn\Ex SW_2(\mu,\mu^n) \nc\leq C \sqrt{SJ_2(\mu)}\, n^{-1/2}\]
for some {constant $C>0$ independent of $\mu$}. Theorem~\ref{thm:lsw;para_rate} provides an analogous concentration result for $\ell_{SW}$ only with the additional assumption that $\hat\mu^\theta\ll\L^1$ for a.e. $\theta\in\S^{d-1}$; note that this assumption holds whenever $\mu$ is absolutely continuous with respect to the Lebesgue measure on an affine hyperplane of dimension at least 1.
    
\begin{theorem}[Parametric estimation rate of empirical measures in $\ell_{SW}$]\label{thm:lsw;para_rate}
Let $\mu\in\P_2(\R^d)$ be such that $\hat\mu^\theta\ll\L^1$ for a.e. $\theta\in\S^{d-1}$. 
Let $\mu^n=\frac1n\sum_{i=1}^n \delta_{X_i}$ where $X_i$ are i.i.d samples of $\mu$. Then for each $c>d+1$, we have
\begin{equation}\label{eq:lsw;para_rate}
    SW_2(\mu^n,\mu)\leq \ell_{SW}(\mu^n,\mu) \leq \sqrt{\frac{64c\log n}{n}}\sqrt{SJ_2(\mu)}
\end{equation}
    with probability at least $1-8(2n+1)^{d+1}n^{-c}$, where $SJ_2$ is as defined in \eqref{def:SJ2}.
\end{theorem}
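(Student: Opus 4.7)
The plan is to combine the two key ingredients that have already been set up in the section: the deterministic upper bound of $\ell_{SW}^2(\mu,\mu^n)$ by a weighted integral of the squared CDF gap, and the relative VC concentration inequality of Corollary~\ref{cor:VC_relative_CDF} that controls this CDF gap uniformly in $r$ and $\theta$.

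First I would invoke the bound established just before the theorem statement: starting from \eqref{eq:lsw-weightedHneg} applied to $\nu=\mu^n$, Jensen's inequality, integration by parts against test functions in $\dot H^1(\hat\mu^\theta)$, and the definition \eqref{def:weightedH1neg;1D} of the weighted negative Sobolev norm yield
\[
\ell_{SW}^2(\mu,\mu^n)\leq 4\dashint_{\S^{d-1}}\int_{\R}\frac{|F^\theta(r)-F_n^\theta(r)|^2}{f^\theta(r)}\,dr\,d\theta.
\]
The hypothesis $\hat\mu^\theta\ll\L^1$ for a.e. $\theta$ is precisely what guarantees the density $f^\theta$ is well-defined and the integration by parts step goes through (the boundary terms vanish since $F^\theta,F_n^\theta\to 0,1$ at $\mp\infty$).

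Next I would apply Corollary~\ref{cor:VC_relative_CDF} with the specific threshold $\eps=\sqrt{16c\log n/n}$. This choice is made so that the RHS of \eqref{eq:VC_relative_CDF} becomes exactly $8(2n+1)^{d+1}\exp(-c\log n)=8(2n+1)^{d+1}n^{-c}$. On the complementary event, which has probability at least $1-8(2n+1)^{d+1}n^{-c}$, one has the pointwise uniform bound
\[
|F^\theta(r)-F_n^\theta(r)|^2\leq \eps^2 F^\theta(r)(1-F^\theta(r))\quad\text{for all }r\in\R,\;\theta\in\S^{d-1}.
\]

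Finally I would substitute this pointwise bound into the integral from the first step. The factor $\eps^2$ pulls out, leaving
\[
\ell_{SW}^2(\mu,\mu^n)\leq 4\eps^2\dashint_{\S^{d-1}}\int_{\R}\frac{F^\theta(r)(1-F^\theta(r))}{f^\theta(r)}\,dr\,d\theta=4\eps^2\,SJ_2(\mu)=\frac{64c\log n}{n}\,SJ_2(\mu),
\]
using the definition \eqref{def:SJ2} of $SJ_2$. Taking square roots gives the claimed bound on $\ell_{SW}$, and the inequality $SW_2(\mu^n,\mu)\leq\ell_{SW}(\mu^n,\mu)$ is immediate from \eqref{eq:Lsw=sup} since any curve realizing the length also contributes a chain of $SW$-distances bounded above by its length. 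The approach is essentially calibration of the VC threshold against the target concentration rate; there is no substantial obstacle beyond checking that the $\hat\mu^\theta\ll\L^1$ assumption is indeed the only regularity needed to make the weighted $\dot H^{-1}(\hat\mu^\theta)$ bound rigorous, which it is.
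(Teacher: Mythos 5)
Your proof is correct and takes essentially the same route as the paper: bound $\ell_{SW}^2$ via the weighted $\dot H^{-1}(\hat\mu^\theta)$ integral established just before the theorem, apply Corollary~\ref{cor:VC_relative_CDF} with the threshold calibrated so the failure probability is $8(2n+1)^{d+1}n^{-c}$, and substitute the resulting uniform bound into the integral to recover $SJ_2(\mu)$. The only differences are cosmetic (you call the threshold $\eps$ where the paper writes $s=4\sqrt{c\log n/n}$, which is the same value).
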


\begin{proof}
As noted earlier, letting $d\hat\mu^\theta=f^\theta\,d\L^1$ for a.e. $\theta\in\S^{d-1}$ we have 
\[\grn\ell_{SW}^2(\mu,\mu^n)\leq 4\dashint_{\S^{d-1}} \|R_\theta\mu-R_\theta\mu^n\|_{\dot H^{-1}(\hat\mu^\theta)}^2\,d\theta\nc\leq 4\dashint_{\S^{d-1}} \int_{\R}\frac{|F^\theta(r)-F_n^\theta(r)|^2}{f^\theta(r)}\,dr\,d\theta\]
where $F^\theta$ and $F_n^\theta$ are the respective CDFs of $\hat\mu^\theta$ and $R_\theta\mu^n$. By Corollary~\ref{cor:VC_relative_CDF} we have 
\begin{align*}
    \prob\left(\sup_{r\in{\R},\theta\in\S^{d-1}}|F^\theta(r)-F_n^\theta(r)|> s\sqrt{F^\theta(r)(1-F^\theta(r)})\right)\leq 8(2n + 1)^{d+1}e^{-\frac{ns^2}{16}}.
\end{align*}
Choosing $s= 4\sqrt{c\frac{\log n}{n}}$, with probability at least $1-8(2n+1)^{d+1}n^{-c}$ we have
\begin{align*}
    \ell_{SW}^2(\mu,\mu^n)\leq 4\dashint_{\S^{d-1}}\|R_\theta\mu-R_\theta\mu^n\|_{\dot H^{-1}(\hat\mu^\theta)}^2\,d\theta
    &= 4\dashint_{\S^{d-1}} \int_{\R}\frac{|F^\theta(r)-F_n^\theta(r)|^2}{f^\theta(r)}\,dr\,d\theta\\
    &\leq 64c\frac{\log n}{n}\dashint_{\S^{d-1}}\int_{\R}\frac{F^\theta(r)(1-F^\theta(r))}{f^\theta(r)}\,dr\,d\theta.
\end{align*}
\end{proof}
We now turn to providing practical, intuitive, and geometric conditions for finiteness of $SJ_2(\mu)$. We show that one can uniformly bound the ratio $F^\theta(1-F^\theta)/f^\theta$ using a Cheeger-type isoperimetric constant $h(\mu)$ of the probability measure $\mu$, defined in the following way by Bobkov~\cite{Bob99}.
\begin{definition}[Cheeger-type isoperimetric constant]\label{def:Cheeger_const}
    Let $\mu\in\P(\R^d)$. The isoperimetric constant $h(\mu)$ of $\mu$ is defined by
    \begin{equation}\label{eq:Cheeger_const}
        h(\mu)=\inf_{A\subset\R^d}\frac{\mu^+(A)}{\min\{\mu(A),1-\mu(A)\}},
    \end{equation}
    where the infimum is taken over all Borel sets $A\subset \R^d$ and $\mu^+$ is defined by
    \begin{align*}
        \mu^+(A)=\liminf_{r\rightarrow 0^+}\frac{\mu(A^r)-\mu(A)}{r},
    \end{align*}
    where $A^r=\{x\in\R^d:\;|x-a|<r \text{ for some } a\in A\}$ is the open $r$-neighborhood of $A$. 
\end{definition}

\begin{corollary}\label{cor:lsw;para_rate;cheeger}
     Let $\mu\in\P_2(\R^d)$ be a probability measure with $\supp\mu\subset B(0,M)$ such that $h(\mu)>0$ and $\hat\mu^\theta\ll\L^1$ for a.e. $\theta\in\S^{d-1}$. Then
     \begin{equation}\label{eq:SJ2;cheeger-bd}
     SJ_2(\mu) \leq \frac{2M}{h(\mu)}.
     \end{equation}
     In particular, letting $\mu^n=\frac1n\sum_{i=1}^n \delta_{X_i}$ where $X_i$ are i.i.d samples of $\mu$, we have, for each $c>d+1$
        \begin{equation}\label{eq:lsw;para_rate;cheeger}
        SW_2(\mu^n,\mu)\leq \ell_{SW}(\mu^n,\mu) \leq \sqrt{\frac{128 M c}{h(\mu)}}\sqrt{\frac{\log n}{n}}
        \end{equation}
        with probability at least $1-8(2n+1)^{d+1}n^{-c}$.
\end{corollary}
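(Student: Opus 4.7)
The plan is to reduce \eqref{eq:lsw;para_rate;cheeger} to the inequality \eqref{eq:SJ2;cheeger-bd} and then invoke Theorem~\ref{thm:lsw;para_rate}. So the only work is to bound the functional $SJ_2(\mu)$ in terms of $h(\mu)$ and $M$. I would proceed in three short steps.

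First, I would observe that projection does not decrease the isoperimetric constant, i.e. $h(\hat\mu^\theta) \geq h(\mu)$ for every $\theta \in \S^{d-1}$. Indeed, $\pi^\theta(x) = x\cdot\theta$ is $1$-Lipschitz, so for any Borel $A \subset \R$ the preimage $B := (\pi^\theta)^{-1}(A)$ satisfies $B^r \subset (\pi^\theta)^{-1}(A^r)$ for every $r>0$; together with $\mu(B) = \hat\mu^\theta(A)$ and the definition of $\mu^+$ this gives $\mu^+(B) \leq (\hat\mu^\theta)^+(A)$, so
\[
\frac{(\hat\mu^\theta)^+(A)}{\min(\hat\mu^\theta(A), 1-\hat\mu^\theta(A))} \geq \frac{\mu^+(B)}{\min(\mu(B), 1-\mu(B))} \geq h(\mu),
\]
and taking infimum over $A$ yields $h(\hat\mu^\theta) \geq h(\mu)$.

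Second, I would invoke the classical one-dimensional characterization of the Cheeger constant for absolutely continuous measures on $\R$ (due to Bobkov; see \cite{Bob99}): if $\nu = f\,d\L^1 \in \P(\R)$ with CDF $F$, then
\[
h(\nu) \;=\; \essinf_{r \,:\, F(r) \in (0,1)}\, \frac{f(r)}{\min(F(r),\, 1-F(r))}.
\]
Applied to $\hat\mu^\theta$ under the assumption $\hat\mu^\theta \ll \L^1$, and using the elementary pointwise inequality $F(1-F) \leq \min(F,1-F)$, I obtain
\[
\frac{F^\theta(r)\bigl(1-F^\theta(r)\bigr)}{f^\theta(r)} \leq \frac{\min(F^\theta(r), 1-F^\theta(r))}{f^\theta(r)} \leq \frac{1}{h(\hat\mu^\theta)} \leq \frac{1}{h(\mu)}
\]
for $\L^1$-a.e. $r$ with $F^\theta(r) \in (0,1)$.

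Finally, I would use the compact support assumption: $\supp\mu \subset B(0,M)$ implies $\supp\hat\mu^\theta \subset [-M,M]$, hence $\{r : F^\theta(r) \in (0,1)\} \subset [-M,M]$, and outside this set the integrand in \eqref{def:SJ2} vanishes by the convention $0/0 = 0$. Integrating the pointwise bound gives $\int_\R F^\theta(1-F^\theta)/f^\theta \,dr \leq 2M/h(\mu)$ for a.e.\ $\theta$, and averaging over $\S^{d-1}$ yields \eqref{eq:SJ2;cheeger-bd}; Theorem~\ref{thm:lsw;para_rate} with any $c > d+1$ then produces \eqref{eq:lsw;para_rate;cheeger}. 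The only subtle point I expect is verifying that the assumption $h(\mu)>0$, together with $\hat\mu^\theta \ll \L^1$, really rules out pathologies in applying Bobkov's 1D formula — essentially, $h(\mu)>0$ forces $\supp\hat\mu^\theta$ to be an interval (otherwise a half-line cut through a gap would give $(\hat\mu^\theta)^+(A)=0$ with $\hat\mu^\theta(A) \in (0,1)$, contradicting $h(\hat\mu^\theta) \geq h(\mu)>0$), and within that interval $f^\theta>0$ a.e., which is exactly the regime where the 1D characterization is valid.
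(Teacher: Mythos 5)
Your proof is correct and takes essentially the same route as the paper: show $h(\hat\mu^\theta) \geq h(\mu)$, invoke the Bobkov--Houdr\'e 1D characterization of the Cheeger constant, use $F(1-F)\leq\min(F,1-F)$ and the compact support to bound $SJ_2$, and then apply Theorem~\ref{thm:lsw;para_rate}. The only cosmetic difference is that the paper argues $h(\hat\mu^\theta)\geq h(\mu)$ by noting that preimages of 1D enlargements under $x\mapsto x\cdot\theta$ are \emph{exactly} the enlargements of preimages, so the Cheeger ratios agree on cylinder sets; you use only the easy inclusion $B^r\subset(\pi^\theta)^{-1}(A^r)$, which suffices and is a hair more elementary.
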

\begin{proof}
As noted by Bobkov and Houdr\'e~\cite[Theorem 1.3]{BobHou97}, in one dimensions we have the characterization
    \begin{align*}
h(\hat\mu^\theta)=\essinf_{r\in\supp\hat\mu^\theta}\frac{f^\theta(r)}{\min\{F^\theta(r),(1-F^\theta(r))\}} \;\text{ where } d\hat\mu^\theta=f^\theta\,d\L^1,\;F^\theta(r)=\int_{-\infty}^{r} f^\theta(s)\,ds.
    \end{align*}
    Noting $F^\theta(r)(1-F^\theta(r))\leq \min\{F^\theta(r),1-F^\theta(r)\}$, we deduce
    \[\int_{-M}^{M}\frac{F^\theta(r)(1-F^\theta(r))}{f^\theta(r)}\,dr\leq \frac{2M}{h(\hat\mu^\theta)}.\]
\grn Moreover, note that for  any Borel set $A\subset\R$,
$\; \frac{(\hat\mu^{\theta})^+(A)}{\min\{\hat\mu^\theta(A),1-\hat\mu^\theta(A)\}} = \frac{\mu^+((\tilde\pi^\theta)^{-1}(A))}{\min\{\mu((\tilde\pi^\theta)^{-1}(A)),1-\mu((\tilde\pi^\theta)^{-1}(A))\}}$, where $\tilde\pi^\theta(x)=x\cdot\theta$. In particular $h(\hat\mu^\theta)\geq h(\mu)$, and we obtain~\eqref{eq:SJ2;cheeger-bd}.
Thus \eqref{eq:lsw;para_rate;cheeger} follows directly from Theorem~\ref{thm:lsw;para_rate}.\nc
\end{proof}

\begin{remark}\label{rmk:Cheeger_const}
    The isoperimetric constant $h(\mu)$ quantifies the narrowness of the `bottleneck' of $\mu$. Note that if the support of $\mu$ is disconnected then $h(\mu)=0$.
    On the other hand, for log-concave $\mu\in\P_2(\R^d)$ Bobkov established a positive lower bound on $h(\mu)$\cite[Theorem 1.2]{Bob99}. 

    Furthermore, $h(\mu)$ is bounded from below by the $L^1$-Poincar\'e constant of $\mu$. Indeed, $h(\mu)$ can be alternatively characterized as the largest constant satisfying the inequality
    \[h\int_{\R^d} |\varphi-\med_\mu(\varphi)|\,d\mu\leq \int_{\R^d}|\nabla\varphi|\,d\mu\]
    for all integrable locally Lipschitz $\varphi:\R^d\rightarrow\R$ with median $\med_\mu \varphi$ (while median is nonunique, the statement holds for every median) with respect to the measure $\mu$; see for instance the proof of~\cite[Theorem 3.1]{BobHou97}. As
    \[\int_{\R^d} |\varphi-\med_\mu(\varphi)|\,d\mu\leq \int_{\R^d} \left|\varphi-\int_{\R^d}\varphi\,d\mu\right|\,d\mu,\]
    the constant $h(\mu)$ is at least as large as the $L^1$-Poincar\'e constant of $\mu$. Consequently, for any bounded open connected $\Omega$ with Lipschitz boundary, the measure $\lambda_\Omega:=|\Omega|^{-1}\L^d\restr_\Omega$ satisfies $h(\lambda_\Omega)>0$.

    Moreover, if $\mu,\lambda\in\P(\R^d)$ satisfy $a\lambda\leq\mu\leq b\lambda$ for some $0<a\leq b<\infty$, then we have $h(\mu)\geq \frac{a}{b}h(\lambda)$. Indeed, for each Borel set $A\subset\R^d$
    \[\mu(A)\leq b\lambda(A),\qquad 1-\mu(A)=\mu(\R^d\setminus A)\leq b\lambda(\R^d\setminus A) = b(1-\lambda(A)),\]
    whereas $\mu^+(A)\geq a\lambda^+(A)$. Thus
    \begin{align*}
        \frac{\mu^+(A)}{\min\{\mu(A),1-\mu(A)\}} \geq \frac{a\lambda^+(A)}{b\min\{\lambda(A),1-\lambda(A)\}}.
    \end{align*}
    Thus any measure $\mu$ comparable to (bounded above and below by) $\L^d\restr\Omega$ satisfies $h(\mu)>0$, given that $\Omega$ is bounded open connected and has a Lipschitz boundary.
\end{remark}
\nc

\purp
\begin{remark}\label{rmk:VC_relative_CDF;sharp}
    Tudor Manole pointed out to us that the $O(\sqrt{\log n/n})$-rate concentration bound of Corollary~\ref{cor:lsw;para_rate;cheeger} is likely not sharp, as the relative VC inequality (Theorem~\ref{thm:VC_relative}) may be suboptimal when applied to CDFs.
    
    Indeed, the asymptotically sharp uniform bound on $|F(r)-F_n(r)|/\sqrt{F(r)(1-F(r))}$ is of order $O(\log\log n/n)$, where $F$ is the CDF of $\mu$ and $F_n$ the corresponding empirical CDF; see the recent survey~\cite[Section 3.1]{SarkarKuchibhotla23} and references therein. This would lead to an improvement to $O(\sqrt{\log\log n/n})$-rate in Corollary~\ref{cor:lsw;para_rate;cheeger}. As the relative VC inequality allows convenient uniform bound on empirical CDFs over all $\theta\in\S^{d-1}$, we do not pursue this refinement in this paper.
\end{remark}
\nc

\section{Metric slopes and gradient flows in the sliced Wasserstein  space}\label{sec:SWGF}
We examine the consequences of the local geometry of the sliced Wasserstein space on metric slopes and gradient flows. As in the previous sections, we  contrast the behaviors of the metric slopes and gradient flows at absolutely continuous and discrete measures. 

Since we are dealing with both the SW metric and the induced intrinsic distance $\ell_{SW}$, we start by commenting on the relationship between gradient flows with respect to the ambient and the intrinsic metric. In a general metric space $(X,m)$ the ``gradient flows'' of an energy $\E: X \to (-\infty, +\infty]$ are defined as \emph{curves of maximal slope}, namely the continuous curves $u:[0,T] \to X$ that satisfy
\begin{equation}\label{def:CMS}
    \frac{d}{dt}\E(u_t)\leq -\frac{1}{2}|u'|_{m}^2(t)-\frac12|\partial\E|_m^2(u_t) \text{ for a.e. }t\in I,
\end{equation}
where $|u'|_m$ is the metric derivative \eqref{def:met_der} and $|\partial\E|_m$ is the metric slope defined in \eqref{def:met_slope}; see~\cite[Definition 1.3.2]{AGS} for a precise and more general definition. 

Suppose $\ell_m$ is the length metric induced by $m$; the definition above allows one to consider curves of maximal slope of $\E$ in $(X, \ell_m)$ as well. We note that if $X$ is a Riemannian manifold isometrically embedded in $\R^d$ and $m$ is the Euclidean metric, then $\ell_m$ is the Riemannian distance with respect to the Riemannian metric of the manifold. It is straightforward to see that the gradient flows in the classical sense on the manifold coincide with the curves of maximal slope in both $(X, \ell_m$) and $(X,m)$. 

This equivalence is not as clear in full generality for curves of maximal slopes. As $m\leq \ell_m$, in general $|\partial\E|_{\ell_m}\leq|\partial\E|_m$ and \grn$|u'|_{m}(t)\leq |u'|_{\ell_m}(t)$ \nc for any absolutely continuous curve $u:I\rightarrow X$.
Furthermore, 
\[|u'|_{\ell_m}(t) \leq \lim_{h\searrow 0}\frac{\ell_m(t,t+h)}{h}\leq\lim_{h\searrow 0}\frac{1}{h}\int_t^{t+h}|u'|_m(s)\,ds = |u'|_m(t)\; \text{ for }\L^1\text{-a.e. } t\in I,\]
where the last equality holds by absolute continuity. Consequently, any curve of maximal slope with respect to $m$ is a curve of maximal slope in $\ell_m$.

However, it is in general unclear exactly when $|\partial\E|_{\ell_m}=|\partial\E|_m$ holds. Muratori and Savar\'{e} showed the equivalence for approximately $\lambda$-convex functional $\E$~\cite[Proposition 2.1.6]{MurSav20}. On a different note, the weighted energy dissipation (WED) approach to constructing curves of maximal slope, studied by 
Rossi, Savar\'{e}, Segatti, and Stefanelli~\cite{RSS19}  
relies on functionals that only involve metric derivatives and 
the energy, but not the metric slope and thus does not distinguish between $m$ and $\ell_m$. The authors \grn construct \nc solutions  of \eqref{def:CMS} 
with metric slope $|\partial \E|$ replaced by its relaxation $|\partial^- \E|$, provided  $|\partial^- \E|$ is a strong upper gradient, which is not the case in general, and in particular is not true for potential energies in the SW space; see Corollary~\ref{cor:relaxed_slope;pot}.
\smallskip

Let us now return to the discussion of metric slopes in the SW space. At an absolutely continuous measure $\mu\in\P_2(\R^d)$ where we have the comparison (see Theorem~\ref{thm:lsw-sw-H;comparison})
\[\|\mu-\nu\|_{\dot H^{-(d+1)/2}(\R^d)}\lesssim SW(\mu,\nu)\leq \ell_{SW}(\mu,\nu)\lesssim \|\mu-\nu\|_{\dot H^{-(d+1)/2}(\R^d)} \,\text{ for suitable } \nu\in\P_2(\R^d),\]
we formally expect
\begin{equation}\label{eq:slope_comparison;ac}
     |\partial\E|_{\dot H^{-(d+1)/2}(\R^d)}(\mu)\lesssim |\partial\E|_{\ell_{SW}}(\mu)\leq |\partial\E|_{SW}(\mu) \lesssim |\partial\E|_{\dot H^{-(d+1)/2}(\R^d)}(\mu).    
\end{equation}
On the other hand, at a discrete measure $\mu^n=\sum_{i=1}^n m_i \delta_{y_i}$, where we have comparison
\[SW(\mu,\nu)=\frac{1}{\sqrt{d}}W(\mu,\nu)+o(SW(\mu,\nu)) \text{ for suitable } \nu\in\P_2(\R^d),\]
we expect
\begin{equation}\label{eq:slope_comparison;discrete}
|\partial\E|_{SW}(\mu^n)=\sqrt{d}\,|\partial\E|_{W}(\mu^n).
\end{equation}
Of course, the comparison theorems of Section~\ref{sec:comparison} require restrictive conditions on $\mu,\nu$ and thus the comparisons of $|\partial\E|_{SW}$ above are formal; rigorously establishing this in generality would be challenging. Hence, we provide rigorous proofs of \eqref{eq:slope_comparison;ac} and \eqref{eq:slope_comparison;discrete} for the potential energy $\Vcal(\mu):=\int_{\R^d} V(x)\,d\mu(x)$ for suitable $V:\R^d\rightarrow [0,+\infty)$, at absolutely continuous measures in Section~\ref{ssec:SWGF;ac;potential} and at discrete measures in Section~\ref{ssec:SWGF;discrete}, respectively. 
Understanding of the metric slope allows us to show instability of curves of maximal slope in terms of initial data; see Proposition~\ref{prop:swslope;potential} and Remark~\ref{rmk:SWGF;instability}. 
    
\subsection{Formal sliced Wasserstein gradient flows at smooth densities}\label{ssec:SWGF;smooth_formal}
We begin by formally deriving partial differential equations corresponding to sliced Wasserstein gradient flows, emphasizing that they are of order $d-1$ higher than their Wasserstein counterparts. 
 For this purpose, it is convenient to limit our attention to the space of smooth positive measures $\P_2^\infty(\R^d)$, defined by
\begin{equation}\label{def:P2infty}
        \P_2^\infty(\R^d)=\left\{\rho\,d\L^d \in\P_2(\R^d):\;\rho\in C^\infty(\R^d),\;\rho>0\right\}.
\end{equation}    
Consider $\partial_t\mu_t+\nabla\cdot J_t=0$ and set  \grn$(\mu,J) :=(\mu_0,J_0)$\nc. Writing $\diff\E_\mu(J)=\left.\frac{d}{dt}\right|_{t=0}\E(\mu_t)$, note that the (standard) Wasserstein gradient $\nabla_W \E \in L^2(\mu;\R^d)$ satisfies
\[\diff \E_\mu(J)=\left\langle\frac{dJ}{d\mu},\nabla_W\E_\mu\right\rangle_{L^2(\mu)}=\langle J, \nabla_{W} \E_\mu \rangle_{\R^d}.\] 
Since the quadratic form $J\mapsto \|dRJ/dR\mu\|_{L^2(R\mu)}^2$ characterizes the local metric of the SW space at $\mu\in\P_2(\R^d)$, formally the sliced Wasserstein gradient flux $\nabla_{SW}\E_\mu\in\tanspace_\mu(\P_2(\R^d),SW)$ satisfies
\begin{equation}\label{eq:SWgrad_differential}
\diff\E_\mu(J) = \left\langle \frac{dRJ}{dR\mu}, \frac{dR (\nabla_{SW}\E_\mu)}{dR\mu}\right \rangle_{L^2(R\mu)}
=\left\langle RJ, \frac{dR (\nabla_{SW}\E_\mu)}{dR\mu}\right\rangle_{\Pd}.
\end{equation}
Suppose there exists some $\rv_\mu\in L^2(\hat\mu;\R^d)$ such that for all $J\in\tanspace_\mu (\P_2(\R^d),SW)$
\begin{equation}\label{eq:duality;wass_grad}
    \langle J, \nabla_W \E_{\mu}\rangle_{\R^d} = \langle RJ,\grn\rv_{\mu}\hat\mu\nc\rangle_{\Pd}.
\end{equation}
then $\nabla_{SW}\E_\mu=R^{-1}(\rv_\mu \hat\mu)$ satisfies \eqref{eq:SWgrad_differential}.
For simplicity, suppose $\nabla_W \E_{\mu} \in \Sch(\R^d;\R^d)$ and thus the inversion formula is valid.
Then by \eqref{eq:duality;radon;dist}
\[\rv_\mu = c_d^{-1} \Lambda_d R\nabla_W \E_{\mu}\]
satisfies \eqref{eq:duality;wass_grad}, and thus by the inversion formula,
\[\nabla_{SW}\E_\mu=R^{-1}(\rv_\mu\hat\mu)=c_d^{-2} R^\ast \Lambda_d (\hat\mu \Lambda_d R\nabla_W\E_\mu).\]
By the definition \eqref{def:tanspace;lsw:mu}, if $\nabla_W \E_\mu=\nabla\varphi$ for some potential $\varphi$ then $\nabla_{SW}\E_\mu\in\tanspace_\mu(\P_2(\R^d);SW)$.
Thus, formally, the gradient flow of $\E$ in $(\P_2^\infty(\R^d),\ell_{SW})$ satisfies the equation
\begin{equation}\label{eq:GF;lsw}
\partial_t\mu_t - c_d^{-2} \nabla\cdot \left(R^\ast \Lambda_d(\hat\mu_t\Lambda_d R\nabla_W\E_\mu)\right) = 0.
\end{equation}
Observe that the order of \eqref{eq:GF;lsw} is $d-1$ higher than the corresponding Wasserstein gradient flow equation. Namely, each $\Lambda_d$ is a differential operator of order $d-1$, whereas \grn$R^\ast$ and $R$ jointly regularizes the function by $d-1$ derivatives\nc.
Note that the energy dissipation for \eqref{eq:GF;lsw} is formally 
\[\frac{d}{dt}\E(\mu_t)=-c_d^{-2}\langle \hat\mu_t\Lambda_d R(\nabla_W\E_{\mu_t}),\Lambda_d R(\nabla_W\E_\mu)\rangle_{\Pd}=-c_d^{-2}\|\Lambda_d R(\nabla_W\E_\mu)\|_{L^2(\hat\mu_t)}^2.\]

\subsection{Metric slopes of potential energies at absolutely continuous measures}\label{ssec:SWGF;ac;potential}
In the formal computations we have seen that a gradient flow $(\mu_t)_{t\in I}$ of $\E$ satisfies
\[\frac{d}{dt}\E(\mu_t)=-c_d^{-2}\|\Lambda_d R(\nabla_W\E_\mu)\|_{L^2(\hat\mu_t)}^2\sim \|\nabla_W\E_\mu\|_{\dot H^{(d-1)/2}(\R^d)}^2.\]
Letting $\E(\mu)=\Vcal(\mu)=\int_{\R^d}V(x)\,d\mu$ for smooth $V:\R^d\rightarrow\R$, we know $\nabla_W\Vcal_\mu=\nabla V$. Thus along the $SW$-gradient flow \grn $(\mu_t)_{t\geq 0}$\nc, we have
\[\frac{d}{dt}\Vcal(\mu_t)\sim - \|\nabla V\|_{\dot H^{(d-1)/2}(\R^d)}^2 = -\|V\|_{\dot H^{(d+1)/2}(\R^d)}^2.\]
Remark~\ref{rmk:grad;negsob} shows that \grn the dissipation of $\dot H^{-(d+1)/2}$-gradient flow $(\tilde\mu_t)_{t\geq 0}$ of $\Vcal$ is of the same order:
\[\frac{d}{dt}\Vcal(\tilde\mu_t)=-\|V\|_{\dot H^{(d+1)/2}(\R^d)}^2.\]
\nc
\begin{remark}[Gradient flows with respect to the $\dot H^{-s}$ norm]\label{rmk:grad;negsob}
        Let $\mu\in\P_2(\R^d)$ be a measure with $L^2(\R^d)$ density, and let us identify $\mu$ with its density. Let $\E:L^2(\R^d)\rightarrow \R$ be a functional that admits an $L^2$ gradient -- i.e. at suitable $\mu\in L^2(\R^d)$ there exists $\nabla_{L^2}\E_\mu\in L^2(\R^d)$ such that for each $v\in L^2(\R^d)$ with $\int v=0$
        \begin{align*}
            \left.\frac{d}{d\eps}\right|_{\eps=0}\E(\mu+\eps v)=\langle \nabla_{L^2} \E_{\mu},v\rangle_{L^2(\R^d)}.
        \end{align*}
        Assuming $\nabla_{L^2}\E_\mu$ is sufficiently smooth, the gradient $\nabla_{\dot H^{-s}}\E_\mu$ of $\E$ with respect to the $\dot H^{-s}$ norm is formally given by
        \[\nabla_{\dot H^s} \E_\mu = (-\Delta)^{s}\nabla_{L^2}\E_{\mu},\]
        as $\langle (-\Delta)^{s}\nabla_{L^2}\E_{\mu}, v\rangle_{\dot H^{-s}(\R^d)} =\langle \nabla_{L^2} \E_{\mu},v\rangle_{L^2(\R^d)}$.
        Thus the $\dot H^{-s}(\R^d)$ gradient flow of $\E$ formally satisfies the PDE
        \[\partial_t\mu_t + (-\Delta)^s \nabla_{L^2}\E_{\mu_t} =0\]
        and we see that the PDE is precisely of order $2s$ higher than that of the $L^2$ gradient flow equation; \vio note that the $\dot H^{-s}$-gradient flow equation has the structure $\partial_t\mu_t=-\nabla_{\dot H^s}\E_\mu$, whereas the Wasserstein gradient flow satisfies an equation formulated in terms of the continuity equation. \nc Furthermore, dissipation of the gradient flow is
        \[\frac{d}{dt}\E(\mu_t)= -\|\nabla_{\dot H^{-s}}\E_{\mu_t}\|_{H^{-s}(\R^d)}^2 = -\|(-\Delta)^s \nabla_{L^2}\E_{\mu_t}\|_{H^{-s}(\R^d)}^2=-\|\nabla_{L^2}\E_{\mu_t}\|_{\dot H^{s}(\R^d)}^2.\]
For  $\E=\Vcal$,  we note that $\nabla_{L^2}\Vcal_\mu=V$, and hence $\dot H^s$-gradient flow of $\Vcal$ satisfies
        \begin{align*}
            \frac{d}{dt}\Vcal(\mu_t)=-\|V\|_{\dot H^s(\R^d)}^2.
        \end{align*}
    \end{remark}

    Applying Theorem~\ref{thm:lsw-sw-H;comparison}, we demonstrate that \eqref{eq:slope_comparison;ac} holds for potential energy functionals with smooth compactly supported $V$.
    
\begin{proposition}[Slope of potential energies at absolutely continuous measures]\label{prop:metslope;potential-ac}
Let $V:\R^d\rightarrow[0,+\infty)$ be smooth and compactly supported.
Let $\Vcal:\P_2(\R^d)\rightarrow [0,+\infty)$ be the potential energy functional
\[\Vcal(\nu)=\int_{\R^d} V(x)\,d\nu(x).\] 
Let $\Omega$ an open bounded connected domain containing $\supp V$ with $\dist(\supp V, \partial\Omega)=:\alpha>0$, and $\mu\in\P_2(\R^d)$ an absolutely continuous probability measure such that
\[\frac{a}{|\Omega|}\L^d\restr_{\Omega}\leq \mu\leq \frac{b}{|\Omega|}\L^d\restr_{\Omega} \text{ for some } 0<a<b<\infty.\]
Then
\begin{equation}\label{eq:metslope;potential_ac}
     \|V\|_{\dot H^{(d+1)/2}(\R^d)}\lesssim_{a,\alpha,|\Omega|} |\partial\Vcal|_{\ell_{SW}}(\mu) \leq |\partial\Vcal|_{SW}(\mu) \leq c_d^{-1}\|\partial_r \Lambda_d R V\|_{L^2(\hat\mu)} \lesssim_{c_d,b,\Omega} \|V\|_{\dot H^{(d+1)/2}(\R^d)}.
\end{equation}
\end{proposition}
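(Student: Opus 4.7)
\begin{proofsketch}
The strategy is to prove the four inequalities separately. The middle inequality $|\partial\Vcal|_{\ell_{SW}}(\mu)\leq |\partial\Vcal|_{SW}(\mu)$ is automatic: since $SW\leq\ell_{SW}$, the same numerator divided by the larger denominator in the definition~\eqref{def:met_slope} yields a smaller slope.

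For the third inequality, one computes using the Radon inversion formula $V=c_d^{-1}R^\ast\Lambda_d RV$ together with the duality~\eqref{eq:duality;radon;measure} to obtain
\[\Vcal(\nu)-\Vcal(\mu)=c_d^{-1}\dashint_{\S^{d-1}}\int_{\R}\Lambda_d R_\theta V\,d(\hat\nu^\theta-\hat\mu^\theta)\,d\theta.\]
For each $\theta$, pick an optimal plan $\hat\gamma^\theta\in\Gamma_o(\hat\mu^\theta,\hat\nu^\theta)$ and Taylor expand $\Lambda_d R_\theta V(s)-\Lambda_d R_\theta V(r)=(s-r)\partial_r\Lambda_d R_\theta V(r)+O(|s-r|^2)$; this is legitimate since $V\in C_c^\infty$ makes $\Lambda_d RV$ a $C^\infty$ function with all derivatives bounded. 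Applying Cauchy--Schwarz first in $(r,s)$ against $\hat\gamma^\theta$ and then in $\theta$ gives
\[|\Vcal(\nu)-\Vcal(\mu)|\leq c_d^{-1}\|\partial_r\Lambda_d RV\|_{L^2(\hat\mu)}\,SW(\mu,\nu)+O(SW(\mu,\nu)^2),\]
so dividing by $SW(\mu,\nu)$ and letting $\nu\to\mu$ narrowly yields the slope bound.

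For the fourth (rightmost) inequality, the hypothesis $\mu\leq(b/|\Omega|)\L^d\restr_\Omega$ implies, by integration over affine hyperplanes, a uniform $L^\infty$-bound on the density of $\hat\mu$ relative to $\vol_{\S^{d-1}}\otimes\L^1$. Hence $\|\partial_r\Lambda_d RV\|_{L^2(\hat\mu)}^2\lesssim_{b,|\Omega|}\|\partial_r\Lambda_d RV\|_{L^2(\Pd)}^2$, which in turn equals $\|RV\|_{\dot H^d(\Pd)}^2$ up to a normalization constant (by \eqref{def:Hts_norm;radon} together with Plancherel in $r$), and is therefore equal to $\|V\|_{\dot H^{(d+1)/2}(\R^d)}^2$ by Sharafutdinov's isometry \eqref{eq:Radon_isometry;Hts} with $s=t=(d+1)/2$.

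The first (leftmost) inequality is the most delicate and is where Theorem~\ref{thm:lsw-sw-H;comparison}(ii) enters. Fix a set $\tilde\Omega$ with $\supp V\subset\tilde\Omega\subset\subset\Omega$ and $\dist(\tilde\Omega,\partial\Omega)>0$, and consider perturbations $\mu_\eps=\mu+\eps\psi$ with $\psi\in C_c^\infty(\tilde\Omega)$, $\int\psi=0$. For small $\eps$, the lower bound $\mu\geq (a/|\Omega|)\L^d\restr_\Omega$ ensures $\mu_\eps\in\P_2(\R^d)$, while the support condition ensures $\mu=\mu_\eps$ on $\Omega\setminus\tilde\Omega$. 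Theorem~\ref{thm:lsw-sw-H;comparison}(ii) applied with $\lambda=\lambda_\Omega$ gives $\ell_{SW}(\mu,\mu_\eps)\leq (C/\sqrt{a})\,\eps\|\psi\|_{\dot H^{-(d+1)/2}(\R^d)}$. Combining with $\Vcal(\mu)-\Vcal(\mu_\eps)=-\eps\int V\psi$ yields
\[|\partial\Vcal|_{\ell_{SW}}(\mu)\geq\frac{\sqrt{a}}{C}\,\sup_{\psi}\frac{|\int V\psi|}{\|\psi\|_{\dot H^{-(d+1)/2}(\R^d)}},\]
where the sup runs over admissible $\psi$. The main obstacle is to show this restricted sup is comparable to $\|V\|_{\dot H^{(d+1)/2}(\R^d)}$. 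The unrestricted sup equals $\|V\|_{\dot H^{(d+1)/2}}$ by Fourier duality, with extremizer $\psi^\ast:=(-\Delta)^{(d+1)/2}V$. For $d$ odd this is an integer-order differential operator applied to $V$, so $\psi^\ast\in C_c^\infty(\supp V)\subset C_c^\infty(\tilde\Omega)$ with $\int\psi^\ast=0$ automatically, and equality is attained. For $d$ even, $\psi^\ast$ is nonlocal but lies in $\Sch(\R^d)$ with $\int\psi^\ast=0$; one approximates it by admissible test functions via a smooth cutoff supported in $\tilde\Omega$ combined with a mean correction (subtracting a small multiple of a fixed bump supported in $\tilde\Omega\setminus\supp V$, which does not affect $\int V\cdot$ since $\chi\equiv 1$ on $\supp V$). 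Polynomial decay of $\psi^\ast$ at infinity, inherited from compactness of $\supp V$, permits controlling the approximation error in $\dot H^{-(d+1)/2}(\R^d)$ by a constant depending only on $\alpha$ and $|\Omega|$.
\end{proofsketch}
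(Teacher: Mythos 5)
Your proof follows the same overall strategy as the paper's: the middle inequality is automatic from $SW\leq\ell_{SW}$; the third inequality uses the Radon inversion formula $V=c_d^{-1}R^\ast\Lambda_d RV$, duality, a first-order Taylor expansion of $\Lambda_d R_\theta V$, and Cauchy--Schwarz against the slice-wise optimal plans; the fourth inequality uses the bound $\hat\mu^\theta\leq bC_\Omega\L^1$ together with the Radon isometry $\|RV\|_{\dot H^d(\Pd)}=\|V\|_{\dot H^{(d+1)/2}(\R^d)}$; and the first inequality applies Theorem~\ref{thm:lsw-sw-H;comparison}(ii) to a perturbation of $\mu$ in the direction $(-\Delta)^{(d+1)/2}V$. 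The paper directly sets $\sigma_\eps=\mu-\eps(-\Delta)^{(d+1)/2}V$ and asserts $(-\Delta)^{(d+1)/2}V\in C_c^\infty(\R^d)$, which is correct only for odd $d$; for even $d$ the operator $(-\Delta)^{(d+1)/2}$ is nonlocal and $\sigma_\eps$ is neither supported in $\Omega$ nor obviously nonnegative, so Theorem~\ref{thm:lsw-sw-H;comparison}(ii) cannot be invoked as stated.

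You noticed this and reformulated the lower bound as a supremum over admissible $\psi\in C_c^\infty(\tilde\Omega)$ with $\int\psi=0$, then proposed a cutoff-and-mean-correction of $\psi^\ast=(-\Delta)^{(d+1)/2}V$ for even $d$. That is a genuine improvement, and the sketch can be completed: writing $\psi=\chi\psi^\ast-c\rho$ with $c=\int\chi\psi^\ast$, one has $\psi=\psi^\ast-\eta$ with $\eta=(1-\chi)\psi^\ast+c\rho$; the zero mean of $\eta$ gives $\F\eta(\xi)=O(|\xi|)$ near the origin, which is exactly enough to make $\|\eta\|_{\dot H^{-(d+1)/2}}$ finite, and the $|x|^{-(2d+1)}$ decay of $\psi^\ast$ combined with a Poincar\'e inequality on $\Omega$ yields $\|\eta\|_{\dot H^{-(d+1)/2}}\lesssim_{\alpha,\Omega}\|V\|_{\dot H^{(d+1)/2}(\R^d)}$. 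One caveat in your write-up: you assert the approximation error is controlled ``by a constant depending only on $\alpha$ and $|\Omega|$'' --- as written this is too weak, since the error must scale with a norm of $V$; the correct claim is that it is bounded by $C(\alpha,|\Omega|)\,\|V\|_{\dot H^{(d+1)/2}(\R^d)}$, which is what the decay estimate actually delivers.
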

\begin{proof}
To obtain the upper bound, note that as $V\in C_c^\infty(\R^d)$ the Radon inversion \eqref{eq:radon_inversion_f2} and duality formula  for finite measures \eqref{eq:duality;radon;measure} imply that for any $\nu\in\P_2(\R^d)$, writing $\widehat\gamma\in\widehat\Gamma_o(\mu,\nu)$, we have
\begin{align*}
\Vcal(\mu)-\Vcal(\nu)&=\int_{\R^{d}} V\,d(\mu-\nu)=c_d^{-1}\int_{\R^{d}} R^\ast \Lambda_d RV d(\mu-\nu) = c_d^{-1}\dashint_{\S^{d-1}}\int_{\R} \Lambda_d RV\, d(\hat\mu^\theta-\hat\nu^\theta)\,d\theta  \\
&= c_d^{-1}\dashint_{\S^{d-1}}\int_{\R} \frac{\Lambda_d RV(r,\theta)-\Lambda_d RV(q,\theta)}{r-q}(r-q)\,d\hat\gamma^\theta(r,q)\,d\theta.
\end{align*}
Furthermore, as $\Lambda_d RV$ is of class $C^\infty$,
\begin{align*}
    |\Lambda_d RV(q,\theta)-\Lambda_d RV(r,\theta)|\leq |\partial_r \Lambda_d R V(r,\theta)||r-q|+O(|r-q|^2)
\end{align*}
and thus
\begin{align*}
    &\left|\dashint_{\S^{d-1}}\int_{\R} \frac{\Lambda_d RV(r,\theta)-\Lambda_d RV(q,\theta)}{r-q}(r-q)\,d\hat\gamma^\theta(r,q)\,d\theta\right|  \\
    &\leq \dashint_{\S^{d-1}}\int_{\R} |r-q|(|\partial_r \Lambda_d R V(r,\theta)|+O(|r-q|))\,d\hat\gamma^\theta(r,q)\,d\theta  \\
    &\leq \dashint_{\S^{d-1}}\int_{\R} |r-q||\partial_r \Lambda_d R V(r,\theta)|)\,d\hat\gamma^\theta(r,q)\,d\theta + \dashint_{\S^{d-1}}\int_{\R} O(|r-q|^2)\,d\hat\gamma^\theta(r,q)\,d\theta \\
    &\leq SW(\mu,\nu)\|\partial_r \Lambda_d R V\|_{L^2(\hat\mu)}+SW^2(\mu,\nu).
\end{align*}
Hence
\begin{align*}
    |\Vcal(\mu)-\Vcal(\nu)|\leq c_d^{-1}SW(\mu,\nu)(\|\partial_r \Lambda_d R V\|_{L^2(\hat\mu)}+SW(\mu,\nu))
\end{align*}
and deduce the upper bound
\begin{align*}
    |\partial\Vcal|_{SW}(\mu) &=\limsup_{\nu\rightarrow \mu}\frac{[\Vcal(\mu)-\Vcal(\nu)]_+}{SW(\mu,\nu)}\\
    &\leq \limsup_{\nu\rightarrow\mu}\frac{c_d^{-1}SW(\mu,\nu)(\|\partial_r \Lambda_d R V\|_{L^2(\hat\mu)}+SW(\mu,\nu))}{SW(\mu,\nu)} = c_d^{-1}\|\partial_r \Lambda_d R V\|_{L^2(\hat\mu)}.
\end{align*}
Note that $\hat\mu^\theta\leq b C_\Omega\L^1$, where $C_\Omega=C_{\lambda_\Omega}$ is as in Theorem~\ref{thm:lsw-sw-H;comparison} with $\lambda_\Omega=|\Omega|^{-1}\L^d\restr_\Omega$. Thus by the Radon isometry \eqref{eq:Radon_isometry;Hts}
\[\|\partial_r \Lambda_d R V\|_{L^2(\hat\mu)}\lesssim_{b/|\Omega|} \|\Lambda_d R V\|_{\dot H^1(\Pd)} = \|V\|_{\dot H^{(d+1)/2}(\R^d)}.\]    

As $SW\leq \ell_{SW}$, $|\partial\Vcal|_{SW}(\mu)\geq|\partial\Vcal|_{\ell_{SW}}(\mu)$ and it only remains to prove the lower bound
\[\|V\|_{\dot H^{(d+1)/2}(\R^d)}\lesssim_{a,\alpha,|\Omega|} |\partial\Vcal|_{\ell_{SW}}(\mu)\]
for $\mu$ satisfying the provided conditions.
To do so, define for each $\eps>0$
\[\sigma_\eps:=\mu-\eps(-\Delta)^{\frac{d+1}{2}} V(x)\,dx.\]
As $V\in C_c^\infty$, $\|(-\Delta)^{\frac{d+1}{2}}V\|_\infty<\infty$ and $\mu$ is bounded away from zero on $\Omega\supset \supp V$, $\sigma_\eps\geq 0$ when $\eps$ is sufficiently small.
Furthermore, as $(-\Delta)^{\frac{d+1}{2}}V\in C_c^\infty(\R^d)$, $\sigma_\eps$ has bounded second moments, and integrating by parts in a sufficiently large ball containing $\supp V$ we may deduce $\int_{\R^d}(-\Delta)^{\frac{d+1}{2}} V =0$, hence $\sigma_\eps\in\P_2(\R^d)$. Moreover, $\sigma_\eps$ has uniformly bounded second moments and converges to $\mu$ narrowly, thus $\sigma_\eps\rightarrow\mu$ in $SW$ as $\eps\searrow 0$. Therefore
\begin{align*}
    \int_{\R^d} V(x)\,d(\mu-\sigma_\eps)=\eps \int_{\R^d} V(x) (-\Delta)^{\frac{d+1}{2}}V\,dx = \eps  \|(-\Delta)^{\frac{d+1}{4}} V\|_{L^2(\R^d)}^2 = \eps \|V\|_{\dot H^{(d+1)/2}(\R^d)}^2.
\end{align*}
As further $\mu=\mu_\eps$ on $\Omega\setminus\supp V$, by the comparison theorem at absolutely continuous measures (Theorem~\ref{thm:lsw-sw-H;comparison}),
\begin{align*}
    |\partial\Vcal|_{\ell_{SW}}(\mu)&=\limsup_{\nu\rightarrow\mu}\frac{\Vcal(\mu)-\Vcal(\nu)}{\ell_{SW}(\mu,\nu)}
    \geq \limsup_{\eps\searrow 0} \frac{\Vcal(\mu)-\Vcal(\sigma_\eps)}{\ell_{SW}(\mu,\sigma_\eps)} \\
    &\gtrsim_{a,\alpha,\Omega} \limsup_{\eps\searrow 0} \frac{\Vcal(\mu)-\Vcal(\sigma_\eps)}{\|\mu-\sigma_\eps\|_{\dot H^{-(d+1)/2}(\R^d)}}
    = \frac{\|V\|_{\dot H^{(d+1)/2}(\R^d)}^2}{\|(-\Delta)^{\frac{d+1}{2}}V\|_{\dot H^{-(d+1)/2}(\R^d)}} = \|V\|_{\dot H^{(d+1)/2}(\R^d)}.
\end{align*}
\end{proof}

\subsection{Metric slopes of potential energies at discrete measures}\label{ssec:SWGF;discrete}
In this section we focus on the equivalence of the Wasserstein and the sliced Wasserstein metric slopes of potential energies at discrete measures. 

\purp Given a functional $\E:\P_2(\R^d)\rightarrow (-\infty,+\infty]$, a metric $m:\P_2(\R^d)\times\P_2(\R^d)\rightarrow [0,+\infty)$, a time-step $\tau>0$, and a base point $\mu\in\P_2(\R^d)$ let us write
\begin{equation}\label{def:Phi_SW}
    \E^{m}(\nu;\tau,\mu):=\frac{m^2(\nu,\mu)}{2\tau}+\E(\nu) \text{ for each }\nu\in\P_2(\R^d).
\end{equation}
We denote by $\E^m_\tau:\P_2(\R^d)\rightarrow (-\infty,+\infty]$ the Moreau-Yosida approximation of $\E$ with with respect to metric $m$ and time step $\tau>0$
\begin{equation}\label{def:Em_tau}
    \E^m_\tau(\mu)=\inf_{\nu\in\P_2(\R^d)}\E^m(\nu;\tau,\mu).
\end{equation}
Existence and uniqueness of the minimizer of \eqref{def:Phi_SW} with $m=SW$ in certain cases was discussed in~\cite{Bonet22}. General existence readily follows from the direct method of calculus of variations as we will see in the proof of Lemma~\ref{lem:sw-JKO-min;winfty;V}.

We will impose two weak regularity assumptions on $\E$, namely lower semicontinuity with respect to $SW$ and coercivity; we say $\E$ is coercive if there exists $\tau_\ast>0$ and $\mu_\ast\in\P_2(\R^d)$ such that
\begin{equation}\label{ass:coercivity}
    \E^m_\tau(\mu_\ast)>-\infty.
\end{equation}
It is well-known that the potential energy functional $\V(\mu)= \int_{\R^d} V(x)\,d\mu(x)$ is coercive for instance when the negative part of $V$ grows at most quadratically -- i.e. $V(x)\geq -C_1- C_2|x|^2$ for some $C_1,C_2>0$. Moreover, lower semicontinuity of $V$ in $\R^d$ implies lower semicontinuity of $\V$ with respect to the narrow topology, hence with respect to $SW$ and $W$. 

We stress that while we utilize the variational problem \eqref{def:Em_tau} in this section to characterize the metric slope via the duality formula, we \emph{do not} study the limiting curves of the minimizing movements scheme.

The duality formula for the local slope~\cite[Lemma 3.1.5]{AGS} in terms of the minimizers of the functional \eqref{def:Phi_SW} along with Theorem~\ref{thm:SWnear-discrete} allows us to establish the following sufficient condition for an energy functional $\E$ to satisfy $|\partial\E|_{SW}=\sqrt{d}\,|\partial\E|_W$ at discrete measures. 
\nc
\begin{lemma}\label{lem:swslope;E}
Let $\E:\P_2(\R^d)\rightarrow [0,+\infty)$ \vio be coercive and lower semicontinuous with respect to $SW$. Additionally, suppose that \nc at each discrete measure $\mu^n=\sum_{i=1}^n m_i\delta_{x_i}$, for sufficiently small $\tau>0$ \grn the functional $\E^{SW}(\cdot;\tau,\mu^n)$ as defined in \eqref{def:Phi_SW} \nc
admits minimizers $\mu_\tau$ such that $W_\infty(\mu_\tau,\mu^n)\xrightarrow[]{\tau\searrow 0}0$.
      
Then the slope of $\E$ at each discrete probability measures $\mu^n$ w.r.t $SW$ coincide with the slope with respect to $W/\sqrt{d}$ -- i.e.
\begin{equation}\label{eq:swslope;potential}
    |\partial\E|_{SW}(\mu^n)=\sqrt{d}\,|\partial\E|_{W}(\mu^n).
\end{equation}
\end{lemma}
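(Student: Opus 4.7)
The plan is to leverage the duality formula relating the metric slope to the Moreau--Yosida approximation. By~\cite[Lemma 3.1.5]{AGS}, under the standing lower semicontinuity and coercivity hypotheses on $\E$,
\[
|\partial\E|_m^2(u) = \limsup_{\tau\downarrow 0}\frac{2(\E(u)-\E^m_\tau(u))}{\tau}
\]
for every $u\in D(\E)$. I would apply this identity with both $m=SW$ and $m=W$ at the discrete measure $u=\mu^n$, and then reduce the identity for $m=SW$ to the one for $m=W$ by a tight two-sided comparison of the corresponding Moreau--Yosida values at rescaled time-steps of order $d\tau$.

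The sandwich splits into an easy half and a hard half. The easy half follows pointwise in $\nu$ from $SW^2\le W^2/d$, which gives $\E^{SW}(\nu;\tau,\mu^n)\le \E^W(\nu;d\tau,\mu^n)$; taking infima yields $\E^{SW}_\tau(\mu^n)\le \E^W_{d\tau}(\mu^n)$. For the hard half, I would apply Theorem~\ref{thm:SWnear-discrete} at the minimizer $\mu_\tau$: by hypothesis $W_\infty(\mu_\tau,\mu^n)\to 0$, so for all sufficiently small $\tau$
\[
\frac{1}{d}W^2(\mu_\tau,\mu^n)\le(1+\eps_\tau)\,SW^2(\mu_\tau,\mu^n),\qquad \eps_\tau:=\frac{4Cn}{l_{\mu^n}}W_\infty(\mu_\tau,\mu^n)\xrightarrow[\tau\downarrow 0]{}0.
\]
Plugging this into the minimality identity $\E^{SW}_\tau(\mu^n)=\frac{SW^2(\mu_\tau,\mu^n)}{2\tau}+\E(\mu_\tau)$ and then enlarging the infimum in $\E^W$ yields $\E^{SW}_\tau(\mu^n)\ge \E^W_{d\tau(1+\eps_\tau)}(\mu^n)$, producing the sandwich
\[
\E^W_{d\tau(1+\eps_\tau)}(\mu^n)\le \E^{SW}_\tau(\mu^n)\le \E^W_{d\tau}(\mu^n).
\]

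From here the conclusion is a change of variables $\sigma = d\tau$ (resp.\ $\sigma=d\tau(1+\eps_\tau)$): dividing the sandwich by $\tau$ pulls a factor of $d$ (resp.\ $d(1+\eps_\tau)$) outside, and since $\eps_\tau\to 0$, applying the Wasserstein duality formula at $\mu^n$ sends both outer quantities to $d\,|\partial\E|_W^2(\mu^n)$. Thus $\limsup_{\tau\downarrow 0} \frac{2(\E(\mu^n)-\E^{SW}_\tau(\mu^n))}{\tau}=d\,|\partial\E|_W^2(\mu^n)$, and the duality formula for $m=SW$ delivers the desired identity $|\partial\E|_{SW}^2(\mu^n)=d\,|\partial\E|_W^2(\mu^n)$. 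The key point --- and the main obstacle --- is precisely the control of the correction factor $1+\eps_\tau$: the sandwich only collapses because $\eps_\tau\to 0$, and this vanishing requires the strong $W_\infty$-convergence of the minimizer (not merely $W$-convergence, which would be automatic), which is exactly the role played by the standing hypothesis on $\E$. A secondary subtlety is to ensure that the duality formula here identifies the metric slope $|\partial\E|_m$ and not merely its lower semicontinuous relaxation; this will have to be justified at $\mu^n$ for both $m=W$ (immediate from continuity of $|\nabla V|^2$ against narrow convergence) and $m=SW$ (the delicate case, reconcilable with the non--lsc phenomenon in Corollary~\ref{cor:relaxed_slope;pot}).
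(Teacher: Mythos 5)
Your argument is correct and uses the same two core ingredients as the paper: the AGS duality formula $\frac{1}{2}|\partial\E|_m^2(u)=\limsup_{\tau\downarrow 0}\tau^{-1}(\E(u)-\E^m_\tau(u))$, and Theorem~\ref{thm:SWnear-discrete} applied at the $SW$-JKO minimizer to leverage $W_\infty(\mu_\tau,\mu^n)\to 0$. The reorganization into a two-sided sandwich $\E^W_{d\tau(1+\eps_\tau)}(\mu^n)\le \E^{SW}_\tau(\mu^n)\le\E^W_{d\tau}(\mu^n)$ is a clean way to present it; the paper only argues the nontrivial direction $|\partial\E|_{SW}^2(\mu^n)\le d\,|\partial\E|_W^2(\mu^n)$ explicitly, leaving the reverse inequality (which holds for all $\mu$ since $SW\le W/\sqrt d$) implicit, so your version is marginally more self-contained. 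One detail you should keep in mind when writing this out: for the left side of the sandwich the reparametrization $\sigma=d\tau$ is a genuine bijection, so you recover the full $\limsup$ and hence the exact value $\frac{d}{2}|\partial\E|_W^2(\mu^n)$; for the right side $\sigma_\tau=d\tau(1+\eps_\tau)$ only gives an upper bound by the $\limsup$, but since the lower side already pins the value, the squeeze closes.

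Your ``secondary subtlety'' about whether the duality formula identifies $|\partial\E|_{SW}$ rather than its lower semicontinuous relaxation is, however, a misconception. \cite[Lemma 3.1.5]{AGS} directly expresses the local slope $|\partial\E|_m$ (not $|\partial^-\E|_m$) in terms of the Moreau--Yosida approximation, assuming only that $\E$ is proper, lower semicontinuous and coercive with respect to $m$; both hypotheses hold for $m=SW$ by assumption and transfer to $m=W$ since $SW\le W/\sqrt d$ and both metrics induce the same topology. The relaxed slope only intervenes in the passage from discrete minimizing movements to curves of maximal slope (as in Corollary~\ref{cor:relaxed_slope;pot}), which is not at play here. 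Also note that the reference to ``$|\nabla V|^2$'' in your last paragraph is specific to the potential-energy case and not relevant to the general $\E$ of this lemma.
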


\begin{proof}
Fix $\mu^n=\sum_{i=1}^n m_i\delta_{x_i}$. By hypothesis, for sufficiently small $\tau>0$ we can find minimizers $\mu_\tau^{SW}$ of the JKO functional $\E^{SW}(\cdot;\tau,\mu^n)$ such that $W_\infty(\mu^n,\mu_\tau^{SW})\xrightarrow[]{\tau\searrow 0}0$. \vio By coercivity and lower semicontinuity with respect to $SW$, we can apply the duality formula for the local slope~\cite[Lemma 3.1.5]{AGS} to choose a sequence $\tau_k\rightarrow 0$ such that \nc
\[|\partial\E|_{SW}^2(\mu)=\lim_{k\rightarrow\infty}\frac{SW^2(\mu^n,\mu_{\tau_k})}{\tau_k^2}=\lim_{k\rightarrow\infty} \frac{\E(\mu^n)-\E(\mu_\tau^{SW})}{\tau_k}.\]
As $W_\infty(\mu^n,\mu_{\tau_k}^{SW})\xrightarrow[]{\tau\searrow 0}0$, by Theorem~\ref{thm:SWnear-discrete} we have
\[\lim_{k\rightarrow\infty}\frac{SW^2(\mu^n,\mu_{\tau_k}^{SW})}{2\tau_k^2}= \lim_{k\rightarrow\infty}\frac{W^2(\mu^n,\mu_{\tau_k}^{SW})}{2d\tau_k^2}.\]
On the other hand, \grn the Moreau-Yosida approximation $\E_\tau^{W/\sqrt{d}}$ satisfies \nc $\E_\tau^{W/\sqrt{d}}(\mu^n)\leq \E(\mu_\tau^{SW})+\frac{W^2(\mu^n,\mu_\tau^{SW})}{2d\tau}$, and thus
\[\frac{\E(\mu^n)-\E(\mu_\tau^{SW})}{\tau}-\frac{W^2(\mu^n,\mu_\tau^{SW})}{2d\tau^2}
\leq \frac{\E(\mu^n)-\E_\tau^{W/\sqrt{d}}(\mu^n)}{\tau}.
\]
Combining the estimates and again using the duality formula for $|\partial\mathcal{E}|_{W/\sqrt{d}}$,
\begin{align*}
    \frac12|\partial\E|_{SW}^2(\mu^n) &=\lim_{k\rightarrow\infty} \frac{\E(\mu^n)-\E(\mu_{\tau_k}^{SW})}{\tau_k}-\frac{SW^2(\mu^n,\mu_{\tau_k}^{SW})}{2\tau_k^2}    
    = \lim_{k\rightarrow\infty}\frac{\E(\mu^n)-\E(\mu_{\tau_k}^{SW})}{\tau}-\frac{W^2(\mu^n,\mu_{\tau_k}^{SW})}{2d\tau_k^2} \\
    & \leq \limsup_{\tau\searrow 0} \frac{\E(\mu^n)-\E_\tau^{W/\sqrt{d}}(\mu^n)}{\tau} 
    = \frac{1}{2}|\partial\E|_{W/\sqrt{d}}^2(\mu^n)= \frac{d}{2}\, |\partial\E|_{W}^2(\mu^n).
\end{align*}
\end{proof}

Our last step is to verify that the hypotheses of Lemma~\ref{lem:swslope;E} are satisfied for a general class of potential energy functionals.
\begin{lemma}\label{lem:sw-JKO-min;winfty;V}
Let $V:\R^d\rightarrow [0,+\infty)$ be \grn continuous \nc and let $\mathcal{V}:\mu\mapsto\int_{\R^d} V(x)\,d\mu(x)$. \grn Then, for each discrete measure $\mu^n=\sum_{i=1}^n m_i\delta_{y_i}\in\P_2(\R^d)$ \nc there exists $\tau^\ast=\tau^\ast(\mu,V)$ such that for all $0<\tau<\tau^\ast$, $\grn\Vcal^{SW}(\,\cdot\,;\tau,\mu^n)\nc$ as defined in \eqref{def:Phi_SW} admits a minimizer $\mu_\tau$ such that
\begin{equation}\label{eq:sw-JKO-min;winfty;V}
    W_\infty(\mu_\tau,\mu^n)\leq \grn\tau^{1/3}\nc.
\end{equation}
\end{lemma}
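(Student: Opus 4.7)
\begin{proofsketch}
\emph{Existence.} The functional $\nu\mapsto\Vcal^{SW}(\nu;\tau,\mu^n)$ is narrowly lower semicontinuous, using continuity of the nonnegative $V$ for the $\Vcal$-term and Proposition~\ref{prop:SW2_lsc} for the $SW^2$-term. Testing against $\nu=\mu^n$, any minimizing sequence $(\nu_k)$ satisfies $SW(\nu_k,\mu^n)\leq\sqrt{2\tau\,\Vcal(\mu^n)}$ and therefore lies in a fixed $SW$-ball, which is narrowly precompact by Proposition~\ref{prop:SWball=wcpct}. A narrow subsequential limit is a minimizer $\mu_\tau$, and the optimality condition tested against $\mu^n$ yields
\[
    SW^2(\mu_\tau,\mu^n)\leq 2\tau\bigl(\Vcal(\mu^n)-\Vcal(\mu_\tau)\bigr)\leq 2\tau\,\Vcal(\mu^n).
\]

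\emph{Concentration near $\supp\mu^n$.} To translate the $SW$-bound into a $W_\infty$-bound, I would first establish a one-dimensional projection estimate: a mass $\eps$ of $\mu_\tau$ located at a point $x$ with $\dist(x,\supp\mu^n)\geq\tau^{1/3}$ contributes at least $c(\mu^n)\,\eps\,\tau^{2/3}$ to $SW^2(\mu_\tau,\mu^n)$. Indeed, for a set of directions $\theta\in\S^{d-1}$ of uniformly positive measure (depending on $\mu^n$), the projection $x\cdot\theta$ lies at distance $\gtrsim\tau^{1/3}$ from every $y_i\cdot\theta$, so the 1D optimal transport between the projections of $\mu_\tau$ and $\mu^n$ cannot absorb this mass cheaply. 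Combining this lower bound with the previous $SW^2$-bound forces the ``outside'' mass to satisfy $\mu_\tau(\R^d\setminus U_\tau)\leq C(\mu^n,V)\,\tau^{1/3}$, where $U_\tau:=\bigcup_i B(y_i,\tau^{1/3})$.

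\emph{Modification.} From any such minimizer I would construct $\tilde\mu_\tau$ by first projecting the outside mass via nearest-point projection onto $\supp\mu^n$, and then rebalancing within $U_\tau$ so that $\tilde\mu_\tau(B(y_i,\tau^{1/3}))=m_i$ for each $i$; this rebalanced measure automatically satisfies $W_\infty(\tilde\mu_\tau,\mu^n)\leq\tau^{1/3}$. The change in $\Vcal$ is $O(\tau^{1/3})$ by the continuity of $V$ and the outside-mass bound, and the total mass rebalanced across balls $B(y_i,\tau^{1/3})$ is also $O(\tau^{1/3})$.

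The principal obstacle is verifying that $\Vcal^{SW}(\tilde\mu_\tau)\leq\Vcal^{SW}(\mu_\tau)$, which requires controlling the change in $SW^2/(2\tau)$ under the rebalancing: a naive bound on the $W^2$-cost of cross-ball transport divided by $2\tau$ would be of order $\tau^{-2/3}$, swamping any $\Vcal$-gains. The plan is to carry out the modification entirely within $C_\tau:=\{\nu:W_\infty(\nu,\mu^n)\leq\tau^{1/3}\}$ and to invoke Theorem~\ref{thm:SWnear-discrete}: on $C_\tau$, $SW^2(\cdot,\mu^n)$ is comparable to $W^2(\cdot,\mu^n)/d$ up to multiplicative error $O(\tau^{1/3})$, so the change in $SW^2$ can be reduced to a change in $W^2$, for which nearest-point projection is a contraction and short-range rebalancing inside $U_\tau$ is tractable. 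For $\tau$ sufficiently small this yields $\Vcal^{SW}(\tilde\mu_\tau)\leq\Vcal^{SW}(\mu_\tau)$, so $\tilde\mu_\tau$ is the required minimizer.
\end{proofsketch}
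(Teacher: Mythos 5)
Your overall strategy diverges from the paper's in a way that introduces a genuine gap. The paper never modifies the minimizer: it proves that any minimizer $\mu_\tau$ \emph{already} satisfies $W_\infty(\mu_\tau,\mu^n)\leq\tau^{1/3}$ for small $\tau$, by a quantitative argument that you do not reproduce. Concretely, after writing $\mu_\tau = m_\tau^{far}\mu_\tau^{far} + \sum_i m_\tau^i\mu_\tau^i$ (with $\mu_\tau^i$ supported in $B(y_i,\tau^{1/3})$), the paper introduces the ``misplaced mass'' $m_\tau^\Delta=\sum_i(m_\tau^i-m_i)_+$, and tests optimality against $\mu^n$ not merely to get $SW^2(\mu_\tau,\mu^n)\leq 2\tau\Vcal(\mu^n)$ (your step), but to obtain the much finer bound $\frac{SW^2(\mu_\tau,\mu^n)}{2\tau}\leq (m_\tau^\Delta+m_\tau^{far})\max_i V(y_i)+\omega_V(\tau^{1/3})$, whose right-hand side \emph{itself} scales with the amount of displaced mass. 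This is then pitted against a projection-wise lower bound on $SW^2$ showing that far mass costs $\gtrsim m_\tau^{far}\,\tau^{2/3}$ and cross-ball mass costs $\gtrsim m_\tau^\Delta\,l_\mu^2$; dividing by $2\tau$ makes both coefficients blow up as $\tau\searrow 0$, which forces $m_\tau^{far}=m_\tau^\Delta=0$. No modification of the minimizer is needed.

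Your proposal has two concrete problems. First, your ``concentration'' step only controls the far mass $m_\tau^{far}$; it says nothing about the misplaced mass $m_\tau^\Delta$, i.e.\ mass sitting in $B(y_j,\tau^{1/3})$ when it ought to be near $y_i$. The $W_\infty$ bound requires exact mass matching $\mu_\tau(B(y_i,\tau^{1/3}))=m_i$, and the obstruction to proving this is precisely the cross-ball rebalancing you defer to the modification step. Second, and more seriously, the repair you suggest for the ``principal obstacle'' does not go through: Theorem~\ref{thm:SWnear-discrete} only converts $SW^2(\cdot,\mu^n)$ to $\tfrac1d W^2(\cdot,\mu^n)$ for measures with $W_\infty(\cdot,\mu^n)$ small, but $\mu_\tau$ itself has not yet been shown to lie in $C_\tau$ (that is what you are trying to establish!), so you cannot compare $SW^2(\tilde\mu_\tau,\mu^n)$ against $SW^2(\mu_\tau,\mu^n)$ via that theorem. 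The comparison of $\Vcal^{SW}(\tilde\mu_\tau)$ with $\Vcal^{SW}(\mu_\tau)$ is therefore not available by the route you sketch, and the $\tau^{-2/3}$ blow-up in the $SW^2/(2\tau)$ term from cross-ball rebalancing remains uncontrolled. The missing idea is to abandon the modify-and-compare strategy and instead exploit, as the paper does, that the lower bound on the $SW^2$ cost of displaced mass (after division by $2\tau$) diverges, while the potential-energy gain from that displacement stays bounded by $(m_\tau^\Delta+m_\tau^{far})\max_i V(y_i)+\omega_V(\tau^{1/3})$.
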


\begin{proof}        
As $\Vcal$ is nonnegative, for any fixed $\tau>0$ and $c<\infty$ the corresponding sublevel set
  \[\{\nu\in\P_2(\R^d):\Vcal^{SW}(\cdot;\tau,\mu^n)\leq c\}\]
is contained in the ball $B_{SW}(\mu^n,\sqrt{2\tau c})$, which is sequentially compact with respect to the {narrow} convergence of measures by {Proposition~\ref{prop:SWball=wcpct}}. On the other hand, $\Vcal^{SW}(\cdot;\tau,\mu^n)$ is lower semicontinuous with respect to the narrow convergence: $\Vcal$ is lower semicontinuous as $V$ is {continuous} and we know from Lemma~\ref{prop:SW2_lsc} that $\sigma\mapsto SW(\sigma,\mu^n)$ is also narrowly lower semicontinuous In conclusion,  for each $\tau>0$ minimizer $\mu_\tau$ of the JKO functional $\Vcal^{SW}(\cdot;\tau,\mu^n)$ exists.

In the remainder of the proof, we denote by $\kappa_d>0$ a constant only dependent on the dimension such that
\begin{equation}\label{def:kappa_d}
|\{\theta\in\S^{d-1}:\;|\theta_1|\leq s\}|\leq \kappa_d |\S^{d-1}| s.
\end{equation}
\vspace{3mm}
\noindent\emph{Step 1$^\circ$}
Let $\mu_\tau$ be a minimizer of \grn$\Vcal^{SW}(\cdot;\tau,\mu^n)$\nc. 
Let us write 
\[\Omega_{h}^n=\{x\in\R^d:\;|x-y_i|\leq h \text{ for some } i=1,2,\cdots,n\}\]
and decompose $\mu_\tau$ into
\begin{equation}\label{eq:mu_tau;tau1/6}
\mu_\tau=m_\tau^{far}\mu_\tau^{far}+\sum_{i=1}^n m_\tau^i \mu_\tau^i \text{ where } \supp\mu_\tau^i\subset \overline B(y_i,\tau^{1/3}),\;\supp\mu_\tau^{far}\cap \Omega_{\tau^{1/3}}^n =\emptyset.
\end{equation}
Here $\mu_\tau^{far}\in\P_2(\R^d)$ is the (normalized) part of $\mu_\tau$ that is far from the support of $\mu^n$, whereas $\mu_\tau^i\in\P_2(\R^d)$ is the part of $\mu_\tau$ close to $y_i$ for each $i=1,\cdots,n$. Let
\[m_{\tau}^{\Delta}=\sum_{i=1}^n (m_\tau^i-m_i)_+ = -m_\tau^{far}+\sum_{i=1}^n (m_i-m_\tau^i)_+.\]
Intuitively, $m_\tau^{far}$ is the mass outside the balls of radius $\tau^{1/3}$ about $y_i$, and $m_{\tau}^{\Delta}$ is the `total misplaced mass'. We want to show that both $m_\tau^{far}$ and $m_\tau^{\Delta}$ are 0 when $\tau$ is sufficiently small.
To deduce this, note that $\Vcal^{SW}(\mu_\tau;\tau,\mu^n)\leq \Vcal^{SW}(\mu^n;\tau,\mu^n)$, and thus
\begin{align*}
    \frac{SW^2(\mu_\tau,\mu^n)}{2\tau}
    &\leq \Vcal(\mu^n)-\Vcal(\mu_\tau) = \Vcal(\mu^n)-\Vcal(\sum_{i=1}^n m_\tau^i \delta_{y_i})+\Vcal(\sum_{i=1}^n m_\tau^i \delta_{y_i})-\Vcal(\mu_\tau)    \\
    &= \sum_{i=1}^n (m_i-m_\tau^i) V(y_i) + \sum_{i=1}^n m_\tau^i \int_{\R^d} V(y_i)-V(x)\,d\mu_\tau^i(x)    \\
    &\leq (m_{\tau}^{\Delta}+m_\tau^{far})\max_{i=1,\cdots,n} V(y_i)+\omega_V(\tau^{1/3}).
\end{align*}
where $\omega_V$ is \grn the uniform modulus of continuity of $V$ in, say $\bigcup_{i=1}^n\overline B(y_i,1)$\nc. On the other hand, we claim
\begin{equation}\label{eq:SW22mutau_lbd}
    \frac{\tau^{2/3}m_\tau^{far}}{16n^2\kappa_d^2 \tau}+\left(\frac{l_{\mu^n}^2}{16n^2 C_d^2}-\tau^{2/3}\right)\frac{m_{\tau}^{\Delta}}{2\tau}\leq \frac{SW^2(\mu_\tau,\mu^n)}{2\tau}
\end{equation}
\grn where $l_{\mu^n}=\min_{i\neq j}|y_i-y_j|$\nc. Note that this implies 
\begin{equation}\label{eq:mb-mdiff;bd}
    \frac{\tau^{2/3}m_\tau^{far}}{16n^2\kappa_d^2 \tau}+\left(\frac{l_{\mu^n}^2}{16n^2 C_d^2}-\tau^{2/3}\right)\frac{m_{\tau}^{\Delta}}{2\tau}\leq (m_{\tau}^{\Delta}+m_\tau^{far})\max_{y_i} V(y_i)+\omega_V(\tau^{1/3}).
\end{equation}
\grn The choice of radius $\tau^{1/3}$ in \eqref{eq:mu_tau;tau1/6} allows, after squaring, the coefficient of $m_\tau^{far}$ in the left-hand side to blow up as $\tau\searrow 0$, and may be replaced by $\tau^{1/2-\eps}$ for any $\eps>0$ \nc. From this we deduce that we can find $\tau^\ast=\tau^\ast(n,d,l_{\mu^n},\max_{i}V(y_i),\omega_V)>0$ such that for all $\tau<\tau_\ast$, 
\[m_{\tau}^{\Delta}=0 \text{ and } m_\tau^{far} =0.\]
By definition of $m_\tau^\Delta,m_\tau^{far}$, this means $\mu_\tau$ has exactly $m_i$ mass in each $\overline B(y_i,\tau^{1/3})$, thus
\[W_\infty(\mu_\tau,\mu^n)\leq \tau^{1/3} \text{ for } \tau\in(0,\tau^\ast).\]
Thus it remains to prove \eqref{eq:SW22mutau_lbd}. Without loss of generality we may assume $\mu_\tau\ll\L^d$. Otherwise, we approximate $\mu_\tau$ by its convolution $\mu_\tau^\eps$ with a smooth and compactly supported kernel. Then all the quantities involved in \eqref{eq:mb-mdiff;bd} such as $m_\tau^{far}(\eps),m_{\tau}^{\Delta}(\eps), SW^2(\mu_\tau^\eps,\mu^n)$, $V(\mu_\tau^\eps)$ converge as $\eps\searrow 0$, hence we can deduce \eqref{eq:mb-mdiff;bd} in the limit as $\eps\searrow 0$.

Furthermore, as $y\mapsto y\cdot\theta$ is a bijection on $\{y_1,\cdots,y_n\}$ for a.e. $\theta\in\S^{d-1}$, we have the existence of transport map $T^\theta:\R^d\rightarrow\R^d$ for a.e. $\theta\in\S^{d-1}$ such that 
\[SW^2(\mu_\tau,\mu^n)=\dashint_{\S^{d-1}}\int_{\R^d} |(T^\theta(x)-x)\cdot\theta|^2\,d\mu_\tau(x).\]

\vspace{3mm}
\noindent\emph{Step 2$^\circ$} 
To prove \eqref{eq:SW22mutau_lbd}, we separately consider $\mu_\tau^{far}$ and $\sum_{i=1}^n m_\tau^i \mu_\tau^i$ -- namely
\begin{align*}
    SW^2(\mu_\tau,\mu^n) &= \int_{\R^d} \dashint_{\S^{d-1}} |(T^\theta(x)-x)\cdot\theta|^2\,d\mu_\tau(x)\,d\theta \\
    &= m_\tau^{far} \int_{\R^d} \dashint_{\S^{d-1}} |(T^\theta(x)-x)\cdot\theta|^2\,d\mu_\tau^{far}(x)\,d\theta \\
    & \phantom{=} \, +\sum_{i=1}^n m_\tau^i \int_{\R^d} \dashint_{\S^{d-1}} |(T^\theta(x)-x)\cdot\theta|^2\,d\mu_\tau^i(x)\,d\theta \\
    &=: I_\tau^{far} + I_\tau^{\Delta}.
\end{align*}
We first deal with the term $I_\tau^{far}$
\begin{align*}
    I_\tau^{far}=m_\tau^{far} \int_{\R^d} \dashint_{\S^{d-1}} |(T^\theta(x)-x)\cdot\theta|^2\,d\mu_\tau^{far}(x)\,d\theta
    \geq m_\tau^{far} \int_{\R^d} \dashint_{\S^{d-1}} \min_{i}|(y_i-x)\cdot\theta|^2\,d\mu_\tau^{far}\,d\theta.
\end{align*}
Let
\[\Theta_x^i:=\left\{\theta\in\S^{d-1}:\;|(y_i-x)\cdot\theta|\leq \frac{\tau^{1/3}}{2n\kappa_d} \right\}.\]
As $|y_i-x|\geq \tau^{1/3}$, we deduce $|\Theta_x^i|\leq \frac{1}{2n}$ and $|\bigcup_{i=1}^n \Theta_x^i|\leq \frac12$. On $\S^{d-1}\setminus\bigcup_{i=1}^n \Theta_x^i$ we have $\min_i |(y_i-x)\cdot\theta|\geq \frac{\tau^{1/3}}{2n\kappa_d}$, and thus 
\begin{equation}\label{mb_ineq}
    I_\tau^{far} \geq m_\tau^{far} \int_{\R^d} \dashint_{\S^{d-1}} \min_{i}|(y_i-x)\cdot\theta|^2\,d\mu_\tau^{far}\,d\theta
    \geq \frac{m_\tau^{far} \tau^{2/3}}{8n^2\kappa_d^2}.
\end{equation}

\vspace{3mm}
\noindent\emph{Step 3$^\circ$}. To deal with the second term $I_\tau^{\Delta}$, define
\[B_\theta:=\{x\in\R^d:\;|T^\theta(x)-x|\geq l_{\mu^n}-\tau^{1/3}\}.\]
\grn The threshold $l_{\mu^n}-\tau^{1/3}$ is a lower bound on how far incorrectly assigned mass must travel. Thus \nc
\[m_\tau^i \mu_\tau^i(B_\theta)\geq (m_\tau^i-m_i)_+ \text{ for each } i=1,\cdots,n, \text{ and } \theta\in\S^{d-1}.\]
Furthermore, on $B_\theta\cap\supp\mu_\tau^i$ we have
\[|(T^\theta(x)-x)\cdot\theta|\geq \min_{j\neq i}|(y_i-y_j)\cdot\theta|-\tau^{1/3}.\]
\grn Using these two properties, we deduce \nc
\begin{align*}
    I_\tau^{\Delta} &= \sum_{i=1}^n m_\tau^i \dashint_{\S^{d-1}}\int_{\R^d} |(T^\theta(x)-x)\cdot\theta|^2\,d\mu_\tau^i(x)\,d\theta  \\  
    &\geq \frac12\sum_{i=1}^n m_\tau^i\dashint_{\S^{d-1}}\int_{B_\theta} \min_{j\neq i} |(y_i-y_j)\cdot\theta|^2-2\tau^{2/3}\,d\mu_\tau^i(x)\,d\theta    \\
    &\geq \frac12 \sum_{i=1}^n (m_\tau^i-m_i)_+ \dashint_{\S^{d-1}} \min_{j\neq i}|(y_i-y_j)\cdot\theta|^2-2\tau^{2/3}\,d\theta    \\
    &= \grn\frac12 \left(\sum_{i=1}^n (m_\tau^i-m_i)_+ \dashint_{\S^{d-1}} \min_{j\neq i}|(y_i-y_j)\cdot\theta|^2\,d\theta\right) - m_{\tau}^{\Delta}\tau^{2/3}.\nc
\end{align*}
Let
\[\Theta_i^j:=\left\{\theta\in\S^{d-1}:\;|(y_i-y_j)\cdot\theta|\leq \frac{l_{\mu^n}}{2n C_d}\right\}.\]
As $l_{\mu^n}\left|\theta\cdot\frac{y_i-y_j}{|y_i-y_j|}\right|\leq |(y_i-y_j)\cdot\theta|,$
by Chebyshev's inequality we have $\frac{|\Theta_i^j|}{|\S^{d-1}|}\leq \frac{1}{2n}$, and thus
\[\frac{|\bigcup_{j=1}^n \Theta_i^j|}{|\S^{d-1}|}\leq \frac{1}{2},\]
whereas 
\[\min_{j\neq i}|(y_i-y_j)\cdot\theta|>\frac{l_{\mu^n}}{2nC_d}\;\text{ for }\theta\in\S^{d-1}\setminus \bigcup_{j=1}^n\Theta_i^j.\]
Thus
\begin{align*}
    &\sum_{i=1}^n (m_\tau^i-m_i)_+ \dashint_{\S^{d-1}} \min_{j\neq i}|(y_i-y_j)\cdot\theta|^2\,d\theta   \\
    &\geq \sum_{i=1}^n (m_\tau^i-m_i)_+ \frac{1}{|\S^{d-1}|}\int_{\S^{d-1}\setminus \bigcup_{j=1^n}\Theta_i^j} \frac{l_{\mu^n}^2}{4n^2 C_d^2}\,d\theta  
    \geq \frac{l_{\mu^n}^2}{8n^2 C_d^2}\sum_{i=1}^n (m_\tau^i-m_i)_+  = \frac{l_{\mu^n}^2 m_\tau^{\Delta}}{8n^2 C_d^2}.
\end{align*}
Collecting the estimates, we have
\begin{align*}
    I_\tau^{\Delta} \geq  \grn\left(\frac12 \sum_{i=1}^n (m_\tau^i-m_i)_+ \dashint_{\S^{d-1}} \min_{j\neq i}|(y_i-y_j)\cdot\theta|^2\,d\theta\right) - m_{\tau}^{\Delta}\tau^{2/3}\nc
    \geq \left(\frac{l_{\mu^n}^2}{16n^2 C_d^2}-\tau^{2/3}\right)m_{\tau}^{\Delta}
\end{align*}
Recalling $SW^2(\mu_\tau,\mu^n)=I_\tau^{far}+I_\tau^\Delta$, we combine the above estimate with \eqref{mb_ineq} to obtain \eqref{eq:SW22mutau_lbd}.
\end{proof}

We are ready to state the main result of this section.
\begin{proposition}[Slope of potential energy at discrete measures]\label{prop:swslope;potential}
Let $V:\R^d\rightarrow [0,+\infty)$ be continuously differentiable. Let $\V$ be the potential energy functional
\[\mathcal{V}(\mu)=\int_{\R^d} V(x)\,d\mu(x).\]
Then the slope of $\Vcal$ at discrete probability measures $\mu^n=\sum_{i=1}^n m_i\delta_{x_i}$ w.r.t $SW$ coincide with the slope with respect to $W/\sqrt{d}$ -- i.e.
\begin{equation}\label{eq:swslope;V;discrete}
    |\partial\mathcal{V}|_{SW}(\mu^n)=\sqrt{d}\,|\partial\mathcal{V}|_{W}(\mu^n).
\end{equation}
\end{proposition}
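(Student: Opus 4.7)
The plan is to apply Lemma~\ref{lem:swslope;E} to $\E = \Vcal$. For this, I need to verify three hypotheses: coercivity of $\Vcal$, lower semicontinuity of $\Vcal$ with respect to $SW$, and existence of minimizers $\mu_\tau$ of $\Vcal^{SW}(\,\cdot\,;\tau,\mu^n)$ satisfying $W_\infty(\mu_\tau,\mu^n) \to 0$ as $\tau \to 0$.

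First, since $V \geq 0$ we have $\Vcal \geq 0$, so $\Vcal_\tau^{SW}(\mu_\ast) \geq 0 > -\infty$ for any choice of $\mu_\ast \in \P_2(\R^d)$, which establishes coercivity in the sense of \eqref{ass:coercivity}. For lower semicontinuity, continuity of $V$ (in particular, lower semicontinuity together with nonnegativity) implies that $\Vcal$ is lower semicontinuous with respect to the narrow topology on $\P_2(\R^d)$; see for instance the analogous statement in~\cite[Lemma 5.1.7]{AGS}. Since $SW$-convergence implies narrow convergence by~\cite[Theorem 2.3]{BayGuo21} (combined with the well-known fact that $W$-convergence implies narrow convergence), $\Vcal$ is lower semicontinuous with respect to $SW$.

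Next, I invoke Lemma~\ref{lem:sw-JKO-min;winfty;V}, whose hypotheses are met: $V$ is continuous and nonnegative. This directly provides $\tau^\ast > 0$ such that for all $0 < \tau < \tau^\ast$, there exists a minimizer $\mu_\tau$ of $\Vcal^{SW}(\,\cdot\,;\tau,\mu^n)$ with $W_\infty(\mu_\tau,\mu^n) \leq \tau^{1/3} \to 0$ as $\tau \searrow 0$. Note that the additional regularity assumption of continuous differentiability of $V$ is not needed for this step; it is only needed to ensure that $|\partial\Vcal|_W(\mu^n)$ is meaningful and finite at the discrete measure $\mu^n$ (it equals $(\sum_i m_i |\nabla V(x_i)|^2)^{1/2}$), but the identity itself holds in the given form regardless.

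With all three hypotheses of Lemma~\ref{lem:swslope;E} verified, the conclusion \eqref{eq:swslope;V;discrete} follows immediately. No further computation is required; the main content of the proposition has been absorbed into the preceding two lemmas, with Lemma~\ref{lem:sw-JKO-min;winfty;V} providing the substantive geometric input (proximity in $W_\infty$) and Lemma~\ref{lem:swslope;E} converting this into the slope identity via the duality formula for the local slope~\cite[Lemma 3.1.5]{AGS} and the quantitative comparison in Theorem~\ref{thm:SWnear-discrete}.
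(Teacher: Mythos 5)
Your proof is correct and follows exactly the same route as the paper's (one-line) proof of Proposition~\ref{prop:swslope;potential}: verify the hypotheses of Lemma~\ref{lem:swslope;E} using the nonnegativity and continuity of $V$, and supply the $W_\infty$-proximity of JKO minimizers from Lemma~\ref{lem:sw-JKO-min;winfty;V}. You have simply spelled out the hypothesis checks that the paper leaves implicit, so there is nothing missing and nothing genuinely different about your approach.
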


\begin{proof}
    \vio As $\V$ satisfies the assumptions of Lemma~\ref{lem:swslope;E} and Lemma~\ref{lem:sw-JKO-min;winfty;V}, \nc \eqref{eq:swslope;V;discrete} follows from the two lemmas. 
\end{proof}

\grn We conclude this section by discussing implications of Proposition~\ref{prop:swslope;potential}. We first note in Remark~\ref{rmk:SWGF;instability} that the curves of maximal slope of the potential energy with respect to $SW$ is not stable in the initial data. \nc

\begin{remark}[Lack of stability of sliced Wasserstein gradient flows]\label{rmk:SWGF;instability}
We note that the curves of maximal slopes $(\mu^n_t)_t$ of $\Vcal$ with respect to the Wasserstein metric starting at any discrete measure $\mu^n$ are, up to a multiplicative constant, curves of maximal slopes with respect to $SW$ (or $\ell_{SW}$) metric. To see this, recall that the $W$-gradient flow of $\Vcal$ starting at $\mu^n=\sum_{i=1}^n m_i\delta_{x_i}$ is given by $\mu^n_t = \sum_{i=1}^n m_i\delta_{x_i(t)}$ where $x_i(t)$ solves $x_i'(t)=-\nabla V(x_i(t))$. Thus by Proposition~\ref{prop:swslope;potential} 
\[|\partial\Vcal|_{SW}(\mu^n_t)=\sqrt{d}\,|\partial\Vcal|_W(\mu^n_t),\]
whereas $\sqrt{d}|(\mu^n)'|_{SW}(t)=|(\mu^n)'|_{W}(t)$ for a.e. $t\in I$ by Theorem~\ref{thm:SWnear-discrete}. Consequently
\begin{align*}
    \frac{d}{dt}(d^{-1}\Vcal(\mu_t^n))=-\frac{1}{2d}|(\mu^n)'|_{W}^2(t)-\frac{1}{2d}|\partial\Vcal|_W^2(\mu_t^n) = - \frac{1}{2}|(\mu^n)'|_{SW}^2(t)-\frac12|d^{-1}\Vcal|_{SW}^2(\mu^n_t),
\end{align*}
thus $(\mu^n_t)_{t\geq 0}$ is a curve of maximal slope of $\nu\mapsto d^{-1}\Vcal(\nu)$ with respect to $SW$ or, equivalently, $\tilde\mu^n_t:=\mu^n_{dt}$ is a curve of maximal slope of $\Vcal$ with respect to $SW$. Note that we do not claim that $\tilde\mu^n_t$ is the only such $SW$ curve of maximal slope, as uniqueness is unknown.

\grn If $V$ is semiconvex, the Wasserstein gradient flow of $\V$ is stable in initial data, for instance by the Evolution Variational Inequality~\cite[Theorem 11.1.4]{AGS}. Thus, if $W(\mu^n,\mu)\rightarrow 0$, then \nc $(\mu^n_t)_t$ converge \grn with respect to $W$ \nc to the Wasserstein gradient flow starting at $\mu$, which satisfies
\[\frac{d}{dt}\Vcal(\mu_t)=-|\partial\Vcal|_{W}^2(\mu_t)=-\|\nabla V\|_{L^2(\mu_t)}^2 \text{ a.e. }t\geq 0\]
On the other hand, if further $V\in C_c^\infty(\R^d)$, Proposition~\ref{prop:metslope;potential-ac} asserts that
\[\|V\|_{\dot H^{(d+1)/2}(\R^d)}\lesssim |\partial\Vcal|_{\ell_{SW}}(\mu) \leq |\partial\Vcal|_{SW}(\mu) \lesssim \|V\|_{\dot H^{(d+1)/2}(\R^d)}\]
for suitable $\mu$ bounded away from zero on a bounded open convex set compactly containing $\supp V$.
Thus, if a $SW$ gradient flow $(\mu_t^{SW})_{t\geq 0}$ starting at $\mu$ were to exist, it must satisfy
\[\frac{d}{dt}\Vcal(\mu_t^{SW})=-|\partial\Vcal|_{SW}^2(\mu_t^{SW})\lesssim - \|V\|_{\dot H^{(d+1)/2}(\R^d)}^2 \text{ a.e. } t\geq 0.\]
Note that the Wasserstein gradient flow does not satisfy this. So the SW gradient flow (should one exist) is in this case distinct from the Wasserstein gradient flow. 
This implies that potential energy is not $\lambda$-convex in the SW geometry and furthermore  suggests that we cannot hope for stability of sliced Wasserstein gradient flows in initial data in the set of measures, even for smooth potential energies. We believe that equation
\eqref{eq:GF;lsw} may be more amenable to PDE-based approaches. 
\end{remark}

\grn
Proposition~\ref{prop:swslope;potential} also implies that the local slope $\mu\mapsto |\partial\Vcal|_{SW}(\mu)$ is not lower semicontinuous with respect to $SW$ nor the narrow convergence. This lack of regularity makes difficult rigorously studying the limit of the minimizing movements scheme as the time-step vanishes, which we do not pursue in this paper. \nc

More precisely, solutions of the minimizing movements scheme for gradient flows converge to the curve of maximal slope with respect to the relaxed slope~\cite[Chapter 2]{AGS}; \purp recall that the relaxed slope $|\partial^-\Vcal|_{m}(\mu)$ of $\V:X\rightarrow(-\infty,+\infty]$ in metric space $(X,m)$ at each $\mu\in X$ is defined by \nc
\begin{equation}\label{def:relaxed_slope}
        |\partial^-\Vcal|_{m}(\mu)=\inf\{\liminf_{k\rightarrow\infty}|\partial\Vcal|_{m}(\mu_k):\;\mu_k\rightharpoonup \mu \text{ narrowly and } \sup_k\{m(\mu,\mu_k),\Vcal(\mu_k)\}<\infty\}.
\end{equation}
Under regularity assumptions such as the $\lambda$-geodesic-convexity of $\V$ with respect to $m$, we have the equivalence $|\partial^-\Vcal|_{m}=|\partial\Vcal|_m$.
However, Proposition~\ref{prop:swslope;potential} implies that this is not the case for the SW-slopes of potential energies. Namely we show that 
even for smooth  potentials the relaxed slope in the SW metric of the potential energy coincides with $\sqrt{d}\,|\partial\V|_W(\mu)$ rather than $|\partial\V|_{SW}(\mu)$. 

\begin{corollary}[Relaxed slope of the potential energy]\label{cor:relaxed_slope;pot}
Let $V:\R^d\rightarrow\R$ be continuously differentiable with uniformly bounded derivatives. Denote by $|\partial^-\Vcal|_{SW}$ the lower semicontinuous envelope of $|\partial\Vcal|_{SW}$ with respect to the narrow topology as defined in \eqref{def:relaxed_slope}.
Then
\begin{equation}\label{eq:swslope;lscenvelope;V}
    |\partial^-\Vcal|_{SW}(\mu) = \sqrt{d}\,|\partial\Vcal|_{W}(\mu) \text{ for each } \mu\in\P_2(\R^d).
\end{equation}
\end{corollary}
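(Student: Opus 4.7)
The plan is to reduce the corollary to Proposition~\ref{prop:swslope;potential} via the classical identity $|\partial\Vcal|_W(\nu) = \|\nabla V\|_{L^2(\nu)}$, which holds on all of $\P_2(\R^d)$ whenever $V\in C^1$ has bounded gradient (the upper bound by a mean value argument along an optimal plan, the lower bound by testing against the transport $(\id - \eps \nabla V)_\#\nu$). A second ingredient is narrow continuity of $\nu\mapsto\|\nabla V\|_{L^2(\nu)}$, which is immediate since $|\nabla V|^2$ is bounded and continuous. Both directions of \eqref{eq:swslope;lscenvelope;V} will then follow from elementary limiting arguments.

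For the upper bound $|\partial^-\Vcal|_{SW}(\mu)\leq \sqrt{d}\,|\partial\Vcal|_W(\mu)$, I would construct discrete approximations $\mu^n$ of $\mu$ with $W(\mu,\mu^n)\to 0$ (e.g.\ by Voronoi quantization). Admissibility in the sense of \eqref{def:relaxed_slope} is automatic: narrow convergence and $\sup_n SW(\mu,\mu^n)<\infty$ follow from $SW\leq W/\sqrt{d}$, while $\sup_n\Vcal(\mu^n)<\infty$ follows because the uniform bound on $\nabla V$ forces at most linear growth of $V$ and hence $\Vcal$ is controlled by first moments. Proposition~\ref{prop:swslope;potential} then gives $|\partial\Vcal|_{SW}(\mu^n) = \sqrt{d}\,\|\nabla V\|_{L^2(\mu^n)}$, which converges to $\sqrt{d}\,|\partial\Vcal|_W(\mu)$ by narrow continuity.

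For the reverse inequality, I would first establish the pointwise bound $|\partial\Vcal|_{SW}(\nu)\geq \sqrt{d}\,|\partial\Vcal|_W(\nu)$ valid on all of $\P_2(\R^d)$: combining $SW\leq W/\sqrt{d}$ with the topological equivalence of $SW$ and $W$ from Bayraktar-Guo~\cite{BayGuo21} (so that the limsups in \eqref{def:met_slope} range over the same convergent sequences), this is immediate from the definition of the local slope. Then for any admissible $\mu_k\rightharpoonup\mu$ narrowly with $\sup_k(SW(\mu,\mu_k),\Vcal(\mu_k))<\infty$, the pointwise bound yields
\[\liminf_{k\to\infty}|\partial\Vcal|_{SW}(\mu_k)\geq \sqrt{d}\lim_{k\to\infty}\|\nabla V\|_{L^2(\mu_k)}=\sqrt{d}\,\|\nabla V\|_{L^2(\mu)}=\sqrt{d}\,|\partial\Vcal|_W(\mu),\]
and taking the infimum over admissible sequences in \eqref{def:relaxed_slope} concludes the argument. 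I do not anticipate any serious obstacle: the heavy lifting is performed by Proposition~\ref{prop:swslope;potential}, and the only mild care required is verifying the admissibility conditions for the discrete approximation in the upper bound, which is handled by the linear growth of $V$ and finiteness of second moments.
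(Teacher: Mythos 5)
Your argument is essentially the paper's: both directions rest on the identity $|\partial\Vcal|_W(\mu)=\|\nabla V\|_{L^2(\mu)}$ (which is narrowly continuous since $|\nabla V|^2$ is bounded and continuous), the general inequality $|\partial\Vcal|_{SW}\geq\sqrt d\,|\partial\Vcal|_W$ coming from $SW\leq W/\sqrt d$ plus topological equivalence, and Proposition~\ref{prop:swslope;potential} applied to discrete approximants of $\mu$. The only difference is that you spell out the admissibility checks (uniform bounds on $SW(\mu,\mu^n)$ and $\Vcal(\mu^n)$) more explicitly than the paper does, which is a helpful rather than substantive change.
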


\begin{remark}\label{rmk:lswslope;lscenvelope}
    \vio As $\sqrt{d}|\partial\Vcal|_W\leq |\partial\Vcal|_{\ell_{SW}}\leq |\partial\Vcal|_{SW}$ in general, \eqref{eq:swslope;lscenvelope;V} implies
    \[\sqrt{d}|\partial\Vcal|_{W}(\mu)=\sqrt{d}|\partial^-\Vcal|_W(\mu)\leq|\partial^-\Vcal|_{\ell_{SW}}(\mu)\leq |\partial^-\Vcal|_{SW}(\mu)=\sqrt{d}\,|\partial\Vcal|_{W}(\mu).\]
    In particular, $|\partial^-\Vcal|_{\ell_{SW}}(\mu)=\sqrt{d}|\partial\Vcal|_{W}(\mu)$. \nc
\end{remark}

\begin{proof}
    It is well-known~\cite[Proposition 10.4.2]{AGS} that
      \[|\partial\Vcal|_W(\mu)=\|\nabla V\|_{L^2(\mu)},\]
    which is continuous with respect to the narrow convergence when $\|\nabla V\|_\infty<\infty$. Fix any $\mu\in\P_2(\R^d)$. Then, for any narrowly converging sequence $\mu_k\rightharpoonup \mu\in\P_2(\R^d)$,
    \begin{align*}
        \liminf_{k\rightarrow\infty}|\partial\V|_{SW}(\mu_k) \geq \liminf_{k\rightarrow\infty}\sqrt{d}\,|\partial\V|_W(\mu_k)=\sqrt{d}\,|\partial\V|_W(\mu),
    \end{align*}
    thus we conclude $|\partial^-\V|_{SW}(\mu)\geq \sqrt{d}\,|\partial\V|_W(\mu)$ by taking infimum over such sequences.

    On the other hand, approximating $\mu$ by discrete measures $\mu^n$ in $SW$, and taking a suitable subsequence to ensure $\sup_n \V(\mu^n)<\infty$, we have by Proposition~\ref{prop:swslope;potential}
    \begin{align*}
        |\partial^-\V|_{SW}(\mu)\leq \liminf_{n\rightarrow\infty}|\partial\Vcal|_{SW}(\mu^n) =\liminf_{n\rightarrow\infty}\sqrt{d}\,|\partial\Vcal|_W(\mu^n) = \sqrt{d}\,|\partial\Vcal|_W(\mu).
    \end{align*}
\end{proof}
From the proof it is clear that \eqref{eq:swslope;lscenvelope;V} also holds if the lower semicontinuous envelope is defined with respect to the topology generated by $SW$.

\begin{remark}\label{rmk:relaxed_slope}
We note that in general $|\partial^-\V|_{SW}$ is not a strong upper gradient. In particular, consider $2a\leq\mu\leq b/2$ on \vio a bounded convex domain $\Omega$ \nc for some $0<4a<b<\infty$ and let $V\in C_c^\infty(\R^d)$ with $\supp V\subset\subset\Omega$. We claim that $|\partial^-\V|_{SW}=\sqrt{d}|\partial\V|_W$ fails to be an upper gradient. 
As all derivatives of $V$ are bounded, for small time $t>0$ the path $\mu_t=\mu-t(-\Delta)^{(d+1)/2}V$ satisfies $a\leq \mu_t\leq b$ on $\Omega$, and in particular remains in the space of probability measures. Furthermore, $\partial_t\mu_t -(-\Delta)^{(d+1)/2} V = 0$ and thus
\[|\mu'|_{SW}(t)\leq \sqrt{d}|\mu'|_{W}(t) \leq \|\nabla (-\Delta)^{(d-1)/2}V\|_{L^2(1/\mu_t)}\leq \sqrt{d/a} \|V\|_{\dot H^{d/2}(\R^d)}\]
and $|\partial\Vcal|_W(\mu)= \|\nabla V\|_{L^2(\mu_t)}\leq \sqrt{b}\|\nabla V\|_{L^2(\R^d)}$. On the other hand,
\begin{align*}
    \left|\frac{d}{dt}\V(\mu_t)\right|=\left|\int_{\R^d} V(x) \partial_t\mu_t\right|=\left| \int_{\R^d} V(x)(-\Delta)^{\frac{d+1}{2}} V\,dx\right| 
    = \|V\|_{\dot H^{(d+1)/2}(\R^d)}^2.
\end{align*}
Thus, choosing sufficiently oscillatory $V$ such that
\[\| V\|_{\dot H^{(d+1)/2}(\R^d)}^2 > \sqrt{\frac{d^2b}{a}}\|\nabla V\|_{L^2(\R^d)}\| V\|_{\dot H^{d/2}(\R^d)},\]
we see $\left|\frac{d}{dt}\V(\mu_t)\right|> |\partial\V|_W(\mu_t) |\mu'|_W(\mu_t)$
which verifies that $\sqrt{d}|\partial\V|_W$ is not an upper gradient.
\end{remark}

\textbf{Acknowledgements.}
The authors are grateful to Jun Kitagawa for stimulating discussions, \grn and also to Tudor Manole for pointing us to the literature that lead to Remark~\ref{rmk:VC_relative_CDF;sharp}\nc.
The authors acknowledge the support of the National Science Foundation via the grant DMS-2206069. They are also thankful to the Center for Nonlinear Analysis for its support. SP was also supported by the NSF grant DMS-2106534. \grn The authors would also like to thank the anonymous referees for careful readings and numerous helpful suggestions, which greatly helped improve the exposition of this manuscript. \nc

    \bibliographystyle{siam}
    \bibliography{SW_bib.bib}

\begin{thebibliography}{10}

\bibitem{AGS}
{\sc L.~Ambrosio, N.~Gigli, and G.~Savar\'{e}}, {\em Gradient flows in metric spaces and in the space of probability measures}, Lectures in Mathematics ETH Z\"{u}rich, Birkh\"{a}user Verlag, Basel, second~ed., 2008.

\bibitem{AmbTil}
{\sc L.~Ambrosio and P.~Tilli}, {\em Topics on analysis in metric spaces}, vol.~25 of Oxford Lecture Series in Mathematics and its Applications, Oxford University Press, Oxford, 2004.

\bibitem{AntTay93}
{\sc M.~Anthony and J.~Shawe-Taylor}, {\em {A result of Vapnik with applications}}, Discrete Applied Mathematics, 47 (1993), pp.~207--217.

\bibitem{BSTK23}
{\sc Y.~Bai, B.~Schmitzer, M.~Thorpe, and S.~Kolouri}, {\em Sliced optimal partial transport}, in Proceedings of the IEEE/CVF Conference on Computer Vision and Pattern Recognition (CVPR), June 2023, pp.~13681--13690.

\bibitem{BayGuo21}
{\sc E.~Bayraktar and G.~Guo}, {\em Strong equivalence between metrics of {W}asserstein type}, Electronic Communications in Probability, 26 (2021), pp.~1 -- 13.

\bibitem{Bob99}
{\sc S.~G. Bobkov}, {\em Isoperimetric and analytic inequalities for log-concave probability measures}, The Annals of Probability, 27 (1999), pp.~1903 -- 1921.

\bibitem{BobHou97}
{\sc S.~G. Bobkov and C.~Houdr{\'e}}, {\em {Isoperimetric constants for product probability measures}}, The Annals of Probability, 25 (1997), pp.~184 -- 205.

\bibitem{BobLed19}
{\sc S.~G. Bobkov and M.~Ledoux}, {\em One-dimensional empirical measures, order statistics, and {K}antorovich Transport Distances}, American Mathematical Society, 2019.

\bibitem{Bonet23}
{\sc C.~Bonet, P.~Berg, N.~Courty, F.~Septier, L.~Drumetz, and M.~T. Pham}, {\em Spherical sliced-{W}asserstein}, in The Eleventh International Conference on Learning Representations, 2023.

\bibitem{Bonet22}
{\sc C.~Bonet, N.~Courty, F.~Septier, and L.~Drumetz}, {\em Efficient gradient flows in sliced-{W}asserstein space}, Transactions on Machine Learning Research,  (2022).

\bibitem{BoRaPePf14}
{\sc N.~Bonneel, J.~Rabin, G.~Peyré, and H.~Pfister}, {\em Sliced and {R}adon {W}asserstein barycenters of measures}, Journal of Mathematical Imaging and Vision, 51 (2014), p.~22–45.

\bibitem{Bon13}
{\sc N.~Bonnotte}, {\em Unidimensional and Evolution Methods for Optimal Transportation}, PhD thesis, Université Paris-Sud, 2013.

\bibitem{Bo74}
{\sc C.~Borell}, {\em Convex measures on locally convex spaces}, Arkiv f{\"o}r Matematik, 12 (1974), pp.~239--252.

\bibitem{Bo75}
{\sc C.~Borell}, {\em Convex set functions in d-space}, Periodica Mathematica Hungarica, 6 (1975), p.~111–136.

\bibitem{Bruhat61}
{\sc F.~Bruhat}, {\em Distributions sur un groupe localement compact et applications \`a l{\textquoteright}\'etude des repr\'esentations des groupes $p$-adiques}, Bulletin de la Soci\'et\'e Math\'ematique de France, 89 (1961), pp.~43--75.

\bibitem{BurIva01}
{\sc D.~Burago, Y.~Burago, and S.~Ivanov}, {\em A course in metric geometry}, vol.~33 of Graduate Studies in Mathematics, American mathematical Society, 2001.

\bibitem{CozSan24}
{\sc G.~Cozzi and F.~Santambogio}, {\em Long-time asymptotics of the sliced-wasserstein flow}, arXiv preprint arXiv:2405.06313,  (2024).

\bibitem{DaiSel20}
{\sc B.~Dai and U.~Seljak}, {\em Sliced iterative normalizing flows}, arXiv preprint arXiv:2007.00674,  (2020).

\bibitem{DelMey78}
{\sc C.~Dellacherie and P.-A. Meyer}, {\em Probabilities and potential}, vol.~29 of North-Holland Mathematics Studies, North-Holland Publishing Co., Amsterdam-New York, 1978.

\bibitem{DeshMax19}
{\sc I.~Deshpande, Y.-T. Hu, R.~Sun, A.~Pyrros, N.~Siddiqui, S.~Koyejo, Z.~Zhao, D.~Forsyth, and A.~G. Schwing}, {\em Max-sliced {W}asserstein distance and its use for {GAN}s}, in Proceedings of the IEEE/CVF Conference on Computer Vision and Pattern Recognition, 2019, pp.~10648--10656.

\bibitem{DZS18}
{\sc I.~Deshpande, Z.~Zhang, and A.~G. Schwing}, {\em Generative modeling using the sliced {W}asserstein distance}, in Proceedings of the IEEE Conference on Computer Vision and Pattern Recognition (CVPR), June 2018.

\bibitem{DevLug01}
{\sc L.~Devroye and G.~Lugosi}, {\em Combinatorial methods in density estimation}, Springer, 2001.

\bibitem{Figalli09}
{\sc A.~Figalli}, {\em The optimal partial transport problem}, Archive for Rational Mechanics and Analysis, 195 (2009), p.~533–560.

\bibitem{GGV66}
{\sc I.~M. Gelfand, M.~I. Graev, N.~Y. Vilenkin, and E.~J. Saletan}, {\em Generalized functions. Vol. 5: Integral Geometry and Representation Theory}, Academic Press, 1966.

\bibitem{Hel10}
{\sc S.~Helgason}, {\em Integral geometry and Radon transforms}, Springer, 1~ed., 2010.

\bibitem{KitagawaTakatsu23}
{\sc J.~Kitagawa and A.~Takatsu}, {\em Sliced optimal transport: is it a suitable replacement?}, arXiv preprint arXiv:2311.15874,  (2023).

\bibitem{kitagawaTakatsu2024}
{\sc J.~Kitagawa and A.~Takatsu}, {\em Disintegrated optimal transport for metric fiber bundles}, arXiv preprint arXiv:2407.01879,  (2024).

\bibitem{KNSUBRR19}
{\sc S.~Kolouri, K.~Nadjahi, U.~Simsekli, R.~Badeau, and G.~Rohde}, {\em Generalized sliced {W}asserstein distances}, in Advances in Neural Information Processing Systems, H.~Wallach, H.~Larochelle, A.~Beygelzimer, F.~d\textquotesingle Alch\'{e}-Buc, E.~Fox, and R.~Garnett, eds., vol.~32, Curran Associates, Inc., 2019.

\bibitem{KSP16}
{\sc S.~Kolouri, S.~R. Park, and G.~K. Rohde}, {\em The {R}adon cumulative distribution transform and its application to image classification}, IEEE Transactions on Image Processing, 25 (2016), pp.~920--934.

\bibitem{KPMR19}
{\sc S.~Kolouri, P.~E. Pope, C.~E. Martin, and G.~K. Rohde}, {\em Sliced {W}asserstein auto-encoders}, in International Conference on Learning Representations, 2019.

\bibitem{LBBHU19}
{\sc C.-Y. Lee, T.~Batra, M.~H. Baig, and D.~Ulbricht}, {\em Sliced {W}asserstein discrepancy for unsupervised domain adaptation}, in Proceedings of the IEEE/CVF Conference on Computer Vision and Pattern Recognition (CVPR), June 2019.

\bibitem{Leoni23}
{\sc G.~Leoni}, {\em A first course in fractional {S}obolev spaces}, vol.~229 of Graduate Studies in Mathematics, American Mathematical Society, Providence, RI, 2023.

\bibitem{LiMo23}
{\sc S.~Li and C.~Moosm\"{u}ller}, {\em Measure transfer via stochastic slicing and matching}, arXiv preprint arXiv:2307.05705,  (2023).

\bibitem{Lin_etal21}
{\sc T.~Lin, Z.~Zheng, E.~Chen, M.~Cuturi, and M.~Jordan}, {\em On projection robust optimal transport: Sample complexity and model misspecification}, in Proceedings of The 24th International Conference on Artificial Intelligence and Statistics, A.~Banerjee and K.~Fukumizu, eds., vol.~130 of Proceedings of Machine Learning Research, PMLR, 13--15 Apr 2021, pp.~262--270.

\bibitem{LSMDS19}
{\sc A.~Liutkus, U.~Simsekli, S.~Majewski, A.~Durmus, and F.-R. St{\"o}ter}, {\em Sliced-{W}asserstein flows: Nonparametric generative modeling via optimal transport and diffusions}, in Proceedings of the 36th International Conference on Machine Learning, K.~Chaudhuri and R.~Salakhutdinov, eds., vol.~97 of Proceedings of Machine Learning Research, PMLR, 09--15 Jun 2019, pp.~4104--4113.

\bibitem{Loe06}
{\sc G.~Loeper}, {\em Uniqueness of the solution to the {V}lasov–{P}oisson system with bounded density}, Journal de Mathématiques Pures et Appliquées, 86 (2006), pp.~68--79.

\bibitem{ManBalWas22}
{\sc T.~Manole, S.~Balakrishnan, and L.~Wasserman}, {\em Minimax confidence intervals for the sliced {W}asserstein distance}, Electronic Journal of Statistics, 16 (2022), pp.~2252 -- 2345.

\bibitem{McCann94}
{\sc R.~J. McCann}, {\em A convexity theory for interacting gases and equilibrium crystals}, PhD thesis, Princeton University, 1994.

\bibitem{MurSav20}
{\sc M.~Muratori and G.~Savaré}, {\em Gradient flows and evolution variational inequalities in metric spaces. i: Structural properties}, Journal of Functional Analysis, 278 (2020), p.~108347.

\bibitem{Nadjahi21}
{\sc K.~Nadjahi}, {\em Sliced-{W}asserstein distance for large-scale machine learning : theory, methodology and extensions}, theses, {Institut Polytechnique de Paris}, Nov. 2021.

\bibitem{NDCKSS20}
{\sc K.~Nadjahi, A.~Durmus, L.~Chizat, S.~Kolouri, S.~Shahrampour, and U.~Simsekli}, {\em Statistical and topological properties of sliced probability divergences}, in Advances in Neural Information Processing Systems, H.~Larochelle, M.~Ranzato, R.~Hadsell, M.~Balcan, and H.~Lin, eds., vol.~33, Curran Associates, Inc., 2020, pp.~20802--20812.

\bibitem{NgueynHo22}
{\sc K.~Nguyen and N.~Ho}, {\em Revisiting sliced {W}asserstein on images: From vectorization to convolution}, in Advances in Neural Information Processing Systems, A.~H. Oh, A.~Agarwal, D.~Belgrave, and K.~Cho, eds., 2022.

\bibitem{NguyenHoPhamBui21}
{\sc K.~Nguyen, N.~Ho, T.~Pham, and H.~Bui}, {\em Distributional sliced-{W}asserstein and applications to generative modeling}, in International Conference on Learning Representations, 2021.

\bibitem{NGSK22}
{\sc S.~Nietert, Z.~Goldfeld, R.~Sadhu, and K.~Kato}, {\em Statistical, robustness, and computational guarantees for sliced {W}asserstein distances}, in Advances in Neural Information Processing Systems, S.~Koyejo, S.~Mohamed, A.~Agarwal, D.~Belgrave, K.~Cho, and A.~Oh, eds., vol.~35, Curran Associates, Inc., 2022, pp.~28179--28193.

\bibitem{NW_Rig22}
{\sc J.~Niles-Weed and P.~Rigollet}, {\em Estimation of {W}asserstein distances in the {S}piked {T}ransport {M}odel}, Bernoulli, 28 (2022), pp.~2663 -- 2688.

\bibitem{ORVW23}
{\sc J.~L.~M. Olea, C.~Rush, A.~Velez, and J.~Wiesel}, {\em {The out-of-sample prediction error of the square-root-LASSO and related estimators}}, arXiv preprint arXiv:2211.07608,  (2023).

\bibitem{Osborne75}
{\sc M.~Osborne}, {\em On the {S}chwartz-{B}ruhat space and the {P}aley-{W}iener theorem for locally compact abelian groups}, Journal of Functional Analysis, 19 (1975), pp.~40--49.

\bibitem{PatyCuturi19}
{\sc F.-P. Paty and M.~Cuturi}, {\em Subspace robust {W}asserstein distances}, in Proceedings of the 36th International Conference on Machine Learning, K.~Chaudhuri and R.~Salakhutdinov, eds., vol.~97 of Proceedings of Machine Learning Research, PMLR, 09--15 Jun 2019, pp.~5072--5081.

\bibitem{RPeyre18}
{\sc {Peyre, R\'emi}}, {\em Comparison between {$W_2$} distance and $\dot {H}^{-1}$ norm, and localization of {W}asserstein distance}, ESAIM: COCV, 24 (2018), pp.~1489--1501.

\bibitem{PKD07}
{\sc F.~Pitié, A.~C. Kokaram, and R.~Dahyot}, {\em Automated colour grading using colour distribution transfer}, Computer Vision and Image Understanding, 107 (2007), pp.~123--137.
\newblock Special issue on color image processing.

\bibitem{RabPeyDelBer12}
{\sc J.~Rabin, G.~Peyr{\'e}, J.~Delon, and M.~Bernot}, {\em {W}asserstein barycenter and its application to texture mixing}, in Scale Space and Variational Methods in Computer Vision, A.~M. Bruckstein, B.~M. ter Haar~Romeny, A.~M. Bronstein, and M.~M. Bronstein, eds., Berlin, Heidelberg, 2012, Springer Berlin Heidelberg, pp.~435--446.

\bibitem{Ramm95}
{\sc A.~G. Ramm}, {\em {R}adon transform on distributions}, Proceedings of the Japan Academy, Series A, Mathematical Sciences, 71 (1995), pp.~202 -- 206.

\bibitem{RSS19}
{\sc R.~Rossi, G.~Savaré, A.~Segatti, and U.~Stefanelli}, {\em Weighted energy-dissipation principle for gradient flows in metric spaces}, Journal de Mathématiques Pures et Appliquées, 127 (2019), pp.~1--66.

\bibitem{San15}
{\sc F.~Santambrogio}, {\em Optimal transport for applied mathematicians: calculus of variations, {PDE}s, and modeling}, Birkh\"auser, 2015.

\bibitem{SarkarKuchibhotla23}
{\sc S.~Sarkar and A.~K. Kuchibhotla}, {\em Post-selection inference for conformal prediction: Trading off coverage for precision}, arXiv preprint arXiv:2304.06158,  (2023).

\bibitem{Sauer72}
{\sc N.~Sauer}, {\em On the density of families of sets}, Journal of Combinatorial Theory, Series A, 13 (1972), pp.~145--147.

\bibitem{Sha21}
{\sc V.~Sharafutdinov}, {\em {R}adon transform on {S}obolev spaces}, Siberian mathematical Journal, 50 (2021), pp.~560--580.

\bibitem{Shelah72}
{\sc S.~Shelah}, {\em {A combinatorial problem; stability and order for models and theories in infinitary languages.}}, Pacific Journal of Mathematics, 41 (1972), pp.~247 -- 261.

\bibitem{SmiSol77}
{\sc K.~T. Smith, D.~C. Solmon, and S.~L. Wagner}, {\em Practical and mathematical aspects of the problem of reconstructing objects from radiographs}, Bull. Amer. Math. Soc., 83 (1977), pp.~1227--1270.

\bibitem{Sol87}
{\sc D.~C. Solomon}, {\em Asymptotic formulas for the dual {R}adon transform and applications}, Mathematische Zeitschrift, 195 (1987), pp.~1432--1823.

\bibitem{Sri16}
{\sc B.~Sriperumbudur}, {\em On the optimal estimation of probability measures in weak and strong topologies}, Bernoulli, 22 (2016), pp.~1839 -- 1893.

\bibitem{Triebel10}
{\sc H.~Triebel}, {\em Theory of function spaces}, Modern Birkh\"{a}user Classics, Birkh\"{a}user/Springer Basel AG, Basel, 2010.
\newblock Reprint of 1983 edition.

\bibitem{Vapnik13}
{\sc V.~N. Vapnik}, {\em The nature of statistical learning theory}, Springer, 2~ed., 2013.

\bibitem{VapChe94}
{\sc V.~N. Vapnik and A.~Y. Chervonenkis}, {\em Ordered risk minimization. {I}}, Automat. Remote Control, 35 (1974), pp.~1226--1235.

\bibitem{Vil09}
{\sc C.~Villani}, {\em Optimal transport, old and new}, vol.~338 of Grundlehren der mathematischen Wissenschaften, Springer-Verlag, Berlin, 2009.

\end{thebibliography}


    \begin{appendix}

    \section{Preliminaries on the Radon transform}\label{app:radon}
    In this appendix we record some basic properties of the Radon transform in further detail.
    
    \begin{remark}[Radon transform of measures and distributions]\label{rmk:radon_on_measure}
        The duality formula \eqref{eq:duality;radon} is used to extend the Radon transform to distributions. For general distributions there are ambiguities, as $R^\ast\rg$ does not necessarily decay rapidly at infinity even for $\rg\in C_c^\infty(\Pd)$ ; see~\cite[Chapter 1.5]{Hel10} and ~\cite{Ramm95}. However, for bounded measures, pushforward by the projection map $\tilde\pi^\theta(x)=x\cdot\theta$ is consistent with the duality formula. To see this, let $\rg\in C_0(\Pd)$ -- i.e. $\rg$ is continuous in $(\theta,r)$ and vanishes as $|r|\rightarrow\infty$. \red Then, it can be verified that $R^\ast \rg\in C_0(\R^d)$. Thus by Fubini's theorem
        \begin{align*}
            \langle \mu, R^\ast\rg\rangle_{\R^d}
            &= \int_{\R^{d}} \dashint_{\S^{d-1}}\rg(\theta,x\cdot\theta)d\theta\,d\mu(x)\\
            &= \dashint_{\S^{d-1}} \int_{\R^d} \rg(\theta,x\cdot\theta)\,d\mu(x)\,d\theta 
            =\dashint_{\S^{d-1}} \int_{\R} \rg(\theta,r) d\tilde\pi^\theta_\# \mu(r)\,d\theta 
            = \langle \tilde\pi^\theta_\#\mu,\rg\rangle_{\Pd}.
        \end{align*}
        In the second last equality we used the change of variables formula and that $x\cdot\theta=r$ for all $x\in(\tilde\pi^\theta)^{-1}(r)=r\theta+\theta^\perp$\nc. As $\M_b(\R^d)$ equipped with the total variation is the dual of $C_0(\R^d)$, the Radon transform can be unambiguously extended to $\mu\in\M_b(\R^d)$.

        We also note that for distributions in $H_t^s(\R^d)$ the Radon transform (or its extension) can be defined unambiguously and the duality formula can be verified; see~\cite{Sha21}, and the discussion preceding \eqref{eq:duality;radon;dist}.
    \end{remark}
    
    Another important property of the Radon transform is its relationship to the Fourier transform.
    \begin{proposition}[The Fourier slicing property]\label{prop:radon-fourier}
        For $k=1,d$, let $\F_k$  denote the $k$-dimensional Fourier transform from $\Sch(\R^k)$ to itself. Then for each $f\in\Sch(\R^d)$
        \begin{equation}\label{eq:radon-fourier}
            (2\pi)^{\tfrac{d-1}{2}}\F_d f(\theta\zeta)= \F_1 R_\theta f(\zeta) \text{ for all }\theta\in\S^{d-1} \text{ and }\zeta\in\R.
        \end{equation}
        Moreover, \eqref{eq:radon-fourier} holds a.e. for $f\in L^1(\R^d)$.
    \end{proposition}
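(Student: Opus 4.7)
The plan is to prove the identity directly for Schwartz functions by Fubini and a linear change of variables, then extend to $L^1$ by a density/Fubini argument. The computation is short and essentially algebraic, so I do not expect any serious obstacle; the main care needed is in justifying Fubini and the change of variables at each step, and in handling the $L^1$ case where $R_\theta f(r)$ is only defined for a.e. $r\in\R$.

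First, for $f\in\Sch(\R^d)$, I would start from the definition of $\F_1$ applied to $R_\theta f$:
\[
\F_1 R_\theta f(\zeta) = (2\pi)^{-1/2}\int_\R e^{-ir\zeta}\left(\int_{\theta^\perp} f(r\theta+y^\theta)\,dy^\theta\right)dr.
\]
Since $f\in\Sch(\R^d)$, the integrand is absolutely integrable on $\R\times \theta^\perp$, so Fubini allows us to combine the two integrals. Applying the change of variables $x=r\theta+y^\theta$ (which is a linear isometry from $\R\times\theta^\perp$ to $\R^d$, so has unit Jacobian, and satisfies $x\cdot\theta=r$), the combined integral becomes
\[
(2\pi)^{-1/2}\int_{\R^d}e^{-i(x\cdot\theta)\zeta}f(x)\,dx = (2\pi)^{-1/2}\int_{\R^d}e^{-ix\cdot(\theta\zeta)}f(x)\,dx = (2\pi)^{(d-1)/2}\F_d f(\theta\zeta),
\]
where the last equality uses the definition of $\F_d$ and matches the normalization constants. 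This gives \eqref{eq:radon-fourier} pointwise for $f\in\Sch(\R^d)$.

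Second, for the $L^1$ case, I would note that by Fubini $R_\theta f(r)$ exists for $\L^1$-a.e. $r\in\R$ and that $R_\theta f\in L^1(\R)$ with $\|R_\theta f\|_{L^1(\R)}\leq \|f\|_{L^1(\R^d)}$; in particular $\F_1 R_\theta f$ is well-defined and continuous on $\R$. The same Fubini/change-of-variables computation as above goes through verbatim for $f\in L^1(\R^d)$, giving \eqref{eq:radon-fourier} pointwise in $\zeta$ for each $\theta\in\S^{d-1}$ where $R_\theta f(r)$ is defined a.e. in $r$. Alternatively, one may argue by density: choose $f_n\in\Sch(\R^d)$ with $f_n\to f$ in $L^1(\R^d)$; then $R_\theta f_n\to R_\theta f$ in $L^1(\R)$, so $\F_1 R_\theta f_n\to \F_1 R_\theta f$ uniformly on $\R$, while $\F_d f_n\to \F_d f$ uniformly on $\R^d$. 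Passing to the limit in \eqref{eq:radon-fourier} applied to $f_n$ yields the identity for $f\in L^1(\R^d)$, which holds for all $\zeta\in\R$ and a.e. $\theta\in\S^{d-1}$ (with the caveat that the equality is understood in the pointwise-a.e. sense inherent to defining $R_\theta f$ on $L^1$).
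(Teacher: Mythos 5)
Your proof is correct and takes essentially the same approach as the paper's: both compute via Fubini's theorem and the linear change of variables $x = r\theta + y^\theta$, with you starting from $\F_1 R_\theta f$ and the paper starting from $\F_d f(\theta\zeta)$, which is merely the same chain of equalities read in the opposite direction. Your handling of the $L^1$ case (Fubini justifies the same steps a.e., or alternatively a density argument using $\|R_\theta f\|_{L^1(\R)}\le \|f\|_{L^1(\R^d)}$) also matches the paper's brief remark.
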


    \begin{proof}
    By definition,
        \begin{align*}
            (2\pi)^{\tfrac{d}{2}}\F_d f(\theta\zeta) = \int_{\R^d} e^{-i \zeta\theta\cdot x} f(x)\,dx
            &=  \int_{\R} e^{-i \zeta r} \int_{x\cdot\theta =r}  f(x)\,dx\,dr\\
            &= \int_{\R} e^{-i\zeta r} R_\theta f(r)\,dr = \sqrt{2\pi}\F_1 R_\theta f(\zeta).
        \end{align*}
        Note all the equalities above are justified for a.e. $\theta\in\S^{d-1},\zeta\in\R$ when $f\in L^1(\R^d)$.
    \end{proof}
    From \eqref{eq:radon-fourier} it follows that, for $f,g\in\Sch(\R^d)$
    \begin{equation}\label{eq:radon-convolution}
        R(f\ast g)(\theta,r)=(R_\theta f\ast R_\theta g)(r),
    \end{equation}
    where $\ast$ on the left-hand side denotes the $d$-dimensional convolution and $\ast$ on the right-hand side denotes the $1$-dimensional convolution. \grn Indeed, as $\F_d(f\ast g)=(2\pi)^{d/2}\F_d f \F_d g$ and $\F_1(R_\theta f\ast R_\theta g)=(2\pi)^{1/2}\F_1 R_\theta f \F_1 R_\theta g$, we have
    \begin{align*}
        (\F_1 R_\theta(f\ast g))(\zeta)&=(2\pi)^{\frac{d-1}{2}}(\F_d(f\ast g))(\theta\zeta) =(2\pi)^{d-1/2} \F_d f(\theta\zeta) \F_d g(\theta\zeta) \\
        &= (2\pi)^{1/2}\F_1 R_\theta f(\zeta) \F_1 R_\theta g(\zeta)= \F_1 (R_\theta f\ast R_\theta g)(\theta\zeta).
    \end{align*}\nc
    Moreover, the same computation is justified when $f\in\Sch(\R^d)$ and $g\in\Sch'(\R^d)$ and $Rg\in\Sch'(\Pd)$ is well-defined. In particular, \eqref{eq:radon-convolution} holds for $f\in\Sch(\R^d)$ and $g\in H_t^s(\R^d)$ with $t\in(-\frac d2,\frac d2)$, which includes the case \grn $g\in\M_b(\R^d)$\nc.
    
    Next we record the smoothing effect of $R^\ast R$.
    \begin{proposition}[Regularizing property of the Radon transform]\label{prop:radon;reg}
        Let us denote by $A_k$ the surface area of the $(k-1)$-dimensional sphere. Then
        \begin{equation}\label{eq:radon;reg}
            R^\ast R f(x) = \frac{A_{d-1}}{A_d} \int_{\R^d}\frac{f(y)}{|y-x|}\,dy.
        \end{equation}
    \end{proposition}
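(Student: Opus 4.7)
The plan is to compute $R^\ast Rf(x)$ directly from its definition and reorganize the resulting integral via Fubini and polar coordinates. Starting from
\[R^\ast Rf(x)=\frac{1}{A_d}\int_{\S^{d-1}}\int_{\theta^\perp} f((x\cdot\theta)\theta+y^\theta)\,dy^\theta\,d\theta,\]
I would first perform the affine change of variables $z=y^\theta-(x-(x\cdot\theta)\theta)$ in the inner integral; since $x-(x\cdot\theta)\theta\in\theta^\perp$, the substitution preserves $\theta^\perp$ and simplifies the argument of $f$ to $x+z$. Next, I would introduce polar coordinates on the $(d-1)$-dimensional hyperplane $\theta^\perp$: writing $z=r\omega$ with $r>0$ and $\omega\in\S^{d-1}\cap\theta^\perp$, the Lebesgue measure on $\theta^\perp$ factors as $dz=r^{d-2}\,dr\,d\omega$, where $d\omega$ here denotes the $(d-2)$-dimensional surface measure on $\S^{d-1}\cap\theta^\perp$. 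This yields
\[R^\ast Rf(x)=\frac{1}{A_d}\int_0^\infty r^{d-2}\int_{\S^{d-1}}\int_{\S^{d-1}\cap\theta^\perp} f(x+r\omega)\,d\omega\,d\theta\,dr.\]

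The crucial step is a symmetry argument on the Stiefel manifold of orthonormal $2$-frames $(\theta,\omega)$. The iterated integration $\int_{\S^{d-1}}\int_{\S^{d-1}\cap\theta^\perp}\cdot\,d\omega\,d\theta$ defines (up to normalization) the $O(d)$-invariant measure on this Stiefel manifold, which is invariant under the involution $(\theta,\omega)\mapsto(\omega,\theta)$. Since the integrand depends only on $\omega$, swapping the order of the two sphere integrals and carrying out the inner integration over $\{\theta\in\S^{d-1}:\theta\perp\omega\}$ yields the factor $A_{d-1}$, reducing the double integral to $A_{d-1}\int_{\S^{d-1}} f(x+r\omega)\,d\omega$. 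Finally, passing to polar coordinates in $\R^d$ via $y=x+r\omega$, so that $r=|y-x|$ and $dy=r^{d-1}\,dr\,d\omega$, the identity $r^{d-2}=r^{d-1}/|y-x|$ transforms the resulting expression into $\frac{A_{d-1}}{A_d}\int_{\R^d} f(y)/|y-x|\,dy$, as desired. The only substantive technical point is the Stiefel manifold symmetry; the remaining steps are routine Fubini applications valid for $f\in\Sch(\R^d)$ (or any sufficiently integrable $f$).
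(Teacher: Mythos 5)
Your proposal is correct and follows essentially the same route as the paper's proof: expand the definition of $R^\ast R$, use the change of variables to center the inner integral at $x$, pass to polar coordinates on $\theta^\perp$, swap the order of the $\theta$- and $\omega$-integrations to pick up the factor $A_{d-1}$, and reassemble via polar coordinates in $\R^d$. The paper performs the same swap by a direct Fubini argument without naming the Stiefel manifold; your $O(d)$-invariance/involution framing is a slightly more conceptual justification of exactly that step, but the computation is identical.
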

    
    \begin{proof}
    By using polar coordinates and Fubini's Theorem, we see
    \begin{align*}
        (R^\ast Rf)(x) &= \dashint_{\S^{d-1}}\hat f(\theta,x\cdot\theta)d\theta
        = \dashint_{\S^{d-1}} \int_{y\in\theta^\perp} f(x+y)\,dyd\theta \\
        &= \dashint_{\S^{d-1}} \int_{\{\omega\in\S^{d-1}:\omega\perp\theta\}} \int_0^\infty f(x+r\omega) r^{d-2}\,dr\,d\omega\,d\theta    \\
        &= \frac{1}{A_d}\int_0^\infty \int_{\S^{d-1}} \int_{\{\theta\in\S^{d-1}:\omega\perp\theta\}} f(x+r\omega) r^{d-2}\,d\theta\,d\omega \,dr   \\
        &= \frac{A_{d-1}}{A_d}\int_0^\infty \dashint_{\S^{d-1}} r^{d-2} f(x+r\omega)\,d\omega\,dr  
        = \frac{A_{d-1}}{A_d}\int_{\R^d} \frac{f(y)}{|y-x|}\,dy
    \end{align*}
    \end{proof}
    
    This confirms the intuition that $R^\ast R f$ should be more regular than $f$, as for $f\in\Sch(\R^d)$
    \[\lim_{\eps\searrow 0}\int_{\R^d} |x-y|^{-d+\eps} f(y)\,dy = C(d) f(x)\]
    for some dimension dependent constant $C(d)$. To examine the regularizing property in more detail, we first introduce the Riesz potentials; see~\cite[Chapter VII]{Hel10} for further details.
    
    \begin{definition}[Riesz potential]\label{def:riesz_trnfrm}
       For $\gamma\in\R$ and $f\in\Sch(\R^d)$ we define its Riesz potential $I_d^\gamma f \in \Sch(\R^d)$ by
       \begin{equation}\label{eq:riesz_trnsfrm}
           (I_d^\gamma f)(x) = \frac{1}{H_d(\gamma)}\int_{\R^d} \purp\frac{f(y)}{|y-x|^{d-\gamma}}\nc\, \quad \text{ where } H_d(\gamma)=\vio 2^\gamma \pi^{d/2}\frac{\Gamma(\tfrac{\gamma}{2})}{\Gamma(\tfrac{d-\gamma}{2})}\nc.
       \end{equation}
    \end{definition}
    
    \begin{proposition}[Properties of the Riesz potential]\label{prop:riesz_trnsfrm;properties}
        Let $f\in\Sch(\R^d)$. Then 
        
        \begin{listi}
            \item (Lemma 6.4 of~\cite{Hel10}) $\gamma\rightarrow (I_d^\gamma f)(x)$ extends to a holomorphic function in the set $\mathbb{C_d}=\{\gamma\in\mathbb{C}:\,\gamma-d\not\in 2\mathbb{Z}^+\}$. Also
            \begin{equation}\label{eq:I0=id}
                I_d^0 f = \lim_{\gamma\rightarrow 0} I_d^\gamma f=f \text{  and  } I_d^\gamma\Delta f = \Delta I_d^\gamma f = -I_d^{\gamma-2} f
            \end{equation}
            Thus, we will understand fractional orders of $\Delta$ and $\Box$ as
            \begin{equation}\label{def:LapBox_frac}
                (-\Delta)^{s}= I_{d}^{-2s},\,(-\Box)^s = I_1^{-2s},
            \end{equation}
            
            \item (Proposition 6.5 of~\cite{Hel10}) We have the following identity. 
            \begin{equation}\label{eq:I;composition}
                I_d^\alpha(I_d^\beta f) = I_d^{\alpha+\beta} f \text{ for } f\in\Sch(\R^d) \text{ whenever } \re \alpha,\re\beta>0 \text{ and }\re(\alpha+\beta)<d.
            \end{equation}
        \end{listi}
    We often suppress the dimensional notation and simply write $I^\gamma$ when it is clear. 
    \end{proposition}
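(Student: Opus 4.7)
The plan is to use the Fourier transform as the primary tool, which reduces most identities to algebraic manipulations of multipliers $|\xi|^{-\gamma}$. Recall that for $0 < \re \gamma < d$, the kernel $|x|^{-(d-\gamma)}$ is locally integrable and of tempered growth at infinity, hence defines a tempered distribution whose Fourier transform is a constant multiple of $|\xi|^{-\gamma}$. The normalization constant $H_d(\gamma)$ in the definition of $I_d^\gamma$ is chosen precisely so that
\[
\F_d(I_d^\gamma f)(\xi) = |\xi|^{-\gamma} \F_d f(\xi) \text{ for } f\in\Sch(\R^d).
\]

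For part (i), the holomorphic extension would proceed from this Fourier representation: $\gamma \mapsto |\xi|^{-\gamma}$ depends holomorphically on $\gamma$ in $\mathbb{C}_d$ when interpreted as a tempered distribution (with the standard regularization near $\xi=0$ for $\re\gamma \geq d$, which is exactly the set of excluded half-integer shifts). The identity $I_d^0 f = f$ is then immediate since $|\xi|^0 = 1$. The commutation $I_d^\gamma \Delta f = \Delta I_d^\gamma f = -I_d^{\gamma-2}f$ is a short Fourier computation: the Laplacian corresponds to multiplication by $-|\xi|^2$, which commutes with multiplication by $|\xi|^{-\gamma}$ and produces the multiplier $-|\xi|^{-(\gamma-2)}$.

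For part (ii), under the hypotheses $\re\alpha,\re\beta>0$ and $\re(\alpha+\beta)<d$, both Riesz potentials as well as their composition are well-defined, and on the Fourier side one simply has $|\xi|^{-\alpha}|\xi|^{-\beta}=|\xi|^{-(\alpha+\beta)}$, yielding $\F_d(I_d^\alpha I_d^\beta f) = \F_d(I_d^{\alpha+\beta} f)$. Alternatively, one can work directly at the level of integrals, invoking Fubini to reduce to the computation of the convolution kernel
\[
\int_{\R^d} |y-x|^{-(d-\alpha)}\,|z-y|^{-(d-\beta)}\,dy = \frac{H_d(\alpha)H_d(\beta)}{H_d(\alpha+\beta)}\,|z-x|^{-(d-\alpha-\beta)},
\]
a classical beta-type integral on $\R^d$ whose evaluation is readily checked by Fourier transform; the normalization constants then cancel to yield $I_d^{\alpha+\beta}f$ exactly.

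The main technical obstacle is justifying the analytic continuation outside of the strip in which the defining integral converges absolutely. This would require either repeated integration by parts to trade regularity of the Schwartz function $f$ for improved decay of the kernel (so that the resulting integral converges for a wider range of $\gamma$), or carefully interpreting $|\xi|^{-\gamma}$ as a tempered distribution via finite-part regularization when $\re\gamma \geq d$. Both approaches, along with the careful verification of holomorphy, are carried out in detail in Helgason's monograph, from which the statement is directly quoted; our proof would amount to recalling these standard arguments.
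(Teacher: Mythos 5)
The paper does not prove this proposition; it cites Lemma~6.4 and Proposition~6.5 of Helgason's monograph verbatim, so there is no proof to compare against. Your sketch is a correct and standard route. A short check confirms your key normalization claim: with $H_d(\gamma)=2^\gamma\pi^{d/2}\Gamma(\tfrac{\gamma}{2})/\Gamma(\tfrac{d-\gamma}{2})$ and the paper's convention $\F_d f(\xi)=(2\pi)^{-d/2}\int f(x)e^{-ix\cdot\xi}\,dx$, one indeed gets $\F_d(I_d^\gamma f)(\xi)=|\xi|^{-\gamma}\F_d f(\xi)$ for $0<\re\gamma<d$, from which parts (i) and (ii) follow formally as you describe. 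You also correctly flag the genuine technical content: extending $\gamma\mapsto I_d^\gamma f$ holomorphically past the strip $0<\re\gamma<d$, either by regularizing $|\xi|^{-\gamma}$ as a tempered distribution or by integrating by parts against $f$. For completeness: Helgason's own proofs take the second route for Lemma~6.4 (repeated use of $\Delta$ to shift $\gamma$ by integer multiples of $2$, combined with the poles of the Gamma factors in $H_d$) and prove Proposition~6.5 by direct evaluation of the Euclidean beta integral you wrote, so your Fourier-multiplier framing is a genuinely different (and arguably cleaner) presentation of the same classical facts, trading explicit kernel computations for distribution-theoretic care near the excluded exponents $\gamma\in d+2\mathbb{Z}^+$.
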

    
    We record some properties of the Radon transform; see~\cite[Chapter 1]{Hel10} for proofs. Intertwining property between the Laplacian and the Radon transform follows from direct calculations.
    \begin{proposition}[Intertwining property]\label{prop:radon_intertwine}
    For $f\in\Sch(\R^d)$ and $\rg\in\Sch(\Pd)$, we have
        \begin{equation}\label{eq:radon_intertwine}
        R{(-\Delta) f}=(-\partial_r^2) R f,\,\quad R^\ast (-\partial_r^2)\rg=-\Delta R^\ast\rg.
    \end{equation}
    \end{proposition}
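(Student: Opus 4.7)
\begin{proofsketch}
The plan is to prove each identity by a direct differentiation under the integral, exploiting the fact that both $f\in\Sch(\R^d)$ and $\rg\in\Sch(\Pd)$ provide the regularity and decay needed to justify all manipulations.

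For the second identity I would start, since it is the simpler one. Writing $R^\ast\rg(x)=\dashint_{\S^{d-1}}\rg(\theta,x\cdot\theta)\,d\theta$ and differentiating twice under the integral (valid because $\rg\in\Sch(\Pd)$), one gets
\[\partial_{ij}R^\ast\rg(x)=\dashint_{\S^{d-1}}\theta_i\theta_j\,(\partial_r^2\rg)(\theta,x\cdot\theta)\,d\theta.\]
Summing over $i=j$ and using $\sum_i\theta_i^2=1$ collapses the factor and yields $\Delta R^\ast\rg=R^\ast(\partial_r^2\rg)$, which is the claim up to signs.

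For the first identity I would decompose the Euclidean Laplacian along $\theta$ and orthogonal to it: writing $x=r\theta+y^\theta$ with $y^\theta\in\theta^\perp$, one has $\Delta=(\theta\cdot\nabla)^2+\Delta_{\theta^\perp}$, where $\Delta_{\theta^\perp}$ is the $(d-1)$-dimensional Laplacian in the variable $y^\theta$. Applying $R_\theta$ to the second summand gives $\int_{\theta^\perp}\Delta_{\theta^\perp}f(r\theta+y^\theta)\,dy^\theta$, which vanishes because $f$ is Schwartz and $\Delta_{\theta^\perp}f=\nabla_{\theta^\perp}\!\cdot\!\nabla_{\theta^\perp}f$ is a divergence in $y^\theta$ (so the integral over $\theta^\perp\cong\R^{d-1}$ is zero by dominated convergence applied to integrals over growing balls, with boundary terms controlled by Schwartz decay). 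For the first summand, iterating \eqref{eq:Radon-derivative} gives $R((\theta\cdot\nabla)^2f)(\theta,r)=\partial_r^2 Rf(\theta,r)$. Combining the two computations and negating both sides yields $R(-\Delta f)=-\partial_r^2 Rf$.

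The only mildly delicate step is the vanishing of the tangential integral, which is routine but requires one to record that $f\in\Sch(\R^d)$ is enough for all differentiations under the integral sign and for the boundary terms in the integration by parts to vanish; everything else is an application of \eqref{eq:Radon-derivative} and elementary calculus. No deeper tool is required.
\end{proofsketch}
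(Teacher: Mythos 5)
Your proof is correct. For the second identity you do essentially what the paper does: differentiate $R^\ast\rg(x)=\dashint_{\S^{d-1}}\rg(\theta,x\cdot\theta)\,d\theta$ under the integral (in your case twice, producing $\theta_i\theta_j\,\partial_r^2\rg$) and sum using $\sum_i\theta_i^2=1$; the only difference is cosmetic. For the first identity you take a genuinely different route: you decompose $\Delta=(\theta\cdot\nabla)^2+\Delta_{\theta^\perp}$, show the tangential part annihilates under $R_\theta$ by integrating a divergence over $\theta^\perp\cong\R^{d-1}$, and handle the longitudinal part by iterating \eqref{eq:Radon-derivative}. The paper instead works entirely through \eqref{eq:Radon-derivative} — it first \emph{derives} that identity from the translation property $R(\tau_{-h}f)(\theta,r)=Rf(\theta,r+h\cdot\theta)$, then observes that $R(\Delta f)=\sum_i R(\partial_i^2 f)=\sum_i\theta_i^2\,\partial_r^2 Rf=\partial_r^2 Rf$, with no separate vanishing argument. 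Both are valid; yours is more geometric and makes the cancellation of tangential derivatives explicit, whereas the paper's is a bit shorter because that cancellation is already encoded in the coefficient $\theta_i$ of the derivative formula. Note that since the paper invokes this proposition to \emph{justify} \eqref{eq:Radon-derivative}, if you were writing a self-contained proof you would want to also include the short translation argument establishing that formula rather than take it as given.
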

    
    \begin{proof}
    To show the first item, write $\tau_h f(x)=f(x-h)$ and notice
    \[R(\tau_{-h}f)(\theta,r)=\grn\int_{y\cdot\theta=r} \tau_{-h}f(y)\,dy\nc=\int_{y\cdot\theta=r} f(y+h)\,dy = \int_{y\cdot\theta=r+h\cdot\theta} f(y)\,dy = Rf(\theta,r+h\cdot\theta),\]
    thus
    \[(R(\partial_i f))(\theta,r)=\theta_i\partial_r Rf(\theta,r),\]    
    which, along with $\sum_{i=1}^d \theta_i^2 = 1$ gives the first equality in \eqref{eq:radon_intertwine}.
    
    The second item is immediate from the definition, as
    \begin{align*}
        \partial_i R^\ast\rg (x)= \partial_i \dashint_{\S^{d-1}} \rg(\theta,x\cdot\theta)\,d\theta = \theta_i \dashint_{\S^{d-1}} \partial_r\rg(\theta,x\cdot\theta)\,d\theta = \theta_i R^\ast\partial_r\rg(x).
    \end{align*}
    \end{proof}

    We give a precise definition of the $(d-1)$-order differential operator $\Lambda_d$ involved in inversion formula for the Radon and the dual transform.

    \begin{definition}\label{def:Lambda}
    Let $\Lambda_d:\Sch(\Pd)\rightarrow \Sch(\Pd)$ be defined by
    \begin{equation}\label{def:hatLambda}
        \Lambda_d = \begin{cases}
        (-i)^{d-1}\frac{\partial^{d-1}}{\partial r^{d-1}} &\text{ when } d \text{ is odd }\\
        (-i)^{d-1}\H\frac{\partial^{d-1}}{\partial r^{d-1}} & \text{ when } d \text{ is even } \end{cases}.
    \end{equation}
    where $\H:\Sch(\S^{d-1}\times\R)\rightarrow\Sch(\S^{d-1}\times\R)$ is the Hilbert transform in the scalar variable
    \begin{equation}\label{def:Hp}
        \H \rg(\theta,r)=\frac{i}{\pi}\int_{\R} \frac{\rg(\theta,s)}{r-s}\,ds.
    \end{equation}
    \end{definition}

    \begin{remark}\label{rmk:Lambda}
    From the interaction of derivatives and the Hilbert transform with the Fourier transform, we can easily verify that for each $\rg\in\Sch(\Pd)$
    \[(\F_1\Lambda_d \rg_\theta)(\zeta)=|\xi|^{d-1}(\F_1\rg_\theta)(\zeta).\]
    Consequently, for $\rg\in\Sch(\Pd)$ we have
    \begin{align*}
        \|\Lambda_d \rg\|_{H_{t-(d-1)}^{s-(d-1)}(\Pd)}^2 &= \frac{1}{2(2\pi)^{d-1}}\int_{\S^{d-1}}\int_{\R}|\zeta|^{2t-(2d-2)}(1+\zeta^2)^{s-t}|\F_1\Lambda_d \rg_\theta(\zeta)|^2\,d\zeta\,d\theta\\
        &= \frac{1}{2(2\pi)^{d-1}}\int_{\S^{d-1}}\int_{\R}|\zeta|^{2t}(1+\zeta^2)^{s-t}|\F_1 \rg_\theta(\zeta)|^2\,d\zeta\,d\theta 
        = \|\rg\|_{H_{t}^{s}(\Pd)}^2.
    \end{align*}
    Thus this we may extend $\Lambda_d$ as a bijective linear isometry from $H_t^s(\Pd)$ to $H_{t-(d-1)}^{s-(d-1)}(\Pd)$ when $t>d-3/2$.
    \end{remark}

    For Schwartz functions, we have the following inversion formulae~\cite[Theorem 8.1]{Sol87}.
     \begin{proposition}[Inversion formula for the Radon and the dual transform]\label{prop:radon_inversion}
        For all $f\in\Sch(\R^d)$
        \begin{equation}\label{eq:radon_inversion_f2}
          c_d f=R^\ast \Lambda_d R f,
        \end{equation}
        where $c_d=(4\pi)^{(d-1)/2}\Gamma(d/2)/\Gamma(1/2)$. Similarly, for all $\rg\in\Sch(\Pd)$
        \begin{equation}\label{eq:dual_inversion}
            c_d \rg(\theta,p)= R R^\ast(\Lambda_d \rg).
        \end{equation}
        Here, $\Sch(\Pd)$ is defined as in \eqref{def:Sch_radon}. Furthermore, $R^\ast \Lambda_d \rg\in C^\infty$ with $(R^\ast \Lambda_d \rg)(x)=O(|x|^{-d})$, hence $f\in L^p(\R^d)$ for $p\in (1,+\infty]$.
    \end{proposition}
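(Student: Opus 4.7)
The proof plan is to establish the two inversion formulas via the Fourier slicing theorem (Proposition~\ref{prop:radon-fourier}) together with the interaction of $\Lambda_d$ with the one-dimensional Fourier transform given by \eqref{eq:Lambdas}.

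The first step is to prove $c_d f = R^\ast \Lambda_d R f$ for $f \in \Sch(\R^d)$. Starting from the standard Fourier inversion formula $f(x) = (2\pi)^{-d/2}\int_{\R^d} e^{ix\cdot\xi}\F_d f(\xi)\,d\xi$, I pass to polar coordinates $\xi = \zeta\theta$ with $\zeta \in \R$ and $\theta \in \S^{d-1}$. This parametrization double-counts each $\xi$, so picks up a factor of $\tfrac12$ together with the Jacobian $|\zeta|^{d-1}\,d\zeta\,d\theta$. Substituting the slicing identity $(2\pi)^{(d-1)/2}\F_d f(\zeta\theta) = \F_1 R_\theta f(\zeta)$ and recognizing $|\zeta|^{d-1}\F_1 R_\theta f(\zeta) = \F_1(\Lambda_d R_\theta f)(\zeta)$ via \eqref{eq:Lambdas}, the inner $\zeta$-integral collapses by one-dimensional Fourier inversion to $(2\pi)^{1/2}(\Lambda_d R_\theta f)(x\cdot\theta)$. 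Averaging over $\theta$ then yields $f(x) = c_d^{-1}R^\ast\Lambda_d R f(x)$ with $c_d = 2(2\pi)^{d-1}/|\S^{d-1}|$, and a routine calculation using $|\S^{d-1}| = 2\pi^{d/2}/\Gamma(d/2)$ and $\Gamma(1/2) = \sqrt{\pi}$ confirms $c_d = (4\pi)^{(d-1)/2}\Gamma(d/2)/\Gamma(1/2)$.

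For the dual inversion formula $c_d \rg = R R^\ast \Lambda_d \rg$, the cleanest route is to combine the intertwining property iterated from Proposition~\ref{prop:radon_intertwine} (giving $R^\ast \Lambda_d \rg = (-\Delta)^{(d-1)/2}R^\ast \rg$) with the smoothing identity of Proposition~\ref{prop:radon;reg}. Applying $R$ and using the convolution identity \eqref{eq:radon-convolution} together with the Fourier-side representation of $|\zeta|^{d-1}$ in one dimension reduces the computation to a one-dimensional Fourier inversion analogous to the first step. Alternatively, one can establish this by pairing against a test $\rf = Rf$ with $f\in\Sch(\R^d)$ and using the duality \eqref{eq:duality;radon} together with the first inversion formula already proved.

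The remaining assertions are the smoothness of $R^\ast \Lambda_d \rg$ — immediate by differentiation under the integral since $\Lambda_d \rg \in \Sch(\Pd)$ — and the decay bound $(R^\ast \Lambda_d \rg)(x) = O(|x|^{-d})$, from which $L^p$ membership for $p \in (1,+\infty]$ follows by a trivial Hölder argument. The decay bound is the main obstacle: it rests on the fact that $\Lambda_d \rg$ has vanishing moments of order $0, 1, \ldots, d{-}2$ in the scalar variable (transparent for odd $d$ via integration by parts, and in even $d$ via the Fourier multiplier description $|\zeta|^{d-1}$ of $\Lambda_d$ which vanishes to order $d-1$ at the origin). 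Taylor expanding $\Lambda_d \rg(\theta,\cdot)$ about $r = x\cdot\theta$ and exploiting these vanishing moments together with the curvature of $\S^{d-1}$ via a stationary-phase/oscillatory-integral estimate produces the claimed $|x|^{-d}$ decay. Making the oscillatory estimate fully rigorous in the even-dimensional case (where the Hilbert transform renders $\Lambda_d \rg$ only slowly decaying rather than compactly supported in the scalar variable) is where the most care is needed.
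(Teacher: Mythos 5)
Your derivation of the first inversion formula $c_d f = R^\ast\Lambda_d Rf$ is correct and standard — the Fourier-inversion-in-polar-coordinates route with the slicing theorem is the argument one finds in Helgason and Natterer, and your constant bookkeeping checks out ($c_d = 2(2\pi)^{d-1}/|\S^{d-1}| = (4\pi)^{(d-1)/2}\Gamma(d/2)/\Gamma(1/2)$). For the dual formula, your route (b) is close in spirit to the paper's own formal derivation (Remark~\ref{rmk:dual_inversion;derivation}), but note a subtlety you have not addressed: pairing $RR^\ast\Lambda_d\rg$ against functions of the form $Rf$ with $f\in\Sch(\R^d)$ does not immediately suffice, because the range of $R$ on $\Sch(\R^d)$ is a \emph{proper} subspace of $\Sch(\Pd)$ cut out by the Helgason--Ludwig moment conditions. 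The paper's remark sidesteps this by starting from $\rg$, \emph{defining} $f$ on the Fourier side by $\F_d f(\zeta\theta) := (2\pi)^{(1-d)/2}\F_1\rg_\theta(\zeta)$, and then showing $Rf = \rg$ together with $f = c_d^{-1}R^\ast\Lambda_d\rg$; this ordering is what makes the argument go through. Your route (a) has a related wrinkle: the iterated intertwining $R^\ast(-\partial_r^2)^k\rg = (-\Delta)^kR^\ast\rg$ handles odd $d$, but even $d$ requires the fractional-power version, which again needs Fourier-side justification.

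The decay estimate $(R^\ast\Lambda_d\rg)(x) = O(|x|^{-d})$ is where a genuine gap remains. You correctly identify the key input (the vanishing of the moments of $\Lambda_d\rg$ of orders $0,\dots,d-2$ in the scalar variable), but the mechanism is not a stationary-phase estimate: writing $x=|x|\omega$ and decomposing the spherical average over latitudes $s=\theta\cdot\omega$ gives
$|\S^{d-1}|\,(R^\ast g)(x) = |\S^{d-2}|\int_{-1}^1 \bar g\bigl(s,|x|s\bigr)(1-s^2)^{(d-3)/2}\,ds$
(with $\bar g$ an average over the latitude circle), and the $|x|^{-d}$ decay then drops out from the change of variables $u=|x|s$ together with a Taylor expansion of the weight $(1-u^2/|x|^2)^{(d-3)/2}$ whose low-order terms are killed by the vanishing moments. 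There is no oscillatory phase here, and "curvature of $\S^{d-1}$" enters only through the $(1-s^2)^{(d-3)/2}$ Jacobian. This is precisely the content of Solmon's Theorem~8.1 in~\cite{Sol87}, which the paper cites rather than reproves; since the whole point of the decay claim is to justify that $R^\ast\Lambda_d\rg$ lands in $L^p$ (so that the dual inversion makes sense), the step you flagged as "where the most care is needed" is indeed the step that is missing, and the sketch as written would not produce it.
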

    
    \begin{remark}[Formal derivation of the inversion formula]\label{rmk:dual_inversion;derivation}
        Let $\rg\in\Sch(\Pd)$. Denote by $\F_k$ with $k=1,d$ the Fourier transform in $k$-dimensions. Recalling $\F_1 (R_\theta f)(\zeta)=(2\pi)^{\tfrac{d-1}{2}}\F_d f(\zeta\theta)$, define $f$ in the Fourier domain by
        \[\F_d f(\zeta\theta):=(2\pi)^{\tfrac{1-d}{2}}\F_1 \rg_\theta(\zeta).\]
        As $\rg$ is even, the above is well-defined, and we have
        \[\F_1(R_\theta f)(\zeta) = \F_1 \rg_\theta(\zeta).\]
        By injectivity of the Fourier transform $\F_1$, we have $Rf=\rg$. Thus the key to proving the inversion formulae is to justify the Fourier and the inverse Fourier transforms, which comes down to regularity of the functions.
        
        Formally, we can find an expression for $f$ following the argument in Theorem 12.6 of~\cite{SmiSol77}. For any test function $\varphi\in C_c^\infty(\R^d)$,
        \begin{align*}
            \langle R\varphi,\Lambda_d \rg\rangle_{L^2(\Pd)}
            &= C\int_{\S^{d-1}}\int_{\R} \F_1(R_\theta \varphi)(\zeta) |\zeta|^{d-1} \F_1 \rg_\theta(\zeta)\,d\zeta\,d\theta \qquad    \text{ by the Plancherel formula for } \F_1\\
            &= C \int_{\S^{d-1}}\int_{\R} (\F_d \varphi)(\zeta\theta) |\zeta|^{d-1} (\F_d f)(\zeta\theta)\,d\zeta\,d\theta \qquad  \text{ as } \F_d \varphi(\zeta\theta)=\F_1(R_\theta \varphi)(\zeta)    \\  
            &= C \langle \F_d \varphi, \F_d f \rangle_{L^2(\R^d)} 
            = C\langle \varphi, f\rangle_{L^2(\R^d)}
        \end{align*}
        where we have used polar coordinates and the Plancherel formula for the Fourier transform. Thus
        \[\langle\varphi,R^\ast\Lambda_d\rg\rangle_{L^2(\Pd)}=\langle R\varphi,\Lambda_d \rg\rangle_{L^2(\Pd)} = C\langle \varphi,f\rangle_{L^2(\R^d)},\]
        and we may conclude, for for some constant $C$ only depending on the dimension $d$,
        \[f=C R^\ast\Lambda_d \rg\]
        As $Rf=\rg$, this gives us \eqref{eq:radon_inversion_f2}, and yields \eqref{eq:dual_inversion} by applying $R$ on both sides
    \end{remark}
    
    We end this section with a few results on the Sobolev spaces with attenuated/amplified low frequencies. We first note $H_t^s(\R^d)$ continuously embeds in $\Sch'(\R^d)$~\cite[Theorem 5.3]{Sha21}. 
    \begin{theorem}[$H_t^s(\R^d)$ and $\Sch'(\R^d)$]\label{thm:Hts;tempered_dist}
        Let $t\in (-d/2,d/2)$ and $r,s\in\R$, the identity map of $\Sch(\R^d)$ extends to the continuous embedding $H_t^s(\R^d)\subset\Sch'(\R^d)$. In other words, $H_t^s(\R^d)$ consists of tempered distributions.
    \end{theorem}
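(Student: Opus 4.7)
The plan is to realize $f \in H_t^s(\R^d)$ as a tempered distribution through the natural pairing with Schwartz functions defined by weighted Plancherel. The only delicacy is controlling the singular weight $|\xi|^{-2t}$ at the origin when working with the dual exponent, which is precisely where the upper bound $t < d/2$ enters.

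First, for $f,\varphi \in \Sch(\R^d)$, the Plancherel identity gives
\[
\langle f,\varphi\rangle_{\R^d} = \int_{\R^d} \F_d f(\xi)\,\overline{\F_d\varphi(\xi)}\,d\xi.
\]
Writing $w(\xi) := |\xi|^{t}(1+|\xi|^2)^{(s-t)/2}$, Cauchy--Schwarz yields
\[
|\langle f,\varphi\rangle_{\R^d}| \;\leq\; \|w \F_d f\|_{L^2}\,\|w^{-1}\F_d \varphi\|_{L^2} \;=\; \|f\|_{H^s_t(\R^d)} \|\varphi\|_{H^{-s}_{-t}(\R^d)},
\]
which is the analogue of \eqref{eq:Hts_dual} as discussed in the excerpt. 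So the heart of the matter is to show that $\|\varphi\|_{H^{-s}_{-t}(\R^d)}$ is finite and controlled by Schwartz seminorms of $\varphi$ under the sole hypothesis $t\in(-d/2,d/2)$ (with $s \in \R$ arbitrary).

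The key estimate splits $\R^d$ into $\{|\xi|\le 1\}$ and $\{|\xi|>1\}$. On the high-frequency piece, $(1+|\xi|^2)^{t-s}$ is polynomially bounded, and $\F_d\varphi$ decays faster than any polynomial, so a standard Schwartz seminorm of $\F_d\varphi$ controls the integral. On the low-frequency piece,
\[
\int_{|\xi|\le 1} |\xi|^{-2t}(1+|\xi|^2)^{t-s}|\F_d\varphi(\xi)|^2\,d\xi \;\leq\; C_{s,t}\,\|\F_d \varphi\|_{L^\infty}^2 \int_{|\xi|\le 1} |\xi|^{-2t}\,d\xi,
\]
and the last integral is finite precisely because $2t < d$, i.e.\ $t < d/2$. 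Combining the two pieces, one obtains a continuous seminorm $p$ on $\Sch(\R^d)$ with $\|\varphi\|_{H^{-s}_{-t}(\R^d)} \leq p(\varphi)$ for all $\varphi \in \Sch(\R^d)$. The symmetric argument (finiteness of $\|\psi\|_{H^s_t}$ for $\psi\in\Sch$) uses the dual condition $-2(-t)<d$, i.e.\ $t > -d/2$, and shows that $\Sch(\R^d) \subset H^s_t(\R^d)$ continuously; this is what makes the definition of $H^s_t(\R^d)$ as the completion of $\Sch$ sensible.

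Finally, combining the two bullet points, the map
\[
T: \Sch(\R^d) \longrightarrow \Sch'(\R^d), \qquad Tf(\varphi) := \langle f,\varphi\rangle_{\R^d},
\]
satisfies $|Tf(\varphi)| \leq \|f\|_{H^s_t}\,p(\varphi)$, so it is continuous from $(\Sch(\R^d),\|\cdot\|_{H^s_t})$ into $\Sch'(\R^d)$ equipped with the weak-$\ast$ topology. Since $\Sch(\R^d)$ is dense in $H^s_t(\R^d)$ by definition, and $\Sch'(\R^d)$ is sequentially complete, $T$ extends uniquely to a continuous linear map $\overline{T}: H^s_t(\R^d) \to \Sch'(\R^d)$. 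The extension is injective: any $f \in H^s_t(\R^d)$ with $\overline{T}f = 0$ satisfies $\int \F_d f \cdot \overline{\F_d\varphi}\,d\xi = 0$ for all $\varphi \in \Sch$, and since $\F_d(\Sch) = \Sch$ is dense in the weighted $L^2$ space defining $H^s_t$, we conclude $f = 0$. The only real obstacle is the low-frequency estimate above, and it is precisely the hypothesis $t \in (-d/2, d/2)$ that both makes the norm well-defined on $\Sch$ and ensures the dual pairing with Schwartz functions is finite.
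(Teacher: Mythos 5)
Your argument is correct and follows essentially the same route as the paper's proof sketch: establish the duality inequality $|\langle f,\varphi\rangle| \leq \|f\|_{H_t^s}\|\varphi\|_{H_{-t}^{-s}}$, observe that $\|\varphi\|_{H_{-t}^{-s}} < \infty$ for $\varphi \in \Sch(\R^d)$ precisely because $|t| < d/2$, and extend by density. The paper cites Sharafutdinov for the duality estimate, whereas you derive it directly from Plancherel and Cauchy--Schwarz and spell out the low/high-frequency splitting that makes the $t \in (-d/2,d/2)$ hypothesis visible; you also address injectivity of the extension, which the paper's sketch leaves implicit.
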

\begin{proofsketch}
    In general, for all $s,t\in\R$ one can show
    \begin{equation}\label{eq:Hts;duality}
        \left|\int_{\R^d} fg \right| \leq \|f\|_{H_t^s(\R^d)}\|g\|_{H_{-t}^{-s}(\R^d)} \text{ for all } f,g\in\Sch(\R^d);
    \end{equation}
    see~\cite[Theorem 5.3]{Sha21} for further details.
    Whenever $t\in(-d/2,d/2)$, for each $g\in\Sch(\R^d)$ we have $\|g\|_{H_{-t}^{-s}(\R^d)}<\infty$. Thus, for any $f\in H_t^s(\R^d)$, we can unambiguously define its action on each $g\in\Sch(\R^d)$ as a limit of actions by the approximating sequence -- i.e.
    \begin{align*}
        f(g) := \lim_{k\rightarrow\infty} \int_{\R^d} f_k(x) g(x)\,dx \quad\text{ for any } (f_k)_{k\geq 1} \text{ in } \Sch(\R^d) \text{ such that } f_k\xrightarrow[]{H_{t}^s} f.
    \end{align*}
    Furthermore, for each fixed $g\in\Sch(\R^d)$ the estimate \eqref{eq:Hts;duality} is preserved for $f\in H_t^s(\R^d)$, hence we deduce $f\in \Sch'(\R^d)$.
\end{proofsketch}
    
Sharafutdinov also established the following supercritical Sobolev-embedding type result~\cite[Theorem 5.4]{Sha21}.
\begin{theorem}[$H_t^s(\R^d)$ and continuous functions]\label{thm:Hts;cts}
    If $t\in(-\frac d2,\frac d2)$, $s>t+\frac d2$, then $H_t^s(\R^d)$ consists of bounded continuous functions.
\end{theorem}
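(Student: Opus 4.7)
The strategy is to reduce the claim to showing that every $f\in H_t^s(\R^d)$ satisfies $\F_d f\in L^1(\R^d)$. Once this is established, Fourier inversion will do the rest: the function $g(x) := (2\pi)^{-d/2}\int_{\R^d} e^{ix\cdot\xi}\F_d f(\xi)\,d\xi$ is automatically bounded, with $\|g\|_\infty \leq (2\pi)^{-d/2}\|\F_d f\|_{L^1}$, and is continuous (in fact $g\in C_0(\R^d)$ by Riemann--Lebesgue). Moreover, $g$ coincides with $f$ as a tempered distribution. The inversion step is legitimate because Theorem~\ref{thm:Hts;tempered_dist} provides the continuous embedding $H_t^s(\R^d)\subset\Sch'(\R^d)$, so $\F_d f$ is well-defined as a tempered distribution; if additionally $\F_d f\in L^1$, the distributional inverse agrees with the ordinary inverse almost everywhere, allowing $f$ to be identified with a bounded continuous representative.

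To prove $\F_d f\in L^1(\R^d)$, I will apply the Cauchy--Schwarz inequality using the natural weight built from the $H_t^s$-norm. Splitting the integrand as
\[\int_{\R^d}|\F_d f(\xi)|\,d\xi = \int_{\R^d}\bigl(|\xi|^{t}(1+|\xi|^2)^{(s-t)/2}|\F_d f(\xi)|\bigr)\cdot\bigl(|\xi|^{-t}(1+|\xi|^2)^{-(s-t)/2}\bigr)\,d\xi,\]
Cauchy--Schwarz yields
\[\|\F_d f\|_{L^1(\R^d)}\leq \|f\|_{H_t^s(\R^d)}\cdot I^{1/2}, \qquad I:=\int_{\R^d}|\xi|^{-2t}(1+|\xi|^2)^{-(s-t)}\,d\xi.\]
The whole argument then reduces to confirming $I<\infty$ under the hypotheses $t\in(-d/2, d/2)$ and $s>t+d/2$.

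The crux is therefore the verification $I<\infty$, which is a direct computation. Passing to spherical coordinates gives
\[I = |\S^{d-1}|\int_0^\infty r^{d-1-2t}(1+r^2)^{-(s-t)}\,dr.\]
Near $r=0$ the integrand behaves like $r^{d-1-2t}$, so integrability on $(0,1)$ reduces to the condition $d-1-2t>-1$, i.e.\ $t<d/2$, which is exactly the upper bound in the hypothesis on $t$. Near $r=\infty$ the factor $(1+r^2)^{-(s-t)}\sim r^{-2(s-t)}$, so the integrand decays like $r^{d-1-2s}$, and integrability on $(1,\infty)$ reduces to $d-1-2s<-1$, i.e.\ $2s-2t>d-2t$, which is precisely $s-t>d/2$, the hypothesis $s>t+d/2$. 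Neither endpoint presents any serious obstacle, so the proof is essentially a weighted adaptation of the classical Sobolev embedding $H^s\subset C_b^0$ for $s>d/2$, with $s-t$ playing the role of the ``effective'' Sobolev regularity in the present setting.
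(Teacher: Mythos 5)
Your approach — reducing to $\F_d f\in L^1(\R^d)$ via Cauchy--Schwarz against the $H_t^s$ weight and then invoking Fourier inversion — is the correct and standard one, and the paper itself offers no proof of this statement (it is cited directly from \cite[Theorem~5.4]{Sha21}), so there is no competing argument to compare against. The reduction, the appeal to Theorem~\ref{thm:Hts;tempered_dist} to justify the inversion, and the passage to spherical coordinates are all fine.

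The slip is in the last line of the $r\to\infty$ endpoint. From $d-1-2s<-1$ you correctly write $2s-2t>d-2t$, but this does \emph{not} simplify to $s-t>d/2$: cancelling $-2t$ from both sides gives $s>d/2$, whereas $s-t>d/2$ reads $2s-2t>d$. The two coincide only when $t=0$. So what convergence of $I$ at infinity actually demands is $s>d/2$ — the classical Sobolev threshold, as one should expect, since for $|\xi|\geq 1$ the weight $|\xi|^{2t}(1+|\xi|^2)^{s-t}$ is comparable to $|\xi|^{2s}$ regardless of $t$. The hypothesis $s>t+d/2$ yields $s>d/2$ exactly when $t\geq 0$, so your proof is complete on that half of the admissible range (which in fact covers every invocation of the theorem in the paper, since the authors only use $H_t^s$ with $0\leq t\leq s$). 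For $t\in(-d/2,0)$, however, the argument breaks, and no repair of the Cauchy--Schwarz step can rescue it when $s\in(t+d/2,\,d/2]$: a distribution $f$ with $\F_d f$ supported in $\{|\xi|\geq 2\}$ lies in $H_t^s$ precisely when it lies in $H^s$ there, and $H^s(\R^d)$ contains unbounded elements whenever $s\leq d/2$. Either the cited statement carries an implicit restriction to $t\geq 0$, or the hypothesis should also require $s>d/2$; worth flagging, but correct the algebra first so the constraint you actually need ($s>d/2$) is visible.
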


\section{Continuity equation in each projection}\label{app:CE_proj}
\purp 
In this section we provide a proof of Lemma~\ref{lem:CE_proj}, which was used to establish Theorem ~\ref{thm:SW2_ac-curves}~(i). Our proof relies on the $H_t^{(q,s)}(\Omega)$-norms for $\Omega=\R^d,\Pd$ -- introduced by Sharafutdinov~\cite{Sha21} -- which generalize the $H_t^s(\Omega)$-norms defined in \eqref{def:Hts_norm} and \eqref{def:Hts_norm;radon} to include regularity in the direction $\theta\in\S^{d-1}$. In the simple case where $t=0$ and $q,s$ are nonnegative integers, the $H_t^{(q,s)}(\Pd)$-norm is the sum of $L^2$-norms of derivatives of order $\leq q$ in the $\theta$-variable and derivatives of order $\leq s$ in the scalar variable. In this section we provide minimal details necessary to prove Lemma~\ref{lem:CE_proj} and refer the interested readers to~\cite[Section 3]{Sha21} and references therein for further information. 

Let $Y_l\in C^\infty(\S^{d-1})$ be a spherical harmonic of degree $l$ if $Y_l=\tilde Y_l|_{\S^{d-1}}$ for a homogeneous polynomial $\tilde Y_l$ of degree $l$ on $\R^d$ satisfying $\Delta\tilde Y_l=0$. The space of spherical harmonics of degree $l$ on $\S^{d-1}$ has finite dimension $N(d,l)$, thus we can choose an orthonormal basis $(Y_{lm})_{m=1}^{N(d,l)}$ for the space. Then the spherical harmonics of degree $l$ are eigenfunctions of the spherical Laplacian $\Delta_\theta:C^\infty(\S^{d-1})\rightarrow C^\infty(\S^{d-1})$, where sign is chosen to ensure that $\Delta_\theta$ is positive definite,
\[\Delta_\theta Y_{lm}=\lambda(d,l)Y_{lm},\qquad \lambda(d,l)=l(l+d-2).\]

We can represent the Fourier transform of each $f\in\Sch(\R^d)$ by
\begin{equation}\label{def:Fdf;spherical}
    \F_d f(\xi)=\sum_{l=0}^\infty\sum_{m=1}^{N(d,l)} \bar f_{lm}(|\xi|)Y_{lm}(\xi/|\xi|)
\end{equation}
where the coefficients $\bar f_{lm}\in C^\infty([0,+\infty))$ and decays fast at infinity. Similarly, for each $\rg\in\Sch(\Pd)$
\begin{equation}\label{def:F1g;spherical}
    \F_1 \rg(\theta,\zeta)=\sum_{l=0}^\infty\sum_{m=1}^{N(d,l)} \bar \rg_{lm}(\zeta)Y_{lm}(\theta)
\end{equation}
with coefficients $\bar\rg_{lm}\in\Sch(\R)$ satisfying $\bar\rg_{lm}(-\zeta)=(-1)^l\bar\rg_{lm}(\zeta)$.

For any $r,s\in\R$ and $t>-d/2$, the $H_t^{(q,s)}(\R^d)$-norm is defined by
\begin{equation}\label{def:Htqs-norm;Rd}
    \|f\|_{H_t^{(q,s)}(\R^d)}^2 = \sum_{l=0}^\infty(\lambda(d,l)+1)^q \sum_{m=1}^{N(d,l)}\int_0^\infty \zeta^{2t+d-1}(1+\zeta^2)^{s-t}|\bar f_{lm}(\zeta)|^2\,d\zeta.
\end{equation}
The norm is independent of the choice of the orthonormal basis; see~\cite[Sections 3-4]{Sha21}. Similarly, for $r,s\in\R$ and $t>-1/2$ the $H_t^{(q,s)}(\Pd)$-norm is defined by
\begin{equation}\label{def:Htqs-norm;Pd}
    \|\rg\|_{H_t^{(q,s)}(\Pd)}^2 = \frac{1}{2(2\pi)^{d-1}}\sum_{l=0}^\infty(\lambda(d,l)+1)^q \sum_{m=1}^{N(d,l)}\int_{\R} |\zeta|^{2t}(1+\zeta^2)^{s-t}|\bar\rg_{lm}(\zeta)|^2\,d\zeta.
\end{equation}
The spaces $H_t^{(q,s)}(\R^d)$ and $H_t^{(q,s)}(\Pd)$ are respectively the closures of $\Sch(\R^d),\Sch(\Pd)$ under the corresponding norm.

In fact, $H_t^{(q,s)}(\Omega)$-norm for $\Omega=\R^d,\Pd$ with $q=0$ is exactly the $H_t^s(\Omega)$-norms; see the proof of~\cite[Theorem 5.1]{Sha21}. Moreover, when $q\geq 0$, $H_t^{(q,s)}(\Omega)$ is continuously embedded in $H_t^s(\Omega)$. Hence for $t\in(-d/2,d/2)$ for $\Omega=\R^d$ and $t\in (-1/2,1/2)$, $H_t^{(q,s)}(\Omega)$ is continuously embedded in $\Sch'(\Omega)$.

Sharafutdinov showed~\cite[Theorem 4.3]{Sha21} that the Radon transform extends to a bijective Hilbert space isometry between $H_t^{(q,s)}(\Omega)$-spaces. Namely, for all $q,s\in\R$ and $t>-d/2$
\begin{equation}\label{eq:Htqs;isometry}
    \|f\|_{H_t^{(q,s)}(\R^d)}=\|Rf\|_{H_{t+(d-1)/2}^{(q,s+(d-1)/2)}(\Pd)} \text{ for all } f\in H_t^{(q,s)}(\R^d).
\end{equation}

The crucial property we use in this section is the supercritical Sobolev-embedding-type inequality for $H_t^{(q,s)}(\Pd)$ spaces, which is due to Sharafutdinov~\cite[Corollary 5.11]{Sha21}. While the proof is omitted, the result readily follows from the analogous arguments for $H^q(\S^{d-1})$ and $H_t^s(\R)$~\cite[Corollary 5.5]{Sha21}.
\begin{theorem}[Supercritical Sobolev embedding for $H_t^{(q,s)}(\Pd)$]\label{thm:HtrsPd;Ck}
    If $t\in(-1/2,1/2)$, $s>t+1/2+k$, and $q>(d-1)/2+k$ then $H_t^{(q,s)}(\Pd)\subset C^k(\Pd)$ is a continuous embedding.
\end{theorem}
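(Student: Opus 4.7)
The plan is to decompose elements of $H_t^{(q,s)}(\Pd)$ into spherical harmonics and apply Sobolev-type embeddings separately in the angular variable $\theta\in\S^{d-1}$ and the radial variable $r\in\R$, combining them via Cauchy--Schwarz. Since $\Sch(\Pd)$ is dense in $H_t^{(q,s)}(\Pd)$ by definition, and $H_t^{(q,s)}(\Pd)\hookrightarrow\Sch'(\Pd)$ under the assumed range of $t$ (by the discussion following~\eqref{def:Htqs-norm;Pd}), it suffices to prove the uniform inequality $\|\rg\|_{C^k(\Pd)}\leq C\|\rg\|_{H_t^{(q,s)}(\Pd)}$ for $\rg\in\Sch(\Pd)$. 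A standard Cauchy-sequence argument will then extend the embedding to the whole space, the limit in $C^k(\Pd)$ being identified with the original tempered distribution via the common inclusion into $\Sch'(\Pd)$.

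For the core estimate, fix $\rg\in\Sch(\Pd)$ and expand $\rg(\theta,r)=\sum_{l,m}Y_{lm}(\theta)\rg_{lm}(r)$ with $\rg_{lm}:=\F_1^{-1}\bar\rg_{lm}\in\Sch(\R)$. For any mixed derivative $\partial_\theta^\alpha\partial_r^\beta$ of total order $|\alpha|+\beta\leq k$ (with $\partial_\theta^\alpha$ an iterated tangential derivative of order $|\alpha|$ on $\S^{d-1}$), term-by-term differentiation --- justified by the rapid decay of $\bar\rg_{lm}$ in $\zeta$ and in $l$ for Schwartz $\rg$ --- together with Cauchy--Schwarz using balanced weights $(\lambda(d,l)+1)^{\pm q/2}$ yields
\[|\partial_\theta^\alpha\partial_r^\beta\rg(\theta,r)|^2 \;\leq\; A_\alpha(\theta)\cdot B_\beta(r),\]
where $A_\alpha(\theta):=\sum_{l,m}(\lambda(d,l)+1)^{-q}|\partial_\theta^\alpha Y_{lm}(\theta)|^2$ and $B_\beta(r):=\sum_{l,m}(\lambda(d,l)+1)^q|\partial_r^\beta\rg_{lm}(r)|^2$. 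The radial factor is controlled by the one-dimensional analogue of Theorem~\ref{thm:Hts;cts}: the same Fourier-inversion argument used in~\cite{Sha21} gives $\|\partial_r^\beta\rg_{lm}\|_{L^\infty(\R)}^2\lesssim\|\rg_{lm}\|_{H_t^s(\R)}^2$ whenever $s>t+1/2+\beta$, and summing against $(\lambda(d,l)+1)^q$ shows $\sup_r B_\beta(r)\lesssim \|\rg\|_{H_t^{(q,s)}(\Pd)}^2$ directly from~\eqref{def:Htqs-norm;Pd}.

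The main obstacle is bounding $A_\alpha(\theta)$ uniformly in $\theta$, which reduces to the pointwise estimate
\[\sum_{m=1}^{N(d,l)}|\partial_\theta^\alpha Y_{lm}(\theta)|^2\;\lesssim\; l^{2|\alpha|+d-2}\quad\text{uniformly in }\theta\in\S^{d-1}.\]
I would derive this from the addition formula $\sum_m Y_{lm}(\theta)Y_{lm}(\theta')=\tfrac{N(d,l)}{|\S^{d-1}|}\mathscr{G}_l(\theta\cdot\theta')$, with $\mathscr{G}_l$ a suitably normalized Gegenbauer polynomial, by differentiating $|\alpha|$ times in each of the two slots and then specializing to $\theta'=\theta$: iterated tangential derivatives of $Y_{lm}$ get converted into derivatives of $\mathscr{G}_l$ evaluated at unity, which grow like $l^{2|\alpha|}$, multiplied by the dimension count $N(d,l)\sim l^{d-2}$. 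Careful bookkeeping of how successive tangential derivatives on the sphere interact with the rotation-invariant structure $\theta\cdot\theta'$ is the most delicate point. Granting this estimate, $\sup_\theta A_\alpha(\theta)\lesssim\sum_{l\geq 1}(l^2+1)^{-q}\,l^{2|\alpha|+d-2}$ converges exactly when $q>(d-1)/2+|\alpha|$, a condition guaranteed by the hypothesis $q>(d-1)/2+k$. Combining the two bounds produces the desired uniform estimate on $\Sch(\Pd)$, and density completes the proof.
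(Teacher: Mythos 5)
Your overall strategy is the intended one: the paper itself contains no proof of this statement --- it is quoted from Sharafutdinov~\cite{Sha21} with the remark that it follows by combining the embeddings for $H^q(\S^{d-1})$ and $H_t^s(\R)$ --- and your spherical-harmonic expansion with the weighted Cauchy--Schwarz split into an angular factor $A_\alpha$ and a radial factor $B_\beta$ is exactly that argument carried out. Your key angular estimate $\sum_m|\partial_\theta^\alpha Y_{lm}(\theta)|^2\lesssim l^{2|\alpha|+d-2}$ is correct, and it produces precisely the threshold $q>(d-1)/2+|\alpha|$ demanded by the hypothesis. The addition-formula route you sketch is, however, exactly where the work sits: after differentiating $|\alpha|$ times in each slot, the terms carrying $\mathscr{G}_l^{(j)}(1)\sim l^{2j}$ with $|\alpha|<j\le 2|\alpha|$ must be shown to come with coefficients that vanish on the diagonal $\theta'=\theta$, otherwise one only gets $l^{4|\alpha|+d-2}$, which is useless. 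You can bypass this bookkeeping entirely: for each fixed $j$ the quantity $\sum_m|\nabla_\theta^{j}Y_{lm}(\theta)|^2$ is independent of the choice of orthonormal basis of the degree-$l$ eigenspace, hence by rotation equivariance it is constant in $\theta$ and equals its average over $\S^{d-1}$, which by the eigenvalue equation for $\Delta_\theta$ is bounded by $C\,\lambda(d,l)^{j}N(d,l)\sim l^{2j+d-2}$.

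The one genuine soft spot is the radial step. The Fourier-inversion/Cauchy--Schwarz bound you invoke, $\|\partial_r^\beta\rg_{lm}\|_{L^\infty(\R)}\lesssim\|\rg_{lm}\|_{H_t^s(\R)}$, needs the dual integral $\int_\R|\zeta|^{2(\beta-t)}(1+\zeta^2)^{t-s}\,d\zeta$ to converge, i.e. $t<\beta+\tfrac12$ (automatic, since $t<\tfrac12$) and, at high frequencies, $s>\beta+\tfrac12$ --- not $s>t+\tfrac12+\beta$. When $t\ge 0$ the stated hypothesis gives $s>k+\tfrac12$ and your argument closes; this covers the only use made of the theorem in the paper ($t=0$, $s=2$, $k=1$, $q>(d+1)/2$). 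For $t\in(-\tfrac12,0)$, however, the hypothesis allows $s\le\beta+\tfrac12$, where the one-dimensional inequality you cite is false, and in fact the embedding itself fails in that range with the norm as defined in \eqref{def:Htqs-norm;Pd}: for $k=0$, $t=-\tfrac14$, $s=\tfrac14+\eps$, take $\rg(\theta,r)=g(r)$ constant in $\theta$ with $\F_1 g$ even, nonnegative, equal to $|\zeta|^{-1}$ on $1\le|\zeta|\le R$ and suitably smoothed; then $\|\rg\|_{H_t^{(q,s)}(\Pd)}$ is bounded uniformly in $R$ while $\sup_r|g(r)|\ge c\log R$. So your proof is complete on the range $t\in[0,\tfrac12)$, equivalently under the corrected hypothesis $s>\max\{t,0\}+\tfrac12+k$; the discrepancy for negative $t$ is inherited from the statement as transcribed rather than from your plan, but as written your radial lemma cannot be invoked there, and you should either restrict $t$ or adjust the condition on $s$ accordingly.
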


We now present a proof of Lemma~\ref{lem:CE_proj}.
\begin{proof}[Proof of Lemma~\ref{lem:CE_proj}]
Note that by hypothesis, for  a.e. $\theta\in\S^{d-1}$ and $t\in I$ we have $\hat\mu_t^\theta\in\P_2(\R)$ and $\widehat J_t^\theta\in\M_b(\R;\R^d)$. It suffices to show that for a.e $\theta\in\S^{d-1}$
\begin{equation}\label{eq:CE_theta}
    \int_I \alpha'(t)\langle \hat\mu_t^\theta,\psi\rangle_{\R} + \alpha(t)\langle \theta\cdot\widehat J_t^\theta,\partial_r\psi\rangle_{\R}\,dt \text{ for all } \alpha\in C_c^\infty(I) \text{ and } \psi\in C_c^\infty(\R).
\end{equation}
Indeed, linear combinations of test functions of the form $(t,x)\mapsto\alpha(t)\psi(r)$ are dense in $C_c^\infty(I\times K)$ for every compact $K\subset\R$ hence this implies \eqref{eq:CE_dist;proj}; see~\cite[Proposition 4.2 and Exercise 4.23]{San15} for instance.

\vspace{3mm}
\noindent\emph{Step 1$^o$.} We first show that 
\begin{equation}\label{eq:CE_radon}
    \int_I \alpha'(t)\langle \mu_t,\varphi_n\rangle_{\R^d} + \alpha(t)\langle J_t,\nabla\varphi_n\rangle_{\R^d}\,dt=0 \text{ for all } \alpha\in C_c^\infty(I) \text{ and } \rg\in C_c^\infty(\Pd).
\end{equation}
To this end, we first note by Proposition~\ref{prop:radon;reg} that $R^\ast \rg= R^\ast R R^{-1}\rg= cI_d^{d-1} R^{-1}\rg$ for some dimension-dependent constant $c>0$, where $I^{d-1}_d=(-\Delta)^{-(d-1)/2}$ (see Definition~\ref{def:riesz_trnfrm}). Fix any $q>(d+1)/2$. Then by \eqref{eq:Htqs;isometry} we can find some constant $C=C(d)>0$ such that
\[\|R^\ast \rg\|_{H_{(d-1)/2}^{(q,2+(d-1)/2)})(\R^d)}=\|cI_d^{d-1} R^{-1}\rg\|_{H_{(d-1)/2}^{(q,2+(d-1)/2)}(\R^d)}=C\|R^{-1}\rg\|_{H_{-(d-1)/2}^{(q,2-(d-1)/2)}(\R^d)}=C\|\rg\|_{H^{(q,2)}(\Pd)}.\]
As $\rg\in C_c^\infty(\Pd)\subset H^{(q,2)}(\Pd)$, $R^\ast\rg\in H_{(d-1)/2}^{(q,2+(d-1)/2)}(\R^d)$. Hence we can choose a sequence $\varphi_n\in \Sch(\R^d)$ such that $\|\varphi_n-R^\ast \rg\|_{H_{(d-1)/2}^{(q,2+(d-1)/2)}(\R^d)}\xrightarrow[]{n\rightarrow\infty} 0$. 

Observe that this implies
\begin{equation}\label{eq:rg;radon_approx}
    \|c_d^{-1}\Lambda_d R\varphi_n-\rg\|_{C^1(\Pd)}\lesssim_{d,q} \|c_d^{-1}\Lambda_d R\varphi_n-\rg\|_{H^{(q,2)}(\Pd)}\lesssim_d \|\varphi_n-R^\ast \rg\|_{H_{(d-1)/2}^{(q,2+(d-1)/2)}(\R^d)}\xrightarrow[]{n\rightarrow\infty} 0.
\end{equation}
Indeed, the first inequality is a consequence of Theorem~\ref{thm:HtrsPd;Ck} applied to $s=2,t=0$ and $q>(d+1)/2$. The second inequality is in fact an equality up to a constant; from definition of the $H_t^{(q,s)}(\R^d)$ norms, the isometry \eqref{eq:Radon_isometry;Hts}, and the intertwining property $R(-\Delta)^{(d-1)/2}=\tilde c\Lambda_d R$, we have
\begin{align*}
    &\|\varphi_n-R^\ast \rg\|_{H_{(d-1)/2}^{(q,2+(d-1)/2)}(\R^d)}
    =\|(-\Delta)^{(d-1)/2}(\varphi_n-R^\ast \rg)\|_{H_{-(d-1)/2}^{(q,2-(d-1)/2)}(\R^d)}    \\
    &= \|R(-\Delta)^{(d-1)/2}(\varphi_n-R^\ast \rg)\|_{H^{(q,2)}(\Pd)}
    = \tilde c\|\Lambda_d R\varphi_n-\Lambda_d R R^\ast \rg\|_{H^{(q,2)}(\Pd)}  \\
    &= \tilde c c_d \|c_d^{-1}\Lambda_d R\varphi_n-c_d^{-1}\Lambda_d R R^\ast \rg\|_{H^{(q,2)}(\Pd)}
    = \tilde c c_d \|c_d^{-1}\Lambda_d R\varphi_n-\rg\|_{H^{(q,2)}(\Pd)}.
\end{align*}
In the last line we have used the inversion formula \eqref{eq:dual_inversion}.

Without loss of generality, we can choose $\varphi_n\in C_c^\infty(\R^d)$, for instance by noting that $C_c^\infty(\R^d)$ is dense in $\Sch(\R^d)$ in the Schwartz topology and that $\Sch(\R^d)\subset H_{(d-1)/2}^{(q,2+(d-1)/2)}(\R^d)$ is a continuous embedding. Thus
\[\int_I \alpha'(t)\langle \mu_t,\varphi_n\rangle_{\R^d} + \alpha(t)\langle J_t,\nabla\varphi_n\rangle_{\R^d}\,dt=0.\]
Recalling $R\nabla\varphi_n=\theta\partial_r R\varphi_n$, and applying the duality formulae for bounded measures \eqref{eq:duality;radon;measure} and distributions \eqref{eq:duality;radon;dist},
\begin{align*}
    c_d^{-1}\int_I \alpha'(t)\langle \hat\mu_t,\Lambda_d R\varphi_n\rangle_{\Pd} + \alpha(t)\langle \theta\cdot\widehat J_t,\partial_r \Lambda_d R\varphi_n\rangle_{\Pd}\,dt =0
\end{align*}
Let $\tilde I$ be the compact interval containing the support of $\alpha$. Then
\begin{align*}
    &\left|\int_I \alpha'(t)\langle\hat\mu_t,\rg\rangle_{\Pd}+\alpha(t)\langle \theta\cdot\widehat J_t,\partial_r\rg\rangle_{\Pd}\,dt\right|\\
    &\qquad=\left|\int_{\tilde I} \alpha'(t)\langle\hat\mu_t,c_d^{-1}\Lambda_d R\varphi_n-\rg\rangle_{\Pd}+\alpha(t)\langle\theta\cdot\widehat J_t,\partial_r(c_d^{-1}\Lambda_d R\varphi_n - \rg)\rangle_{\Pd}\,dt\right|  \\
    &\qquad\leq \|\alpha\|_{C^1(\tilde I)}(|\hat\mu|_{TV(\tilde I\times\Pd)}+|\widehat J|_{TV(\tilde I\times\Pd)})\|c_d^{-1}\Lambda_d R\varphi_n - \rg\|_{C^1(\Pd)}.
\end{align*}
By Definition~\ref{def:CE} (ii), $|\hat\mu|_{TV(\tilde I\times\Pd)}+|\widehat J|_{TV(\tilde I\times\Pd)}<+\infty$. Thus from \eqref{eq:rg;radon_approx} we obtain \eqref{eq:CE_radon}.

\vspace{3mm}
\noindent\emph{Step 2$^o$.} 
Let $B_{\S^{d-1}}(\theta_0,\eps)=\{\omega\in\S^{d-1}:|\omega-\theta_0|\leq\eps\}$, and let $B_{\S^{d-1}}^e(\theta_0,\eps)=B_{\S^{d-1}}(\theta,\eps)\cap B_{\S^{d-1}}(-\theta,\eps)$. Then $\one_{B_{\S^{d-1}}^e(\theta_0,\eps)}\in L^1(\S^{d-1})$ is an even function. A standard approximation argument using smooth cutoff function in the $\theta$-variable using the continuity equation in the Radon space \eqref{eq:CE_radon} we deduce that for any $\theta_0\in\S^{d-1}$, $\eps>0$, and $\rg\in C_c^\infty(\Pd)$
\begin{equation}\label{eq:CE_radon_anglecutoff}
        \int_{B_{\S^{d-1}}^e(\theta_0,\eps)}\int_I  \alpha'(t)\langle\hat\mu_t^\theta,\rg^\theta\,\rangle_{\R} + \alpha(t)\langle\theta\cdot\widehat J_t^\theta,\partial_r\rg^\theta\rangle_{\R}\,dt\,d\theta=0.
\end{equation}
Indeed, as no derivatives in $\theta$ appear in the continuity equation, the passage to the limit is justified by the dominated convergence theorem.

As the integrand in \eqref{eq:CE_radon_anglecutoff} with respect to $d\theta$ is clearly $L^1$, for each $\rg\in C_c^\infty(\Pd)$ and $\alpha\in C_c^\infty(I)$ the Lebesgue differentiation theorem yields that there exists a null set $\mathcal{N}_{\rg,\alpha}\subset\S^{d-1}$ such that
\begin{align*}
    \int_I  \alpha'(t)\langle\hat\mu_t^\theta,\rg^\theta\,\rangle_{\R} + \alpha(t)\langle\theta\cdot\widehat J_t^\theta,\partial_r\rg^\theta\rangle_{\R}\,dt=0 \text{ for all } \theta\not\in\mathcal{N}_{\rg}.
\end{align*}
By separability of $C_c^\infty(\Omega)$ for $\Omega=I,\Pd$ we can find a null set $\mathcal{N}\subset\S^{d-1}$ such that the above holds for all $\rg\in C_c^\infty(\Pd)$ and $\alpha\in C_c^\infty(I)$.

As for every $\psi\in C_c^\infty(\R)$ one can find $\rg\in C_c^\infty(\Pd)$ with $\rg^\theta=\psi$ we conclude that for all $\theta\not\in\mathcal{N}$
\[\int_I  \alpha'(t)\langle\hat\mu_t^\theta,\psi\,\rangle_{\R} + \alpha(t)\langle \theta\cdot\widehat J_t^\theta,\partial_r\psi\rangle_{\R}\,dt=0 \text{ for all } \alpha\in C_c^\infty(I), \psi\in C_c^\infty(\R).\]
\end{proof}

\nc

\end{appendix} 

\end{document}